\documentclass{amsart}
\setlength{\textwidth}{\paperwidth}
\addtolength{\textwidth}{-2in}
\calclayout
\usepackage{amssymb,amsmath,amsfonts,epsfig,latexsym,tikz, amsrefs}
\usepackage{tikz-cd}
\usepackage{hyperref}
\usepackage{enumerate}
\usepackage{mathtools}
\usepackage{verbatim}
\usepackage{cleveref}
\usepackage[shortlabels]{enumitem}
\usepackage{subcaption}
\usepackage{ mathrsfs }

\usetikzlibrary{positioning}
\usetikzlibrary{matrix}
\usetikzlibrary{decorations}
\usetikzlibrary{decorations.pathreplacing, decorations.pathmorphing, angles,quotes}

\newtheorem{theorem}{Theorem}[section]
\newtheorem{definition}[theorem]{Definition}
\newtheorem{proposition}[theorem]{Proposition}
\newtheorem{lemma}[theorem]{Lemma}
\newtheorem{corollary}[theorem]{Corollary}

\def\C{\mathbb{C}}

\def\R{\mathbb{R}}

\def\Z{\mathbb{Z}}
\def\K{\mathcal{K}}

\def\II{\mathscr{I}}

\def\1{\mathbf{1}}

\def\cS{\mathcal{S}}

\def\<{\langle}
\def\>{\rangle}

\DeclareMathOperator{\Ehr}{Ehr}

\DeclareMathOperator{\tr}{tr}

\DeclareMathOperator{\id}{id}

\DeclareMathOperator{\pr}{pr}
\DeclareMathOperator{\Int}{Int}

\DeclareMathOperator{\rank}{rank}
\DeclareMathOperator{\Pyr}{Pyr}

\DeclareMathOperator{\rev}{rev}
\DeclareMathOperator{\Ind}{Ind}
\DeclareMathOperator{\Res}{Res}
\DeclareMathOperator{\ev}{ev}

\DeclareMathOperator{\GL}{GL}
\DeclareMathOperator{\Aff}{Aff}

\DeclareMathOperator{\Cyl}{Cyl}

\DeclareMathOperator{\face}{\mathcal{F}}

\DeclareMathOperator{\cf}{cf}

\theoremstyle{definition}
\newtheorem{remark}[theorem]{Remark}
\newtheorem{example}[theorem]{Example}

\begin{document}
	
	\title[]{Subdivisions of lower Eulerian posets and KLS theory}
	%{Subdivisions of Eulerian posets,  $cd$-indices, and Kazhdan-Lusztig-Stanley theory}
	%{Poset subdivisions, $cd$-indices, and Kazhdan-Lusztig-Stanley theory}
	%{Subdivisions of lower Eulerian posets and Kazhdan-Lusztig-Stanley theory}          
	\date{\today}
	\author{Alan Stapledon}

	\address{Sydney Mathematics Research Institute, L4.42, Quadrangle A14, University of Sydney, NSW 2006, Australia}
	\email{astapldn@gmail.com}
	\begin{abstract}
		In a companion paper, a
		canonical bijection was established between strong formal subdivisions of lower Eulerian posets and triples consisting of a lower Eulerian poset, a corresponding rank function, and a non-minimal element such that the join with any other element exists.
		The main goal of this paper is to relate the local $h$-polynomials of a strong formal subdivision to the Kazhdan-Lusztig-Stanley (KLS) invariants associated to its corresponding lower Eulerian poset under this bijection. As an application, we show that 
		Braden and MacPherson's  relative $g$-polynomials are alternative encodings of corresponding local $h$-polynomials. We also further develop equivariant KLS theory and give  equivariant generalizations of our main results, as well as  an application to 
		equivariant Ehrhart theory. 
	\end{abstract}
	
	\maketitle
%	\tableofcontents
	
	\vspace{-20 pt}

	\section{Introduction}
	
	Kazhdan-Lusztig-Stanley (KLS) theory was introduced by Stanley in \cite{Stanley92} as a way to abstract computations coming from intersection cohomology to the setting of posets.  
	The theory was later developed, for example, in \cite{Stanley92,DyerHecke,BrentiTwistedIncidence,BrentiPKernels,KatzStapledon16,ProudfootAGofKLSpolynomials,FMVChowFunctions,FREulerian}. 
	See \cite{BPIntersectionCohomology} for a recent survey. 
	Given a finite poset $\Gamma$, the \emph{incidence algebra} $I(\Gamma)$ 
	is the set of functions from intervals in $\Gamma$ to $\Z[t]$ (see Section~\ref{ss:KLSbackground} for the $\Z[t]$-algebra structure). Given $p \in I(\Gamma)$ and an interval $[z,z']$ in $\Gamma$, we often write $p([z,z']) = p(z,z') =  p(z,z';t)\in \Z[t]$. Given an element $\kappa_\Gamma \in I(\Gamma)$ called a \emph{$\Gamma$-kernel} (see Definition~\ref{def:kernel}), one may consider three invariants in $I(\Gamma)$: 
		the \emph{right and left Kazhdan-Lusztig-Stanley functions} $f_\Gamma$ and $g_\Gamma$ respectively,  
	%the \emph{right Kazhdan-Lusztig-Stanley function} $f_\Gamma$, 
	and the \emph{$Z$-function} $Z_\Gamma$ (see Theorem~\ref{thm:existenceofg},  Definition~\ref{def:Zfunction}).
	
	Assume that $\Gamma$ is \emph{lower Eulerian}. This means that 
	it contains a unique minimal element, denoted $\hat{0}_\Gamma$, and  every interval between distinct elements contains as many elements of even rank as odd rank. If $\Gamma$ also contains a unique maximal element, denoted $\hat{1}_\Gamma$, then $\Gamma$ is \emph{Eulerian}.  
	A prototypical example of a lower Eulerian poset %from topology
	 is a $CW$-poset. This is 
	the poset  $\face(\K)$ of closed cells, including the empty cell, of a regular $CW$-poset $\K$ under inclusion. For example, the face poset  of a fan  (viewed as a regular $CW$-complex by intersecting with a sphere centered at the origin), or the face poset of a polyhedral subdivision  of a polytope are $CW$-posets, and hence lower Eulerian posets. In this case, there is a natural choice of $\Gamma$-kernel called the \emph{Eulerian kernel} (see Example~\ref{ex:t-1case}).  
	See \cite{FREulerian}*{Theorem~1.2} for a beautiful interpretation of the corresponding $Z$-function. 	Later, when considering the equivariant case, we will see examples of lower Eulerian posets where other choices of  $\Gamma$-kernel naturally arise (see Example~\ref{ex:introequivariantfan}). 
	
	The notion of a \emph{strong formal subdivision} $\sigma: X \to Y$
	between lower Eulerian posets $X$ and $Y$ with rank functions $\rho_X$ and $\rho_Y$ respectively was introduced in \cite{KatzStapledon16}*{Definition~3.17} and further developed in \cite{DKTPosetSubdivisions,StapledonLWPosets}, replacing the notion of a formal subdivision in \cite[Definition~7.4]{Stanley92}, and also generalizing the notion of subdivisions of Gorenstein*-posets introduced in   \cite{EKDecompositionTheoremCDIndex}*{Definition~2.6}.
	We have the following important example. 
	
	\begin{example}\label{ex:introproper}(see  \cite{StapledonLWPosets}*{Example~3.8})
		Consider a linear map $\phi: V' \to V$ inducing a proper, surjective morphism between %full-dimensional 
		fans 
		$\Sigma'$ and $\Sigma$ in real vector spaces $V'$ and $V$ respectively.  Then 
		the corresponding strong formal subdivision is the function $\sigma: \face(\Sigma') \to \face(\Sigma)$ between face posets, where $\sigma(C')$ is the smallest cone of $\Sigma$ containing $\phi(C')$, for all $C' \in \Sigma'$.  
		%Let $s = \dim \ker(\phi)$. 
		The corresponding rank functions are defined as follows:
			 $\rho_{\face(\Sigma')}(C') = \dim C'$ for all $C' \in \Sigma'$, and  
		 $\rho_{\face(\Sigma)}(C) = \dim C + \dim \ker(\phi) = \dim \phi^{-1}(C)$ for all $C \in \Sigma$. 
	\end{example}
	
	We have the following special case. 	Given a polyhedron $Q$ in a real vector space,  let  $C(Q)$ be the cone spanned by $Q$. 
	
	\begin{example}\label{ex:intropolytope}(see \cite{KatzStapledon16}*{Lemma~3.25})
		Let $\cS$ be a polyhedral subdivision of a full-dimensional 
		polytope $P$ in a real vector space $V$. 
		Then the identity map on $V \oplus \R$ induces  a refinement of fans between $\Sigma_\cS = \{ C(G \times \{1 \}) : G \in \cS \}$ and the pointed cone $C(P \times \{1 \})$. 
		The corresponding strong formal subdivision is the function $\sigma: \face(\cS) \to \face(P)$ between face posets,  where $\sigma(F)$ is the smallest face of $P$ containing $F$, for all $F \in \cS$. 
		We have $\rho_{\face(\cS)}(G) = \dim G + 1$ for all $G \in \cS$, and 
		$\rho_{\face(P)}(G) = \dim G + 1$ for all faces $G$ of $P$. 
	\end{example}
	
%	For example, if $\cS$ is  polyhedral subdivision of a polytope $P$, then 
%	the corresponding strong formal subdivision is the function $\sigma: \face(\cS) \to \face(P)$ between face posets,  where $\sigma(F)$ is the smallest face of $P$ containing $F$, for all $F \in \cS$. 
%	By putting $P$ at height $1$ in a higher dimensional space and taking cones over the faces of $\cS$ and $P$, this may be viewed as a special case of the following example. 
%	Consider a linear map $\phi: V' \to V$ inducing a proper, surjective morphism between full-dimensional fans $\Sigma'$ and $\Sigma$ in real vector spaces $V'$ and $V$ respectively (see Example~\ref{TODO}).  Then 
%	the corresponding strong formal subdivision is the function $\sigma: \face(\Sigma) \to \face(\Sigma')$ between face posets, where $\sigma(C)$ is the smallest cone of $\Sigma'$ containing $\phi(C)$, for all $C \in \Sigma$ (see Example~\ref{TODO}).  
	
	Given  $x \in X$ and $y \in Y$ such that $\sigma(x) \le y$, one may consider
	the \emph{local $h$-polynomial} $\ell_\sigma(x,y) = \ell_\sigma(x,y;t)$ in $\Z[t]$ (see Definition~\ref{def:hellpolynomial}). 
	If $Y$ is Eulerian, then we call 
	$\ell_\sigma(\hat{0}_X,\hat{1}_Y)$ the local $h$-polynomial of $\sigma$. 
	The geometric significance of the local $h$-polynomial comes in two distinct flavors. 
	Firstly, when $\sigma$ is induced by a proper, surjective morphism of fans as in Example~\ref{ex:introproper}, the connection with proper maps of toric varieties, sheaves on fans, and the decomposition theorem are explored, for example, in \cite{Stanley92,KatzStapledon16,deCataldoMiglioriniMustata18,KaruRelativeHardLefschetz}.
%	In particular, if we further assume that $\sigma$ is projective, the coefficients of $\Delta \ell$ are nonnegative. \alan{find reference in Karu; use this to guide us for pointed cones??} \cite{KaruRelativeHardLefschetz}
	Secondly, when $\sigma$ is induced by a regular lattice polyhedral decomposition of a lattice polytope as in Example~\ref{ex:intropolytope}, the connection with the action of monodromy on the cohomology of the Milnor fiber of a nondegenerate hypersurface singularity is explored, for example, in \cite{Stapledon17,LPSLocalMotivic,SaitoMixed,STMonodromies}. 
	For further work on the local $h$-polynomial and connections with combinatorics and commutative algebra see, for example, 	\cite{dMGPSS20, ChanLocal, JKSS, LPS1, Athanasiadis12b, Athanasiadis12, LSLefschetz}, and survey papers
	\cite{AthanasiadisSurveySubdivisions, ChanSurvey}. 

The main goal of this paper is to relate the local $h$-polynomials of a strong formal subdivision to the KLS invariants associated to a corresponding lower Eulerian poset. We will use a canonical bijection from the companion paper \cite{StapledonLWPosets} that we now describe. See Section~\ref{ss:backgroundposet} for more details. 
%In fact, this paper is a companion paper to \cite{StapledonLWPosets}. 
We say that an element $q$ of a poset $B$ is \emph{join-admissible} if for any element $z$ in $B$, 
the join (or  least upper bound) $z \vee q$ exists. 

\begin{theorem}\label{thm:introbijection}\cite{StapledonLWPosets}*{Theorem~1.1}
	There is a canonical bijection between  strong formal subdivisions $\sigma: X \to Y$ 	between lower Eulerian posets $X$ and $Y$ with rank functions $\rho_X$ and $\rho_Y$ respectively, and triples $(\Gamma,\rho_\Gamma,q)$, where $\Gamma$ is a lower Eulerian poset
	with 
	rank function $\rho_\Gamma$, and $q$ is a join-admissible element of $\Gamma$ with $q \neq \hat{0}_\Gamma$.
\end{theorem}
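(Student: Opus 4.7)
The plan is to construct the bijection explicitly via a mapping-cylinder type operation on posets, and then verify mutual inverseness together with preservation of the lower Eulerian property in each direction.

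For the forward direction, given a strong formal subdivision $\sigma \colon X \to Y$, I would form the poset $\Gamma$ with underlying set $X \sqcup Y$ and partial order: $a \le_\Gamma b$ if and only if either $a, b \in X$ with $a \le_X b$; or $a, b \in Y$ with $a \le_Y b$; or $a \in X$, $b \in Y$, and $\sigma(a) \le_Y b$. I would take $q := \hat{0}_Y$, viewed as an element of $\Gamma$. Since $\sigma(\hat{0}_X) = \hat{0}_Y$ as part of the definition of a subdivision, one has $\hat{0}_\Gamma = \hat{0}_X < q$, so $q$ is non-minimal. For any $z \in \Gamma$, the join $z \vee q$ equals $z$ when $z \in Y$ and equals $\sigma(z)$ when $z \in X$, so $q$ is join-admissible. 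The rank function $\rho_\Gamma$ is set to agree with $\rho_X$ on $X$ and with an additive shift of $\rho_Y$ on $Y$, the shift being pinned down by compatibility along the covering relations that cross from $X$ to $Y$.

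For the reverse direction, given a triple $(\Gamma, \rho_\Gamma, q)$, I would set $Y := \{ z \in \Gamma : z \ge q \}$ with the inherited partial order and a suitably normalized rank function, and define $X$ to be the subposet of elements of $\Gamma$ not strictly above $q$. The map $\sigma \colon X \to Y$ is then $\sigma(x) := x \vee q$, which exists by join-admissibility of $q$. The two constructions should be manifestly inverse to each other once the underlying-set identification $\Gamma \leftrightarrow X \sqcup Y$ and the rank normalizations are made explicit.

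The three technical points to check are: (a) the $\Gamma$ built from a subdivision is lower Eulerian; (b) the triple $(X, Y, \sigma)$ extracted from a triple $(\Gamma, \rho_\Gamma, q)$ is a strong formal subdivision of lower Eulerian posets; and (c) the two constructions compose to identities. Step (c) is essentially formal once (a) and (b) are in place, so the main obstacle is verifying the Eulerian condition on the ``crossing'' intervals $[x, y] \subseteq \Gamma$ with $x \in X$ and $y \in Y$, $\sigma(x) \le y$. Such an interval decomposes as $\{ x' \in X : x \le x', \ \sigma(x') \le y \}$ together with the interval $[\sigma(x), y]_Y$, and the required balance between even- and odd-rank element counts is precisely the combinatorial content of the strong formal subdivision axiom. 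Encoding this equivalence cleanly, via the characteristic functions of intervals in the incidence algebra $I(\Gamma)$ and an induction on the rank difference $\rho_\Gamma(y) - \rho_\Gamma(x)$, constitutes the bulk of the work.
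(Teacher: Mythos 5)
The statement you are proving is cited from the companion paper \cite{StapledonLWPosets}*{Theorem~1.1}; the present paper does not prove it but recalls the two directions of the bijection in Section~\ref{ss:backgroundposet}. Your construction matches that description exactly: in the forward direction you form the non-Hausdorff mapping cylinder $\Cyl(\sigma)$ on $X \sqcup Y$ with the crossing relation $x \le y \iff \sigma(x) \le y$, set $q = \hat{0}_Y$, and shift $\rho_Y$ by $+1$ on the $Y$-part (the paper gives this shift explicitly in \eqref{eq:rhoCyl}); in the reverse direction you set $Y = \{ z : q \le z \}$ and $\sigma(x) = x \vee q$. You also correctly identify the crux of the lower Eulerian verification as the crossing intervals $[x,y]$ with $x \in X$, $y \in Y$, which decompose as $\{x' \in X : x \le x', \sigma(x') \le y\} \sqcup [\sigma(x),y]_Y$, with the balance between even and odd ranks amounting to the defining equality \eqref{eq:strongsubdivisionequality}. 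So your approach is the same one the paper points to.

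Two small points. First, in the reverse direction you describe $X$ as ``the subposet of elements not strictly above $q$''; as written this would include $q$ itself, whereas the correct set is $X = \Gamma \smallsetminus Y = \{ z \in \Gamma : q \not\le z \}$, which excludes $q$ (since $q \in Y$). Second, your argument that $q$ is non-minimal uses $\sigma(\hat{0}_X) = \hat{0}_Y$; this is not listed among the four axioms of a strong formal subdivision, so you should note that it follows from order-preservation together with surjectivity of $\sigma$. Beyond these, your outline stops at identifying what must be checked for (a), (b), (c); the actual verification (including that $X$ and $Y$ inherit the lower Eulerian property in the reverse direction, and that strong surjectivity of the constructed $\sigma$ follows from join-admissibility) is carried out in the companion paper, so your sketch cannot be compared to a proof appearing here.
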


%	In \cite{StapledonLWPosets}*{Theorem~1.1}, a canonical bijection is given between strong formal subdivisions $\sigma: X \to Y$, and triples $(\Gamma,\rho_\Gamma,q)$, where $\Gamma$ is a lower Eulerian poset
%	 with 
%	 %unique minimal element $\hat{0}_\Gamma$ and 
%	 rank function $\rho_\Gamma$, and $q$ is a join-admissible element of $\Gamma$ with $q \neq \hat{0}_\Gamma$.
	 
	In Theorem~\ref{thm:introbijection},  $\Gamma$ is obtained from $\sigma$ via the 
	\emph{non-Hausdorff mapping cylinder} construction of  Barmak and Minian \cite{BMSimpleHomotopy}*{Definition~3.6}. That is, $\Gamma$ is the disjoint union of $X$ and $Y$ as sets. As a poset, $\Gamma$ inherits the orderings on $X$ and $Y$. Also, if $x \in X$ and $y \in Y$, then 
	$x \le y$ in $\Gamma$ if $\sigma(x) \le y$ in $Y$. 
	Conversely, we recover $\sigma$ by setting 
	$Y = \{ z \in \Gamma : q \le z \}$, $X = \Gamma \smallsetminus Y$, and  $\sigma(x) = x \vee q$ for all $x \in X$. 
	
%	, and $z \le z'$ in $\Gamma$ if and only if either $z,z' \in X$ and $z \le z' \in X$, or $z,z' \in Y$ and $z \le z' \in Y$, or $z \in X$, $z' \in Y$, and $\sigma(z) \le z' \in Y$. Also, $q = \hat{0}_Y \in \Gamma$. 
%	Conversely,  
	
	For example, if $\sigma$ is induced by a proper, surjective morphism of fans as in Example~\ref{ex:introproper}, then $\Gamma$ is a $CW$-poset (see \cite{StapledonLWPosets}*{Example~7.26}). We have the following important special case. 
	
	\begin{example}\label{ex:introprojectivetocone}\cite{StapledonLWPosets}*{Section~4.1}
	   Suppose that $\sigma: \face(\Sigma') \to \face(\Sigma)$ is induced by a proper, surjective morphism of fans  as in Example~\ref{ex:introproper}. Assume further than the morphism of fans is projective, and $\Sigma$ is a pointed cone.
	   Consider the corresponding triple $(\Gamma,\rho_\Gamma,q)$ under Theorem~\ref{thm:introbijection}. 
	    Then there exists an explicitly constructed polytope $Q$ such that $\Gamma = \face(Q)$, $\rho_\Gamma(G) = \dim G + 1$ for all $G \in \face(Q)$,  and $q$ corresponds to a nonempty face $F$ of $Q$. Conversely, every pair of a polytope $Q$ and a nonempty face $F$ of $Q$ appears in this way. Moreover, $F$ is a vertex if and only $\sigma$ is induced by a regular polyhedral subdivision $\cS$ of a  polytope $P$ as in Example~\ref{ex:intropolytope}. 
	   	See Example~\ref{ex:polytope} for details.
	\end{example}

%	If $\sigma$ is induced by a projective, surjective morphism of fans and $\Sigma'$ is a pointed cone, then $\Gamma = \face(Q)$ is the face lattice of a polytope $Q$, $\rho_\Gamma(G) = \dim G + 1$ for all $G \in \face(Q)$,  and $q$ corresponds to a nonempty face $F$ of $Q$. Conversely, every pair of a polytope $Q$ and a nonempty face $F$ of $Q$ appears in this way, and $F$ is a vertex if and only $\sigma$ is induced by a regular polyhedral subdivision $\cS$ of a  polytope $P$ (see \cite{StapledonLWPosets}*{Section~4.1}).  %Moreover, pairs consisting of a polytope $Q$ and a vertex $v$ correspond to strong formal subdivisions induced by a 
%	See Example~\ref{ex:polytope} for more details.
%	
%%	 $\Gamma$ is the disjoint union of $X$ and $Y$ as a set, $Y = \{ z \in \Gamma : q \le z \}$,  and $\sigma(x) = x \vee q$ for all $x \in X$. 
	
	To state our main theorem, we introduce some notation.	We say that a polynomial $a \in \Z[t]$ is symmetric if there exists a nonnegative integer $m$ (necessarily unique if $a$ is nonzero) such that
	$a = \sum_{i = 0}^m a_i t^i$ for some coefficients $a_i \in \Z$ satisfying $a_i = a_{m - i} \in \Z$ for $0 \le i \le m$. In this case, define $\Delta a = a_0 + \sum_{i = 1}^{m/2} (a_i - a_{i - 1}) t^i \in \Z[t]$. Observe that $\Delta a$ is an alternative encoding of $a$, and $\Delta a$ has nonnegative coefficients if and only if $a$ has nonnegative unimodal coefficients. For example, the local $h$-polynomial 
	$\ell_\sigma(x,y)$ above is symmetric (see Proposition~\ref{prop:symmetry}), and hence may alternatively be encoded by $\Delta \ell_\sigma(x,y)$. 
	
	Consider $\Gamma$ corresponding to $\sigma$ under the bijection in Theorem~\ref{thm:introbijection}.  Fix the Eulerian kernel in $I(\Gamma)$ and consider the corresponding KLS invariants 
	 $f_\Gamma$, $g_\Gamma$, and $Z_\Gamma$. 
	Given an element $p_\Gamma \in I(\Gamma)$, write $p_X$ and $p_Y$ for the restriction of $p$ to $I(X)$ and $I(Y)$ respectively. 
	Then  $f_X$, $g_X$, and $Z_X$ are the corresponding
	KLS invariants associated to $X$ with the Eulerian kernel, and $f_Y$, $g_Y$, and $Z_Y$ are the corresponding
	KLS invariants associated to $Y$ with the Eulerian kernel. 
	% (see \ref{TODO}). 
	
	A consequence of the theorem below is that 
	$g_\Gamma$ determines and is determined by $g_X$, $g_Y$, and the local $h$-polynomials $\{ \ell_\sigma(x,y) : x \in X, y \in Y, \sigma(x) \le y \}$ (see the discussion before Theorem~\ref{thm:maingell}).  In the body of the paper, the statement of the theorem below
 is written concisely using the language of incidence algebras. Below, we expand out the definitions for the benefit of the reader.

\begin{theorem}\label{thm:intromainKLS}(see Theorem~\ref{thm:maingell},Remark~\ref{rem:altdeltaell}, Corollary~\ref{cor:rightKLSfunction}, Corollary~\ref{cor:Zmappingformula})
	Let  $\sigma: X \to Y$ be a strong formal subdivision
	between lower Eulerian posets $X$ and $Y$ with rank functions $\rho_X$ and $\rho_Y$ respectively, corresponding to a triple $(\Gamma,\rho_\Gamma,q)$  under Theorem~\ref{thm:introbijection}. 
	 Fix the Eulerian kernel in $I(\Gamma)$. 
	For any  $x \in X$ and $y \in Y$ such that $\sigma(x) \le y$,
		\[
		\Delta \ell_{\sigma}(x,y) = \sum_{ \sigma(x) \le y' \le y} (-1)^{\rho_Y(y') - \rho_Y(y)} g_\Gamma(x,y') f_Y(y',y),
		\] 
	\[
	g_\Gamma(x,y) = \sum_{ \sigma(x) \le y' \le y} \Delta \ell_{\sigma}(x,y') g_Y(y',y), 
	\] 
	\[
	f_\Gamma(x,y) 
	=  \sum_{ \substack{x \le x' \in X \\ \sigma(x') \le y} } (-1)^{\rho_Y(y) - \rho_X(x')} f_X(x,x') \Delta \ell_{\sigma}(x',y),
	\] 
		\[
	Z_\Gamma(x,y) = \sum_{ \substack{x \le x' \in X \\ \sigma(x') \le y} } (-1)^{\rho_Y(y) - \rho_X(x')} Z_X(x,x') \Delta \ell_{\sigma}(x',y) + \sum_{ \sigma(x) \le y' \le y}  t^{\rho_Y(y') - \rho_X(x) + 1} (\Delta \ell_{\sigma})(x,y';t^{-1}) Z_Y(y',y).
	\] 
\end{theorem}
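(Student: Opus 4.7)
The strategy is to exploit the uniqueness of the right KLS function $g_\Gamma \in I(\Gamma)$, characterized by $g_\Gamma(z,z)=1$, $\deg g_\Gamma(z,z') < \tfrac{1}{2}(\rho_\Gamma(z')-\rho_\Gamma(z))$ for $z<z'$, and the self-reciprocity of $\kappa_\Gamma g_\Gamma$ on each interval. First I would observe that the Eulerian kernel of $\Gamma$ restricts to the Eulerian kernel of $X$ on any interval contained in $X$, and similarly for $Y$. By uniqueness, $g_\Gamma$, $f_\Gamma$ and $Z_\Gamma$ then agree with the corresponding invariants of $X$ and $Y$ on unmixed intervals. The content of the theorem therefore concerns mixed intervals $[x,y]_\Gamma$ with $x\in X$, $y\in Y$, which by Theorem~\ref{thm:introbijection} decompose set-theoretically as
\[
\{x' \in X : x \le x' \text{ and } \sigma(x') \le y\} \sqcup \{y' \in Y : \sigma(x) \le y' \le y\}.
\]

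The central step is to prove the formula for $g_\Gamma(x,y)$. Let $\tilde g(x,y)$ denote the proposed right-hand side $\sum_{\sigma(x)\le y'\le y} \Delta \ell_\sigma(x,y')\, g_Y(y',y)$. The degree bound follows from the symmetry of $\ell_\sigma(x,y')$ (Proposition~\ref{prop:symmetry}), which gives $\deg \Delta \ell_\sigma(x,y') \le \tfrac{1}{2}(\rho_Y(y') - \rho_X(x) - 1)$, combined with $\deg g_Y(y',y) < \tfrac{1}{2}(\rho_Y(y)-\rho_Y(y'))$. To verify that $\kappa_\Gamma \tilde g$ is self-reciprocal on $[x,y]_\Gamma$, I would split the convolution $\sum_{x\le z\le y} \kappa_\Gamma(x,z)\, \tilde g(z,y)$ according to whether $z\in X$ or $z\in Y$, substitute the definition of the local $h$-polynomial (Definition~\ref{def:hellpolynomial}), and reduce to a manifestly symmetric expression. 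By uniqueness, $\tilde g = g_\Gamma$, establishing formula~(2).

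The remaining formulas follow by manipulations in the incidence algebra. Formula~(1) is obtained by inverting formula~(2) on $Y$: right-convolution by $g_Y$ on $I(Y)$ is invertible, with inverse a signed convolution with $f_Y$ (a consequence of $f_Y = \kappa_Y g_Y$ and the Eulerian structure of $Y$). Formula~(3) for $f_\Gamma(x,y)$ follows from $f_\Gamma = \kappa_\Gamma g_\Gamma$ by splitting the convolution over $z\in X$ and $z\in Y$, applying formula~(2) on the $Y$-part, and recognizing the $X$-part as $f_X$ convolved with $\Delta \ell_\sigma$. Formula~(4) for $Z_\Gamma$ then follows from the standard expression of $Z_\Gamma$ in terms of $g_\Gamma$ and its bar-involution (Definition~\ref{def:Zfunction}), applied through the same $X$/$Y$ splitting; the factor $t^{\rho_Y(y') - \rho_X(x) + 1} (\Delta \ell_\sigma)(x,y';t^{-1})$ arises from the bar-involution acting on $\Delta \ell_\sigma$ via the symmetry of $\ell_\sigma$.

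The main obstacle lies in verifying the self-reciprocity of $\kappa_\Gamma \tilde g$ on mixed intervals in the second step. This is where the definition of $\ell_\sigma$ and the strong formal subdivision property of $\sigma$ enter essentially: the $X$-side and $Y$-side contributions must combine in exactly the right way to produce the required Eulerian symmetry. Once this single identity is in hand, the remaining derivations are formal bookkeeping in the incidence algebra.
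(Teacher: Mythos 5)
Your strategy for the central identity~(2) — define the candidate $\tilde g$ on mixed intervals by the proposed formula, verify the degree bound and the defining relation, and conclude by uniqueness of $g_\Gamma$ — is a valid route and genuinely different from the paper's. The paper does not guess and check: it restricts the defining relation $g_\Gamma^{\rev}=g_\Gamma\cdot\kappa_\Gamma$ to the $X/Y$ intervals via a product formula for restrictions (Lemma~\ref{lem:restrict}), rewrites $\kappa_\Gamma|_{X/Y}=\kappa_\Gamma|_{(X/Y)^\circ}\cdot\kappa_\Gamma$ (Lemma~\ref{lem:alternatingmultiplicative}), multiplies by $(g_\Gamma^{-1})^{\rev}$, and recognizes $g_\Gamma|_{X/Y}\cdot g_\Gamma^{-1}$ as the unique solution of the equation \eqref{eq:tildeell} defining $\Delta\ell_\sigma$. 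Your approach buys a cleaner logical structure (no need for the auxiliary identity on $\kappa_\Gamma|_{X/Y}$ up front, though multiplicativity still enters when you simplify the $X$-side of the convolution), at the cost of having to guess the formula. Both are carried by the same small set of structural facts: $X$ is a lower order ideal, $Y$ an upper order ideal, and $\kappa_\Gamma$ is multiplicative and rank alternating.

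However, two things in your write-up would need repair. First, the convolution you propose to split, $\sum_{x\le z\le y}\kappa_\Gamma(x,z)\,\tilde g(z,y)$, is $\kappa_\Gamma\cdot\tilde g$, but the defining relation for the \emph{left} KLS function is $g_\Gamma^{\rev}=g_\Gamma\cdot\kappa_\Gamma$; the convolution you need to split is $\sum_{x\le z\le y}\tilde g(x,z)\,\kappa_\Gamma(z,y)$. Second, and more seriously, the identities ``$f_Y=\kappa_Y g_Y$'' and ``$f_\Gamma=\kappa_\Gamma g_\Gamma$'' that you invoke to deduce formulas~(1), (3), (4) are simply false. Neither is an algebraic consequence of the defining relations $f^{\rev}=\kappa\cdot f$ and $g^{\rev}=g\cdot\kappa$. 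What is true, and what the paper uses (Lemma~\ref{lem:inverse}, proved using rank alternation), is $\widehat{f_B}=g_B^{-1}$ and $\widehat{g_B}=f_B^{-1}$, so that $g_Y^{-1}(y',y)=(-1)^{\rho_Y(y',y)}f_Y(y',y)$. With this replacement, your plan to invert~(2) by right-convolving with $g_Y^{-1}$ does give~(1), and the paper obtains~(3) by restricting $f_\Gamma\cdot\widehat{g_\Gamma}=\delta_\Gamma$ to $X/Y$ intervals and substituting Theorem~\ref{thm:maingell}, and~(4) by the same splitting applied to $Z_\Gamma=g_\Gamma^{\rev}\cdot f_\Gamma$. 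Without Lemma~\ref{lem:inverse} as stated, the derivations of~(1), (3), (4) do not go through.
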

	
%	For example, if $\sigma(x) = y$, then the first two  equations  say that $g_\Gamma(x,y) = \Delta \ell_\sigma(x,y)$. 
	
	Consider again Example~\ref{ex:introprojectivetocone}. In particular, $Q$ is a polytope with nonempty face $F$, and $\Gamma = \face(Q)$. 
%	Consider again the example when
%	$\Gamma = \face(Q)$ is the face lattice of a polytope $Q$, $\rho_\Gamma(G) = \dim G + 1$ for all $G \in \face(Q)$,  and $q$ corresponds to a nonempty face $F$ of $Q$.
%Recall that the corresponding strong formal subdivision $\sigma: X \to Y$ is
%induced by a  projective, surjective morphism between a fan and a pointed cone. 
	In \cite{BMIntersectionHomologyKalai}*{Proposition~2}, Braden and MacPherson introduced the \emph{relative $g$-polynomial} $g(Q,F)$ associated to the pair $(Q,F)$ (see Example~\ref{ex:polytope}). For example, $g(Q,Q) = g_\Gamma(\Gamma) = g_\Gamma(\hat{0}_X, \hat{1}_Y)$. 
	The corollary below states that the relative $g$-polynomial is obtained by applying the operator $\Delta$ to the local $h$-polynomial of the corresponding strong formal subdivision $\sigma$. 
	%For example, when $F = Q$, it says that the local $h$-polynomial of $\sigma$ is the toric $h$-polynomial of $Q$. \alan{add reference; to example?}
	
	\begin{corollary}(see Example~\ref{ex:polytope})
		With the notation of Example~\ref{ex:introprojectivetocone}, 	$g(Q,F) = \Delta \ell_{\sigma}(\hat{0}_X, \hat{1}_Y).$
	\end{corollary}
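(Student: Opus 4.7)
The plan is to derive the corollary by specializing the identities in Theorem~\ref{thm:intromainKLS} at $x = \hat 0_X$, $y = \hat 1_Y$, and matching the result to Braden--MacPherson's defining recursion for $g(Q, F)$ as recalled in Example~\ref{ex:polytope}. Applying the second formula of Theorem~\ref{thm:intromainKLS} at these values, and noting that $\sigma(\hat 0_X) = \hat 0_X \vee q = q = \hat 0_Y$, I obtain
\[
g_\Gamma(\hat 0_X, \hat 1_Y) = \sum_{y' \in Y} \Delta \ell_\sigma(\hat 0_X, y')\, g_Y(y', \hat 1_Y).
\]
Under the bijection of Example~\ref{ex:introprojectivetocone}, $\Gamma = \face(Q)$, the upper set $Y \subset \Gamma$ corresponds to the faces of $Q$ containing $F$, and with respect to the Eulerian kernel the left KLS function at an interval starting at $\hat 0_\Gamma$ is exactly the classical $g$-polynomial of the corresponding face of $Q$. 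Hence the left-hand side above is $g(Q)$, and the sum expresses it as a sum over faces $G'$ of $Q$ containing $F$ of products $\Delta \ell_\sigma(\hat 0_X, y') \cdot g_Y(y', \hat 1_Y)$.

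Next I would compare this with Braden--MacPherson's recursion, which characterizes the relative $g$-polynomials by expressing $g(Q)$ as a sum, indexed by faces $G'$ of $Q$ containing $F$, of products $g(Q, G')\cdot h_{G'}$, where $h_{G'}$ is an explicit polynomial built from the link of $G'$ in $Q$. Since Braden--MacPherson's recursion determines $g(Q, F)$ uniquely by descending induction on $\dim F$, it suffices to check that $g_Y(y', \hat 1_Y)$ coincides with the Braden--MacPherson factor $h_{G'}$, where $G'$ is the face corresponding to $y' \in Y$. This reduces the corollary to purely a comparison of two explicit combinatorial gadgets.

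The main obstacle I expect is precisely this identification of $g_Y$ with the Braden--MacPherson factor. Concretely, one needs to show that the upper interval $[q, \hat 1_\Gamma] \subset \face(Q)$, equipped with the restriction of the Eulerian kernel, is isomorphic (after the appropriate rank shift) to the face poset of a polytope encoding the link of $F$ in $Q$, so that its left KLS function is the classical $g$-polynomial of that link polytope. This is a standard fact about intervals in polytope face posets combined with the compatibility of the Eulerian kernel under such identifications; once it is in place, uniqueness in the Braden--MacPherson recursion forces $g(Q, F) = \Delta \ell_\sigma(\hat 0_X, \hat 1_Y)$, and the corollary follows.
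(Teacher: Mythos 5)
Your overall strategy --- specialize the formula in Theorem~\ref{thm:maingell} at $(\hat 0_X,\hat 1_Y)$, observe it has the same shape as Braden--MacPherson's defining relation, and invoke uniqueness --- is exactly the paper's, and it does work, but two of your intermediate steps are off, and one would derail the argument as written. First, you have misread the Braden--MacPherson recursion \eqref{eq:BMdefine}: it is $\sum_{F \subset E \subset Q} g(E,F)\, g(Q/E) = g(Q)$, so the relative $g$-polynomial factor is $g(E,F)$ with the \emph{ambient} polytope $E$ varying and the face $F$ held fixed, not $g(Q,G')$ with the face $G'$ varying. The correct matching is $\Delta\ell_\sigma(\hat 0_X, y') \leftrightarrow g(E,F)$, and the term $y' = \hat 1_Y$, $E = Q$ is then $g(Q,F)$; under your stated matching you would instead be comparing $\Delta\ell_\sigma(\hat 0_X,\hat 1_Y)$ to $g(Q,Q) = g(Q)$, which is false. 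Second, the identification you call the ``main obstacle'' is in fact immediate and needs none of the link/rank-shift discussion you anticipate: $Y = [F,Q]$ is a closed subinterval of $\Gamma = \face(Q)$, the weak rank function, kernel, and KLS functions all restrict by the setup of Section~\ref{ss:statements}, and $[E,Q] \subset \face(Q)$ is $\face(Q/E)$; hence $g_Y(y',\hat 1_Y) = g_\Gamma(E,Q) = g(Q/E)$, which is literally the second factor in \eqref{eq:BMdefine}.

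You also leave the inductive step implicit: equality of the two sums at the top interval alone does not yield termwise equality. You need that the Theorem~\ref{thm:maingell} formula holds on every interval $[\hat 0_X,y']$ (it does) and then to argue by induction on $\dim E'$ that $\Delta\ell_\sigma(\hat 0_X,E') = g(E',F)$ for all $F \subset E' \subset Q$, since both recursions express the $E' = $ top term through the strictly smaller ones. The paper sidesteps the explicit induction by working in the incidence algebra: it defines $\bar g \in \II(\Gamma)$ by $\bar g(G,E) = g(E/G,(G\vee F)/G)$ for $G \in X$, $E \in Y$ and zero otherwise, checks $\bar g \cdot g_\Gamma = g_\Gamma|_{X/Y}$ by applying \eqref{eq:BMdefine} on each interval, and reads off $\bar g = g_\Gamma|_{X/Y}\cdot g_\Gamma^{-1} = \Delta\ell_\sigma$ from Remark~\ref{rem:altdeltaell}. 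With the two corrections above your argument becomes a pointwise unpacking of that computation.
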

	
		Braden and MacPherson proved that the coefficients of $g(Q,F)$ are nonnegative integers
	(see \cite{BMIntersectionHomologyKalai}*{Theorem~4} for the case when $Q$ is rational; the proof was later extended to the general case in \cite{BBFKCombinatorialIntersectionCohomology,BLIntersectionCohomologyNonrational}).  
	By the above corollary, the nonnegativity of the coefficients of $g(Q,F)$ is equivalent to the local $h$-polynomial of the corresponding strong formal subdivision $\sigma$  having nonnegative unimodal coefficients. The latter is known and due to Karu \cite{KaruRelativeHardLefschetz}.
	
	 An equivariant generalization of  Kazhdan-Lusztig-Stanley theory was developed in \cite{ProudfootEquivariantKLS}, generalizing work in 
	\cite{GPYEquivariantKLS} (see Section~\ref{ss:backgroundequivariantKLS} for an overview).
	We further develop this theory in Section~\ref{ss:classfunctions} and 
	Section~\ref{ss:equivariantKLSlowerEulerian}, and prove an equivariant version of Theorem~\ref{thm:intromainKLS} in Theorem~\ref{thm:equivariantmaingell}.  We will not give the full statement in the introduction, but instead present part of the statement in an important example in  Theorem~\ref{thm:introequivariantKLS} below.

	Let $W$ be a finite group acting on a finite poset $\Gamma$. 
	For any $z \in \Gamma$, let $W_z = \{ w \in W : w \cdot z = z \}$ be the stabilizer of $z$. For $z \le z' \in \Gamma$, let $W_{z,z'} = W_z \cap W_{z'}$. 
	There is a notion of an   \emph{equivariant incidence algebra} $I^W(\Gamma)$ (see Definition~\ref{def:equivariantincidencealgebra}). If $p \in I^W(\Gamma)$ and $[z,z']$ is an interval in $\Gamma$, then $p(z,z') \in R(W_{z,z'})[t]$, where $R(W_{z,z'})$ is the \emph{complex representation ring} of $W_{z,z'}$ (see Section~\ref{ss:representationtheory}). There is also the notion of an 
	\emph{equivariant $\Gamma$-kernel}, and 
	corresponding invariants in $I^W(\Gamma)$: 
	the \emph{right and left equivariant Kazhdan-Lusztig-Stanley functions} $f_\Gamma$ and $g_\Gamma$ respectively,  
	and the \emph{equivariant $Z$-function} $Z_\Gamma$.
	Assume further than $\Gamma$ is a lower Eulerian poset. In general, there is not a natural choice of an equivariant $\Gamma$-kernel, as in the non-equivariant case. However, we show that there is a natural choice in some important special cases. 
	%The example below includes the case where $W$ acts affinely on a polytope, preserving a polyhedral subdivision, for example, the trivial subdivision (see Example~\ref{ex:polytopeequiv}). 
	    If $w \in W$ acts on a set $S$, we let $S^w$ be the fixed locus.

	\begin{example}(see Example~\ref{ex:fanequiv}, Example~\ref{ex:fanequivv2})\label{ex:introequivariantfan}
		Suppose that $\Sigma$ is a %full-dimensional 
		fan in a real vector space $V$, and $\psi: W \to \GL(V)$ is a real representation of a finite group $W$ such that $\Sigma$ is $W$-invariant. 
		%Assume that the support $|\Sigma|$ of $\Sigma$ is convex. 
		Then we have a corresponding action of $W$ on $\face(\Sigma)$, and there is a natural choice of equivariant $\face(\Sigma)$-kernel $\kappa_{\face(\Sigma)}$. 
		Explicitly, for any $z \in \face(\Sigma)$, let $V_z$ be the linear span of the corresponding cone in $\Sigma$. For $z \le z' \in \face(\Sigma)$, we have an induced representation $\psi_{z,z'}: W_{z,z'} \to \GL(V_{z'}/V_z)$. Then $\kappa_{\face(\Sigma)}(z,z') = 
			\det(tI - \psi_{z,z'})$, where the coefficient of $t^{\dim (V_{z'}/V_z) - i}$ in $\det(tI - \psi_{z,z'})$ is $(-1)^i$ times the class of the $i$th exterior power of the representation $\psi_{z,z'}$ in $R(W_{z,z'})$ (see \eqref{eq:detI-rhot}).
	
		Fix an element $w \in W$. 
		%If $w$ acts on a set $S$, let $S^w$ be the fixed locus.  
		Then the $w$-fixed subposet $\face(\Sigma)^w$ is the face poset of the fan $\{ C^w : C \in \Sigma, w \cdot C = C \}$ (see Example~\ref{ex:fanactionisEulerian}), and hence is a $CW$-poset, and a lower Eulerian poset. 
		Recall that there is a correspondence between complex representations of a finite group and class functions of the group (see Section~\ref{ss:representationtheory}). 
		 In Lemma~\ref{lem:reducenonequivariant}, we show that by evaluating corresponding class functions at $w$ in $W$, we have a natural choice of $\face(\Sigma)^w$-kernel $\ev_w(\kappa_{\face(\Sigma)})$, 
		 which does not equal the Eulerian kernel when  $\face(\Sigma)^w \neq \face(\Sigma)$ (c.f. Remark~\ref{rem:rankwarning}). 
%		 which agrees with the Eulerian kernel when $w$ is the identity element, but, in general, is different. 
Explicitly, for $z \le z'$ in $\face(\Sigma)^w$, $\ev_w(\kappa_{\face(\Sigma)})(z,z') = \det(tI - \psi_{z,z'}(w))$ is the characteristic polynomial of $\psi_{z,z'}(w) \in \GL(V_{z'}/V_z)$, while the value of the Eulerian kernel at $[z,z']$ is $(t - 1)^{\dim (V_{z'}/V_z)^w}$ (see Example~\ref{ex:t-1case} and \eqref{eq:rhoGammaw}). 
	\end{example}
	
		\begin{example}\label{ex:introequivariantpolytope}(c.f. Example~\ref{ex:polytopeequiv})
		Let $\cS$ be a polyhedral subdivision of a full-dimensional polytope $P$ in a real vector space $V$. For example, the trivial subdivision of $P$ consists of the faces of $P$. Recall from Example~\ref{ex:intropolytope} that we may consider the fan 
		$\Sigma_\cS = \{ C(G \times \{1 \}) : G \in \cS \}$. 
		Suppose that $\psi: W \to \Aff(V)$ is an affine representation of a finite group $W$. That is, there is a  representation
		$\widetilde{\psi}: W \to \GL(V \oplus \R)$ such that $W$ acts trivially on the last coordinate of $V \oplus \R$, and $\psi$ is the induced action on $V \times \{ 1 \}$. Assume that $\cS$ is $W$-invariant, or, equivalently, that 	$\Sigma_\cS$ is $W$-invariant. Then there is a natural choice of equivariant $\face(\cS)$-kernel by  Example~\ref{ex:introequivariantfan} above. 
		
%		
%		% and the pointed cone $C(P \times \{1 \})$. 
%		The corresponding strong formal subdivision is the function $\sigma: \face(\cS) \to \face(P)$ between face posets,  where $\sigma(F)$ is the smallest face of $P$ containing $F$, for all $F \in \cS$. 
%		We have $\rho_{\face(\cS)}(G) = \dim G + 1$ for all $G \in \cS$, and 
%		$\rho_{\face(P)}(G) = \dim G + 1$ for all faces $G$ of $P$. 
	\end{example}
	
	In fact, the above two examples are special cases of the following equivariant version of Example~\ref{ex:introproper}. See Example~\ref{ex:propermapequiv} for more details.
	
	\begin{example}\label{ex:introproperequivariant}
		As in Example~\ref{ex:introproper}, consider a linear map $\phi: V' \to V$ inducing a proper, surjective morphism between %full-dimensional 
		fans $\Sigma'$ and $\Sigma$ in real vector spaces $V'$ and $V$ respectively, with induced strong formal subdivision  
		$\sigma: X = \face(\Sigma') \to Y = \face(\Sigma)$. Let  $(\Gamma,\rho_\Gamma,q)$  be the corresponding triple under Theorem~\ref{thm:introbijection}. Let  $\psi': W \to \GL(V')$ and $\psi: W \to \GL(V)$ be real representations of $W$ such that we have an induced action of $W$ on 
		$\Sigma'$ and $\Sigma$, and 
		$\phi: V' \to V$ is $W$-equivariant. Then we have a corresponding action of $W$ on $\Gamma$ that fixes $q$, and   there is a natural choice of equivariant $\Gamma$-kernel $\kappa_\Gamma$ (see \eqref{eq:kappagammaequiv}). 
		See Example~\ref{ex:polytopeequiv} for the special case when $\Gamma = \face(Q)$ for a corresponding polytope $Q$ as in 	Example~\ref{ex:introprojectivetocone}.
		% and a comparison with 
		% Example~\ref{ex:introequivariantpolytope} for the trivial subdivision of $Q$.
%		
%		In the special case of Example~\ref{ex:introprojectivetocone} when $\Gamma = \face(Q)$ for a corresponding polytope $Q$, $\kappa_\Gamma$ agrees with the equivariant $\Gamma$-kernel described in Example~\ref{ex:introequivariantpolytope} for the trivial subdivision of $Q$. 
		In general, $\kappa_\Gamma$ restricts to the equivariant kernels for $X = \face(\Sigma')$ and $Y = \face(\Sigma)$ described in Example~\ref{ex:introequivariantfan}.
		If $\cS$ is a polyhedral subdivision of a full-dimensional polytope $P$ in a real vector space $V$, and $W$ acts affinely on $V$ preserving $\cS$, then we obtain an equivariant version of Example~\ref{ex:intropolytope}. The corresponding equivariant $\Gamma$-kernel restricts to the equivariant kernels for $X = \face(\cS)$ and $Y = \face(P)$ described in Example~\ref{ex:introequivariantpolytope}. 
	\end{example}

%More generally, as above, consider a linear map $\phi: V' \to V$ inducing a proper, surjective morphism between full-dimensional fans $\Sigma'$ and $\Sigma$ in real vector spaces $V'$ and $V$ respectively, with induced strong formal subdivision  
%$\sigma: \face(\Sigma) \to \face(\Sigma')$. 
%Let  $(\Gamma,\rho_\Gamma,q)$  be the corresponding triple under the canonical bijection of \cite{StapledonLWPosets}*{Theorem~1.1}. 
% Let $\psi: W \to \GL(V)$ and $\psi': W \to \GL(V')$ be real representations of $W$ such that we have an induced action of $W$ on 
%$\Sigma'$ and $\Sigma$, and 
%$\phi: V' \to V$ is $W$-equivariant.
%Then \eqref{eq:kappagammaequiv} describes a natural choice of equivariant $\Gamma$-kernel. See Example~\ref{ex:polytopeequiv} for the special case when $\Gamma$ is the face lattice of a polytope $Q$ with an affine action of $W$, and $q$ corresponds to a nonempty $W$-invariant face $F$. 

Assume the setup of Example~\ref{ex:introproperequivariant}. 
%With this setup, 	
For any   $x \in X$ and $y \in Y$ such that $\sigma(x) \le y$, we introduce the \emph{equivariant local $h$-polynomial} $\ell_{\sigma}(x,y)$ in $R(W_{x,y})[t]$ (see Definition~\ref{def:equivarianthellpolynomial}). As in the non-equivariant case, the equivariant local $h$-polynomial is a symmetric polynomial (see Corollary~\ref{cor:equivariantsymmetry}).
% We describe its behavior under compositions of strong formal subdivisions in Corollary~\ref{cor:equivariantcomposition}, generalizing a similar result in the non-equivariant case \cite{KatzStapledon16}*{Corollary~4.7}. 

In the body of the paper, the statement of the theorem below
is written concisely using the language of equivariant incidence algebras.
 Below, we expand out the definitions for the benefit of the reader. 
  For $z \le z'' \le z \in \Gamma$, let $W_{z,z'',z'} = W_z \cap W_{z''} \cap W_{z'}$. Below, $\Res$ and $\Ind$ denote restriction and  induction respectively, both applied coefficientwise (see Section~\ref{ss:representationtheory}). 
 
 \begin{theorem}(see Theorem~\ref{thm:equivariantmaingell}, Remark~\ref{rem:altdeltaellequiv})\label{thm:introequivariantKLS}
 	Assume the setup of Example~\ref{ex:introproperequivariant}. For any $x \in X$ and $y \in Y$ such that $\sigma(x) \le y$, 
\[
g_\Gamma(x,y) = \sum_{ \sigma(x) \le y' \le y} \frac{|W_{x,y',y}|}{|W_{x,y}|}
\Ind^{W_{x,y}}_{W_{x,y',y}} \left( \Res^{W_{x,y'}}_{W_{x,y',y}} \Delta \ell_\sigma(x,y') \cdot  \Res^{W_{y',y}}_{W_{x,y',y}} g_Y(y',y)  \right). 
\]
There are also formulas for $\Delta \ell_\sigma(x,y)$, $f_\Gamma(x,y)$, and $Z_\Gamma(x,y)$ that are equivariant generalizations of the corresponding formulas in Theorem~\ref{thm:intromainKLS}. 
 \end{theorem}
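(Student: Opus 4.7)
The strategy is to lift the proof of Theorem~\ref{thm:intromainKLS} to the equivariant setting by running all of the manipulations inside the equivariant incidence algebra $I^W(\Gamma)$. Because the element $q$ is $W$-fixed in the setup of Example~\ref{ex:introproperequivariant}, the set-theoretic splitting $\Gamma = X \sqcup Y$ is $W$-equivariant, and so the equivariant KLS invariants $f_\Gamma, g_\Gamma, Z_\Gamma \in I^W(\Gamma)$ restrict on intervals contained entirely in $X$ or entirely in $Y$ to $f_X, g_X, Z_X$ and $f_Y, g_Y, Z_Y$ respectively. All non-trivial content therefore lies in intervals $[x,y]$ with $x \in X$ and $y \in Y$, and these are exactly the intervals that the formulas address.

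The core step is to establish the equivariant analogue of the factorization identity in $I(\Gamma)$ that drives the proof of Theorem~\ref{thm:intromainKLS}: one shows that the equivariant $\Gamma$-kernel $\kappa_\Gamma$ from \eqref{eq:kappagammaequiv} decomposes, on each mixed interval $[x,y]$, as a convolution in $I^W(\Gamma)$ of $\kappa_X$, the equivariant local $h$-polynomial $\Delta \ell_\sigma$, and $\kappa_Y$. On the representation-theoretic side, this rests on the short exact sequence of $W_{x,y'}$-modules $0 \to V_{\sigma(x)}/V_{x} \to V_{y'}/V_x \to V_{y'}/V_{\sigma(x)} \to 0$ for each intermediate $y' \in Y$, which makes the characteristic polynomials defining $\kappa_\Gamma$ factor equivariantly; on the combinatorial side, the definition of the equivariant local $h$-polynomial in Definition~\ref{def:equivarianthellpolynomial} is tailored so that $\Delta \ell_\sigma$ is precisely the middle factor accounting for the discrepancy between $\kappa_\Gamma$ and $\kappa_X \cdot \kappa_Y$.

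With this factorization in hand, substituting into the defining equations of $g_\Gamma$, $f_\Gamma$, and $Z_\Gamma$ in $I^W(\Gamma)$ and splitting the convolution sum over intermediate elements in $Y$ (respectively in $X$) yields the claimed formulas. The stated coefficient $|W_{x,y',y}|/|W_{x,y}|$ together with the operator $\Ind^{W_{x,y}}_{W_{x,y',y}}\bigl(\Res \cdot \Res\bigr)$ is exactly the multiplication rule in $I^W(\Gamma)$ developed in Section~\ref{ss:classfunctions}, so the formulas emerge by direct evaluation. The expression for $\Delta \ell_\sigma(x,y)$ in terms of $g_\Gamma(x,y')$ and $f_Y(y',y)$ is obtained by inverting the convolution with $g_Y$, which is a unit in $I^W(Y)$ because $g_Y(y,y) = 1$, and the formula for $Z_\Gamma$ follows from combining the $f_\Gamma$ and $g_\Gamma$ formulas through the equivariant $Z$-function identity. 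The main obstacle is not conceptual but organizational: keeping track of the restriction/induction twists and the index factors at every triple of nested stabilizers, and verifying that the symmetrization operator $\Delta$ interacts with equivariant convolution in the same way it does in the non-equivariant case, so that the whole argument really is a line-by-line equivariant translation of the proof of Theorem~\ref{thm:intromainKLS}.
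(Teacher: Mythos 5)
Your proposal takes a genuinely different route from the paper. You propose to run the manipulations of the non-equivariant proof directly inside $I^W(\Gamma)$, re-verifying each step (restriction lemma, kernel factorization, uniqueness of $\Delta\ell_\sigma$ in $\II_{1/2}^W$) in the equivariant algebra. The paper instead develops the \emph{evaluation} machinery of Section~\ref{ss:classfunctions}: Lemma~\ref{lem:Ztalgebrahom} shows that for each $w \in W$, the map $\ev_w : I^W(\Gamma) \to I(\Gamma^w)_\C$ given by evaluating class functions at $w$ is a $\Z[t]$-algebra homomorphism, and Lemma~\ref{lem:reducenonequivariant} shows it sends the equivariant kernel and KLS invariants to their non-equivariant counterparts on the fixed poset $\Gamma^w$. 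Since an element of $I^W(\Gamma)$ is determined by the collection $\{\ev_w : w \in W\}$, the equivariant identity in Theorem~\ref{thm:equivariantmaingell} follows by applying $\ev_w$ to both sides and invoking the non-equivariant Theorem~\ref{thm:maingell} for the strong formal subdivision $\sigma^w : X^w \to Y^w$. This reduction avoids re-verifying the equivariant analogues of every intermediate lemma, which is precisely the bookkeeping burden you flag at the end. Your route can in principle be made to work — the paper does establish the equivariant ring-involution structure and Lemma~\ref{lem:equivariantmultiplicativealternating} — but it is more laborious and, notably, the paper's own proof of Lemma~\ref{lem:equivariantmultiplicativealternating} itself falls back on the $\ev_w$ reduction.

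One substantive inaccuracy in your description of the ``core step'': the equivariant kernel $\kappa_\Gamma$ does \emph{not} decompose on a mixed interval $[x,y]$ as a convolution of $\kappa_X$, $\Delta\ell_\sigma$, and $\kappa_Y$. The relevant kernel factorization (the non-equivariant version is Lemma~\ref{lem:alternatingmultiplicative}) is $\kappa_\Gamma|_{X/Y} = \kappa_\Gamma|_{(X/Y)^\circ}\cdot\kappa_\Gamma = -\,\kappa_\Gamma\cdot(\kappa_\Gamma|_{(X/Y)^\circ})^{\rev}$, which involves only the kernel itself, restricted to the short mixed intervals $[x,\sigma(x)]$. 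The local $h$-polynomial $\Delta\ell_\sigma$ does not appear as a factor of $\kappa_\Gamma$; rather, after substituting the kernel factorization into the defining relation $g_\Gamma^{\rev}=g_\Gamma\cdot\kappa_\Gamma$ and multiplying by $(g_\Gamma^{-1})^{\rev}$, one identifies $g_\Gamma|_{X/Y}\cdot g_\Gamma^{-1}$ with $\Delta\ell_\sigma$ via the uniqueness of the solution of $(\Delta p)^{\rev}-\Delta p = (t-1)\cdot p$ in $\II_{1/2}$. The representation-theoretic intuition you offer (the short exact sequence of $W_{x,y'}$-modules making the characteristic polynomials multiply) is the right reason $\kappa_\Gamma$ is multiplicative, but the resulting factorization is of the kernel itself, not a decomposition through $\Delta\ell_\sigma$.
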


%As mentioned above, we have an equivariant generalization Theorem~\ref{thm:equivariantmaingell} of Theorem~\ref{thm:intromainKLS}. The statement is written concisely using the language of equivariant incidence algebras. To give the reader a flavor for the result, if we expand this out, then the equivariant analogue of the second equation in Theorem~\ref{thm:intromainKLS} is as follows. For $z \le z'' \le z \in \Gamma$, let $W_{z,z'',z'} = W_z \cap W_{z''} \cap W_{z'}$. 
%For any $x \in X$ and $y \in Y$ such that $\sigma(x) \le y$, 
%\[
%g_\Gamma(x,y) = \sum_{ \sigma(x) \le y' \le y} \frac{|W_{x,y',y}|}{|W_{x,y}|}
%\Ind^{W_{x,y}}_{W_{x,y',y}} \left( \Res^{W_{x,y'}}_{W_{x,y',y}} \Delta \ell_\sigma(x,y') \cdot  \Res^{W_{y',y}}_{W_{x,y',y}} g_Y(y',y)  \right). 
%\]
%

Finally, we consider an application to equivariant Ehrhart theory.
Ehrhart theory concerns the enumeration of lattice points in lattice polytopes. 
Equivariant Ehrhart theory is a generalization of Ehrhart theory that was introduced in \cite{StapledonEquivariant} and \cite{StapledonRepresentations11}, and has subsequently been developed in \cite{StapledonCalabi12,ASV20,ASV21,EKS22,CHK23,StapledonEquivariantCommutativeTriangulations,DDEquivariantHilbertTranslative}. 
Let $N$ be a lattice and let $N_\R = N \otimes_\Z \R$. 
Consider an affine representation $\psi: W \to \Aff(N)$ of a finite group $W$ that preserves a full-dimensional  lattice polytope $P \subset N_\R$. 
The corresponding \emph{equivariant $h^*$-series} $h^*(P,\psi;t)$ is a power series in $R(W)[[t]]$ that precisely encodes the Ehrhart theory of the rational polytopes $\{ P^w : w \in W \}$ (see \eqref{eq:equivhstar}). 
The corresponding \emph{equivariant local $h^*$-series} in $R(W)[[t]]$ first appeared in \cite{StapledonCalabi12}*{Definition~3.3} (see \eqref{eq:equivlocalhstar}). A great deal of interest concerns criterion that guarantee that  $h^*(P,\psi;t)$ and  $\ell^*(P,\psi;t)$ are polynomials, i.e., elements of $R(W)[t]$ (see, for example,  \cite{StapledonEquivariantCommutativeTriangulations}*{Conjecture~1.2,Theorem~1.4}). For example, when $W$ acts trivially, $h^*(P,\psi;t)$ and  $\ell^*(P,\psi;t)$ are the usual $h^*$-polynomial and local $h^*$-polynomial respectively associated to $P$ \cite[Example~7.2, Corollary~7.7]{Stanley92}.

Let $\cS$ be a $W$-invariant lattice polyhedral subdivision of $P$. Recall from Example~\ref{ex:introproperequivariant} that we may consider the corresponding strong formal subdivision $\sigma: X = \face(\cS) \to Y = \face(P)$ and triple $(\Gamma,\rho_\Gamma,q)$ under Theorem~\ref{thm:introbijection}, and a natural choice of equivariant $\Gamma$-kernel. 
Recall that
for any   $x \in X$ and $y \in Y$ such that $\sigma(x) \le y$, we may consider  the equivariant local $h$-polynomial $\ell_{\sigma}(x,y)$ in $R(W_{x,y})[t]$. We may also consider the \emph{equivariant $h$-polynomial} $h_{\sigma}(x,y)$ in $R(W_{x,y})[t]$ (see Definition~\ref{def:equivarianthellpolynomial}). We call $h_{\sigma}(\hat{0}_X,\hat{1}_Y)$ and $\ell_{\sigma}(\hat{0}_X,\hat{1}_Y)$ the equivariant $h$-polynomial and equivariant local $h$-polynomial respectively associated to $\sigma$. 
For any $z \in \face(\cS)$, let $F_z$ denote the corresponding face in $\cS$, and let $\psi_z$ denote the affine representation of $W_z$ acting on the affine span of $F_z$. 
%
%For any $z \in \Gamma$, let $F_z$ denote the corresponding face in $\cS$, if $z \in X$, or the corresponding face of $P$, if $z \in Y$. Let $\psi_z$ denote the affine representation of $W_z$ acting on the affine span of $F_z$. 
The following is a special case of Proposition~\ref{prop:equivariantEhrhartapp} (see \eqref{eq:hstarintermsofh} and \eqref{eq:localhstarintermsofh}). It reduces to known formulas when $W$ acts trivially (see  $(2)$ in \cite{KatzStapledon16}*{Lemma~7.12}). 

\begin{proposition}\label{prop:introequivEhrhart}
	Let $N$ be a lattice and
	consider an affine representation $\psi: W \to \Aff(N)$ of a finite group $W$ that preserves a full-dimensional  lattice polytope $P \subset N_\R$. Let $\cS$ be a $W$-invariant lattice polyhedral subdivision of $P$.
	Assume that for all $z \in \face(\cS)$, $h^*(F_z,\psi_z;t) \in R(W_z)[t]$. 
	Then $h^*(P,\psi;t), 	\ell^*(P,\psi;t) \in  R(W)[t]$. Moreover, 
	\begin{equation*}%\label{eq:hstarintermsofh}
		h^*(P,\psi;t)  = %\sum_{\substack{z \in \face(\cS) \\ \sigma(z) = \hat{1}_{\face(P)}
				\sum_{z \in \face(\cS)} \frac{|W_z|}{|W|} \Ind^{W}_{W_{z}} \left( \ell^*(F_{z},\psi_{z};t) h_\sigma(z,\hat{1}_\Gamma) \right),
			\end{equation*}
			\begin{equation*}%\label{eq:localhstarintermsofh}
				\ell^*(P,\psi;t)  = %\sum_{\substack{z \in \face(\cS) \\ \sigma(z) = \hat{1}_{\face(P)}
						\sum_{z \in \face(\cS)} \frac{|W_z|}{|W|} \Ind^{W}_{W_{z}} \left( \ell^*(F_{z},\psi_{z};t) \ell_\sigma(z,\hat{1}_\Gamma) \right).
					\end{equation*}
\end{proposition}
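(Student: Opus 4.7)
The plan is to verify both identities by evaluating class functions at each $w \in W$, thereby reducing to the non-equivariant decomposition formula \cite{KatzStapledon16}*{Lemma~7.12(2)} applied to the $w$-fixed data. Since a virtual representation of a finite group is determined by its character, and since the equivariant Ehrhart series is set up so that $\ev_w h^*(P,\psi;t) = h^*(P^w;t)$ and $\ev_w \ell^*(P,\psi;t) = \ell^*(P^w;t)$ for every $w \in W$ (see \cite{StapledonEquivariant,StapledonCalabi12}), it suffices to show that both sides of each claimed identity have the same image under $\ev_w$ for all $w$.

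Next, I would combine two ingredients. First, Lemma~\ref{lem:reducenonequivariant} ensures that $\ev_w$ sends the equivariant $\Gamma$-kernel of Example~\ref{ex:introproperequivariant} to the Eulerian kernel of the lower Eulerian poset $\Gamma^w$ that corresponds under Theorem~\ref{thm:introbijection} to the non-equivariant strong formal subdivision $\sigma^w: \face(\cS)^w \to \face(P)^w$ induced by the $w$-invariant lattice polyhedral subdivision $\cS^w$ of $P^w$. This compatibility propagates to the derived invariants: $\ev_w h_\sigma(z,\hat{1}_\Gamma) = h_{\sigma^w}(z,\hat{1}_{\Gamma^w};t)$ and $\ev_w \ell_\sigma(z,\hat{1}_\Gamma) = \ell_{\sigma^w}(z,\hat{1}_{\Gamma^w};t)$ for each $z \in \face(\cS)^w$. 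Second, the Frobenius character formula together with a short double-count of pairs $(z,g) \in \face(\cS) \times W$ such that $gz$ is $w$-fixed yields
\[
\ev_w \left( \sum_{z \in \face(\cS)} \frac{|W_z|}{|W|} \Ind^W_{W_z} V_z \right) = \sum_{z \in \face(\cS)^w} \ev_w V_z
\]
for any $W$-equivariant family $V_z \in R(W_z)[t]$.

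Applying these two steps to $V_z = \ell^*(F_z,\psi_z;t) \, h_\sigma(z,\hat{1}_\Gamma)$ and to $V_z = \ell^*(F_z,\psi_z;t) \, \ell_\sigma(z,\hat{1}_\Gamma)$ rewrites $\ev_w$ of the right-hand sides as
\[
\sum_{z \in \face(\cS)^w} \ell^*(F_z^w;t) \, h_{\sigma^w}(z,\hat{1}_{\Gamma^w};t) \quad \text{and} \quad \sum_{z \in \face(\cS)^w} \ell^*(F_z^w;t) \, \ell_{\sigma^w}(z,\hat{1}_{\Gamma^w};t),
\]
which equal $h^*(P^w;t)$ and $\ell^*(P^w;t)$ respectively by \cite{KatzStapledon16}*{Lemma~7.12(2)} applied to $\sigma^w$. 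Since $\ev_w$ of the left-hand sides coincides with these class functions for every $w$, both identities follow. For polynomiality, the hypothesis forces $\ell^*(F_z,\psi_z;t) \in R(W_z)[t]$ by the equivariant inclusion-exclusion relation between $h^*$ and $\ell^*$ (cf.\ \cite{StapledonCalabi12}*{Definition~3.3}), while $h_\sigma(z,\hat{1}_\Gamma)$ and $\ell_\sigma(z,\hat{1}_\Gamma)$ are polynomials by Definition~\ref{def:equivarianthellpolynomial}; hence the right-hand sides lie in $R(W)[t]$, forcing the left-hand sides to do so as well.

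The main obstacle is verifying the compatibility in the second step: that $\ev_w$ of the equivariant Eulerian kernel on $\Gamma$ really recovers the non-equivariant Eulerian kernel on $\Gamma^w$, so that all KLS-type invariants, and in particular $h_\sigma$ and $\ell_\sigma$, specialize correctly. Once this is in hand via the determinantal description of the kernel in Example~\ref{ex:introequivariantfan} and the identity $\ev_w \det(tI - \psi_{z,z'}) = \det(tI - \psi_{z,z'}(w))$, the remainder of the argument is a routine combination of the Frobenius character formula and the known non-equivariant decomposition.
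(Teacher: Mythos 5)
The proposal has a genuine gap at the very step you flagged as the ``main obstacle'': the claimed compatibilities do \emph{not} hold, and the paper explicitly says so. Evaluating the equivariant $\Gamma$-kernel at $w$ does \emph{not} recover the Eulerian kernel on $\Gamma^w$: $\ev_w(\kappa_\Gamma)(z,z') = \det(tI - \widetilde\psi_{z,z'}(w))$, the characteristic polynomial of $\widetilde\psi_{z,z'}(w)$, whereas the Eulerian kernel on $\Gamma^w$ would be $(t-1)^{\dim(V_{z'}/V_z)^w}$. These agree only when $w$ acts trivially on $V_{z'}/V_z$; see the explicit warning at the end of Example~\ref{ex:introequivariantfan} and Remark~\ref{rem:rankwarning}. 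Moreover, the weak rank function one must use on $\Gamma^w$ is the \emph{restriction} $r_\Gamma^w$, not the natural rank function of $\Gamma^w$, and these differ (Remark~\ref{rem:rankwarning}). So $\ev_w(h_\sigma)$ and $\ev_w(\ell_\sigma)$ are \emph{not} the standard $h$- and local $h$-polynomials of $\sigma^w$ in the sense of \cite{KatzStapledon16}; they are the more general invariants built from a non-Eulerian kernel and a non-natural rank function, which is precisely why the body of the paper develops Section~\ref{sec:lowerEulerian} in that generality.

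For the same reason, the starting identification $\ev_w h^*(P,\psi;t) = h^*(P^w;t)$ and $\ev_w \ell^*(P,\psi;t) = \ell^*(P^w;t)$ is false for $w\neq\id$. By definition \eqref{eq:equivhstar}, $\ev_w(h^*(P,\psi;t)) = \Ehr(P^w;t)\,\ev_w(\det(I - \widetilde\psi t))$, and $\ev_w(\det(I - \widetilde\psi t)) = \prod_i(1-\lambda_i t)$ where the $\lambda_i$ are eigenvalues of $\widetilde\psi(w)$; this differs from $(1-t)^{\dim P^w + 1}$ whenever $w$ acts nontrivially, so $\ev_w(h^*(P,\psi;t)) \neq h^*(P^w;t)$ in general. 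The paper's statement that $\ev_{\id}(h^*(P,\psi;t)) = h^*(P;t)$ is \emph{only} asserted for $w = \id$. Consequently you cannot reduce to \cite{KatzStapledon16}*{Lemma~7.12(2)} applied to $\sigma^w$ with the Eulerian kernel. The paper's actual proof of Proposition~\ref{prop:equivariantEhrhartapp} instead proves the identity \eqref{eq:hstarresult} directly by applying equivariant Ehrhart reciprocity \eqref{eq:reciprocity} at each $w$, manipulating the resulting rational functions via the multiplicativity of $\kappa_\Gamma$, and then passing back through Lemma~\ref{lem:Ztalgebrahom}; the displayed formulas for $h^*(P,\psi;t)$ and $\ell^*(P,\psi;t)$ then fall out by rewriting \eqref{eq:hstarresult} using Definition~\ref{def:equivarianthellpolynomial} and evaluating at the top interval. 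Your Frobenius character identity \eqref{eq:inducedcharacter} is used correctly, but the reduction step it feeds into is not available.
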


In the statement of Proposition~\ref{prop:introequivEhrhart}, it follows from the assumptions that $\ell^*(F_{z},\psi_{z};t)$ is a polynomial. 
If $\cS$ is a $W$-invariant lattice triangulation of $P$, then the assumptions of Proposition~\ref{prop:introequivEhrhart} hold, and we recover the formula of $h^*(P,\psi;t) \in R(W)[t]$ from \cite{StapledonEquivariantCommutativeTriangulations}*{Proposition~4.40} (see Example~\ref{ex:triangulations}). 
If we further assume that $\cS$ is a unimodular triangulation then
 $h^*(P,\psi;t)$ is the equivariant $h$-polynomial of $\sigma$, and 
$\ell^*(P,\psi;t)$ is the equivariant local $h$-polynomial of $\sigma$  (see Example~\ref{ex:triangulations}). The former statement recovers \cite{StapledonEquivariantCommutativeTriangulations}*{Remark~4.41} (c.f. \cite{DDEquivariantEhrhart}*{Theorem~5.2}). Finally, we note that in the non-equivariant case, the coefficients of $h_\sigma(z,\hat{1}_\Gamma)$ and $\ell_\sigma(z,\hat{1}_\Gamma)$ above have interpretations as dimensions of canonically defined vector spaces (see \cite{KaruRelativeHardLefschetz}). In the general case, we expect that the coefficients of $h_\sigma(z,\hat{1}_\Gamma)$ and $\ell_\sigma(z,\hat{1}_\Gamma)$, which are virtual representations of $W$ by construction, should be the classes of representations of $W$ acting on these canoncially defined vector spaces.

 	We briefly outline the contents of the paper. In Section~\ref{sec:background}, we recall background material to be used throughout the paper. Then  Section~\ref{sec:lowerEulerian} and Section~\ref{sec:equivariant} contain the non-equivariant and equivariant results described above respectively. See the first paragraph of each section for a more detailed description of its content.

	\emph{Notation and conventions}: All posets are finite posets. 
	An interval of a poset is a closed interval unless specified otherwise. Given elements $z \le z'$ in a poset $B$, we write 
	$[z,z'] = \{ z'' \in B : z \le z'' \le z' \}$. 
	All vector spaces are finite-dimensional. 	All cones in this paper will be polyhedral cones, i.e., the intersection of finitely many half-spaces in a real vector space. All groups are finite groups, and all group representations are complex representations. 
We write $|S|$ for the cardinality of a finite set $S$. 	Given a polynomial $a \in \Z[t]$, let $a_i$ denote the coefficient of $t^i$ in $a$. 
Given a field $k$ and a $\Z$-module $A$, write $A_k = A \otimes_\Z k$.

\subsection*{Acknowledgements}
We thank Luis Ferroni for some very helpful conversations.

%	
%References: 
%
%new paper \cite{FREulerian}
%
%\begin{enumerate}
%	\item see page 6 for the definition of toric $h$-polynomial
%	
%	\item They call the standard kernel for lower Eulerian posets the \emph{Eulerian kernel}
%	
%	\item main result is Theorem 1.2; let $B$ be an Eulerian poset. Then the $Z$-polynomial of $B$ using the Eulerian kernel equals the toric $h$-polynomial of the poset of intervals of $P$ ordered by reverse inclusion. (think this is our definition of $h$-polynomial if excluded the empty interval, which is the maximal element; theorem of Lindstrom says that this poset is Eulerian)
%	
%	\item Section 5.4 give an example of a Gorenstein* poset whose $Z$-polynomial has negative coefficients; unknown if further assume lattices
%\end{enumerate}

\section{Background}\label{sec:background}

We recall background that will be used throughout the paper. In Section~\ref{ss:backgroundfans},  we recall background on fans and polytopes. 
In Section~\ref{ss:backgroundposet}, we recall background on posets, including details on the bijection in Theorem~\ref{thm:introbijection}. In Section~\ref{ss:KLSbackground}, we recall background on Kazhdan-Lusztig-Stanley theory. 

%In Section~\ref{ss:backgroundposet} we explain Theorem~\ref{thm:introbijection} in more detail. In Section~\ref{ss:KLSbackground} we recall background on 
% on Kazhdan-Lusztig-Stanley theory. We also refer the reader to 
% \cite{StapledonLWPosets}*{Section~2.2, Section~2.3} for background on polytopes and fans respectively. \alan{lazy?? what about proper...} 

	 \subsection{Background on fans and polytopes}\label{ss:backgroundfans}
We recall some basic facts on polytopes and fans. We also refer the reader to 
\cite{StapledonLWPosets}*{Section~2.2, Section~2.3} for more entensive background. % on polytopes and fans respectively. 
All cones in this paper are polyhedral cones, i.e., the intersection of finitely many half-spaces in a real vector space. A cone is \emph{pointed}
if its minimal face is the origin. 
Recall that a  \emph{fan} $\Sigma$ in a real vector space $V$ is a finite collection of pointed cones such that 
\begin{enumerate}
	\item if $C \in \Sigma$ and $F$ is a face of $C$, then $F \in \Sigma$, and,
	\item if $C,C' \in \Sigma$, then $C \cap C'$ is a common face of $C$ and $C'$. 
\end{enumerate}
The \emph{support} $|\Sigma|$ of $\Sigma$ is the union of the cones of $\Sigma$ in $V$.  
%The fan is \emph{full-dimensional} if the linear span of $|\Sigma|$ is $V$. 	
The \emph{face poset} $\face(\Sigma)$  is the poset of cones in $\Sigma$ ordered by inclusion.  Then $\face(\Sigma)$ is lower Eulerian with natural rank function $\rho_{\face(\Sigma)}(C) = \dim C$.

Let $\Sigma'$ be a fan in a real vector space $V'$. Let $\phi: V' \to V$ be a linear map. Then $\phi$ defines a \emph{morphism of fans} 
from $\Sigma'$ to $\Sigma$ 
if for every cone $C'$ in $\Sigma'$ there is a cone $C$ in $\Sigma$ such that $\phi(C') \subset C$. 
In that case, we have an induced function
$\sigma: \face(\Sigma') \to \face(\Sigma)$,
where $\sigma(C')$ is the smallest element of $\Sigma$ containing $\phi(C')$.
A morphism of fans is \emph{proper} if $\phi^{-1}(|\Sigma|) = |\Sigma'|$, and is \emph{surjective} if $\phi$ is surjective.
	For example, if $V = V'$ and the identity map on $V$ induces a proper morphism of fans from $\Sigma'$ to $\Sigma$, then we say that $\Sigma'$ is a \emph{refinement} of $\Sigma$.
		A proper morphism of fans is \emph{projective} if there exists
	$p: |\Sigma'| \to \R$ that is 	piecewise linear  with respect to $\Sigma'$, and  
	%	a 
	%	piecewise linear function $p$ on $\Sigma'$ 
	such that the restriction of $p$ to $\phi^{-1}(C)$ is strictly convex with respect to the restriction $\Sigma'|_{\phi^{-1}(C)}$ for every maximal cone $C$ in $\Sigma$. 
%A proper morphism of fans is \emph{projective} if there exists a 
%piecewise linear function $p$ on $\Sigma'$ such that the restriction of $p$ to $\phi^{-1}(C)$ is strictly convex with respect to the restriction $\Sigma'|_{\phi^{-1}(C)}$ for every maximal cone $C$ in $\Sigma$. 

	A \emph{polyhedral subdivision} $\cS$ of a polytope $P$ is a finite collection of polytopes whose union is $P$, such that if $Q \in \cS$, then any face of $Q$ lies in $\cS$, and if $Q' \in \cS$, then $Q \cap Q'$ is a common face of $Q$ and $Q'$. Elements of $\cS$ are called faces. 
	For example, the trivial subdivision of $P$ consists of the faces of $P$. 
	The \emph{face poset} $\face(\cS)$ of $\cS$ is the poset of elements of $\cS$ ordered by inclusion. Then $\face(\cS)$ is a lower Eulerian poset with natural rank function $\rho_{\face(P)}(F) = \dim F + 1$. If $\cS'$ is a polyhedral subdivision of $P$, then $\cS'$ is a \emph{refinement} of $\cS$ if every face in $\cS'$ is contained in a face of $\cS$. As in Example~\ref{ex:intropolytope}, there is a  corresponding refinement of fans from $\Sigma_{\cS'}$ to  $\Sigma_{\cS}$.  
	The polyhedral subdivision is regular if and only if the corresponding refinement is projective. 
	We have an induced function
	$\sigma: \face(\cS') \to \face(\cS)$,
	where $\sigma(F')$ is the smallest element of $\cS$ containing $F' \in \cS'$.

\subsection{Background on lower Eulerian posets}\label{ss:backgroundposet}
In this section, we recall some basic definitions on posets and  explain Theorem~\ref{thm:introbijection} in more detail. We refer the reader to \cite{StapledonLWPosets} for more details including proofs of the statements.
	Let $B$ be a nonempty finite poset. 
%	An interval of $B$ is a closed interval unless specified otherwise. Given elements $z \le z'$ in $B$, we write 
%	$[z,z'] = \{ z'' \in B : z \le z'' \le z' \}$ and  $[z,z') = \{ z'' \in B : z \le z'' < z' \}$. 
%%	and $(z,z') = \{ z'' \in B : z < z'' < z' \}$. 
%%	We say that $z'$ \emph{covers} $z$ if $z < z'$ and $z \le z'' \le z'$ implies that $z'' \in \{ z,z'\}$.  
%%	If $B$ contains a unique minimal element, we will denote it $\hat{0}_B$, and if $B$ contains a unique maximal element, we will denote it $\hat{1}_B$. 
A \emph{lower order ideal} of  $B$ is a subset $I \subset B$ such that if $z \le z'$ in $B$ and $z' \in I$, then $z \in I$. Similarly, an \emph{upper order ideal} of $B$ is a subset $I \subset B$ such that if $z \le z'$ in $B$ and $z \in I$, then $z' \in I$.
	A function $\rho_B: B \to \Z$ is a \emph{rank function} for $B$ if $\rho_B(z') = \rho_B(z) + 1$ whenever $z'$ covers $z$, i.e., the interval $[z,z']$ consists of two elements.
	Then $B$ is a \emph{lower Eulerian poset} if it admits a rank function 
	$\rho_B$, contains a unique minimal element, denoted $\hat{0}_B$, and for any $z < z'$ in $B$,  $\sum_{z \le z'' \le z'} (-1)^{\rho_B(z'')} = 0$. 
	% the length of the longest maximal chain in $B$. 
	If $B$ also contains a unique maximal element, denoted $\hat{1}_B$, then $B$ is an \emph{Eulerian poset}, and we write $\partial B = B \smallsetminus \{ \hat{1}_B \}$. 
	 For any integer $s$, we define a shifted rank function 
	$\rho_B[s] : B \to \Z$ by $\rho_B[s](z) = \rho_B(z) + s$ for all $z \in B$. Note that if $B$ is lower Eulerian then any two rank functions agree for $B$ up to a shift.  In this case, the \emph{natural rank function} is the unique rank function with $\rho_B(\hat{0}_B) = 0$. 	 The \emph{rank} $\rank(B)$ of $B$ is 
	the maximum of $\{ \rho_B(z) - \rho_B(\hat{0}_B) : z \in B \}$.

	\begin{example}\label{ex:boolean}
		For a nonnegative integer $n$, let $B_n$ be the poset of all (possibly empty) subsets of $[n] := \{ 1,\ldots, n\}$. 
		Then $B_n$ is Eulerian  of rank $n$ and is called the \emph{Boolean algebra} on $n$ elements. 	
		The natural rank function $\rho_{B_n}: B_n \to \Z$ is given by
		%A rank function is defined by 
		%$\rho: B \to \Z$, 
		$\rho_{B_n}(J) = |J|$  for all $J \subset [n]$.
		% For example, $B_0$ is the one element poset, and $B_1 = \{ \hat{0}, \hat{1} \}$. 
	\end{example}
	
	\begin{example}\label{ex:productsprepre}
		Let $B$ and $B'$ be posets. The \emph{direct product} is the poset $B \times B' = \{ (z,z') : z \in B, z' \in B' \}$ with $(z_1,z_1') \le (z_2,z_2')$ in $B \times B'$ if and only if $z_1 \le z_2$ in $B$ and $z_1' \le z_2'$ in $B'$.
		The \emph{pyramid} of $B$ is $\Pyr(B) = B \times B_1$. 
		If $B$ and $B'$ are lower Eulerian with rank functions $\rho_B$ and $\rho_{B'}$ respectively, then $B \times B'$ is lower Eulerian with rank function
		$\rho_{B \times B'}(z,z') = \rho_B(z) + \rho_{B'}(z')$ for all $z \in B$ and $z' \in B'$.  For example, $B_{n + 1} = \Pyr(B_n)$.
		If $\sigma: X \to Y$ and $\sigma': X' \to Y'$ are functions between posets, we define $\sigma \times \sigma': X \times X' \to Y \times Y'$ by $(\sigma \times \sigma')(x,x') = (\sigma(x),\sigma(x'))$ for all $(x,x') \in X \times X'$. 
	\end{example}
	
	Let $\sigma: X \to Y$ be a function between lower Eulerian posets $X$ and $Y$ with rank functions $\rho_X$ and $\rho_Y$ respectively.  Then  $\sigma$ is a \emph{strong formal subdivision} \cite[Definition~3.17]{KatzStapledon16} if it satisfies the following conditions
	\begin{enumerate}
		\item 	 $\sigma$ is \emph{order-preserving} in the sense that $x \le x'$ in $X$ implies that $\sigma(x) \le \sigma(x')$ in $Y$.
		\item 	$\sigma$  is \emph{rank-increasing} in the sense that $\rho_X(x) \le \rho_Y(\sigma(x))$ for all $x$ in $X$.
		\item $\sigma$ is \emph{strongly surjective} in the sense that  $\sigma$ is surjective and for all $x \in X$ and $y \in Y$ with $\sigma(x) \leq y$, there exists $x' \in X$ such that $x \le x'$, $\rho_X(x') = \rho_Y(y)$, and $\sigma(x') = y$.
		\item For all $x \in X$ and $y \in Y$ such that $\sigma(x) \le y$, we have
		\begin{equation}\label{eq:strongsubdivisionequality}
			\sum_{ \substack{x \le x' \in X \\ \sigma(x') = y} } (-1)^{\rho_Y(y) - \rho_X(x')} = 1.
		\end{equation}
	\end{enumerate}
	Assume that $\sigma$ is a strong formal subdivision.
	The corresponding \emph{non-Hausdorff mapping cylinder} $\Cyl(\sigma)$ \cite{BMSimpleHomotopy}*{Definition~3.6} is the poset with elements given by the disjoint union %$X \sqcup Y$ 
	of $X$ and $Y$, 
	with $z \le z'$ in $\Cyl(\sigma)$ if and only if one of the following conditions hold:
	\begin{enumerate}
		\item $z,z' \in X$ and $z \le z'$ in $X$,
		\item $z,z' \in Y$ and $z \le z'$ in $Y$,
		\item $z \in X$, $z' \in Y$, and $\sigma(z) \le z'$ in $Y$.
	\end{enumerate}
	Define $\rho_{\Cyl(\sigma)}: \Cyl(\sigma) \to \Z$ by 
	\begin{equation}\label{eq:rhoCyl}
		\rho_{\Cyl(\sigma)}(z) =  \begin{cases}
			\rho_X(z) &\textrm{ if } z \in X, \\
			\rho_Y(z) + 1 &\textrm{ if } z \in Y. 
		\end{cases}
	\end{equation}
	Then %Theorem~\ref{thm:introbijection} says that 
	$\Cyl(\sigma)$ is a lower Eulerian poset with rank function $\rho_{\Cyl(\sigma)}$, $\hat{0}_Y \neq \hat{0}_{\Cyl(\sigma)}$ is join-admissible, and $\sigma$ corresponds to the
	triple $(\Cyl(\sigma),\rho_{\Cyl(\sigma)},\hat{0}_Y)$ under the bijection of Theorem~\ref{thm:introbijection}. 
	
	Conversely, consider a triple $(\Gamma,\rho_\Gamma,q)$,
	 where $\Gamma$ is a lower Eulerian poset
	with 
	rank function $\rho_\Gamma$, and $q$ is a join-admissible element of $\Gamma$ with $q \neq \hat{0}_\Gamma$. 	Let $Y = \{ z \in \Gamma : q \le z \}$ and $X = \Gamma \smallsetminus Y$. Define $\rho_X$ and $\rho_Y$ to be the restrictions of  $\rho_\Gamma$ to $X$ and $Y$ shifted by $0$ and $-1$ respectively. Define $\sigma: X \to Y$ by 
	  $\sigma(x) = x \vee q$ for all $x \in X$. 
	  	Then %Theorem~\ref{thm:introbijection} says that 
	  	$X$ and $Y$ are lower Eulerian with rank functions $\rho_X$ and $\rho_Y$ respectively, and $\sigma$ is the strong formal subdivision corresponding to $(\Gamma,\rho_\Gamma,q)$ under the bijection of  Theorem~\ref{thm:introbijection}.

%t^{r_\Gamma(x,y) - 1}p(x,y;t^{-1})$, i.e., $p(x,y)_i = p(x,y)_{r_\Gamma(x,y) - 1 - i}$ for all $i$. Define an element $\Delta p \in \II_{1/2}(\Gamma)$ by 
%$(\Delta p)_X = (\Delta p)_Y = 0$, and 
%$$(\Delta p)(x,y) = p(x,y)_0 + \sum_{i = 1}^{\lfloor \frac{r_\Gamma(x,y) - 1}{2} \rfloor} (p(x,y)_i - p(x,y)_{i - 1})t^i,$$
%for all $x \in X$ and $y \in Y$  such that $\sigma(x) \le y$.
%Background on 

\subsection{Background on Kazhdan-Lusztig-Stanley theory}\label{ss:KLSbackground}

We recall some background on Kazhdan-Lusztig-Stanley (KLS) theory. We refer the reader to 
 \cite{ProudfootAGofKLSpolynomials} for more details. 
Let $B$ be a poset. Let $\Int(B) = \{ [z,z']  : z  \le z' \in B \}$ denote the set of closed intervals of $B$. The \emph{incidence algebra} $I(B)$ of $B$ is the set of functions from $\Int(B)$ to $\Z[t]$. 
Given a function 
$p: \Int(B) \to \Z[t]$ and an interval $[z,z'] \in \Int(B)$, we often write
$p(z,z') = p(z,z';t)$ to denote $p([z,z']) \in \Z[t]$.
The incidence algebra has the structure of 
%a ring. 
an associative $\Z[t]$-algebra.
Given elements $p,p' \in I(B)$, addition is defined by 
$(p + p')(z,z') = p(z,z') + p'(z,z')$. Multiplication is given by convolution: 
\[
(p \cdot p')(z,z')= \sum_{z \le z'' \le z'} p(z,z'') p'(z'',z').
\]
The identity element $\delta_B \in I(B)$ is defined by
\[
\delta_B(z,z') = \begin{cases}
	1 &\textrm{if } z = z', \\
	0 &\textrm{otherwise. }
\end{cases}
\]
%By associativity, if $p \in I(P)$ is invertible, then the left and right inverses are unique and they coincide.
The $\Z[t]$-module structure is defined as follows:
given $a \in \Z[t]$ and $p \in I(B)$, $(a \cdot p)(z,z') = a p(z,z')$ for all $z \le z' \in B$. 

\begin{example}\label{ex:productspre}
	Let $B$ and $B'$ be posets. 
%	The \emph{direct product} is the poset $B \times B' = \{ (z,z') : z \in B, z' \in B' \}$ with $(z_1,z_1') \le (z_2,z_2')$ in $B \times B'$ if and only if $z_1 \le z_2$ in $B$ and $z_1' \le z_2'$ in $B'$.
%	The \emph{pyramid} of $B$ is $\Pyr(B) = B \times B_1$. 
	Given $p \in I(B)$ and $p' \in I(B')$, define $p \times p' \in I(B \times B')$ by  $(p \times p')((z_1,z_1'),(z_2,z_2')) = p(z_1,z_2) p(z_1',z_2')$ for all intervals $[(z_1,z_1'),(z_2,z_2')]$ in $B \times B'$. 
	Given $p_1,p_2 \in I(B)$ and $p_1',p_2' \in I(B')$, we have
	$(p_1 \times p_1') \cdot (p_2 \times p_2') = (p_1 \cdot p_2) \times (p_1' \cdot p_2') \in I(B \times B')$. 
	
\end{example}

An element $r_B \in I(B)$ is a \emph{weak rank function} in the sense of Brenti \cite{BrentiTwistedIncidence}*{Section~2} if
\begin{enumerate}
	\item $r_B(z,z') \in \Z_{\ge 0} \subset \Z[t]$ for all $z \le z' \in B$,
	\item $r_B(z,z') > 0$ for all $z < z' \in B$,
	\item $r_B(z,z') = r_B(z,z'') + r_B(z'',z')$ for all $z \le z'' \le z' \in B$. 
\end{enumerate}
If $B'$ is a subposet of $B$, then the restriction of a weak rank function $r_B \in I(B)$ to $B'$ is the weak rank function $r_{B'} \in I(B')$ defined by $r_{B'}(z,z') = r_B(z,z')$ for $z \le z' \in B'$. 

For example, suppose that $B$ is a lower Eulerian poset with rank function $\rho_B$. 
With a slight abuse of notation, we will consider the associated weak rank function $r_B = \rho_B$ defined by 
 $\rho_B(z,z') = \rho_B(z') - \rho_B(z)$ for all $z \le z'  \in B$. 
 Since any two rank functions for $B$ agree up to a shift, $r_B$ is  independent of the choice of rank function $\rho_B$.
 We will call $r_B = \rho_B$ the \emph{natural weak rank function}. 
 
% 
% Observe that any shifted rank function $\rho_B[j]$, for some $j \in \Z$, determines the same weak rank function. If we further assume that $B$ contains a unique minimal element, then there is a unique rank function up to  a shift, and the corresponding weak rank function is independent of the choice of rank function $\rho_B$.

%A function $\sigma: B \to B'$ between posets $B$ and $B'$ is 
%\emph{strictly order-preserving} if $\sigma(z) < \sigma(z')$ in $Y$ whenever $z < z'$ in $B$.  Let $r_B: B \to \Z$ be a strictly order-preserving function. 
%For any integer $s$, we define a shifted function 
%$r_B[s] : B \to \Z$ by $r_B[s](z) = r_B(z) + s$ for all $z \in B$.
%With a slight abuse of notation, consider the element $r_B \in I(B)$ 
%defined by  $r_B(z,z') = r_B(z') - r_B(z) \in \Z \subset \Z[t]$ for all $z \le z'$ in $B$. Then $r_B \in I(B)$ is a \emph{weak rank function} in the sense of Brenti \cite{BrentiTwistedIncidence}*{Section~2}. 
%The discussion below only depends on the associated weak rank function, which is invariant under replacing $r_B$ by $r_B[s]$. 
%%Observe that if $B$ has a unique minimal element $\hat{0}$, then $r: B \to \Z$ is determined by its corresponding weak rank function up to an integer shift. 

Fix a weak rank function $r_B \in I(B)$. Let $\deg(a)$ denote the degree of an element $a \in \Z[t]$. Here we set $\deg(0) = - \infty$.  Consider the following subring of $I(B)$: % defined by
\[
\II(B) = \{ p \in I(B) : \deg(p(z, z')) \le r_B(z, z') \textrm{ for all } z \le z' \in B \}.
\]
Then $\II(B)$
admits a ring involution 
$p \mapsto p^{\rev}$, where
$(p^{\rev})(z, z';t) := t^{r_B(z, z')} p(z, z';t^{-1})$ for any $z \le z'$ in $B$. 
An element $p \in \II(B)$ is \emph{symmetric} if $p = p^{\rev}$, and
%An element $p \in \II(B)$ is 
\emph{antisymmetric} if $p = -p^{\rev}$. 
%\alan{TODO: truncation map from $\II(B)$ to $\II_{1/2}(B)$ what it means for anti-symmetric}
%\begin{definition}
%	Let $p \in \II(B)$ be antisymmetric. 
%	Let $\widetilde{p} \in \II_{1/2}(B)$ be the unique element such that 
%	$p = \widetilde{p} - \widetilde{p}^{\rev}$. Explictly, for any $z \le z' \in B$, if $p(z,z';t) = \sum_{i = 0}^{r_B(z,z')} p(z,z')_i t^i$, then 
%	 $\widetilde{p}(z,z';t) = \sum_{i = 0}^{r_B(z,z')/2} p(z,z')_i t^i$. 
%\end{definition}
We will use the following lemma (c.f. \cite{StanleyEnumerative}*{Proposition~3.6.2}). 

\begin{lemma}\label{lem:invertible}\cite{ProudfootAGofKLSpolynomials}*{Lemma~2.1}
	An element $p \in I(B)$ is invertible if and only if $p(z,z) \in \{ -1, 1\} \subset \Z[t]$ for all $z \in B$. If $p  \in \II(B) \subset I(B)$ is invertible, then $p^{-1} \in \II(B)$. 
\end{lemma}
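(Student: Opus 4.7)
The plan is to handle both assertions by induction on the length of intervals, exploiting the fact that the convolution product has a triangular structure: the value of $(p \cdot q)(z,z')$ depends on values of $p$ and $q$ on sub-intervals of $[z,z']$, and the singleton contribution $p(z,z) q(z,z')$ (or $q(z,z) p(z,z')$) is singled out.

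For the first equivalence, the forward direction is immediate: if $p \cdot q = q \cdot p = \delta_B$, then evaluating at a singleton interval $[z,z]$ gives $p(z,z) q(z,z) = 1$ in $\Z[t]$, so $p(z,z)$ is a unit in $\Z[t]$, hence in $\{-1,1\}$. For the converse, assume $p(z,z) \in \{\pm 1\}$ for all $z$. I would construct a candidate inverse $q$ inductively on the length of $[z,z']$: set $q(z,z) = p(z,z)$ (so $p(z,z) q(z,z) = 1$), and for $z < z'$ rearrange the identity $(p \cdot q)(z,z') = 0$ to obtain the recursion
\[
q(z,z') = -p(z,z)^{-1} \sum_{z < z'' \le z'} p(z,z'') q(z'',z'),
\]
whose right-hand side only involves $q(z'',z')$ on strictly shorter intervals. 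This defines $q \in I(B)$ with $p \cdot q = \delta_B$. Running the analogous recursion from the other side produces a left inverse $q'$, and the standard one-line argument $q' = q' \cdot (p \cdot q) = (q' \cdot p) \cdot q = q$ shows $q = q'$ is a two-sided inverse.

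For the second claim, assume $p \in \II(B)$ is invertible with inverse $q = p^{-1}$. I will prove $\deg(q(z,z')) \le r_B(z,z')$ by induction on $r_B(z,z')$. The base case $r_B(z,z) = 0$ is clear since $q(z,z) = p(z,z)^{\pm 1} \in \{\pm 1\}$ has degree $0$. For the inductive step, use the recursion above. By hypothesis, $\deg(p(z,z'')) \le r_B(z,z'')$, and by induction $\deg(q(z'',z')) \le r_B(z'',z')$ for all $z < z'' \le z'$; hence each summand has degree at most
\[
r_B(z,z'') + r_B(z'',z') = r_B(z,z'),
\]
where the equality uses the additivity axiom for the weak rank function. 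Multiplication by $p(z,z)^{-1} \in \{\pm 1\}$ does not change degrees, so $\deg(q(z,z')) \le r_B(z,z')$, completing the induction.

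There is no serious obstacle; the only point requiring care is the well-definedness of the two-sided inverse (handled by the uniqueness argument above) and the correct use of the additivity of $r_B$ in the degree estimate, which is exactly what makes $\II(B)$ closed under inversion.
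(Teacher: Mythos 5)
Your proof is correct, and it is the standard argument (the one used in Proudfoot's Lemma 2.1 and, mutatis mutandis, in Stanley's Proposition 3.6.2 for the field-coefficient case). The forward direction correctly identifies that $p(z,z)$ must be a unit of $\Z[t]$, hence $\pm 1$; the converse builds a one-sided inverse recursively on interval length and upgrades it to a two-sided inverse by the associativity argument; and the degree bound for $p^{-1}$ follows by induction using the additivity of the weak rank function together with the fact that $r_B(z'',z') < r_B(z,z')$ whenever $z < z''$, which makes the induction well-founded. No gaps.
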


Let 
$U(B) = \{ p \in I(B) : p(z,z) = 1 \textrm{ for all } z \in B \}$. Observe that $U(B)$ is closed under multiplication, and  all elements of $U(B)$ are invertible. 
%, and $U(B)$ is a subring of $I(B)$. 
%We need the following notation.   
Let
\[
\II_{1/2}(B) = \{ p \in \II(B) : \deg(p(z, z')) < r_B(z, z')/2 \textrm{ for all } z < z' \in B \}.
\]
Observe that $\II_{1/2}(B)$ is a subring of $\II(B)$. This notation is slightly different to that in \cite{ProudfootEquivariantKLS}, where $\II_{1/2}(B)$ equals what we denote $\II_{1/2}(B) \cap U(B)$.
Recall that given a polynomial $a \in \Z[t]$,  $a_i$ denotes the coefficient of $t^i$ in $a$. 
Define a $\Z$-linear map 
\[
\Delta: \II(B) \to \II_{1/2}(B),   
\]
\[
p \mapsto \Delta p,
\]
where for any $z \le z' \in B$,
\begin{equation}\label{eq:Delta}
	(\Delta p)(z,z') = \begin{cases}
		p(z,z')_0 + \sum_{i = 1}^{\lfloor \frac{r_B(z,z') - 1}{2} \rfloor} (p(z,z')_i - p(z,z')_{i - 1})t^i &\textrm{ if } z < z', \\
		0 &\textrm{ if } z = z'.
	\end{cases}
\end{equation}
That is, $(\Delta p)(z,z')$ is the truncation of $(1 - t) p(z,z')$ to a polynomial with degree strictly less than $r_B(z,z')/2$.  

Suppose $p \in \II(B)$ satisfies $p(z,z) = 0$ for $z \in B$ and $p(z,z') = 
t^{r_B(z,z') - 1}p(z,z';t^{-1})$ for $z < z' \in B$. Equivalently, assume that $(t - 1) \cdot p$ lies in $\II(B)$ and is antisymmetric. Then $\Delta p$ is the unique element in $\II_{1/2}(B)$ satisfying 
\begin{equation}\label{eq:tildeell}
	(\Delta p)^{\rev} - \Delta p = (t - 1) \cdot p. 
\end{equation}
In particular, in this case, we may recover $p$ from $\Delta p$, and $\Delta p$ may be viewed as an alternative encoding of $p$. For example,
 the coefficients of $(\Delta p)(z,z')$ are nonnegative if and only if the coefficients of $p(z,z')$ are nonnegative and unimodal.

\begin{definition}\label{def:kernel}
	An element $\kappa_B \in \II(B) \cap U(B)$ is a \emph{$B$-kernel} if $\kappa_B^{-1} =  \kappa_B^{\rev}$. 
\end{definition}

We will need the following lemma. 

\begin{lemma}\cite{FMVChowFunctions}*{Lemma~3.1}\label{lem:poleatone}
	Let $\kappa_B$ be a $B$-kernel. 	For every $x < x'$ in $B$, $\kappa_B(x,x')$ is divisible by $t - 1$.  
\end{lemma}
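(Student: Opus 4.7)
The plan is to work with the defining identity $\kappa_B \cdot \kappa_B^{\rev} = \delta_B$ of a $B$-kernel and specialize it at $t = 1$. Since $\kappa_B^{\rev}(z, z'; t) = t^{r_B(z, z')} \kappa_B(z, z'; t^{-1})$, the substitution $t = 1$ kills the $t^{r_B(z, z')}$ prefactor and identifies $\kappa_B^{\rev}(z, z'; 1) = \kappa_B(z, z'; 1)$. Denoting $f(z, z') := \kappa_B(z, z'; 1)$, the convolution identity becomes the purely numerical relation
\[
\sum_{x \le z \le x'} f(x, z) f(z, x') = 0 \quad \text{for all } x < x' \in B,
\]
together with $f(z, z) = 1$ for all $z \in B$ (since $\kappa_B \in U(B)$). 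Showing $f(x, x') = 0$ for every $x < x'$ is exactly the statement that $\kappa_B(x, x')$ is divisible by $t - 1$.

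I would then prove the vanishing of $f(x, x')$ for $x < x'$ by induction on $|[x, x']|$. For the base case $|[x, x']| = 2$ (so $x'$ covers $x$), the only summands in the above identity come from $z = x$ and $z = x'$, giving $2 f(x, x') = 0$. For the inductive step, in the range $x < z < x'$ the subinterval $[x, z]$ is a proper subposet of $[x, x']$ (it does not contain $x'$), so $|[x, z]| < |[x, x']|$ and by induction $f(x, z) = 0$. Thus the middle terms of the sum collapse and one is left with $f(x, x) f(x, x') + f(x, x') f(x', x') = 2 f(x, x') = 0$, giving $f(x, x') = 0$.

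There is essentially no obstacle here beyond being careful with the bookkeeping: the whole argument is a two-line specialization followed by a standard Möbius-style induction, and the only mild subtlety is verifying that evaluating $\kappa_B^{\rev}$ at $t = 1$ produces $\kappa_B(\cdot, \cdot; 1)$, which is immediate from the definition of the bar involution. No properties beyond the $B$-kernel identity and $\kappa_B \in U(B)$ are used.
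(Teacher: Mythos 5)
Your proof is correct. The paper itself does not supply a proof of this lemma — it simply cites \cite{FMVChowFunctions}*{Lemma~3.1} — so there is no in-text argument to compare against. Your approach of specializing the defining identity $\kappa_B \cdot \kappa_B^{\rev} = \delta_B$ at $t=1$, noting that $\kappa_B^{\rev}(z,z';1)=\kappa_B(z,z';1)$, and then running a M\"obius-style induction on the cardinality of the interval (the boundary terms give $2f(x,x')=0$ in $\Z$, hence $f(x,x')=0$) is precisely the standard argument for this fact, and every step checks out. One pedantic note: the base case $|[x,x']|=2$ is already the $n=2$ instance of the induction step computation, so the induction could be stated uniformly without isolating a base case, but that is purely cosmetic.
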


The following theorem is a  generalization of  a result of Stanley \cite{Stanley92}*{Corollary~6.7}. 

\begin{theorem}\label{thm:existenceofg}\cite{ProudfootAGofKLSpolynomials}*{Theorem~2.2} 
	Let $B$ be a poset with weak rank function $r_B$.  Let $\kappa_B$ be a $B$-kernel. Then there exist unique elements $f_B,g_B \in \II_{1/2}(B) \cap U(B)$ such that $f_B^{\rev} = \kappa_B \cdot f_B$  and 
	$g_B^{\rev} = g_B \cdot \kappa_B$.
\end{theorem}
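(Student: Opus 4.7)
The plan is to build $f_B$ by induction on the length $L := r_B(z, z')$ of the interval $[z, z']$; the argument for $g_B$ is entirely symmetric, with left and right convolution swapped. The base case $L = 0$ forces $f_B(z, z) = 1$ by membership in $U(B)$, and this trivially satisfies $f_B^{\rev}(z, z) = \kappa_B(z, z) f_B(z, z)$ since both sides equal $1$.

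For the inductive step with $L > 0$, I extract the $z'' = z$ and $z'' = z'$ terms from $(\kappa_B \cdot f_B)(z, z')$ and use $\kappa_B(z, z) = f_B(z', z') = 1$ to rearrange the desired equation $f_B^{\rev}(z, z') = (\kappa_B \cdot f_B)(z, z')$ into
\[
f_B^{\rev}(z, z') - f_B(z, z') = B(z, z'), \qquad B(z, z') := \kappa_B(z, z') + \sum_{z < z'' < z'} \kappa_B(z, z'') f_B(z'', z'),
\]
where the right-hand side is already determined by the inductive hypothesis. The left-hand side is antisymmetric under the involution $p \mapsto p^{\rev}$, so a solution $f_B(z, z')$ of degree $< L/2$ can exist only if $B^{\rev} = -B$; conversely, granted this antisymmetry, a unique such solution exists whose coefficient at $t^i$ for $i < L/2$ equals $-B_i$, with the remaining coefficients then automatically forced by the antisymmetry of the left-hand side (and the middle coefficient, when $L$ is even, vanishing because $B_{L/2}$ does).

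The crux of the proof, and the main obstacle, is verifying $B^{\rev}(z, z') = -B(z, z')$. My approach is to apply the involution term by term using $(p \cdot q)^{\rev} = p^{\rev} \cdot q^{\rev}$, then invoke the inductive hypothesis $f_B^{\rev}(z'', z') = (\kappa_B \cdot f_B)(z'', z')$ valid for $z < z'' < z'$ to expand each $f_B^{\rev}(z'', z')$. Swapping the order of summation produces inner sums of the form $\sum_{z \le z'' \le z'''} \kappa_B^{\rev}(z, z'') \kappa_B(z'', z''')$, and the defining identity $\kappa_B^{\rev} \cdot \kappa_B = \delta_B$ of a $B$-kernel collapses each such sum to $\delta_B(z, z''')$, which vanishes because $z''' > z$. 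A careful accounting of the boundary terms at $z'' = z$ and at $z''' = z'$ (which are excluded from the strict-inequality convolution defining $B$) yields precisely $B^{\rev}(z, z') = -B(z, z')$, the lone surviving contribution $\kappa_B^{\rev}(z, z')$ canceling against the leading term of $B^{\rev}$.

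Uniqueness at each inductive step is immediate from the degree bound, and the assembled $f_B$ lies in $\II_{1/2}(B) \cap U(B)$ by construction. Replaying the entire argument with left- and right-multiplication by $\kappa_B$ swapped, and using the identity $\kappa_B \cdot \kappa_B^{\rev} = \delta_B$ in place of $\kappa_B^{\rev} \cdot \kappa_B = \delta_B$, produces $g_B$.
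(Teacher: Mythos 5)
Your proof is correct. A point worth flagging up front: the paper does not prove Theorem~\ref{thm:existenceofg} itself but cites it from \cite{ProudfootAGofKLSpolynomials}*{Theorem~2.2} (itself a generalization of \cite{Stanley92}*{Corollary~6.7}), so there is no in-paper argument to compare against. That said, your argument is precisely the standard one.

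The inductive reduction to solving $f_B^{\rev}(z,z') - f_B(z,z') = B(z,z')$ with $B(z,z') = \kappa_B(z,z') + \sum_{z < z'' < z'} \kappa_B(z,z'')f_B(z'',z')$ is right, and the antisymmetry check is the real content. Writing it out: after applying $\rev$ term by term (using that $\rev$ is a ring involution and that $\kappa_B(z,z)^{\rev} = 1$) and substituting $f_B^{\rev}(z'',z') = (\kappa_B \cdot f_B)(z'',z')$ for $z < z'' < z'$, swapping the order of summation yields inner sums that are truncations of $(\kappa_B^{\rev}\cdot\kappa_B)(z,z''')$ missing the $z'' = z$ term (and, for $z''' = z'$, also the $z'' = z'$ term). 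Using $\kappa_B^{\rev}\cdot\kappa_B = \delta_B$ and tracking those two boundary contributions gives
\[
B^{\rev}(z,z') = \kappa_B^{\rev}(z,z') - \sum_{z < z''' < z'}\kappa_B(z,z''')f_B(z''',z') - \kappa_B(z,z') - \kappa_B^{\rev}(z,z') = -B(z,z'),
\]
exactly the cancellation you describe. The degree constraint then pins down $f_B(z,z')_i = -B(z,z')_i$ for $i < L/2$ uniquely, and the higher coefficients are consistent by antisymmetry (with $B_{L/2} = 0$ automatic when $L$ is even). The $g_B$ case is the mirror image, replacing $\kappa_B^{\rev}\cdot\kappa_B = \delta_B$ with $\kappa_B\cdot\kappa_B^{\rev} = \delta_B$, as you note. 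One small remark for the write-up: the clause ``inner sums of the form $\sum_{z\le z''\le z'''}\kappa_B^{\rev}(z,z'')\kappa_B(z'',z''')$'' is not literally what appears after swapping summations (the $z''=z$ boundary is absent, and for $z'''=z'$ so is $z''=z'$); you acknowledge this in the next clause, but making the exact truncated sum explicit would tighten the argument.
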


The elements $f_B$ and $g_B$ in Theorem~\ref{thm:existenceofg} are called the 
\emph{right Kazhdan-Lusztig-Stanley function} and \emph{left Kazhdan-Lusztig-Stanley function} respectively. 
%The polynomials 
%$\{ f_B(z,z') : z \le z' \in B \}$ and $\{ g_B(z,z') : z \le z' \in B \}$ are called the \emph{right Kazhdan-Lusztig-Stanley polynomials} and \emph{left Kazhdan-Lusztig-Stanley  polynomials} respectively.
We will need the following remark.

\begin{remark}\label{rem:existenceofgoverk}
	Recall that given a field $k$ and a $\Z$-module $A$, we write $A_k = A \otimes_\Z k$. Then $I(B)_k$ has  the structure of a $k[t]$-algebra,  $\II(B)_k$ has the structure of a $k$-algebra  with subalgebra $\II_{1/2}(B)_k$, and the discussion above still holds. In particular,  the 
	statement of Theorem~\ref{thm:existenceofg} still holds after tensoring with $k$. 		 That is, $f_B,g_B \in \II_{1/2}(B) \cap U(B)$  are the unique elements in $\II_{1/2}(B)_k \cap U(B)_k$ such that $f_B^{\rev} = \kappa_B \cdot f_B$  and 
	$g_B^{\rev} = g_B \cdot \kappa_B$ in $\II(B)_k$.

\end{remark} 

\begin{definition}\cite{ProudfootAGofKLSpolynomials}*{Section~2.3}\label{def:Zfunction}
	The \emph{$Z$-function} is $Z_B = g_B \cdot \kappa_B \cdot f_B \in \II(B) \cap U(B)$. 
	%The polynomials 
	%$\{ Z_B(z,z') : z \le z' \in B \}$ are called the \emph{$Z$-polynomials}. 
\end{definition}

Proudfoot showed that $Z_B = g_B^{\rev} \cdot f_B = g_B \cdot f_B^{\rev}$
\cite{ProudfootAGofKLSpolynomials}*{Proposition~2.6}. 
 In particular, 
$Z_B$ is symmetric, i.e., $Z_B = Z_B^{\rev}$. 

\begin{example}\label{ex:simple}
	If $\kappa_B = \delta_B$ then $\kappa_B^{\rev} = f_B = g_B = Z_B = \delta_B$.  
\end{example}

\begin{example}\label{ex:simpleB1}
	Consider $B_1 = \{ \hat{0}, \hat{1} \}$ and let $r_{B_1}$ be the natural weak rank function. 
	 %for a rank function $\rho_{B_1}$ for $B_1$. 
	 Then there exists $\lambda \in \Z$ such that $\kappa_{B_1}(B_1) = \lambda(t - 1)$. We have 
	$g_{B_1}(B_1) = f_{B_1}(B_1) = \lambda$ and $Z_{B_1}(B_1) = \lambda(1 + t)$. 
\end{example}

\begin{example}\label{ex:products}
	Consider posets $B$ and $B'$ with weak rank functions $r_B$ and $r_{B'}$ respectively. Let $\kappa_B$ be a $B$-kernel, and let 
	$\kappa_{B'}$ be a $B'$-kernel.
%	Consider the right and left Kazhdan-Lusztig-Stanley functions $f_B, f_{B'}$ and $g_B, g_{B'}$ respectively, and the $Z$-functions $Z_B, Z_{B'}$ associated to $B$ and $B'$ respectively.
		Fix the weak rank function $r_{B \times B'} \in I(B \times B')$ defined by $r_{B \times B'}(z,z') = r_B(z,z') + r_{B'}(z,z')$. 
		With the notation from Example~\ref{ex:productspre}, let  $\kappa_{B \times B'} = \kappa_B \times \kappa_{B'}$. Then $\kappa_{B \times B'}$ is a 
%		Then with  the notation from Example~\ref{ex:productspre}, $\kappa_{B \times B'} := \kappa_B \times \kappa_{B'}$ is a 
	$(B \times B')$-kernel  with 
	right and left Kazhdan-Lusztig-Stanley functions $f_{B \times B'} = f_B \times f_{B'}$ and $g_{B \times B'} = g_B \times g_{B'}$ respectively, and $Z$-function $Z_{B \times B'} = Z_{B} \times Z_{B'}$.

\end{example}

\begin{example}\label{ex:lowdegreeterms}
	To help the reader with computations, we consider some low and high degree terms of these invariants. 
	Recall that given a polynomial $a \in \Z[t]$,  $a_i$ denotes the coefficient of $t^i$ in $a$. Then for any $z \le z'$ in $B$, comparing highest degree terms in the expressions $g_B^{\rev} = g_B \cdot \kappa_B$, $f_B^{\rev} = \kappa_B \cdot f_B$, and $Z_B = g_B \cdot \kappa_B \cdot f_B$ evaluated at $[z,z']$ gives
	\[
	g_B(z,z')_0 = f_B(z,z')_0 = \kappa_B(z,z')_{r_B(z,z')} = Z_B(z,z')_0 = Z_B(z,z')_{r_B(z,z')}. 
	\]
	In particular, if $r_B(z,z') \le 2$, then $g_B(z,z') = f_B(z,z') = \kappa_B(z,z')_{r_B(z,z')}$. Assume that $r_B(z,z') > 2$. Then comparing second highest degree terms in the expressions $g_B^{\rev} = g_B \cdot \kappa_B$ and  $f_B^{\rev} = \kappa_B \cdot f_B$ evaluated at $[z,z']$ gives
	\[
		g_B(z,z')_1 = \kappa_B(z,z')_{r_B(z,z') - 1} + \sum_{\substack{z \le z'' \le z' \\ r_B(z,z'') = 1 }} \kappa_B(z'',z')_{r_B(z'',z')},
	\]
	\[
	f_B(z,z')_1 = \kappa_B(z,z')_{r_B(z,z') - 1} + \sum_{\substack{z \le z'' \le z' \\ r_B(z'',z') = 1 }} \kappa_B(z,z'')_{r_B(z,z'')}.
	\]
\end{example}

For the remainder of the section, assume that $B$ is a lower Eulerian poset.  %with the natural weak rank function $\rho_B$. %$\rho_B: B \to \Z$. 
%Assume further that $B$ is a ranked poset with rank function  $\rho_B: B \to \Z$. 
Note that we do not assume that $r_B$ equals  the natural weak rank function $\rho_B$. 
% = \rho_B$.
%Note that $r_B$ need not equal $\rho_B$ up to a shift. 
Consider the ring involution of $\II(B)$ defined by
$p \mapsto \widehat{p}$,
where $\widehat{p}(z, z') = (-1)^{\rho_B(z,z')} p(z, z')$ for $z \le z'$ in $B$.
Observe that this involution commutes with the involution $p \mapsto p^{\rev}$. % and is independent of the choice of $\rho_B$. 
%Also, the involution is determined by $B$, and does not depend on $r_B$. 
%Also,  
%if $B$ contains a unique minimal element $\hat{0}$, then $\rho_B$ is determined up to a shift, and 
%the involution $p \mapsto \widehat{p}$ is independent of the choice of $\rho_B$. 
We say that $p \in \II(B)$ is \emph{rank alternating} if $p^{\rev} = \widehat{p}$.
This is inspired by \cite{ProudfootAGofKLSpolynomials}*{Section~2.4}, where there is no assumption that $B$ is lower Eulerian,  $\widehat{p}$ is defined with  $\rho_B$ is replaced by $r_B$ above, and an element is called alternating if $p^{\rev} = \widehat{p}$.

%We note that the definition of $\widehat{p}$ is slightly different to that in \cite{ProudfootAGofKLSpolynomials}*{Section~2.4}; there $\rho_B$ is replaced by $r_B$ above, and an element is called alternating if $p^{\rev} = \widehat{p}$.
%%We note that the definitions of $\widehat{p}$ and alternating are slightly different to those in \cite{ProudfootAGofKLSpolynomials}*{Section~2.4}, where $\rho_B$ is replaced by $r_B$ above.  

We need the following lemma. 
	The proof is essentially the same as the proof of \cite{ProudfootAGofKLSpolynomials}*{Proposition~2.11}.
%The proof follows in the same way as  \cite{ProudfootAGofKLSpolynomials}*{Proposition~2.11}.
%  which is essentially \cite{ProudfootAGofKLSpolynomials}*{Proposition~2.11}. Since our  definitions are slightly different to those used in \cite{ProudfootAGofKLSpolynomials}, we reproduce the proof for the benefit of the reader. 

\begin{lemma}\label{lem:inverse}
	Let $B$ be a lower Eulerian poset with weak rank function $r_B$ and natural weak rank function $\rho_B$. 
	% \in I(B)$. % and rank function  $\rho_B: B \to \Z$. 
	Suppose that $\kappa_B$ is a rank alternating $B$-kernel. Then 
	%the corresponding Kazhdan-Lusztig-Stanley functions satisfy 
	$\widehat{f_B} = g_B^{-1}$ and $\widehat{g_B} = f_B^{-1}$.
\end{lemma}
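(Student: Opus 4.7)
The plan is to imitate the argument in \cite{ProudfootAGofKLSpolynomials}*{Proposition~2.11} and reduce both claimed identities to the uniqueness statement in Theorem~\ref{thm:existenceofg}. I would first record the basic algebraic facts about the involutions: the map $p \mapsto \widehat{p}$ is a ring homomorphism of $\II(B)$ that commutes with $p \mapsto p^{\rev}$, and both involutions fix $\delta_B$ and preserve the subring $\II_{1/2}(B) \cap U(B)$. Moreover, the rank alternating hypothesis on $\kappa_B$ together with the $B$-kernel identity $\kappa_B^{\rev} = \kappa_B^{-1}$ gives $\widehat{\kappa_B} = \kappa_B^{\rev} = \kappa_B^{-1}$.

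Next I would apply the hat involution to the defining equation $f_B^{\rev} = \kappa_B \cdot f_B$ from Theorem~\ref{thm:existenceofg}. Using the commutativity of hat with rev and the computation $\widehat{\kappa_B} = \kappa_B^{-1}$, this yields
\[
\widehat{f_B}^{\rev} = \kappa_B^{-1} \cdot \widehat{f_B}.
\]
Taking inverses of both sides (which is legal because $\widehat{f_B} \in U(B)$ and hence is invertible by Lemma~\ref{lem:invertible}), and using that rev is a ring homomorphism, I obtain
\[
(\widehat{f_B}^{-1})^{\rev} = \widehat{f_B}^{-1} \cdot \kappa_B.
\]
This is exactly the equation characterizing $g_B$ in Theorem~\ref{thm:existenceofg}.

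The step I expect to take the most care is verifying that $\widehat{f_B}^{-1}$ actually lives in $\II_{1/2}(B) \cap U(B)$, so that the uniqueness in Theorem~\ref{thm:existenceofg} applies. For this I would check that $\II_{1/2}(B) \cap U(B)$ is closed under inversion: writing $\widehat{f_B} = \delta_B + q$ with $q \in \II_{1/2}(B)$ vanishing on diagonal intervals, the geometric series $\widehat{f_B}^{-1} = \sum_{n \ge 0} (-q)^n$ converges interval-by-interval because $q(z,z) = 0$ forces strict chains $z_0 < z_1 < \cdots < z_n$ in each $q^n(z,z')$; the degree bound $\deg q(z_{i-1},z_i) < r_B(z_{i-1},z_i)/2$ then adds up along the chain to give $\deg \widehat{f_B}^{-1}(z,z') < r_B(z,z')/2$, and the diagonal values are clearly $1$.

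Having established $\widehat{f_B}^{-1} \in \II_{1/2}(B) \cap U(B)$, uniqueness in Theorem~\ref{thm:existenceofg} forces $\widehat{f_B}^{-1} = g_B$, proving the first identity $\widehat{f_B} = g_B^{-1}$. The second identity $\widehat{g_B} = f_B^{-1}$ then follows by applying the hat involution to both sides, since hat is an involutive ring homomorphism and therefore commutes with inversion, or alternatively by running the symmetric argument starting from $g_B^{\rev} = g_B \cdot \kappa_B$ and invoking the uniqueness of $f_B$.
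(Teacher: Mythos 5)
Your proof is correct, and it is close in spirit to the paper's (both imitate \cite{ProudfootAGofKLSpolynomials}*{Proposition~2.11}), but the mechanics differ in one notable way. The paper forms the product $\widehat{g_B} \cdot f_B$, observes it lies in $\II_{1/2}(B) \cap U(B)$ (needing only closure under multiplication), and then shows directly that it is symmetric, so it must be $\delta_B$ by the fact that the only symmetric element of $\II_{1/2}(B) \cap U(B)$ is the identity. You instead apply the hat involution to the defining equation $f_B^{\rev} = \kappa_B \cdot f_B$, take inverses to land on the defining equation for $g_B$, and invoke the uniqueness clause of Theorem~\ref{thm:existenceofg}. This forces you to also verify that $\II_{1/2}(B) \cap U(B)$ is closed under taking inverses, which you do correctly via the geometric series and the additivity of the strict degree bound along chains, but this is an extra step that the paper's ``symmetric implies $\delta_B$'' trick sidesteps. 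Both routes are sound; the paper's is marginally more economical, while yours makes explicit the useful fact that $\II_{1/2}(B) \cap U(B)$ is a group under convolution.
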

\begin{proof} 
 Observe that $\II_{1/2}(B) \cap U(B)$ is closed under multiplication, and the only symmetric function in $\II_{1/2}(B) \cap U(B)$ is the identity element $\delta_B \in \II(B)$. 	We have $\widehat{g_B},f_B \in \II_{1/2}(B) \cap U(B)$, and hence $\widehat{g_B} \cdot f_B \in \II_{1/2}(B) \cap U(B)$. 
Since $g_B^{\rev} = g_B \cdot \kappa_B$, we have $\widehat{g_B}^{\rev} = \widehat{g_B} \cdot \kappa_B^{\rev}$. Then 
\[
(\widehat{g_B} \cdot f_B)^{\rev} = \widehat{g_B} \cdot \kappa_B^{\rev} \cdot f_B^{\rev} = \widehat{g_B} \cdot (\kappa_B \cdot f_B)^{\rev} = \widehat{g_B} \cdot f_B. 
\]
Hence $\widehat{g_B} \cdot f_B$ is symmetric, and $\widehat{g_B} \cdot f_B = \delta_B$, as desired.
Also $g_B \cdot \widehat{f_B} = \widehat{\delta_B} = \delta_B$. 
\end{proof}

We say that  an element $p \in \II(B)$ is \emph{multiplicative} if
$p(z,z') = p(z,z'')p(z'',z')$ for all $z \le z'' \le z'$ in $B$.  The following lemma essentially follows from \cite{Stanley92}*{Proposition~7.1}. 

\begin{lemma}\label{lem:multaltiskernel}
	Let $B$ be a lower Eulerian poset with weak rank function $r_B$. 	Consider an element  $\kappa_B \in \II(B) \cap U(B)$	that is multiplicative and rank alternating. Then $\kappa_B$ is a $B$-kernel.
	
%	Let $B$ be a ranked poset with weak rank function $r_B \in I(B)$ and rank function  $\rho_B: B \to \Z$.
%	Consider an element  $\kappa_B \in \II(B) \cap U(B)$	that is multiplicative and rank alternating. If $B$ is locally Eulerian then $\kappa_B$ is a $B$-kernel.
%	
%	Let $B$ be a ranked poset with weak rank function $r_B \in I(B)$ and rank function  $\rho_B: B \to \Z$.
%	Consider an element  $\kappa_B \in \II(B) \cap U(B)$	that is multiplicative and rank alternating. If $B$ is locally Eulerian then $\kappa_B$ is a $B$-kernel.
\end{lemma}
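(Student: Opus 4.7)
The plan is to verify directly that $\kappa_B \cdot \kappa_B^{\rev} = \delta_B$, which identifies $\kappa_B^{\rev}$ as a right inverse of $\kappa_B$. Since $\kappa_B(z,z)=1$ for all $z \in B$, Lemma~\ref{lem:invertible} guarantees that $\kappa_B$ is invertible in $I(B)$, and in any associative ring a right inverse of an invertible element coincides with the two-sided inverse. Thus this single identity will establish $\kappa_B^{-1} = \kappa_B^{\rev}$, i.e., that $\kappa_B$ is a $B$-kernel in the sense of Definition~\ref{def:kernel}. Before starting, note that $\kappa_B^{\rev} \in \II(B) \cap U(B)$ is automatic from the degree condition defining $\II(B)$ together with $\kappa_B(z,z) = 1$, so the convolution is well-defined inside $\II(B)$.

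At an interval $[z,z]$, both sides are $1$, using $\kappa_B^{\rev}(z,z) = \kappa_B(z,z) = 1$. The main step is to evaluate the convolution at an interval $[z,z']$ with $z < z'$. Expanding the definition and applying the rank-alternating hypothesis $\kappa_B^{\rev}(z'',z') = (-1)^{\rho_B(z'',z')} \kappa_B(z'',z')$, one gets
\[
(\kappa_B \cdot \kappa_B^{\rev})(z,z') = \sum_{z \le z'' \le z'} (-1)^{\rho_B(z'',z')} \kappa_B(z,z'') \kappa_B(z'',z').
\]
Multiplicativity then collapses $\kappa_B(z,z'')\kappa_B(z'',z') = \kappa_B(z,z')$, producing the factorization
\[
(\kappa_B \cdot \kappa_B^{\rev})(z,z') = \kappa_B(z,z') \sum_{z \le z'' \le z'} (-1)^{\rho_B(z') - \rho_B(z'')} = (-1)^{\rho_B(z')}\kappa_B(z,z') \sum_{z \le z'' \le z'} (-1)^{\rho_B(z'')}.
\]
The final sum vanishes since $B$ is lower Eulerian and $z < z'$, so $(\kappa_B \cdot \kappa_B^{\rev})(z,z') = 0$, completing the verification of $\kappa_B \cdot \kappa_B^{\rev} = \delta_B$.

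There is no real obstacle: the hypotheses (multiplicativity, rank alternating, lower Eulerian) plug in one after another in exactly this order, and the computation is essentially a two-line identity. The only subtlety worth flagging is the invocation of Lemma~\ref{lem:invertible} to conclude that the right inverse computed above is automatically two-sided; otherwise one could equally well perform the symmetric calculation to verify $\kappa_B^{\rev} \cdot \kappa_B = \delta_B$ by the same argument.
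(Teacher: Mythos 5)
Your proof is correct and follows essentially the same computation as the paper: substitute the rank-alternating identity $\kappa_B^{\rev}(z'',z') = (-1)^{\rho_B(z'',z')}\kappa_B(z'',z')$, collapse via multiplicativity, and invoke the lower Eulerian vanishing to get $\delta_B$. You are slightly more explicit than the paper in observing that showing $\kappa_B^{\rev}$ is a right inverse suffices because $\kappa_B$ is invertible by Lemma~\ref{lem:invertible}, which is a reasonable bit of added care but not a different argument.
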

\begin{proof}
	Let $\rho_B$ be the natural weak rank function for $B$. For $z \le z'$ in $B$,  
	we compute
		\begin{align*}
		(\kappa_B \cdot \kappa_B^{\rev})(z,z') &= \sum_{z \le z'' \le z'} \kappa_B(z,z'')\kappa_B^{\rev}(z'',z') \\
		&= \sum_{z \le z'' \le z'} (-1)^{\rho_B(z'',z')} \kappa_B(z,z'')\kappa_B(z'',z') \\
		&= \kappa_B(z,z') \sum_{z \le z'' \le z'} (-1)^{\rho_B(z'',z')}  \\
		&= \delta_B(z,z'). 
	\end{align*}

\end{proof}

Kazhdan-Lusztig-Stanley theory for the following important example was studied in detail in \cite{KatzStapledon16}*{Section~3} and \cite{Stanley92}*{Section~7}.

\begin{example}\label{ex:t-1case}
	Let $B$ be a lower Eulerian poset.
	% with rank function $\rho_B$. 
	Let $r_B = \rho_B$ be the natural weak rank function, and let 	$\kappa_B(z,z') = (t - 1)^{\rho_B(z,z')}$ for all $z,z' \in B$. Then $\kappa_B \in \II(B)$ is multiplicative and rank alternating, and hence is a $B$-kernel by Lemma~\ref{lem:multaltiskernel}. 
	In this case, $\kappa_B$ is called the \emph{Eulerian kernel}. 
	
	If $B$ is Eulerian, then we will use the notation $f(B)$, $g(B)$, $Z(B)$  for  $f_B(B) = f_B(\hat{0}_B,\hat{1}_B)$, $g_B(B) = g_B(\hat{0}_B,\hat{1}_B)$, and $Z_B(B) = Z_B(\hat{0}_B,\hat{1}_B)$ respectively.  
%    To distinguish this example, we will write $\rho = r_B = \rho_B$, $\kappa = \kappa_B$, $f = f_B$, $g = g_B$, and $Z = Z_B$. 
    We also recall the definition of the \emph{$h$-polynomial} $h(B) \in \Z[t]$ of a lower Eulerian poset $B$ \cite{Stanley92}*{Example~7.2}. Let $\rank(B) = n$, then 
    \[
    t^n h(B;t^{-1}) = \sum_{z \in B} g_B(\hat{0}_B,z;t)(t - 1)^{n - \rho_B(\hat{0}_B,z)}.
    \]
    For example, if $B$ is Eulerian, then $g^{\rev} = g \cdot \kappa$ implies that $h(B) = g(B)$. If $B$ is an Eulerian poset of rank $n > 0$, then 
    $(1 - t)h(\partial B;t) = g(B;t) - t^n g(B;t^{-1})$ (see, for example, \cite{KatzStapledon16}*{Example~3.14}). Here $h(\partial B;t)$ is sometimes  called the \emph{toric $h$-polynomial} of $B$ (see, for example, \cite{FREulerian}*{Section~2.2}). 
    
    If $B$ and $B'$ are lower Eulerian posets, then it follows from Example~\ref{ex:products} and the above definition that 
    $h(B \times B') = h(B)h(B')$. 
\end{example}

\begin{example}\label{ex:lowdegreetermsspecial}
	Consider the setup of Example~\ref{ex:t-1case}. Then by Example~\ref{ex:lowdegreeterms}, for any $z \le z'$ in $B$,
	\[
	g_B(z,z')_0 = f_B(z,z')_0 = Z_B(z,z')_0 =  Z_B(z,z')_{\rho(z,z')} = 1. 
	\]
		In particular, if $\rho_B(z,z') \le 2$, then $g_B(z,z') = f_B(z,z') = 1$.
	For $\rho_B(z,z') > 2$, we have 
	\[
	g_B(z,z')_1 = |\{ z \le z'' \le z' : \rho_B(z,z'') = 1 \}| - \rho_B(z,z'),
	\]
	\[
	f_B(z,z')_1 = |\{ z \le z'' \le z' : \rho_B(z'',z') = 1 \}| - \rho_B(z,z').
	\]
	Also, $h(B)_0 = 1$ and $h(B)_1 = |\{ z \in B : \rho_B(\hat{0}_B,z) = 1 \}| - \rank(B)$ (see, for example, \cite{KatzStapledon16}*{Example~3.13}). 
\end{example}

\begin{example}\label{ex:t-1caseBn}
	Consider Example~\ref{ex:t-1case} with $B = B_n$. 
%	 with the natural rank function and the kernel of Example~\ref{ex:t-1case}. 
	 Then $f(B_n) = g(B_n) = 1$, and $Z(B_n) = (1 + t)^n$ (c.f. Example~\ref{ex:simpleB1}). 
	 %See, for example, \cite{StanleyEnumerative}*{Example~3.16.8}. 
	 This is well-known and follows from Example~\ref{ex:products} using the $B_1$ case in Example~\ref{ex:simpleB1} and the fact that $B_{n + 1} = \Pyr(B_n)$. Also, $h(B_n) =  1$ and, for $n > 0$, $h(\partial B_n) = 1 + t + \cdots + t^{n - 1}$. 
\end{example}

\section{Kazhdan-Lusztig-Stanley theory for %subdivisions of
	 lower Eulerian posets}\label{sec:lowerEulerian}

In this section, we further develop KLS theory for lower Eulerian posets. 
In Section~\ref{ss:statements} we state our main results. Examples will be given in Section~\ref{ss:examplesmain}, and the proofs will be given in Section~\ref{ss:proofsmain}. 	
%Recall that given a polynomial $a \in \Z[t]$,  $a_i$ denotes the coefficient of $t^i$ in $a$.

\subsection{Statements of main results}\label{ss:statements}

We fix the following setup throughout the section. 
	Let $\sigma: X \to Y$ be a strong formal subdivision between lower Eulerian posets with rank functions $\rho_X$ and $\rho_Y$ respectively,
corresponding under Theorem~\ref{thm:introbijection} to a triple $(\Gamma, \rho_\Gamma, q)$, where $\Gamma$ is a lower Eulerian poset
with 
rank function $\rho_\Gamma$, and $q$ is a join-admissible element of $\Gamma$ with $q \neq \hat{0}_\Gamma$. See Section~\ref{ss:backgroundposet} for details. Observe that $\Gamma$ is Eulerian if and only if $Y$ is Eulerian.

%$\Gamma = \Cyl(\sigma)$, $\rho_\Gamma$ determined by \eqref{eq:rhoCyl}, and $q = \hat{0}_Y$. 
%Recall that $X = \Gamma \smallsetminus \Gamma_{\ge q}$, $Y = \Gamma_{\ge q}$, and $\sigma(x) = x \vee q$ for all $x \in X$. 

Fix  a weak rank function $r_\Gamma \in I(\Gamma)$.
Observe that $r_\Gamma$ restricts to weak rank functions on $X$ and $Y$. 
We may then define $\II(X), \II(Y), \II(\Gamma)$ and related invariants as in Section~\ref{ss:KLSbackground}. 
Fix an element $\kappa_\Gamma \in \II(\Gamma) \cap U(\Gamma)$ that is multiplicative and rank alternating. By Lemma~\ref{lem:multaltiskernel}, $\kappa_\Gamma$ is a $\Gamma$-kernel. Moreover, the restrictions $\kappa_X \in \II(X)$ and $\kappa_Y \in \II(Y)$ of $\kappa_\Gamma$ to $X$ and $Y$ respectively are multiplicative and rank alternating, and are
an $X$-kernel  and a $Y$-kernel respectively. Then $f_\Gamma,g_\Gamma, Z_\Gamma \in \II(\Gamma)$ restrict to $f_X,g_X, Z_X \in \II(X)$ respectively, and restrict to $f_Y,g_Y, Z_Y \in \II(Y)$ respectively. 

%Recall that $\Gamma$ is a lower Eulerian poset. 
Our most important example is when 
 $r_B = \rho_B$ is the natural weak rank function, and $\kappa_B$ is the Eulerian kernel (see Example~\ref{ex:t-1case}). 
%A crucial example is when $r_\Gamma$ and $\kappa_\Gamma$ are given by Example~\ref{ex:t-1case}. That is, $r_\Gamma = \rho_\Gamma$ is induced by a rank function $\rho_\Gamma$  for $\Gamma$, and 	$\kappa_\Gamma(z,z') = (t - 1)^{r_\Gamma(z,z')}$ for all $z,z' \in \Gamma$. 
We welcome the reader to read this section with this example in mind, but 
note that it will be crucial for applications in Section~\ref{sec:equivariant} to
develop the theory in slightly greater generality (see Remark~\ref{rem:rankwarning}). The key property we will need is that the restriction of a
weak rank function to a subposet is still a weak rank function. In contrast, the natural weak rank function is not  preserved under restriction. 
% rank functions are not preserved under restriction. 

We next introduce some notation to be used throughout. 
Given $p \in \II(\Gamma)$ and a subset $S \subset \Int(\Gamma)$, define 
$p|_S \in \II(\Gamma)$ by 
\[
p|_{S}(z,z') = \begin{cases}
	p(z,z') &\textrm{ if } [z,z'] \in S, \\
	0 &\textrm{ otherwise.}
\end{cases}
\]
%We use the following shorthand notations
For convenience, we also set
\[
p|_X := p|_{\{ [z,z'] \in \Int(\Gamma) : z,z' \in X \} },
\]
\[
p|_Y := p|_{\{ [z,z'] \in \Int(\Gamma) : z,z' \in Y \} },
\]
\[
p|_{X/Y} := p|_{\{ [z,z'] \in \Int(\Gamma) : z \in X, z' \in Y \} },
\]
\[
p|_{(X/Y)^\circ} := p|_{\{ [z,z'] \in \Int(\Gamma) : z \in X, z' = \sigma(z) \in Y\} }.
\]
Observe that $p = p|_X + p|_Y + p|_{X/Y}$.

We next introduce $h$-polynomials and local $h$-polynomials. 
When $r_\Gamma$ is the natural weak rank function and $\kappa_\Gamma$ is the Eulerian kernel, 
%In the case when $r_\Gamma$ and $\kappa_\Gamma$ are given by Example~\ref{ex:t-1case}, 
these reduce to the usual definitions (see \cite{KatzStapledon16}*{Definition~4.1}).

\begin{definition}\label{def:hellpolynomial}
	With the notation above, define elements $h_{\sigma},\ell_{\sigma} \in \II(\Gamma)$ by
	\[
	(t - 1) \cdot h_\sigma = g_\Gamma \cdot \kappa_\Gamma|_{(X/Y)^\circ} = g_\Gamma|_X \cdot \kappa_\Gamma|_{(X/Y)^\circ}, 
	\]
	\[
	\ell_\sigma = h_\sigma \cdot g_\Gamma^{-1}. 
	\]
	For any   $x \in X$ and $y \in Y$ such that $\sigma(x) \le y$, 
		the polynomials $h_{\sigma}(x,y)$ and $\ell_{\sigma}(x,y)$ are called the 
	\emph{$h$-polynomial} and \emph{local $h$-polynomial} respectively associated to the interval $[x,y]$  in $\Gamma$. When $\Gamma$ is Eulerian, we call  $h_{\sigma}(\Gamma) = h_{\sigma}(\hat{0}_X,\hat{1}_Y)$ and $\ell_{\sigma}(\Gamma) = \ell_{\sigma}(\hat{0}_X,\hat{1}_Y)$ the 
	$h$-polynomial and local $h$-polynomial respectively associated to $\sigma$.

%	For any interval $[x,y]$ in $\Gamma$ with $x \in X$ and $y \in Y$
%	
%	With the notation above, define elements $h_{\sigma},\ell_{\sigma} \in \II(\Gamma)$ as follows: let
%	$(h_{\sigma})_X = (h_{\sigma})_Y = (\ell_{\sigma})_X = (\ell_{\sigma})_Y = 0$, and for any   $x \in X$ and $y \in Y$ such that $\sigma(x) \le y$. 
%	let
%	\[
%	(t - 1)h_{\sigma}(x,y) = \sum_{ \substack{x \le x' \in X \\ \sigma(x') = y} } g_\Gamma(x,x')  \kappa_\Gamma(x',y), 
%	\]
%	\[
%	\ell_{\sigma}(x,y) = \sum_{ \sigma(x) \le y' \le y } h_{\sigma}(x,y') g_\Gamma^{-1}(y',y). 
%	\]
%	The polynomials $h_{\sigma}(x,y)$ and $\ell_{\sigma}(x,y)$ are called the 
%	\emph{$h$-polynomial} and \emph{local $h$-polynomial} respectively associated to the interval $[x,y]$  in $\Gamma$. 

\end{definition}

In Definition~\ref{def:hellpolynomial}, we could have equivalently defined $h_\sigma$ and $\ell_\sigma$ by setting $h_{\sigma}(z,z') = 0$ unless $z \in X$ and $z' \in Y$, and, for  any   $x \in X$ and $y \in Y$ such that $\sigma(x) \le y$, setting
\[
(t - 1)h_{\sigma}(x,y) = \sum_{ \substack{x \le x' \in X \\ \sigma(x') = y} } g_X(x,x')  \kappa_\Gamma(x',y), 
\]
\[
\ell_{\sigma}(x,y) = \sum_{ \sigma(x) \le y' \le y } h_{\sigma}(x,y') g_Y^{-1}(y',y). 
\]
In particular, 
%In Definition~\ref{def:hellpolynomial} above, observe that 
$h_{\sigma}(x,y)$ is a well-defined polynomial by Lemma~\ref{lem:poleatone}. The fact that $h_{\sigma},\ell_{\sigma} \in \II(\Gamma)$ follows from Remark~\ref{rem:degreeh} below. 
%Observe that $\ell_{\sigma} = h_{\sigma} \cdot g_\Gamma^{-1}$ and $h_{\sigma} = \ell_{\sigma} \cdot g_\Gamma$. 
Also, observe that $h_{\sigma}(x,y)$ and $\ell_{\sigma}(x,y)$ only depend on the restriction of the relevant KLS invariants to the interval $[x,y]$ in $\Gamma$. % (and the restriction of the relevant invariants to $[x,y]$). 

\begin{remark}\label{rem:degreeh}
	With the setup of Definition~\ref{def:hellpolynomial},  the  degree of both $h_{\sigma}(x,y)$ and $\ell_{\sigma}(x,y)$ is bounded by $r_\Gamma(x,y) - 1$. Indeed,  the bound on the degree of  $h_{\sigma}(x,y)$ follows since 
 $g_\Gamma \in \II_{1/2}(\Gamma)$ and $\kappa_\Gamma \in \II(\Gamma)$. The bound on the degree of $\ell_{\sigma}(x,y)$ then follows, using the fact that $g_\Gamma^{-1} \in \II_{1/2}(\Gamma)$ by Lemma~\ref{lem:inverse}. 
 %See also Proposition~\ref{prop:symmetry} below. 
 In particular, we deduce that 
 $(t - 1) \cdot h_{\sigma}, (t - 1) \cdot \ell_{\sigma} \in \II(\Gamma)$. 
\end{remark}

\begin{example}\label{ex:sigmaxequalsy}
	For any $x \in X$, we have $h_{\sigma}(x,\sigma(x)) = \ell_{\sigma}(x,\sigma(x))$. 
	
\end{example}

\begin{example}\label{ex:simple2}
	Consider Example~\ref{ex:simple} with $\kappa_\Gamma =  \kappa_\Gamma^{\rev} = f_\Gamma = g_\Gamma = \delta_\Gamma$.  With the notation above, $g_\Gamma|_{X/Y} =  h_{\sigma} = \ell_{\sigma} = 0$. % \Delta \ell_{\sigma} = 0$. 
\end{example}

\begin{example}\label{ex:simpleB1v2}
	Continuing with Example~\ref{ex:simpleB1}, 	
	let  $\Gamma = B_1 = \{ \hat{0}, \hat{1} \}$,  let $r_{\Gamma} = \rho_{B_1}$ be the natural weak rank function,
	%for a rank function $\rho_{B_1}$ for $B_1$, 
	and let $q = \hat{1}_\Gamma$. In this case, $X = Y = B_0$ and $\sigma$ is the identity function. Recall that there exists $\lambda \in \Z$ such that $\kappa_{B_1}(B_1) = \lambda(t - 1)$. Then $h_{\sigma}(B_1) = \ell_{\sigma}(B_1) = \lambda$. 
\end{example}

\begin{example}\label{ex:t-1casedetail}
	Consider the setup of Example~\ref{ex:t-1case}, where  $r_\Gamma$ is the natural weak rank function and $\kappa_\Gamma$ is the Eulerian kernel. Consider $x \in X$ and $y \in Y$ such that $\sigma(x) \le y$. 
%	Assume that $r_\Gamma$ and $\kappa_\Gamma$ are given by Example~\ref{ex:t-1case}.
%	That is, 
%	  $r_\Gamma = \rho_\Gamma$ and
%	$\kappa_\Gamma(z,z') = (t - 1)^{\rho_\Gamma(z,z')}$ for all $z,z' \in \Gamma$.
	Then $h_{\sigma}(x,y) = h(B)$, where $B$ is the lower Eulerian poset 
	$\{ x' \in X : x \le x', \sigma(x') \le y \}$
%	$X_{\ge x} \cap \sigma^{-1}([\hat{0}_Y,y])$ 
%	Then $h_{\sigma}(x,y)$ is the usual $h$-polynomial of the lower Eulerian poset $X_{\ge x} \cap \sigma^{-1}([\hat{0}_Y,y])$ 
	(see, for example, \cite{KatzStapledon16}*{Definition~3.12, Proposition 3.29}). 
	In particular, if $\Gamma$ is Eulerian, then 
	the 
	$h$-polynomial  associated to $\sigma$ is 	$h_{\sigma}(\Gamma) = h(X)$.

%	$h_{\sigma}(\Gamma) = h(X)$,
%	 %is the usual $h$-polynomial of $X$, 
%	 and $\ell_{\sigma}(\Gamma)$ is the usual local $h$-polynomial associated to $\sigma$ \cite{KatzStapledon16}*{Definition~4.1}. 
\end{example}

In the case when   $r_\Gamma$ is the natural weak rank function and $\kappa_\Gamma$ is the Eulerian kernel,
%$r_\Gamma$ and $\kappa_\Gamma$ are given by Example~\ref{ex:t-1case}, 
the following proposition is 
 \cite{KatzStapledon16}*{Proposition~3.29}.
%  \cite{KatzStapledon16}*{Proposition~3.29, Theorem~5.5}.
 The proof will be given in Section~\ref{ss:proofsmain}. Recall from Remark~\ref{rem:degreeh} that $(t - 1) \cdot \ell_{\sigma} \in \II(\Gamma)$. 
 
\begin{proposition}\label{prop:symmetry}
	The local $h$-polynomials are symmetric in the sense that for all $x \in X$ and $y \in Y$ such that $\sigma(x) \le y$,
	\[
	\ell_{\sigma}(x,y;t) = t^{r_\Gamma(x,y) - 1}\ell_{\sigma}(x,y;t^{-1}).
	\]
	Equivalently,  $(t - 1) \cdot \ell_{\sigma} \in \II(\Gamma)$ is antisymmetric.

\end{proposition}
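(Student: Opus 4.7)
The plan is to establish the equivalent claim that $(t-1)\cdot \ell_\sigma$ is antisymmetric in $\II(\Gamma)$. Note that $(t-1)\cdot \ell_\sigma \in \II(\Gamma)$ by Remark~\ref{rem:degreeh} and $g_\Gamma^{-1} \in \II(\Gamma)$ by Lemma~\ref{lem:invertible}, so all manipulations below take place in $\II(\Gamma)$. Setting $M := \kappa_\Gamma|_{(X/Y)^\circ}$, Definition~\ref{def:hellpolynomial} gives the compact formula
\[
(t-1)\cdot \ell_\sigma = g_\Gamma \cdot M \cdot g_\Gamma^{-1}.
\]
Applying the ring involution $p \mapsto p^{\rev}$ to both sides, and using $g_\Gamma^{\rev} = g_\Gamma \cdot \kappa_\Gamma$ from Theorem~\ref{thm:existenceofg} (which in turn yields $(g_\Gamma^{-1})^{\rev} = \kappa_\Gamma^{\rev} \cdot g_\Gamma^{-1}$) together with the observation $M^{\rev} = \widehat{M}$---a direct consequence of rank-alternation $\kappa_\Gamma^{\rev} = \widehat{\kappa_\Gamma}$ and the support of $M$---reduces the desired antisymmetry to the kernel-level identity
\[
\kappa_\Gamma \cdot \widehat{M} \cdot \kappa_\Gamma^{\rev} = -M \quad \text{in } \II(\Gamma).
\]

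To prove this identity, I would evaluate both sides on a fixed interval $[x,y]$ with $x \in X$, $y \in Y$, and $\sigma(x) \le y$, which are the only intervals where either side can be nonzero. Expanding the triple convolution on the left, the nonvanishing inner indices $(z_1, z_2)$ are precisely those with $z_1 \in X$, $x \le z_1$, and $z_2 = \sigma(z_1) \le y$. Multiplicativity of $\kappa_\Gamma$ collapses the product $\kappa_\Gamma(x,z_1)\,\kappa_\Gamma(z_1,\sigma(z_1))\,\kappa_\Gamma(\sigma(z_1),y)$ into $\kappa_\Gamma(x,y)$, while the signs $(-1)^{\rho_\Gamma(z_1,\sigma(z_1))}$ from $\widehat{M}$ and $(-1)^{\rho_\Gamma(\sigma(z_1),y)}$ from $\kappa_\Gamma^{\rev} = \widehat{\kappa_\Gamma}$ combine into $(-1)^{\rho_\Gamma(z_1,y)}$. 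Therefore the left-hand side reduces to
\[
\kappa_\Gamma(x,y) \sum_{\substack{x \le z_1 \in X \\ \sigma(z_1) \le y}} (-1)^{\rho_\Gamma(z_1,y)}.
\]

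Partitioning this sum by the value $y' := \sigma(z_1)$ and translating via the rank shift $\rho_Y = \rho_\Gamma|_Y - 1$, condition (4) in the definition of a strong formal subdivision yields $\sum_{x \le z_1 \in X,\, \sigma(z_1) = y'} (-1)^{\rho_\Gamma(z_1,y')} = -1$ for every $y'$ with $\sigma(x) \le y' \le y$, so the expression becomes $-\kappa_\Gamma(x,y) \sum_{\sigma(x) \le y' \le y} (-1)^{\rho_\Gamma(y',y)}$. If $y = \sigma(x)$, only the term $y' = y$ contributes and one obtains $-\kappa_\Gamma(x,y) = -M(x,y)$; if $\sigma(x) < y$, the inner sum vanishes by the lower Eulerian property of $Y$ applied to the interval $[\sigma(x),y]$, matching $-M(x,y) = 0$. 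The main technical obstacle is the careful bookkeeping of the rank shift $\rho_Y = \rho_\Gamma|_Y - 1$ when invoking condition (4) and of the signs produced by rank alternation; once these are tracked correctly, the rest is routine algebraic manipulation.
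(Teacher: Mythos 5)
Your proof is correct and follows essentially the same strategy as the paper: rewrite $(t-1)\cdot\ell_\sigma = g_\Gamma\cdot\kappa_\Gamma|_{(X/Y)^\circ}\cdot g_\Gamma^{-1}$, apply the involution $p\mapsto p^{\rev}$, and use $g_\Gamma^{\rev}=g_\Gamma\cdot\kappa_\Gamma$ together with a kernel-level identity proved by combining multiplicativity of $\kappa_\Gamma$, rank alternation, and \eqref{eq:strongsubdivisionequality}. The only difference is one of packaging: you prove the triple-product identity $\kappa_\Gamma\cdot\widehat{M}\cdot\kappa_\Gamma^{\rev}=-M$ directly (where $M=\kappa_\Gamma|_{(X/Y)^\circ}$), which forces you to split by $y'=\sigma(z_1)$ and then additionally invoke the lower Eulerian property of $Y$ when $\sigma(x)<y$; the paper instead isolates the cleaner two-factor identity $\kappa_\Gamma\cdot M^{\rev}=-M\cdot\kappa_\Gamma$ in Lemma~\ref{lem:alternatingmultiplicative} (proved by the same calculation but without needing the vanishing sum over $[\sigma(x),y]$, since $M^{\rev}$ already pins $\sigma(x')=y$), and then cancels $\kappa_\Gamma\cdot\kappa_\Gamma^{\rev}=\delta_\Gamma$; the paper's factorization also has the advantage of being reusable in the proof of Theorem~\ref{thm:maingell}.
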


Recall from Section~\ref{ss:KLSbackground} that we may consider the element $\Delta \ell_{\sigma} \in \II_{1/2}(\Gamma)$. It follows from Proposition~\ref{prop:symmetry} and \eqref{eq:tildeell} that 
$\Delta \ell_{\sigma}$ is an alternative encoding of $\ell_{\sigma}$.

\begin{example}\label{ex:lowdegreehell}
%	Using Example~\ref{ex:lowdegreeterms}, 
%Definition~\ref{def:hellpolynomial}, and Proposition~\ref{prop:symmetry}, one may verify 

We claim that for any   $x \in X$ and $y \in Y$ such that $\sigma(x) \le y$, 
	\[
	h_{\sigma}(x,y)_0 = \kappa_\Gamma(x,y)_{r_\Gamma(x,y)},
	\]
	\[
	h_{\sigma}(x,y)_{r_\Gamma(x,y) - 1} = \ell_{\sigma}(x,y)_{r_\Gamma(x,y) - 1} = \ell_{\sigma}(x,y)_0 = (\Delta \ell_{\sigma}(x,y))_0  =  \begin{cases}
		\kappa_\Gamma(x,y)_{r_\Gamma(x,y)} &\textrm{if } \sigma(x) = y, \\
		0    &\textrm{otherwise.} 
	\end{cases}
	\]
	In the case when  $r_\Gamma$ is the natural weak rank function and $\kappa_\Gamma$ is the Eulerian kernel, 
	%$r_\Gamma$ and $\kappa_\Gamma$ are given by Example~\ref{ex:t-1case}, 
	%$r_\Gamma(x,y) = \rho_\Gamma(x,y)$ and
	 $\kappa_\Gamma(x,y)_{r_\Gamma(x,y)} = 1$ above, and  the linear coefficients of $h_{\sigma}(x,y)$ and $\ell_{\sigma}(x,y)$
	 are described  in %this case see 
	 \cite{KatzStapledon16}*{Example~3.13, Example~4.10}. 
	 
	To prove the first equation, we compute, using Example~\ref{ex:lowdegreeterms} and 
	Definition~\ref{def:hellpolynomial}, as well as the assumption that 
	$\kappa_\Gamma$ is multiplicative and rank alternating, and \eqref{eq:strongsubdivisionequality} and \eqref{eq:rhoCyl}, 
	\begin{align*}
	 h_{\sigma}(x,y)_0  &= - \sum_{ \substack{x \le x' \in X \\ \sigma(x') 
	 		= y} } g_\Gamma(x,x')_0 \kappa_\Gamma(x',y)_0 
 		\\&= - \sum_{ \substack{x \le x' \in X \\ \sigma(x') = y} } \kappa_\Gamma(x,x')_{r_\Gamma(z,z')} \kappa_\Gamma(x',y)_0
 		 		\\&= - \sum_{ \substack{x \le x' \in X \\ \sigma(x') = y} } (-1)^{\rho_{\Gamma}(x',y)} \kappa_\Gamma(x,x')_{r_\Gamma(x,x')} \kappa_\Gamma(x',y)_{r_\Gamma(x',y)} \\
 		 		&= \kappa_\Gamma(x,y)_{r_\Gamma(x,y)} \sum_{ \substack{x \le x' \in X \\ \sigma(x') = y} } (-1)^{\rho_{Y}(y) - \rho_X(x)} 
 		 		\\ &=  	\kappa_\Gamma(x,y)_{r_\Gamma(x,y)}.
	\end{align*}
	The second equation follows by comparing highest degree terms in Definition~\ref{def:hellpolynomial}, and then using Proposition~\ref{prop:symmetry}. 
	
	Later we will need the following proposition. 
	
	\begin{proposition}\label{prop:composition}\cite{KatzStapledon16}*{Corollary~4.7}
		Let 
		$\sigma: X \to Y$ and $\tau: Y \to Z$ be strong formal subdivisions between lower Eulerian posets $X,Y,Z$ with rank functions $\rho_X, \rho_Y, \rho_Z$ respectively. 
		Consider each of $\Cyl(\sigma)$, $\Cyl(\tau)$, and $\Cyl(\tau \circ \sigma)$ with the corresponding natural weak rank function and Eulerian kernel. 
%		
%			Fix  a compatible choice of a weak rank function $r_{\Cyl(\sigma,\tau)} \in I(\Cyl(\sigma,\tau))$, and 
%		an element $\kappa_{\Cyl(\sigma,\tau)}\in \II(\Cyl(\sigma,\tau)) \cap U(\Cyl(\sigma,\tau))$ that is multiplicative and rank alternating.
		Then for any $x \in X$ and $z \in Z$ such that $(\tau \circ \sigma)(x) \le z$, 
		\begin{equation}\label{eq:composition}
			\ell_{\tau \circ \sigma}(x,z) = \sum_y  \ell_{\sigma}(x,y) \ell_{\tau}(y,z), 
		\end{equation}
		%	\[
		%
		%	%\sum_{\substack{y \in Y \\ \sigma(x) \le y, \tau(y) \le z}}
		%	\]
		where the above sum varies over all $y \in Y$ such that $\sigma(x) \le y$ and $\tau(y) \le z$. 
	\end{proposition}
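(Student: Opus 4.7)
The plan is to prove the identity directly by expanding both sides using the defining decomposition $\ell_\bullet = h_\bullet \cdot g^{-1}$ together with the identity $(t-1)\, h_\sigma = g_X \cdot \kappa_{\Cyl(\sigma)}|_{(X/Y)^\circ}$ (and its $\tau$- and $\tau \circ \sigma$-analogues) from Definition~\ref{def:hellpolynomial}. After multiplying both sides of \eqref{eq:composition} by $(t-1)^2$, the right-hand side becomes a nested sum indexed by $x' \in X$, $y, y', y'' \in Y$, and $z' \in Z$ with $y' = \sigma(x')$, $z' = \tau(y'')$, together with appropriate inequalities relating these to $x, y, z$.

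The key simplification is that the outer sum over $y$ with $y' \leq y \leq y''$ yields $\sum_y g_Y^{-1}(y', y)\, g_Y(y, y'') = \delta_Y(y', y'')$, which collapses $y' = y''$ and hence forces $z' = (\tau \circ \sigma)(x')$. What remains is a single sum whose shape matches $(t-1)^2\, \ell_{\tau \circ \sigma}(x, z)$, provided one can identify the middle product $\kappa_{\Cyl(\sigma)}(x', \sigma(x'))\, \kappa_{\Cyl(\tau)}(\sigma(x'), z')$ with $(t-1)\, \kappa_{\Cyl(\tau \circ \sigma)}(x', z')$. Since each kernel is Eulerian, this reduces via the shift formula \eqref{eq:rhoCyl} to the rank identity
\[
\bigl(\rho_Y(\sigma(x')) + 1 - \rho_X(x')\bigr) + \bigl(\rho_Z(z') + 1 - \rho_Y(\sigma(x'))\bigr) = \rho_{\Cyl(\tau \circ \sigma)}(x', z') + 1,
\]
after which the right-hand side equals $(t-1)^2\, \ell_{\tau \circ \sigma}(x, z)$, giving the desired formula upon dividing by $(t-1)^2$.

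The main obstacle I anticipate is the careful bookkeeping: three cylinders with different natural rank functions are in play simultaneously, and one must track both the convolution structure across three incidence algebras and the explicit powers of $(t-1)$ produced by the Eulerian kernels at each step, so that the telescoping from $g_Y^{-1} \cdot g_Y = \delta_Y$ combines cleanly with the multiplicativity of $\kappa$.
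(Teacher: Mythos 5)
Your plan is correct and the key steps all check out. The paper itself only cites this statement from \cite{KatzStapledon16}*{Corollary~4.7} and does not prove it, so there is no in-paper argument to compare against; your proposal provides a self-contained, direct computation. Expanding $(t-1)^2\sum_y \ell_\sigma(x,y)\ell_\tau(y,z)$ using $(t-1)\ell_\sigma = \big(g_X\cdot\kappa_{\Cyl(\sigma)}|_{(X/Y)^\circ}\big)\cdot g_Y^{-1}$ and its $\tau$-analogue produces the nested sum you describe, and since for fixed $y' = \sigma(x')$ and $y''$ every $y$ with $y'\le y\le y''$ automatically satisfies $\sigma(x)\le y$ and $\tau(y)\le z$ (and the factors $g_Y^{-1}(y',y)\,g_Y(y,y'')$ vanish outside $[y',y'']$), the inner sum is exactly $(g_Y^{-1}\cdot g_Y)(y',y'') = \delta_Y(y',y'')$, collapsing $y'' = y' = \sigma(x')$ and forcing $z' = (\tau\circ\sigma)(x')$. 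The rank arithmetic is also correct: by \eqref{eq:rhoCyl} the two cylinder ranks $\rho_{\Cyl(\sigma)}(x',\sigma(x')) = \rho_Y(\sigma(x'))+1-\rho_X(x')$ and $\rho_{\Cyl(\tau)}(\sigma(x'),z') = \rho_Z(z')+1-\rho_Y(\sigma(x'))$ sum to $\rho_Z(z')+2-\rho_X(x') = \rho_{\Cyl(\tau\circ\sigma)}(x',z')+1$, so the product of Eulerian kernels is $(t-1)\kappa_{\Cyl(\tau\circ\sigma)}(x',z')$. The resulting single sum over $x'$ is exactly $(t-1)^2\ell_{\tau\circ\sigma}(x,z)$, and since $\Z[t]$ is an integral domain you may cancel $(t-1)^2$. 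Two points worth making explicit when you write this up: first, that the restrictions of the natural weak rank functions of $\Cyl(\sigma)$ and $\Cyl(\tau)$ to intervals inside $Y$ both coincide with $\rho_Y$, so the same $g_Y$ appears in both expansions; second, that the outer index set $\{y : \sigma(x)\le y,\ \tau(y)\le z\}$ neither excludes any $y\in[y',y'']$ nor contributes nonzero terms outside $[y',y'']$, which justifies the telescoping.
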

	
%	Comparing highest degree terms in Definition~\ref{def:hellpolynomial}, we have 
%%	The second example follows by comparing highest degree terms in 
%	 $$h_{\sigma}(x,y)_{r_\Gamma(x,y) - 1}  =  \sum_{ \substack{x \le x' \in X \\ \sigma(x') 
%			= y} } g_\Gamma(x,x')_{r_\Gamma(x,x')} \kappa_\Gamma(x',y)_{r_\Gamma(x',y)} =  \kappa_\Gamma(x,y)_{r_\Gamma(x,y)}.$$
\end{example}

We are now ready to state our main theorem.  The proof  will be given in Section~\ref{ss:proofsmain}. 
The theorem implies that the left Kazhdan-Lusztig-Stanley function $g_\Gamma$ associated to $\Gamma$ is determined by the left Kazhdan-Lusztig-Stanley functions $g_X$ and $g_Y$ associated to $X$ and $Y$ respectively, together with $\ell_{\sigma}$. Conversely, 
 Remark~\ref{rem:altdeltaell} below implies that $g_X$, $g_Y$, and $\ell_{\sigma}$, are determined by $g_\Gamma$.

\begin{theorem}\label{thm:maingell}
		Let  $\sigma: X \to Y$ be a strong formal subdivision
	between lower Eulerian posets $X$ and $Y$ with rank functions $\rho_X$ and $\rho_Y$ respectively, corresponding to a triple $(\Gamma,\rho_\Gamma,q)$  under Theorem~\ref{thm:introbijection}. 
	Fix a weak rank function $r_\Gamma \in I(\Gamma)$ and a multiplicative and rank alternating element $\kappa_\Gamma \in \II(\Gamma) \cap U(\Gamma)$. 
	Then
	\[
	     g_\Gamma|_{X/Y} = \Delta \ell_{\sigma} \cdot  g_\Gamma =  \Delta \ell_{\sigma} \cdot  g_\Gamma|_Y. 
	\]
That is, for any $x \in X$ and $y \in Y$ such that $\sigma(x) \le y$, 
\[
g_\Gamma(x,y) = \sum_{ \sigma(x) \le y' \le y} \Delta \ell_{\sigma}(x,y') g_Y(y',y). 
\] 
\end{theorem}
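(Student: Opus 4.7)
The proof will hinge on the uniqueness clause in Theorem~\ref{thm:existenceofg}. The second equality $\Delta \ell_\sigma \cdot g_\Gamma = \Delta \ell_\sigma \cdot g_\Gamma|_Y$ is a support observation: since $\Delta \ell_\sigma$ is supported on intervals $[x,y]$ with $x \in X$, $y \in Y$, and since $Y$ is an upper order ideal of $\Gamma$ (so any element above a $Y$-element lies in $Y$), only the $g_\Gamma|_Y$ part of $g_\Gamma$ can contribute. The substantive content is the first equality $g_\Gamma|_{X/Y} = \Delta \ell_\sigma \cdot g_\Gamma|_Y$, and my plan is to prove it by setting
\[
G := g_\Gamma|_X + g_\Gamma|_Y + \Delta \ell_\sigma \cdot g_\Gamma|_Y
\]
and showing $G = g_\Gamma$; comparing with the tautological decomposition $g_\Gamma = g_\Gamma|_X + g_\Gamma|_Y + g_\Gamma|_{X/Y}$ then gives the result.

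As preliminaries, I would first record that $X$ is a lower order ideal and $Y$ an upper order ideal of $\Gamma$, which is immediate from the description of the ordering on the non-Hausdorff mapping cylinder: the only cross-relations run from $X$ upward to $Y$. From this I deduce the restriction-of-convolution facts $g_\Gamma|_X^{\rev} = g_\Gamma|_X \cdot \kappa_\Gamma|_X$, $g_\Gamma|_Y^{\rev} = g_\Gamma|_Y \cdot \kappa_\Gamma|_Y$ (uniqueness in Theorem~\ref{thm:existenceofg} identifies $g_\Gamma|_X$ with $g_X$ and $g_\Gamma|_Y$ with $g_Y$), together with $g_\Gamma^{-1}|_Y = (g_\Gamma|_Y)^{-1}$. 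A short degree count using additivity of $r_\Gamma$ then shows $G \in \II_{1/2}(\Gamma) \cap U(\Gamma)$; the diagonal values are $1$ because $\Delta \ell_\sigma$ vanishes on the diagonal.

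Next I would verify $G^{\rev} = G \cdot \kappa_\Gamma$, invoking $(p \cdot q)^{\rev} = p^{\rev} \cdot q^{\rev}$ and the decomposition $\kappa_\Gamma = \kappa_\Gamma|_X + \kappa_\Gamma|_Y + \kappa_\Gamma|_{X/Y}$. Virtually every cross term vanishes by the same support principle (no element lies in both $X$ and $Y$), and after cancellation the identity collapses to
\[
\bigl((\Delta \ell_\sigma)^{\rev} - \Delta \ell_\sigma\bigr) \cdot g_\Gamma|_Y \cdot \kappa_\Gamma|_Y \;=\; g_\Gamma|_X \cdot \kappa_\Gamma|_{X/Y}.
\]
By Proposition~\ref{prop:symmetry} and \eqref{eq:tildeell}, the left factor of the LHS is $(t-1) \cdot \ell_\sigma$. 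Using $\ell_\sigma = h_\sigma \cdot g_\Gamma^{-1}$ together with $g_\Gamma^{-1}|_Y = (g_\Gamma|_Y)^{-1}$ collapses $\ell_\sigma \cdot g_\Gamma|_Y$ to $h_\sigma$, and then the defining relation $(t-1) \cdot h_\sigma = g_\Gamma|_X \cdot \kappa_\Gamma|_{(X/Y)^\circ}$ reduces everything to the purely kernel-level identity $\kappa_\Gamma|_{(X/Y)^\circ} \cdot \kappa_\Gamma|_Y = \kappa_\Gamma|_{X/Y}$. This in turn is immediate from multiplicativity: for $x \in X$ and $y' \in Y$ with $\sigma(x) \le y'$, the only intermediate element that survives the convolution on the left is $\sigma(x)$ itself, and $\kappa_\Gamma(x, \sigma(x)) \cdot \kappa_\Gamma(\sigma(x), y') = \kappa_\Gamma(x, y')$.

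The main obstacle is purely organizational: keeping careful track of the supports of each summand in the triple convolutions and confirming that every ``unwanted'' term vanishes because of the $X$-below-$Y$ structure. Apart from this bookkeeping, and the appeal to Proposition~\ref{prop:symmetry} (whose proof in Section~\ref{ss:proofsmain} is independent of the present theorem), the argument is mechanical.
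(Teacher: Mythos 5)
Your proof is correct; it runs the argument in the opposite direction from the paper's. The paper works directly with the defining relation $g_\Gamma^{\rev} = g_\Gamma \cdot \kappa_\Gamma$: it restricts via Lemma~\ref{lem:restrict} to isolate the $X/Y$ piece, substitutes $\kappa_\Gamma|_{X/Y} = \kappa_\Gamma|_{(X/Y)^\circ} \cdot \kappa_\Gamma$ (Lemma~\ref{lem:alternatingmultiplicative}), multiplies by $(g_\Gamma^{-1})^{\rev}$, and recognizes that $g_\Gamma|_{X/Y}\cdot g_\Gamma^{-1}$ satisfies the relation $p^{\rev}-p = (t-1)\cdot\ell_\sigma$ that uniquely characterizes $\Delta\ell_\sigma$ in $\II_{1/2}(\Gamma)$ by \eqref{eq:tildeell}. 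You instead posit a candidate $G = g_\Gamma|_X + g_\Gamma|_Y + \Delta\ell_\sigma\cdot g_\Gamma|_Y$ and verify $G^{\rev} = G\cdot\kappa_\Gamma$, invoking the uniqueness clause of Theorem~\ref{thm:existenceofg} in place of the uniqueness of $\Delta$. Both routes bottom out on the same two facts --- the multiplicativity consequence $\kappa_\Gamma|_{(X/Y)^\circ}\cdot\kappa_\Gamma|_Y = \kappa_\Gamma|_{X/Y}$ (the first half of Lemma~\ref{lem:alternatingmultiplicative}) and the symmetry of $\ell_\sigma$ (Proposition~\ref{prop:symmetry}) --- and the cross-term cancellations you track by hand are exactly what Lemma~\ref{lem:restrict} packages once and for all, so the paper's version is a little leaner but the content is the same. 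One small abuse in your write-up: $g_\Gamma|_Y$ is not invertible in $\II(\Gamma)$ (it vanishes on the $X$-diagonal), so ``$(g_\Gamma|_Y)^{-1}$'' should be read as $g_\Gamma^{-1}|_Y$, with the identities $g_\Gamma^{-1}|_Y \cdot g_\Gamma|_Y = \delta_\Gamma|_Y$ (from Lemma~\ref{lem:restrict}) and $h_\sigma\cdot\delta_\Gamma|_Y = h_\sigma$ (since $h_\sigma$ is supported on $X/Y$ intervals); the collapse $\ell_\sigma\cdot g_\Gamma|_Y = h_\sigma$ then holds as you claim.
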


Recall from Section~\ref{ss:KLSbackground} that we have a ring involution $p \mapsto \widehat{p}$ of $\II(\Gamma)$,
where $\widehat{p}(z, z') = (-1)^{\rho_\Gamma(z,z')} p(z, z')$ for $z \le z'$ in $\Gamma$.  

\begin{remark}\label{rem:altdeltaell}
	By Lemma~\ref{lem:inverse} and Theorem~\ref{thm:maingell}, we have  $\Delta \ell_{\sigma} = g_\Gamma|_{X/Y} \cdot g_\Gamma^{-1} = g_\Gamma|_{X/Y} \cdot \widehat{f_\Gamma}$. 
	That is, for any $x \in X$ and $y \in Y$ such that $\sigma(x) \le y$, 
	\[
	\Delta \ell_{\sigma}(x,y) = \sum_{ \sigma(x) \le y' \le y} (-1)^{\rho_Y(y',y)} g_\Gamma(x,y') f_Y(y',y). 
	\] 
\end{remark}

\begin{example}\label{ex:sigmaxequalsyv3}
			Recall from Example~\ref{ex:sigmaxequalsy} that for any $x \in X$, 
	$h_{\sigma}(x,\sigma(x)) = \ell_{\sigma}(x,\sigma(x))$. 
	By Theorem~\ref{thm:maingell}, $g_\Gamma(x,\sigma(x)) = \Delta \ell_{\sigma}(x,\sigma(x))$.
	
	The latter statement can also be deduced by comparing 
	\eqref{eq:tildeell} and the equality 
		$(t - 1)h_{\sigma}(x,\sigma(x)) = g_\Gamma^{\rev}(x,\sigma(x)) - g_\Gamma(x,\sigma(x))$ (which can be deduced from Definition~\ref{def:hellpolynomial} using the equality $g_\Gamma^{\rev} = g_\Gamma \cdot \kappa_\Gamma$). 
	  This special case is well-known in the case when 
	   $r_\Gamma$ is the natural weak rank function and $\kappa_\Gamma$ is the Eulerian kernel
	 % $r_\Gamma$ and $\kappa_\Gamma$ are given by Example~\ref{ex:t-1case} 
	 (see, for example, \cite{KatzStapledon16}*{Example~3.14}).
	
%	When $y = \sigma(x)$ in Theorem~\ref{thm:maingell}, the statement of the theorem says that $g_\Gamma(x,y) = \Delta \ell_{\sigma}(x,y)$. 
%	 This reproduces Example~\ref{ex:sigmaxequalsyv2}.
%	 \alan{maybe merge Example~\ref{ex:sigmaxequalsyv2} here and forgo the computations; just one line explanation}
%		
%	\begin{example}\label{ex:sigmaxequalsyv2}
%		Recall from Example~\ref{ex:sigmaxequalsy} that for any $x \in X$, 
%		$h_{\sigma}(x,\sigma(x)) = \ell_{\sigma}(x,\sigma(x))$. By \eqref{eq:tildeell}, $\Delta \ell_{\sigma} (x,y)$ is the unique polynomial of degree strictly less than $r_\Gamma(x,y)/2$ such that 
%		\begin{equation*}%\label{eq:tildeell}
%			t^{r_\Gamma(x,y)}\Delta \ell_{\sigma} (x,y;t^{-1}) - \Delta \ell_{\sigma} (x,y;t) = (t - 1) h_{\sigma}(x,y). 
%		\end{equation*}
%		On the other hand, using Definition~\ref{def:hellpolynomial} and the fact that $g_\Gamma^{\rev} = g_\Gamma \cdot \kappa_\Gamma$, we have
%		\[
%		(t - 1)h_{\sigma}(x,\sigma(x)) = \sum_{ \substack{x \le x' \in X \\ \sigma(x') = \sigma(x)} } g_\Gamma(x,x')  \kappa_\Gamma(x',\sigma(x)) = g_\Gamma^{\rev}(x,\sigma(x)) - g_\Gamma(x,\sigma(x)).  
%		\]
%		We conclude that  $\Delta \ell_{\sigma}(x,\sigma(x)) = g_\Gamma(x,\sigma(x))$. This is well-known in the case when $r_\Gamma$ and $\kappa_\Gamma$ are given by Example~\ref{ex:t-1case} (see, for example, \cite{KatzStapledon16}*{Example~3.14}). We will reproduce this result in Example~\ref{ex:sigmaxequalsyv3} below.
%		
%		
%	\end{example}	
		
\end{example}

We have the following analogue of Theorem~\ref{thm:maingell} for the right Kazhdan-Lusztig-Stanley function. The proof will be given in Section~\ref{ss:proofsmain}.

\begin{corollary}\label{cor:rightKLSfunction}
			Let  $\sigma: X \to Y$ be a strong formal subdivision
	between lower Eulerian posets $X$ and $Y$ with rank functions $\rho_X$ and $\rho_Y$ respectively, corresponding to a triple $(\Gamma,\rho_\Gamma,q)$  under Theorem~\ref{thm:introbijection}. 
	Fix a weak rank function $r_\Gamma \in I(\Gamma)$ and a multiplicative and rank alternating element $\kappa_\Gamma \in \II(\Gamma) \cap U(\Gamma)$. 
%	Let $\sigma: X \to Y$ be a strong formal subdivision between lower Eulerian posets with rank functions $\rho_X$ and $\rho_Y$ respectively,
%	corresponding under Theorem~\ref{thm:mainsimplified} to a triple $(\Gamma, \rho_\Gamma, q) \in \JoinIdealLW^\circ$ with $\Gamma = \Cyl(\sigma)$, $\rho_\Gamma$ determined by \eqref{eq:rhoCyl}, and $q = \hat{0}_Y$. 
	Then
	\[
	f_\Gamma|_{X/Y} = - f_\Gamma \cdot \Delta \widehat{\ell_{\sigma}} =  - f_\Gamma|_X \cdot \Delta \widehat{\ell_{\sigma}}. 
	\]
	That is, for any $x \in X$ and $y \in Y$ such that $\sigma(x) \le y$, 
	\[
	f_\Gamma(x,y) 
	=  \sum_{ \substack{x \le x' \in X \\ \sigma(x') \le y} } (-1)^{\rho_Y(y) - \rho_X(x')} f_X(x,x') \Delta \ell_{\sigma}(x',y). 
	\] 
\end{corollary}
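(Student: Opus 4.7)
The plan is to transport Theorem~\ref{thm:maingell} across the involution $p \mapsto \widehat{p}$, using the inversion identities $\widehat{g_B} = f_B^{-1}$ from Lemma~\ref{lem:inverse}, applied to each of $B \in \{X, Y, \Gamma\}$. This is legal since the restrictions of $\kappa_\Gamma$ to $X$ and $Y$ are again multiplicative and rank alternating, as already noted in the setup of Section~\ref{ss:statements}. In outline, I will first establish $f_\Gamma|_{X/Y} = -f_\Gamma|_X \cdot \widehat{\Delta \ell_\sigma}$, and then rewrite using $\widehat{\Delta \ell_\sigma} = \Delta \widehat{\ell_\sigma}$ (immediate from the definitions) and the rank shift \eqref{eq:rhoCyl}.

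Concretely, I start from the identity $g_\Gamma \cdot \widehat{f_\Gamma} = \delta_\Gamma$, a restatement of Lemma~\ref{lem:inverse}. Decomposing $g_\Gamma = g_\Gamma|_X + g_\Gamma|_{X/Y} + g_\Gamma|_Y$ and similarly for $\widehat{f_\Gamma}$, and expanding the convolution, most cross-terms vanish in $\II(\Gamma)$ because the intermediate element of an interval cannot lie in both $X$ and $Y$. Restricting the surviving identity to $X/Y$-intervals, where $\delta_\Gamma$ vanishes, leaves the single equation
\[
g_\Gamma|_X \cdot \widehat{f_\Gamma|_{X/Y}} + g_\Gamma|_{X/Y} \cdot \widehat{f_\Gamma|_Y} = 0.
\]

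Now I substitute $g_\Gamma|_{X/Y} = \Delta \ell_\sigma \cdot g_\Gamma = \Delta \ell_\sigma \cdot g_\Gamma|_Y$ from Theorem~\ref{thm:maingell} (the second equality holds because $\Delta \ell_\sigma$ is supported on $X/Y$-intervals, so only the $g_\Gamma|_Y$-piece of $g_\Gamma$ contributes) and use $g_\Gamma|_Y \cdot \widehat{f_\Gamma|_Y} = \delta_Y$ (Lemma~\ref{lem:inverse} for $Y$) to simplify the right-hand side to $-\Delta \ell_\sigma$. Left-multiplying the resulting equation $g_\Gamma|_X \cdot \widehat{f_\Gamma|_{X/Y}} = -\Delta \ell_\sigma$ by $(g_\Gamma|_X)^{-1} = \widehat{f_\Gamma|_X}$ (Lemma~\ref{lem:inverse} for $X$) and applying $p \mapsto \widehat{p}$ to both sides yields the identity $f_\Gamma|_{X/Y} = -f_\Gamma|_X \cdot \Delta \widehat{\ell_\sigma}$. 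Since $\Delta \widehat{\ell_\sigma}$ is supported on $X/Y$-intervals, only $f_\Gamma|_X$ contributes to the product $f_\Gamma \cdot \Delta \widehat{\ell_\sigma}$, giving the equivalent form $f_\Gamma|_{X/Y} = -f_\Gamma \cdot \Delta \widehat{\ell_\sigma}$. The pointwise formula then follows by expanding the convolution and using $\rho_\Gamma(x',y) = \rho_Y(y) - \rho_X(x') + 1$ from \eqref{eq:rhoCyl} to convert the sign $(-1)^{\rho_\Gamma(x',y)+1}$ into $(-1)^{\rho_Y(y) - \rho_X(x')}$.

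I expect the argument to be essentially mechanical once Theorem~\ref{thm:maingell} is available. The main obstacle is purely clerical: isolating which of the nine cross-terms in the expansion of $g_\Gamma \cdot \widehat{f_\Gamma}$ actually contribute on $X/Y$-intervals, and correctly accounting for the rank shift between $\rho_\Gamma$ and $\rho_X, \rho_Y$ when passing to the pointwise formula. No new KLS-theoretic input beyond Theorem~\ref{thm:maingell} and Lemma~\ref{lem:inverse} should be required.
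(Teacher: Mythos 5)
Your argument is correct and takes essentially the same route as the paper: both start from the inversion identity of Lemma~\ref{lem:inverse}, split via Lemma~\ref{lem:restrict}, substitute Theorem~\ref{thm:maingell}, and rearrange. The only difference worth flagging is that the paper begins from $f_\Gamma \cdot \widehat{g_\Gamma} = \delta_\Gamma$ and clears the equation by right-multiplying with the \emph{globally} invertible $(\widehat{g_\Gamma})^{-1}$, which is slightly cleaner; your version starts from $g_\Gamma \cdot \widehat{f_\Gamma} = \delta_\Gamma$ and ``left-multiplies by $(g_\Gamma|_X)^{-1} = \widehat{f_\Gamma|_X}$'', but $g_\Gamma|_X$ is \emph{not} invertible in $I(\Gamma)$ (its diagonal vanishes on $Y$, cf.\ Lemma~\ref{lem:invertible}). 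The step still goes through because $\widehat{f_\Gamma}|_X \cdot g_\Gamma|_X = \delta_\Gamma|_X$ and $\delta_\Gamma|_X$ acts as the identity on anything supported on $X/Y$-intervals; you should state it that way rather than invoking a two-sided inverse of $g_\Gamma|_X$, and likewise avoid phrasing the simplification of $g_\Gamma|_{X/Y}\cdot\widehat{f_\Gamma}|_Y$ in terms of $(\widehat{f_\Gamma}|_Y)^{-1}$.
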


%To deduce the second statement above from the first we used \eqref{eq:rhoCyl}. 
We also have an analogue of Remark~\ref{rem:altdeltaell}. 

\begin{remark}
	It follows from Lemma~\ref{lem:inverse} and Corollary~\ref{cor:rightKLSfunction} that $\Delta \ell_{\sigma} 
	 = - g_\Gamma \cdot \widehat{f_\Gamma}|_{X/Y}$. 	That is, for any $x \in X$ and $y \in Y$ such that $\sigma(x) \le y$, 
	 \[
	 \Delta \ell_{\sigma}(x,y) = \sum_{ \substack{x \le x' \in X \\ \sigma(x') \le y} } (-1)^{\rho_Y(y) - \rho_X(x')} g_X(x,x') f_\Gamma(x',y). 
	 \] 
\end{remark}

Recall from Definition~\ref{def:Zfunction} that we may consider the $Z$-function 
$Z_\Gamma = g_\Gamma \cdot \kappa_\Gamma \cdot f_\Gamma \in \II(\Gamma)$. 
The following corollary implies that $Z_\Gamma$  is determined by the $Z$-functions $Z_X$ and $Z_Y$ associated to $X$ and $Y$ respectively, together with $\ell_{\sigma}$. The proof will be given in Section~\ref{ss:proofsmain}. 

		\begin{corollary}\label{cor:Zmappingformula}
						Let  $\sigma: X \to Y$ be a strong formal subdivision
			between lower Eulerian posets $X$ and $Y$ with rank functions $\rho_X$ and $\rho_Y$ respectively, corresponding to a triple $(\Gamma,\rho_\Gamma,q)$  under Theorem~\ref{thm:introbijection}. 
			Fix a weak rank function $r_\Gamma \in I(\Gamma)$ and a multiplicative and rank alternating element $\kappa_\Gamma \in \II(\Gamma) \cap U(\Gamma)$. 
			Then
	\begin{align*}
		Z_\Gamma|_{X/Y} = - Z_\Gamma|_X \cdot \Delta \widehat{\ell_{\sigma}} + (\Delta \ell_{\sigma})^{\rev} \cdot Z_\Gamma|_Y. 
	\end{align*}
		That is, for any $x \in X$ and $y \in Y$ such that $\sigma(x) \le y$, 
	\[
	Z_\Gamma(x,y) = \sum_{ \substack{x \le x' \in X \\ \sigma(x') \le y} } (-1)^{\rho_Y(y) - \rho_X(x')} Z_X(x,x') \Delta \ell_{\sigma}(x',y) + \sum_{ \sigma(x) \le y' \le y} (\Delta \ell_{\sigma})^{\rev}(x,y') Z_Y(y',y). 
	\] 
\end{corollary}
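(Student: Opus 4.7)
The plan is to build on Theorem~\ref{thm:maingell} and Corollary~\ref{cor:rightKLSfunction}, which already identify $g_\Gamma|_{X/Y}$ and $f_\Gamma|_{X/Y}$, together with Proudfoot's identity $Z_\Gamma = g_\Gamma^{\rev} \cdot f_\Gamma$ noted after Definition~\ref{def:Zfunction}. The derivation is then almost purely formal in the incidence algebra.

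The structural observation I would establish first is a ``product rule'' for the restriction operators defined above Definition~\ref{def:hellpolynomial}: for any $p,q \in \II(\Gamma)$,
\begin{equation*}
(p \cdot q)|_X = p|_X \cdot q|_X, \qquad (p \cdot q)|_Y = p|_Y \cdot q|_Y,
\end{equation*}
\begin{equation*}
(p \cdot q)|_{X/Y} = p|_X \cdot q|_{X/Y} + p|_{X/Y} \cdot q|_Y.
\end{equation*}
This follows from $\Gamma = X \sqcup Y$ together with $X = \{z \in \Gamma : q \not\le z\}$ being a lower order ideal and $Y = \{z \in \Gamma : q \le z\}$ being an upper order ideal, so that any intermediate element $z$ in an interval entirely in $X$ (respectively entirely in $Y$) must itself lie in $X$ (respectively $Y$), while for $[x,y] \in X/Y$ the middle element $z$ lies in either $X$ or $Y$ and the corresponding half of the interval has the indicated type. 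I would also note that restriction commutes with the involution $\rev$, and that $(a \cdot b)^{\rev} = a^{\rev} \cdot b^{\rev}$ (an order-preserving, not antihomomorphic, identity, directly from the definition).

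Applying the product rule to $Z_\Gamma = g_\Gamma^{\rev} \cdot f_\Gamma$ gives
\begin{equation*}
Z_\Gamma|_{X/Y} = g_\Gamma^{\rev}|_X \cdot f_\Gamma|_{X/Y} + g_\Gamma^{\rev}|_{X/Y} \cdot f_\Gamma|_Y.
\end{equation*}
I would then substitute Corollary~\ref{cor:rightKLSfunction}, $f_\Gamma|_{X/Y} = -f_\Gamma|_X \cdot \Delta \widehat{\ell_\sigma}$, into the first summand, and substitute the $\rev$ of Theorem~\ref{thm:maingell}, namely $g_\Gamma^{\rev}|_{X/Y} = (\Delta \ell_\sigma)^{\rev} \cdot g_\Gamma^{\rev}|_Y$, into the second. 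Collecting, the $X$-part yields $g_\Gamma^{\rev}|_X \cdot f_\Gamma|_X$ and the $Y$-part yields $g_\Gamma^{\rev}|_Y \cdot f_\Gamma|_Y$; by the product rule applied to $Z_\Gamma$, these are exactly $Z_\Gamma|_X$ and $Z_\Gamma|_Y$. This produces the displayed compact formula.

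For the expanded pointwise statement, I would unwind each convolution and translate the sign $(-1)^{\rho_\Gamma(x',y)}$ coming from $\widehat{\Delta\ell_\sigma}(x',y)$ using \eqref{eq:rhoCyl}, which gives $\rho_\Gamma(x',y) = \rho_Y(y) - \rho_X(x') + 1$ for $x' \in X$ and $y \in Y$; the leading minus sign is then absorbed, yielding the stated $(-1)^{\rho_Y(y) - \rho_X(x')}$. There is no serious obstacle: the only conceptual ingredient is the product rule for restriction, and after that the corollary is algebraic bookkeeping on top of the two preceding results. (As a sanity check, the alternative identity $Z_\Gamma = g_\Gamma \cdot f_\Gamma^{\rev}$ gives a dual expression for $Z_\Gamma|_{X/Y}$; consistency of the two forms is forced by symmetry of $Z_\Gamma$ together with symmetry of $Z_X$ and $Z_Y$ on the subposets.)
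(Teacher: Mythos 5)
Your proof is correct and follows essentially the same path as the paper: both apply Lemma~\ref{lem:restrict} to $Z_\Gamma = g_\Gamma^{\rev}\cdot f_\Gamma$, then substitute Theorem~\ref{thm:maingell} and Corollary~\ref{cor:rightKLSfunction}, and finish with the sign translation via \eqref{eq:rhoCyl}. The only difference is a minor bookkeeping shortcut: the paper rewrites $g_\Gamma^{\rev} = Z_\Gamma\cdot f_\Gamma^{-1}$ and $f_\Gamma=(g_\Gamma^{-1})^{\rev}\cdot Z_\Gamma$ and cancels inverses before restricting, whereas you stay restricted throughout and recognize $g_\Gamma^{\rev}|_X\cdot f_\Gamma|_X = Z_\Gamma|_X$ and $g_\Gamma^{\rev}|_Y\cdot f_\Gamma|_Y = Z_\Gamma|_Y$ directly via another application of Lemma~\ref{lem:restrict} — which is arguably slightly cleaner but not a different method.
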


\subsection{Examples of main results}\label{ss:examplesmain}

In this section, we apply our main results to some examples that appeared in 
\cite{StapledonLWPosets}*{Section~4}. We continue with the notation of Section~\ref{ss:statements}.

\begin{example}\label{ex:polytope}
	We recall the following example from \cite{StapledonLWPosets}*{Section~4.1}. 
	Let $Q$ be a full-dimensional polytope in a real vector space $V$. 
	Let $F$ be a nonempty face of $Q$.
	Consider the triple $(\Gamma,\rho_\Gamma,q)$,
	where  
	$\Gamma = \face(Q)$ is the face lattice of $Q$, $\rho_\Gamma$ is the natural rank function, and $q = F \in \Gamma$. 
	Let $\sigma: X \to Y$ be the corresponding strong formal subdivision under Theorem~\ref{thm:introbijection}, where $X = \Gamma \smallsetminus [F,Q]$ and $Y = [F,Q]$. Here $Y$ is the  face lattice of the quotient polytope $Q/F$ \cite{BMIntersectionHomologyKalai}.  Let
	 $r_\Gamma$ be the natural weak rank function and let $\kappa_\Gamma$ be the Eulerian kernel. 
%		$r_\Gamma = \rho_\Gamma$ and $\kappa_\Gamma(z,z') = (t - 1)^{\rho_\Gamma(z,z')}$ as in Example~\ref{ex:t-1case}. 
%%	 Recall that in this case we write $\rho = r_\Gamma = \rho_\Gamma$, $\kappa = \kappa_\Gamma$, $f = f_\Gamma$, $g = g_\Gamma$, and $Z = Z_\Gamma$. 
	The polynomial $g(\Gamma) = g_\Gamma(\hat{0}_\Gamma,\hat{1}_\Gamma)$ is the \emph{$g$-polynomial} of $Q$ and is also denoted $g(Q)$. It was first considered by Stanley in \cite{StanleyGeneralized87}. 
	%By Example~\ref{ex:t-1casedetail}, $h_\sigma(\Gamma) = h(X)$. 
	
	We briefly recall the following geometric description of $\sigma$, and refer the reader to \cite{StapledonLWPosets}*{Section~4.1} for details.  After possible translation, we may assume that the origin lies in the relative interior of $F$. For each face $G$ of $Q$, let 
	$C(G)$ denotes the smallest cone in $V$ containing $G$. 
	Consider the fan $\Sigma = \{ C(G) : G \in X \}$ with support $C(Q)$. 
	Let $L$ be the linear span of $F$.  Equivalently, $L$ is the  largest linear subspace of $V$ contained in $C(Q)$.   Let $\phi: V \to V/L$ denote the projection map, and let $C = \phi(C(Q))$.
	Then $\phi$ induces a projective, surjective morphism of fans from $\Sigma$ to $C$, and $\phi$ induces the strong formal subdivision $\sigma$. That is, $X$ and $Y$ are the poset of faces of $\Sigma$ and $C$ respectively, and $\sigma(G)$ corresponds to the smallest face of $C$ containing $\phi(C(G))$ for all $G \in X$. Conversely, every strong formal subdivision induced by a  projective, surjective morphism between a fan and a pointed cone appears in this way. 
	% (up to a shift in the rank function $\rho_\Gamma$). 

	Recall from the introduction that 
	Braden and MacPherson introduced the \emph{relative $g$-polynomial} $g(Q,F)$ associated to the pair $(Q,F)$  \cite{BMIntersectionHomologyKalai}*{Proposition~2}. They define  $g(Q,Q) = g(Q)$. and then recursively define $g(Q,F)$ by the relation
	\begin{equation}\label{eq:BMdefine}
		\sum_{F \subset E \subset Q}  g(E,F) g(Q/E) = g(Q). 
	\end{equation}
	For example, if $F$ is a facet of $Q$, then $g(Q,F) = g(Q) - g(F)$.
	Recall from Example~\ref{ex:t-1casedetail}, that $\ell_{\sigma}(\Gamma)$ is the usual local $h$-polynomial associated to $\sigma$. 
	We claim that Theorem~\ref{thm:maingell} implies that 
		$$g(Q,F) = \Delta \ell_{\sigma}(\Gamma).$$
	To prove the claim,  define an element $\bar{g} \in \II(\Gamma)$ as follows. For any $z \le z' \in \Gamma$, let
	$$\bar{g}(z,z') = \begin{cases}
		g(E/G,G \vee F/G) &\textrm{ if } z = G \in X, z' = E \in Y, \\
		0 &\textrm{ otherwise.}
	\end{cases}
	$$
	Consider any $x \in X$ and $y \in Y$ such that $\sigma(x) \le y$. Suppose that $x$ corresponds to a face $G$ of $Q$, and $y$ corresponds to a face $E$ of $Q$. Then $\sigma(x)$ corresponds to the face $G \vee F$ of $Q$.
	The defining equation \eqref{eq:BMdefine} applied to the pair $(E/G,G \vee F/G)$ gives
	\[
		(\bar{g} \cdot g_\Gamma)(x,y) = 	\sum_{y \le y' \le \hat{1}_Y}  \bar{g}(x,y') g_\Gamma(y',y) = g_\Gamma(x,y).
	\]
	We deduce that $\bar{g} \cdot g_\Gamma = g_\Gamma|_{X/Y}$. 
	Then Theorem~\ref{thm:maingell} implies that $\bar{g} = g_\Gamma|_{X/Y} \cdot g_\Gamma^{-1} = \Delta \ell_{\sigma}$. We conclude that
	$g(Q,F) = \bar{g}(\hat{0}_X,\hat{1}_Y) = \Delta \ell_{\sigma}(\hat{0}_X,\hat{1}_Y) = \Delta \ell_{\sigma}(\Gamma)$, as desired.	
	
\end{example}

\begin{example}
	We have the following concrete example of Example~\ref{ex:polytope} that appeared in \cite{StapledonLWPosets}*{Example~4.1}. 
	Let $Q$ be a polygon with $s + 3$ vertices, and let $F$ be a vertex of $Q$. 	Consider the triple $(\Gamma,\rho_\Gamma,q)$,
	where  
	$\Gamma = \face(Q)$ is the face lattice of $Q$, $\rho_\Gamma$ is the natural rank function, and $q = F \in \Gamma$.  
	Let $P = [0,1]$. Then $\face(P) = B_2$. 
	Let $\cS$ be the regular polyhedral subdivision of $P$ with $s$ interior vertices.  	 Let $\sigma: \face(\cS) \to \face(P)$ be the corresponding strong formal subdivision, where $\face(\cS)$ and $\face(P)$ are equipped with the natural rank functions (see Example~\ref{ex:intropolytope}). 	Then $(\Gamma,\rho_\Gamma,q)$ corresponds to $\sigma$ under Theorem~\ref{thm:introbijection}. Let
	$r_\Gamma$ be the natural weak rank function and let $\kappa_\Gamma$ be the Eulerian kernel.  Using the results of the previous section, one may compute that
 $h_\sigma(\Gamma) = g(\Gamma) = f(\Gamma) = 1 + st$,  $\ell_\sigma(\Gamma) = \Delta \ell_\sigma(\Gamma) = g(Q,F) = st$, and 
$Z(\Gamma) = 1 + (2s + 3)t + (2s + 3)t^2 + t^3$.  

% (c.f. Example~\ref{ex:lowdegreehell}). 

\end{example}

\begin{example}\label{ex:B0}
	We recall the following example from \cite{StapledonLWPosets}*{Example~4.9}. Every strong formal subdivision to $B_0$ has the following form. 
		Let $B$ be an Eulerian poset of positive rank with rank function $\rho_B$. 		Consider the triple $(\Gamma,\rho_\Gamma,q)$, where $\Gamma = B$, $\rho_\Gamma = \rho_B$, and  $q = \hat{1}_\Gamma$. Let $\sigma: X \to Y$ be the corresponding strong formal subdivision under Theorem~\ref{thm:introbijection}.  Then $X = \partial B$, $Y = B_0$ is identified with $\{ q \}$, and the rank functions for $X$ and $Y$ are determined by \eqref{eq:rhoCyl}.  
		For example, consider Example~\ref{ex:polytope} with $F = Q$. 
		
	 By Example~\ref{ex:sigmaxequalsy} and Example~\ref{ex:sigmaxequalsyv3},
	 $h_{\sigma}(B) = \ell_{\sigma}(B)$, and $\Delta \ell_{\sigma}(B) = g_B(B)$. In the case 
	 when 
	 $r_\Gamma$ is the natural weak rank function and $\kappa_\Gamma$ is the Eulerian kernel, $h_{\sigma}(B) = \ell_{\sigma}(B) = h(\partial B)$ by Example~\ref{ex:t-1casedetail}, and $\Delta \ell_{\sigma}(B) = g(B)$.

%	 $h_{\sigma}(B)$ is the usual $h$-polynomial of $\partial B$, and $g_B(B)$ is the usual $g$-polynomial of $B$. 
%	 If we specialize further to the case of Example~\ref{ex:t-1caseBn} when $B = B_n$ for some $n > 0$, then 
%	 $h_{\sigma}(B_n) = \ell_{\sigma}(B_n) = 1 + t + \cdots + t^{n - 1}$ and $\Delta \ell_{\sigma}(B_n) = g_{B_n}(B_n) = 1$. 

\end{example}

\begin{example}\label{ex:B1}
	We recall the following example from \cite{StapledonLWPosets}*{Example~4.11}. 
	Recall from \cite[p.485]{StanleyFlagfVectors} that a  nonempty poset $B$ is \emph{near-Eulerian} if  there exists an Eulerian poset of positive rank $\tilde{\Sigma} B$, called the \emph{semisuspension} of $B$, and a maximal element $\hat{z} \in \partial (\tilde{\Sigma} B)$  such that $B = \partial (\tilde{\Sigma} B) \smallsetminus \{ \hat{z} \}$. The \emph{boundary} $\partial B$ is the lower order ideal of $B$ generated by all elements $z \in B$ %(necessarily of corank $1$)
	such that  precisely one element of $B$ is strictly greater than $z$.

	Every strong formal subdivision to $B_1$ has the following form. 
	Let $B$ be a near-Eulerian poset with rank function $\rho_B$. 
	Then $\tilde{\Sigma} B$ and $\partial B$ have induced rank functions 
	  $\rho_{\tilde{\Sigma} B}$ and $\rho_{\partial B}$ respectively.
%	  Recall from Section~\ref{ss:posets} the corresponding semisuspension $\tilde{\Sigma} B = B \cup   \{ \hat{z}, \hat{1}_{\Gamma}\}$ and boundary $\partial B = \{ z \in \tilde{\Sigma} B : z < \hat{z} \}$ with induced rank functions  $\rho_{\tilde{\Sigma} B}$ and $\rho_{\partial B}$ respectively. 
	Here 
	$\rho_{\tilde{\Sigma} B}(\hat{1}_{\Gamma}) = \rho_{\tilde{\Sigma} B}(\hat{z}) + 1 = \rho_{B}(\hat{0}_B) + \rank(B) + 1$.  
    Consider the triple $(\Gamma,\rho_\Gamma,q)$, where
		$\Gamma = \tilde{\Sigma} B$, $\rho_\Gamma = \rho_{\tilde{\Sigma} B}$, and $q = \hat{z}$. Let $\sigma: X \to Y$ be the corresponding strong formal subdivision under Theorem~\ref{thm:introbijection}, where $X = B$ and $Y = B_1$ is identified with $\{ \hat{z}, \hat{1}_{\Gamma}\}$. That is, 
		\[
		\sigma: B \to B_1,
		\]
		\[
		\sigma(z) = \begin{cases}
			\hat{0} &\textrm{if } z \in \partial B, \\
			\hat{1} &\textrm{otherwise. }
		\end{cases}
		\]
		 For example, consider Example~\ref{ex:polytope} with $F$ a facet of $Q$.
		  
		 By Remark~\ref{rem:altdeltaell},
		 $\Delta \ell_{\sigma}(\tilde{\Sigma} B) = g_\Gamma(\tilde{\Sigma} B) - g_\Gamma(\overline{\partial B})f_\Gamma(\hat{z},\hat{1}_\Gamma)$, where $\overline{\partial B} = [\hat{0}_{\tilde{\Sigma} B},\hat{z}] \subset \tilde{\Sigma} B$ is obtained from $\partial B$ by adjoining a maximal element.  In the case
		 	 when 
		 $r_\Gamma$ is the natural weak rank function and $\kappa_\Gamma$ is the Eulerian kernel,
%		  when $r_\Gamma$ and $\kappa_\Gamma$ are given by Example~\ref{ex:t-1case},   
		  $h_{\sigma}(\tilde{\Sigma} B) = h(B)$ by Example~\ref{ex:t-1casedetail}, and 	  $\Delta \ell_{\sigma}(\tilde{\Sigma} B) = g(\tilde{\Sigma} B) - g(\overline{\partial B})$.
%		  where $g_\Gamma(\tilde{\Sigma} B)$ and $g_\Gamma(\overline{\partial B})$ are the usual 
%		 $g$-polynomials of $\tilde{\Sigma} B$ and $\overline{\partial B}$ respectively (see Example~\ref{ex:t-1casedetail}). 
		 
\end{example}

%Let  $P \subset V$ and $P' \subset V'$ be  a polytopes in  a real vector spaces $V$ and  $V'$ respectively. 
%Let $0_V$ and $0_{V'}$ denote the origin in $V'$ and $V$ respectively.
%The \emph{free join} $P \star P'$ is the convex hull of 
%$P \times \{ 0_{V'} \} \times \{ 0 \}$ and 
%$\{ 0_V \} \times P'  \times \{ 1 \}$ in $V \oplus V' \oplus \R$.
%%The \emph{pyramid} $\Pyr(P)$ is the free join of $P$ and $P' = \{ 0 \} \subset \R$.  
%Then
%$\face(P \star P') = \face(P) \times \face(P')$, i.e., faces, including the empty face, of  $P \star P'$ have the form $F \star F'$, where $F$ and $F'$ are possibly empty faces of $P$ and $P'$ respectively. 

% and $\face(\Pyr(P)) = \Pyr(\face(P))$. The \emph{apex} of $\Pyr(P)$ is the vertex 
%$(0_V,0,1)$. 

\begin{example}\label{ex:productsv2}
let $B$ be a lower Eulerian poset with rank function $\rho_B$,  weak rank function $r_B$, and 
multiplicative and rank alternating element $\kappa_B \in \II(B) \cap U(B)$. 
%multiplicative and rank alternating $B$-kernel $\kappa_B$.
% and strictly order-preserving function $r_B$, right and left Kazhdan-Lusztig-Stanley functions $f_B$ and $g_B$ respectively, and $Z$-function $Z_B$. 
Consider a triple $(\Gamma,\rho_\Gamma,q)$ corresponding to a strong formal subdivision $\sigma: X \to Y$ under Theorem~\ref{thm:introbijection}. 
Consider a weak rank function $r_\Gamma$ and a
multiplicative and rank alternating element $\kappa_\Gamma \in \II(\Gamma) \cap U(\Gamma)$. 
%Consider a multiplicative and rank alternating $\Gamma$-kernel $\kappa_{\Gamma}$ and weak rank function $r_\Gamma \in I(\Gamma)$. 
%, right and left Kazhdan-Lusztig-Stanley functions $f_\Gamma$ and $g_\Gamma$ respectively, and $Z$-function $Z_\Gamma$. 
Recall from Example~\ref{ex:productsprepre} that $B \times \Gamma$ is lower Eulerian with rank function $\rho_{B \times \Gamma}$. 
Recall from Example~\ref{ex:products} that if we fix the corresponding
weak rank function $r_{B \times \Gamma}$, then  
$\kappa_{B} \times \kappa_{\Gamma}$ is a $(B \times \Gamma)$-kernel, 
$f_{B \times \Gamma} = f_B \times f_{\Gamma}$, $g_{B \times \Gamma} = g_B \times g_{\Gamma}$, and $Z_{B \times \Gamma} = Z_{B} \times Z_{\Gamma}$.
Moreover, in this case, $\kappa_{B} \times \kappa_{\Gamma}$ is multiplicative and rank alternating.

%
%Recall that the corresponding
%right and left Kazhdan-Lusztig-Stanley functions are $f_{B} \times f_\Gamma$ and $g_B \times g_{\Gamma}$ respectively, and the corresponding $Z$-function is  $Z_B \times Z_\Gamma$.

With the notation of Example~\ref{ex:productsprepre} and
by \cite{StapledonLWPosets}*{Example~4.14}, $\id_B \times \sigma: B \times X \to B \times Y$ is a strong formal subdivision corresponding to the triple $(B \times \Gamma, \rho_{B \times \Gamma},(\hat{0}_B,q))$ under Theorem~\ref{thm:introbijection}. 
We claim that
\begin{equation}\label{eq:productidentity}
	h_{\id_B \times \sigma}  =  g_B \times h_{\sigma}, \: \: \: 
	\ell_{\id_B \times \sigma}  = 
	\delta_B \times \ell_{\sigma}, \: \: \: 
	\Delta \ell_{\id_B \times \sigma} =  
	\delta_B \times \Delta \ell_{\sigma}. 
\end{equation}
%\[
%\ell_{\id_B \times \sigma}  = 
%\delta_B \cdot \ell_{\sigma},
%\]
%\[
%\Delta \ell_{\id_B \times \sigma} =  
%\delta_B \cdot \Delta \ell_{\sigma}. 
%\]
%
%We claim that  for any 
%$z \le z' \in B$, and $x \in X$, $y \in Y$ with $\sigma(x) \le y$, 
%\[
%h_{\id_B \times \sigma}( (z,x),(z',y))  =  g_B(z,z') h_{\sigma}(x,y),
%\]
%\[
%\ell_{\id_B \times \sigma}( (z,x),(z',y))  = 
%	\delta_B(z,z')\ell_{\sigma}(x,y),
%\]
%\[
%\Delta \ell_{\id_B \times \sigma}( (z,x),(z',y))  =  
%	\delta_B(z,z')\Delta \ell_{\sigma}(x,y). 
%\]
%%\[
%%\ell_{\id_B \times \sigma}( (z,x),(z',y))  =  \begin{cases}
%%	\ell_{\sigma}(x,y) &\textrm{ if } z  = z', \\
%%	0 &\textrm{ otherwise. }
%%\end{cases}
%%\]
%%\[
%%\Delta \ell_{\id_B \times \sigma}( (z,x),(z',y))  =  \begin{cases}
%%	\Delta \ell_{\sigma}(x,y) &\textrm{ if } z  = z', \\
%%	0 &\textrm{ otherwise. }
%%\end{cases}
%%\]
These equations can be computed directly from  Definition~\ref{def:hellpolynomial}.
Alternatively, by Theorem~\ref{thm:maingell},
\[
\Delta \ell_{\id_B \times \sigma} = (g_B \times g_\Gamma)|_{(B \times X)/(B \times Y)} \cdot (g_B \times g_\Gamma)^{-1} = 
(g_B \times (g_\Gamma|_{X/Y})) \cdot (g_B^{-1} \times g_\Gamma^{-1}) = \delta_B \times \Delta \ell_{\sigma}.
%(g_B \cdot g_B^{-1}) \times \Delta \ell_{\sigma}. 
\]
This establishes the third equation, and the second equation follows after expanding definitions. The first equation also follows using $h_{\id_B \times \sigma} = \ell_{\id_B \times \sigma} \cdot (g_B \times g_\Gamma)$. 

For example, taking  $X = Y = B_0$ with the natural rank function and letting $\sigma = \id_{B_0}$,  we deduce that $\ell_{\id_B} = \Delta \ell_{\id_B} = \delta_B$.
%As another example, consider Example~\ref{ex:polytope} with $\Gamma = \face(P')$ for some polytope $P'$, $\rho_{\Gamma'}$ the natural rank function, and $q = F'$ for some nonempty face $F'$ of $P'$. 
%%Let $\sigma: X \to Y$ be the corresponding strong formal subdivision. Let $B = \face(P)$ for some polytope $P$. 
%Then $B \times \Gamma = \face(P \star P')$, $\rho_{B \times \Gamma}$ is the natural rank function, and $(\hat{0}_B,q)$ corresponds to the nonempty face $\emptyset \star F'$  of $P \star P'$. 

%As another example, suppose that we fix strictly order-preserving functions and kernels for $B$, $\Gamma$, and $B \times \Gamma$ as in Example~\ref{ex:t-1case}. Using Example~\ref{ex:t-1casedetail}, \eqref{eq:productidentity} implies that 
%$h(B \times X) = g(B)h(X)$ and $\ell_{\id_B \times \sigma}(B \times \Gamma)  = 0$.  
%\[
%h(B \times X) = h_{\id_B \times \sigma}(B \times \Gamma)  =  g(B) h_{\sigma}(\Gamma) = g(B)h(X), \: \: \: 
%\ell_{\id_B \times \sigma}(B \times \Gamma)  = 0 
%\delta_B \times \ell_{\sigma}, \: \: \: 
%\Delta \ell_{\id_B \times \sigma} =  
%\delta_B \times \Delta \ell_{\sigma}. 
%\]

%Also, $B \times Y = \face(P \star P'/\emptyset \star F')$. 
%Then \eqref{eq:productidentity} implies that 
%$h_{\id_B \times \sigma}(\face(P \star P'))  =  g(\face(P))h_{\sigma}(\face(P'))$ and 
%$\ell_{\id_B \times \sigma}(\face(P \star P'))  = 0$. 
\end{example}

\begin{example}\label{ex:productsformulas}
	Consider strong formal subdivisions $\sigma: X \to Y$  and $\sigma': X' \to Y'$.   With the notation of Example~\ref{ex:productsprepre} and
	by \cite{StapledonLWPosets}*{Example~4.15},  $\sigma \times \sigma': X \times X' \to Y \times Y'$ is a strong formal subdivision.
	Consider the elements 
	$(\Cyl(\sigma),\rho_{\Cyl(\sigma)},q)$,  $(\Cyl(\sigma'),\rho_{\Cyl(\sigma')},q')$, and $(\Cyl(\sigma \times \sigma'),\rho_{\Cyl(\sigma \times \sigma')},(q,q'))$  corresponding to $\sigma$, $\sigma'$ and $\sigma \times \sigma'$ respectively under Theorem~\ref{thm:introbijection}. 
%	Then  corresponds to $\sigma \times \sigma'$. 
	We warn that  $\rho_{\Cyl(\sigma \times \sigma')}$ is not the restriction of $\rho_{\Cyl(\sigma) \times \Cyl(\sigma')}$ to $\Cyl(\sigma \times \sigma') \subsetneq \Cyl(\sigma) \times \Cyl(\sigma')$. Indeed, while $\rho_{\Cyl(\sigma \times \sigma')}$ and $\rho_{\Cyl(\sigma) \times \Cyl(\sigma')}$ restrict to $\rho_{X \times X'}$ on $X \times X'$, they restrict to $\rho_{Y \times Y'}[1]$ and $\rho_{Y \times Y'}[2]$ respectively on $Y \times Y'$. 
	
%    We have $\sigma \times \sigma' = (\sigma \times \id_{Y'}) \circ (\id_X \times \sigma')$. 	With the notation of Proposition~\ref{prop:composition}, 
%	fix  a compatible choice of a strictly order-preserving function $r_{\Cyl(\id_X \times \sigma',\sigma \times \id_{Y'})} : \Cyl(\id_X \times \sigma',\sigma \times \id_{Y'}) \to \Z$, and 
%	an element $\kappa_{\Cyl(\id_X \times \sigma',\sigma \times \id_{Y'})}\in \II(\Cyl(\id_X \times \sigma',\sigma \times \id_{Y'})) \cap U(\Cyl(\id_X \times \sigma',\sigma \times \id_{Y'}))$ that is multiplicative and rank alternating. 
%	
%	$X \times X'  ---> X \times Y' ----> X' \times Y'$
%	
%	For example, 
%
%	WORKING ABOVE

	Suppose that we fix the natural weak rank function and Eulerian kernel
	for $\Cyl(\sigma)$, $\Cyl(\sigma')$, and $\Cyl(\sigma \times \sigma')$. 
%	Suppose that we fix weak rank functions and multiplicative and rank alternating kernels for $\Cyl(\sigma)$, $\Cyl(\sigma')$, and $\Cyl(\sigma \times \sigma')$ as in Example~\ref{ex:t-1case}. 
We will prove the following formulas. 
	For any $(x,x') \in X \times X'$ and $(y,y') \in Y \times Y'$ such that $(\sigma(x),\sigma(x')) \le (y,y')  \in Y \times Y'$, 
	\begin{equation}\label{eq:productell}
				\ell_{\sigma \times \sigma'}((x,x'),(y,y'))  = \ell_{\sigma}(x,y) \ell_{\sigma'}(x',y'),
	\end{equation}
	\begin{equation}\label{eq:productgCyl}
			g_{\Cyl(\sigma \times \sigma')}((x,x'),(y,y')) = 
					\sum_{\tilde{y}} \sum_{\tilde{y}'} 
			\Delta \ell_{\sigma \times \sigma'}( (x,x'), (\tilde{y},\tilde{y}')) g_Y(\tilde{y},y) g_{Y'}(\tilde{y}',y'),
		%	\sum_{\sigma(x) \le \tilde{y} \le y} \sum_{\sigma'(x') \le \tilde{y}' \le y'} 
%		\sum_{\tilde{y}} \sum_{\tilde{y}'} 
%		\Delta \left( \ell_{\sigma}(x,\tilde{y}) \ell_{\sigma'}(x',\tilde{y}') \right) g_Y(\tilde{y},y) g_{Y'}(\tilde{y}',y'),
	\end{equation}
		where the above sum varies over all $\tilde{y} \in Y$ and $\tilde{y}' \in Y'$ such that  $\sigma(x) \le \tilde{y} \le y$ and $\sigma'(x') \le \tilde{y}' \le y'$. For example, if $\Cyl(\sigma)$ and $\Cyl(\sigma')$ are Eulerian, or, equivalently, if $Y$ and $Y'$ are Eulerian, then setting $(x,x') = (\hat{0}_X,\hat{0}_{X'})$ and $(y,y') = (\hat{1}_Y,\hat{1}_{Y'})$ in \eqref{eq:productell} gives
		$\ell_{\sigma \times \sigma'}(\Cyl(\sigma \times \sigma')) = \ell_{\sigma}(\Cyl(\sigma))\ell_{\sigma'}(\Cyl(\sigma'))$, i.e., the local $h$-polynomial is multiplicative. 
%		\[
%		g(\Cyl(\sigma \times \sigma')) = 				\sum_{\tilde{y}} \sum_{\tilde{y}'} 
%		\Delta \ell_{\sigma \times \sigma'}( (x,x'), (\tilde{y},\tilde{y}')) g([\tilde{y},\hat{1}_Y]) g([\tilde{y}',\hat{1}_{Y'}]),.
%		\]

		Observe that \eqref{eq:productell} and \eqref{eq:productgCyl} allow us to compute $g_{\Cyl(\sigma \times \sigma')}$ in terms of $g_{\Cyl(\sigma)}$ and $g_{\Cyl(\sigma')}$. Indeed, 
		 $g_{\Cyl(\sigma \times \sigma')}|_{X \times X'} = g_{\Cyl(\sigma)} \times g_{\Cyl(\sigma')}|_{X \times X'}$ restricts to $g_X \times g_{X'}$, and  $g_{\Cyl(\sigma \times \sigma')}|_{Y \times Y'} = g_{\Cyl(\sigma)} \times g_{\Cyl(\sigma')}|_{Y \times Y'}$ restricts to $g_Y \times g_{Y'}$. 		By Theorem~\ref{thm:maingell}, $\Delta \ell_\sigma$ and $\Delta \ell_{\sigma'}$  and hence $\ell_\sigma$ and $\ell_{\sigma'}$ are determined by $g_{\Cyl(\sigma)}$ and $g_{\Cyl(\sigma')}$ respectively. Finally, \eqref{eq:productell} and \eqref{eq:productgCyl} determine 
		 $g_{\Cyl(\sigma \times \sigma')}|_{(X \times X')/(Y \times Y')}$.

		We now prove the formulas. First observe that \eqref{eq:productgCyl} follows from Theorem~\ref{thm:maingell} and \eqref{eq:productell}, so we need to establish \eqref{eq:productell}. 
		 We have $\sigma \times \sigma' = (\sigma \times \id_{Y'}) \circ (\id_X \times \sigma')$. 	
 Fix 
		  the natural weak rank function and Eulerian kernel
		 for
		  $\Cyl(\id_X \times \sigma')$ and $\Cyl(\sigma \times \id_{Y'})$. 
		  		 Applying Proposition~\ref{prop:composition} with $\sigma$ replaced by $\id_X \times \sigma'$, and $\tau$ replaced by $\sigma \times \id_{Y'}$, shows that		  
%		  In this case, Proposition~\ref{prop:composition} reduces to	\cite{KatzStapledon16}*{Corollary~4.7} and states that 
		 for any $(x,x') \in X \times X'$ and $(y,y') \in Y \times Y'$ such that $(\sigma(x),\sigma(x')) \le (y,y') \in Y \times Y'$, 
	\begin{align*}
					\ell_{\sigma \times \sigma'}((x,x'),(y,y'))  = \sum_{(\tilde{x},\tilde{y}')} \ell_{\id_X \times \sigma'}((x,x'),(\tilde{x},\tilde{y}')) \ell_{\sigma \times \id_{Y'}}((\tilde{x},\tilde{y}'),(y,y')), 
	\end{align*}
			where the above sum varies over all $(\tilde{x},\tilde{y}') \in X \times Y'$ such that $(x,\sigma'(x')) \le (\tilde{x},\tilde{y}') \in X \times Y'$ and $(\sigma(\tilde{x}),\tilde{y}') \le (y,y') \in Y \times Y'$.
					By \eqref{eq:productidentity},
			$\ell_{\id_X \times \sigma'} = \delta_X \times \ell_{\sigma'}$ and $\ell_{\sigma \times \id_{Y'}} = \ell_\sigma \times \delta_{X'}$. Then the above expression simplifies to
				$\ell_{\sigma \times \sigma'}((x,x'),(y,y'))  = \ell_{\sigma}(x,y) \ell_{\sigma'}(x',y'),$ as desired.  
\end{example}

\subsection{Proofs of main results}\label{ss:proofsmain}

In this section, we prove Proposition~\ref{prop:symmetry}, Theorem~\ref{thm:maingell}, Corollary~\ref{cor:rightKLSfunction}, and Corollary~\ref{cor:Zmappingformula}. 
  %and Proposition~\ref{prop:composition}. 
  We continue with the notation of Section~\ref{ss:statements}.

	We need the following lemma which follows from the assumption that $\kappa_\Gamma$ is multiplicative and rank alternating.

\begin{lemma}\label{lem:alternatingmultiplicative}
	With the setup of Section~\ref{ss:statements},
	\[
	\kappa_{\Gamma}|_{X/Y} = 
	\kappa_\Gamma|_{(X/Y)^\circ} \cdot \kappa_\Gamma = 
	- \kappa_\Gamma \cdot (\kappa_\Gamma|_{(X/Y)^\circ})^{\rev}.
	\]
\end{lemma}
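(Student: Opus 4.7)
The plan is to verify each of the two claimed equalities pointwise on intervals $[z,z']$ in $\Gamma$. Observe first that $\kappa_{\Gamma}|_{X/Y}(z,z')$ vanishes unless $z \in X$ and $z' \in Y$, in which case $z \le z'$ in $\Gamma$ means $\sigma(z) \le z'$, and the value is just $\kappa_\Gamma(z,z')$. So in all cases where $z,z'$ both lie in $X$ or both lie in $Y$, one can check (using that $\Gamma$ puts no element of $X$ above an element of $Y$) that the convolutions on the right also vanish, and only the ``mixed'' case needs real work.

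For the first equality, fix $z = x \in X$, $z' = y \in Y$ with $\sigma(x) \le y$. In the convolution
\[
(\kappa_\Gamma|_{(X/Y)^\circ}\cdot\kappa_\Gamma)(x,y) = \sum_{x \le z'' \le y}\kappa_\Gamma|_{(X/Y)^\circ}(x,z'')\,\kappa_\Gamma(z'',y),
\]
the first factor forces $z'' = \sigma(x)$, so the sum collapses to $\kappa_\Gamma(x,\sigma(x))\,\kappa_\Gamma(\sigma(x),y)$, and multiplicativity of $\kappa_\Gamma$ across the chain $x \le \sigma(x) \le y$ in $\Gamma$ identifies this with $\kappa_\Gamma(x,y)$. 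This gives the first equality.

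For the second equality, use rank-alternatingness to rewrite
\[
(\kappa_\Gamma|_{(X/Y)^\circ})^{\rev}(z'',y) = (-1)^{\rho_\Gamma(z'',y)}\kappa_\Gamma(z'',y)
\]
whenever $z'' \in X$ and $\sigma(z'') = y$, and zero otherwise. The convolution
$-\kappa_\Gamma\cdot(\kappa_\Gamma|_{(X/Y)^\circ})^{\rev}$ evaluated at $(x,y)$ with $\sigma(x) \le y$ then reduces, after another application of multiplicativity, to
\[
-\kappa_\Gamma(x,y)\sum_{\substack{x \le z'' \in X\\ \sigma(z'') = y}}(-1)^{\rho_\Gamma(z'',y)}.
\]
Using \eqref{eq:rhoCyl} to rewrite $\rho_\Gamma(z'',y) = \rho_Y(y) - \rho_X(z'') + 1$ and then invoking the defining identity \eqref{eq:strongsubdivisionequality} of a strong formal subdivision, the inner sum equals $-1$, yielding $\kappa_\Gamma(x,y) = \kappa_\Gamma|_{X/Y}(x,y)$.

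The main bookkeeping obstacle is matching the sign conventions: one must carefully track that rank-alternatingness is stated with respect to $\rho_\Gamma$, while \eqref{eq:strongsubdivisionequality} is stated in terms of the shifted ranks $\rho_X$ and $\rho_Y$ coming from \eqref{eq:rhoCyl}. Everything else is a routine unwinding of the definitions of $p|_{X/Y}$, $p|_{(X/Y)^\circ}$, and the involution $p \mapsto p^{\rev}$.
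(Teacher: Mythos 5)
Your proof is correct and takes essentially the same approach as the paper: collapse the convolutions (first one by the support of $\kappa_\Gamma|_{(X/Y)^\circ}$, second one similarly), use multiplicativity to combine $\kappa_\Gamma(x,z'')\kappa_\Gamma(z'',y)=\kappa_\Gamma(x,y)$, use rank-alternatingness to produce the $(-1)^{\rho_\Gamma(z'',y)}$ factor, and then evaluate the resulting sign-sum via \eqref{eq:strongsubdivisionequality}. The paper's argument does the sign bookkeeping a bit more tersely, pulling out a factor of $-1$ and matching the remaining sum directly against the identity, whereas you explicitly convert $\rho_\Gamma(z'',y)$ to $\rho_Y(y)-\rho_X(z'')+1$ via \eqref{eq:rhoCyl} before applying the identity — but these are the same computation.
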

\begin{proof}
	Consider $z \le z'$ in $\Gamma$. We need to show that 
	$\kappa_{\Gamma}|_{X/Y}(z,z') = (\kappa_\Gamma|_{(X/Y)^\circ} \cdot \kappa_\Gamma)(z,z') = - (\kappa_\Gamma \cdot (\kappa_\Gamma|_{(X/Y)^\circ})^{\rev})(z,z').$
	All terms are zero unless $z \in X$ and $z' \in Y$. 
	Assume that $z \in X$ and $z' \in Y$. 
	Using the fact that $\kappa_\Gamma$ is multiplicative, we compute
	\begin{align*}
		(\kappa_\Gamma|_{(X/Y)^\circ} \cdot \kappa_\Gamma)(z,z') &=  \kappa_\Gamma(z,\sigma(z)) \kappa_\Gamma(\sigma(z),z') = \kappa_\Gamma(z,z') = \kappa_{\Gamma}|_{X/Y}(z,z').
	\end{align*}
	Also,
	using the fact that $\kappa_\Gamma$ is multiplicative and rank alternating, as well as \eqref{eq:strongsubdivisionequality}, we compute
	\begin{align*}
		(\kappa_\Gamma \cdot (\kappa_\Gamma|_{(X/Y)^\circ})^{\rev})(z,z') &= \sum_{ \substack{z \le x' \in X \\ \sigma(x') = z'} } \kappa_\Gamma(z,x') \kappa_\Gamma(x',z')^{\rev} \\
		&= \sum_{ \substack{z \le x' \in X \\ \sigma(x') = z'} } \kappa_\Gamma(z,x') (-1)^{\rho_\Gamma(x',z')}\kappa_\Gamma(x',z') \\
		&= - \kappa_\Gamma(z,z') \sum_{ \substack{z \le x' \in X \\ \sigma(x') = z'} } (-1)^{\rho_\Gamma(x',z') - 1} \\
		&=  - \kappa_\Gamma(z,z'). 
	\end{align*}
\end{proof}

	\begin{proof}[Proof of Proposition~\ref{prop:symmetry}]
%		The equivalence of the two statements follows from the observation that 
%%		$t(t^{-1} - 1) = -(t - 1)$. 
%%	We need to show that for all $x \in X$ and $y \in Y$ such that $\sigma(x) \le y$, 
%		\[
%	\ell_{\sigma}(x,y;t) = t^{r_\Gamma(x,y) - 1}\ell_{\sigma}(x,y;t^{-1}).
%	\]	
%	if and only if 
%%	Equivalently, 
%			\[
%	(t - 1)\ell_{\sigma}(x,y;t) = - t^{r_\Gamma(x,y)}(t^{-1} - 1)\ell_{\sigma}(x,y;t^{-1}).
%	\]	
%	That is, 
	We need to show that $(t - 1) \cdot \ell_{\sigma}$ is antisymmetric. Recall from Definition~\ref{def:hellpolynomial} that $(t - 1) \cdot \ell_{\sigma} = g_\Gamma \cdot \kappa_{\Gamma}|_{(X/Y)^\circ} \cdot g_\Gamma^{-1}$.
	Recall that  $g_\Gamma^{\rev} = g_\Gamma \cdot \kappa_\Gamma$.  Using Lemma~\ref{lem:alternatingmultiplicative}, we compute
	\begin{align*}
		((t - 1) \cdot \ell_{\sigma})^{\rev} &= (g_\Gamma \cdot \kappa_{\Gamma}|_{(X/Y)^\circ} \cdot g_\Gamma^{-1})^{\rev}  \\
		&= g_\Gamma \cdot \kappa_\Gamma \cdot (\kappa_\Gamma|_{(X/Y)^\circ})^{\rev} \cdot (g_\Gamma^{-1})^{\rev} \\
				&= - g_\Gamma \cdot \kappa_\Gamma|_{(X/Y)^\circ} \cdot \kappa_\Gamma \cdot (g_\Gamma^{-1})^{\rev} \\
				&= - (t - 1) \cdot \ell_{\sigma}. \\
	\end{align*}

\end{proof}

Before proving Theorem~\ref{thm:maingell}, we need the following lemma.
It follows from the definition of the convolution product on 
$\II(\Gamma)$, together with the fact that $X$ is a lower order ideal of $\Gamma$, and $Y$ is an upper order ideal of $\Gamma$. 

\begin{lemma}\label{lem:restrict}
		With the setup of Section~\ref{ss:statements}, if 
	$p,p' \in \II(\Gamma)$, then
	\[
	(p \cdot p')|_X = p|_X \cdot p'|_X = p \cdot p'|_X,
	\]
	\[
	(p \cdot p')|_Y = p|_Y \cdot p'|_Y = p|_Y \cdot p',
	\]
	\[
	(p \cdot p')|_{X/Y} = p \cdot p'|_{X/Y} + p|_{X/Y} \cdot p'. 
	\]
\end{lemma}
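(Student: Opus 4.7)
The plan is to verify each of the three identities directly from the definition of the convolution product, using two structural observations about the decomposition $\Gamma = X \sqcup Y$ from the bijection in Theorem~\ref{thm:introbijection}: namely, $X$ is a lower order ideal of $\Gamma$ (since $Y = \{z \in \Gamma : q \le z\}$ is upward-closed), and $Y$ is an upper order ideal. Consequently, for any interval $[z,z']$ in $\Gamma$: if $z' \in X$ then every $z'' \in [z,z']$ lies in $X$, if $z \in Y$ then every $z'' \in [z,z']$ lies in $Y$, and if $z \in X$ and $z' \in Y$ then $[z,z'] = ([z,z'] \cap X) \sqcup ([z,z'] \cap Y)$.

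First I would handle the identity $(p \cdot p')|_X = p|_X \cdot p'|_X = p \cdot p'|_X$. Both sides vanish unless $z,z' \in X$, so fix such $z \le z'$; by the lower order ideal property every $z'' \in [z,z']$ also lies in $X$, so $p(z,z'') = p|_X(z,z'')$ and $p'(z'',z') = p'|_X(z'',z')$ for every term of the convolution, and the equalities become immediate. The identity $(p \cdot p')|_Y = p|_Y \cdot p'|_Y = p|_Y \cdot p'$ is completely symmetric: both sides vanish unless $z,z' \in Y$, and then the upper order ideal property forces every intermediate $z''$ into $Y$.

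The $X/Y$ identity is the substantive one. Both sides vanish unless $z \in X$ and $z' \in Y$; in that case I would split the convolution according to whether the intermediate element $z''$ belongs to $X$ or to $Y$,
\[
(p \cdot p')(z,z') = \sum_{\substack{z \le z'' \le z' \\ z'' \in X}} p(z,z'')p'(z'',z') + \sum_{\substack{z \le z'' \le z' \\ z'' \in Y}} p(z,z'')p'(z'',z'),
\]
and identify the first sum with $(p \cdot p'|_{X/Y})(z,z')$ (here $z'' \in X$ and $z' \in Y$, so $p'|_{X/Y}(z'',z') = p'(z'',z')$) and the second sum with $(p|_{X/Y} \cdot p')(z,z')$ (here $z \in X$ and $z'' \in Y$, so $p|_{X/Y}(z,z'') = p(z,z'')$). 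To finish I would also check that $p \cdot p'|_{X/Y}$ and $p|_{X/Y} \cdot p'$ each vanish on intervals outside $X/Y$: if $z' \notin Y$ the first is zero termwise, and if $z \notin X$ the order-ideal property forces every $z'' \ge z$ to lie in $Y$, killing the first sum and hence the first expression; an analogous argument dispatches the second.

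None of this is really an obstacle — the whole lemma is a bookkeeping exercise once one notes that $X$ and $Y$ are complementary order ideals in $\Gamma$. The one point to be careful about is the case analysis in the third identity, ensuring that the two summands on the right-hand side truly partition the convolution rather than double-count or miss anything.
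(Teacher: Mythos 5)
Your proof is correct and takes essentially the same route as the paper's: the paper likewise reduces each identity to the facts that $X$ is a lower order ideal and $Y$ an upper order ideal of $\Gamma$, then for the first two identities notes that all intermediate $z''$ stay in $X$ (resp. $Y$), and for the third splits the convolution sum by $z'' \in X$ versus $z'' \in Y$ and checks vanishing outside $X/Y$-intervals. The only difference is cosmetic phrasing in the vanishing check for the $X/Y$ identity.
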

\begin{proof}
	Consider $z \le z' \in \Gamma$. 
	If $z' \notin X$, then 	$(p \cdot p')|_X(z,z') = (p|_X \cdot p'|_X)(z,z') = (p \cdot p'|_X)(z,z') = 0$. If $z' \in X$, then $z \in X$, and $(p \cdot p')|_X(z,z') = (p|_X \cdot p'|_X)(z,z') = (p \cdot p'|_X)(z,z') = (p \cdot p')(z,z')$. This establishes the first set of equations. Similarly, if $z \notin Y$, then $(p \cdot p')|_Y(z,z') = (p|_Y \cdot p'|_Y)(z,z') = (p|_Y \cdot p)(z,z') = 0$. If $z \in Y$, then $z' \in Y$, and $(p \cdot p')|_Y(z,z') = (p|_Y \cdot p'|_Y)(z,z') = (p|_Y \cdot p)(z,z')  = (p \cdot p')(z,z')$. This establishes the  second  set of equations.
	If $z \in X$ and $z' \in Y$, then we compute
	\begin{align*}
		(p \cdot  p')|_{X/Y}(z,z') &= (p \cdot p')(z,z') \\
		&= 	 \sum_{\substack{z \le z'' \le z' \\ z'' \in X}} p(z,z'') p'(z'',z') + \sum_{\substack{z \le z'' \le z' \\ z'' \in Y}} p(z,z'') p'(z'',z') \\
		&= \sum_{z \le z'' \le z'} p(z,z'') p'|_{X/Y}(z'',z') + \sum_{z \le z'' \le z'} p|_{X/Y}(z,z'') p'(z'',z') \\
		&=  (p \cdot p'|_{X/Y})(z,z') + (p|_{X/Y} \cdot p')(z,z').
	\end{align*}
	Suppose that $z \notin X$ or $z' \notin Y$. Then $(p \cdot p')|_{X/Y}(z,z') = 0$ by definition.  
	If $(p \cdot p'|_{X/Y})(z,z')$ is nonzero,  then $z' \in Y$, and $z'' \in X$ for some $z \le z''$, and hence $z \in X$, a contradiction. Hence $(p \cdot p'|_{X/Y})(z,z') = 0$. Similarly, if  $(p|_{X/Y} \cdot p')(z,z')$ is nonzero, then $z \in X$ and 
	$z'' \in Y$ for some $z'' \le z'$, a contradiction. Hence $(p|_{X/Y} \cdot p')(z,z') = 0$. We conclude that the third  set of equations hold.
\end{proof}

\begin{proof}[Proof of Theorem~\ref{thm:maingell}]
	By Lemma~\ref{lem:restrict} applied to $g_\Gamma^{\rev}  = g_\Gamma \cdot \kappa_\Gamma$, we have
	\[
	(g_\Gamma|_{X/Y})^{\rev} = g_\Gamma \cdot \kappa_{\Gamma}|_{X/Y} + g_\Gamma|_{X/Y} \cdot \kappa_\Gamma. 
	\]
	By Lemma~\ref{lem:alternatingmultiplicative}, this is equivalent to 
	\[
	(g_\Gamma|_{X/Y})^{\rev} = (g_\Gamma \cdot \kappa_\Gamma|_{(X/Y)^\circ} + g_\Gamma|_{X/Y}) \cdot \kappa_\Gamma. 
	\]
	Multiplying both sides on the right by $(g_\Gamma^{-1})^{\rev} = \kappa_\Gamma^{-1} \cdot g_\Gamma^{-1}$ gives
	\[
	(g_\Gamma|_{X/Y} \cdot g_\Gamma^{-1})^{\rev} = (g_\Gamma \cdot \kappa_\Gamma|_{(X/Y)^\circ} + g_\Gamma|_{X/Y}) \cdot g_\Gamma^{-1}. 
	\]
	Using  Definition~\ref{def:hellpolynomial}, this is equivalent to
	\[
	(g_\Gamma|_{X/Y} \cdot g_\Gamma^{-1})^{\rev} - g_\Gamma|_{X/Y} \cdot g_\Gamma^{-1} = g_\Gamma \cdot \kappa_\Gamma|_{(X/Y)^\circ} \cdot g_\Gamma^{-1} = (t - 1) \cdot \ell_{\sigma}. 
	\]
	Since $g_\Gamma \in \II_{1/2}(\Gamma)$, we have $g_\Gamma|_{X/Y}  \in \II_{1/2}(\Gamma)$. It follows from Lemma~\ref{lem:inverse} that $g_\Gamma^{-1} \in \II_{1/2}(\Gamma)$. Since $\II_{1/2}(\Gamma)$ is a ring, we deduce that $g_\Gamma|_{X/Y} \cdot g_\Gamma^{-1} \in \II_{1/2}(\Gamma)$. 
	On the other hand, recall from \eqref{eq:tildeell} that 
		 $\Delta \ell_{\sigma}$ is the unique element in  $\II_{1/2}(\Gamma)$ satisfying
		$(\Delta \ell_{\sigma})^{\rev} - \Delta \ell_{\sigma}  = (t - 1) \cdot \ell_{\sigma}$. 
		We conclude that $g_\Gamma|_{X/Y} \cdot g_\Gamma^{-1} = \Delta \ell_{\sigma}$. Equivalently, 
		$g_\Gamma|_{X/Y} = \Delta \ell_{\sigma} \cdot g_\Gamma$. Finally, observe that $\Delta \ell_{\sigma} \cdot g_\Gamma = \Delta \ell_{\sigma} \cdot g_\Gamma|_Y$, since $\Delta \ell_{\sigma} = (\Delta \ell_{\sigma})|_{X/Y}$.

\end{proof}

			\begin{proof}[Proof of Corollary~\ref{cor:rightKLSfunction}]
	By Lemma~\ref{lem:inverse}, 
	$f_\Gamma \cdot \widehat{g_\Gamma} = \delta_\Gamma$. 
	By Lemma~\ref{lem:restrict}, 
	\[
	f_\Gamma|_{X/Y} \cdot \widehat{g_\Gamma} + f_\Gamma \cdot \widehat{g_\Gamma}|_{X/Y} = \delta_\Gamma|_{X/Y} = 0. 
	\]
	Multiplying both sides on the right by $(\widehat{g_\Gamma})^{-1}$, and using Theorem~\ref{thm:maingell}, we compute
	\begin{align*}
		f_\Gamma|_{X/Y} = - f_\Gamma \cdot \widehat{g_\Gamma}|_{X/Y} \cdot (\widehat{g_\Gamma})^{-1} 
		= - f_\Gamma \cdot \Delta \widehat{\ell_{\sigma}} \cdot \widehat{g_\Gamma} \cdot (\widehat{g_\Gamma})^{-1} 
		= - f_\Gamma \cdot \Delta \widehat{\ell_{\sigma}}. 
	\end{align*}
	 Finally, observe that $f_\Gamma \cdot \Delta \widehat{\ell_{\sigma}} =  - f_\Gamma|_X \cdot \Delta \widehat{\ell_{\sigma}}$, since $\Delta \widehat{\ell_{\sigma}} = (\Delta \widehat{\ell_{\sigma}})|_{X/Y}$.  When expanding terms we also use that $-\Delta \widehat{\ell_{\sigma}}(x',y) =   -(-1)^{\rho_\Gamma(x',y)} \Delta \ell_{\sigma}(x',y)
	 =   (-1)^{\rho_Y(y) - \rho_X(x')} \Delta \ell_{\sigma}(x',y)$. 
\end{proof}

			\begin{proof}[Proof of Corollary~\ref{cor:Zmappingformula}]
					Recall from Theorem~\ref{thm:maingell} that $g_\Gamma|_{X/Y} = \Delta \ell_{\sigma} \cdot  g_\Gamma$. Recall from Corollary~\ref{cor:rightKLSfunction} that 
				$f_\Gamma|_{X/Y} = - f_\Gamma \cdot \Delta \widehat{\ell_{\sigma}}$. 
	Applying Lemma~\ref{lem:restrict} to the equality $Z_\Gamma = g_\Gamma^{\rev} \cdot f_\Gamma$,  we compute
	\begin{align*}
		Z_\Gamma|_{X/Y} &= g_\Gamma^{\rev} \cdot f_\Gamma|_{X/Y} + (g_\Gamma|_{X/Y})^{\rev} \cdot f_\Gamma \\ 
		&= Z_\Gamma \cdot f_\Gamma^{-1} \cdot f_\Gamma|_{X/Y} + (g_\Gamma|_{X/Y})^{\rev} \cdot (g_\Gamma^{-1})^{\rev} \cdot Z_\Gamma \\
		&= - Z_\Gamma \cdot \Delta \widehat{\ell_{\sigma}} + (\Delta \ell_{\sigma})^{\rev} \cdot Z_\Gamma
		\\
				&= - Z_\Gamma|_X \cdot \Delta \widehat{\ell_{\sigma}} + (\Delta \ell_{\sigma})^{\rev} \cdot Z_\Gamma|_Y. 
	\end{align*}
	When expanding terms we also use that $$-\Delta \widehat{\ell_{\sigma}}(x',y) =   -(-1)^{\rho_\Gamma(x',y)} \Delta \ell_{\sigma}(x',y)
	=   (-1)^{\rho_Y(y) - \rho_X(x')} \Delta \ell_{\sigma}(x',y).$$ 
	
\end{proof}

\section{Applications to equivariant Kazhdan-Lusztig-Stanley theory}\label{sec:equivariant}

The main goal of this section is to state and prove equivariant versions of the results of Section~\ref{ss:statements}. After recalling and then further developing equivariant KLS theory, we will, in fact, deduce these results as corollaries of the non-equivariant results.
In Section~\ref{ss:representationtheory} we recall some basic representation theory. In Section~\ref{ss:backgroundequivariantKLS} we recall background on equivariant KLS theory. In Section~\ref{ss:classfunctions}, we 
 consider the effect on equivariant KLS theory of evaluating the corresponding class functions.
In Section~\ref{ss:equivariantKLSlowerEulerian}, we develop equivariant generalizations of results and examples in Section~\ref{sec:lowerEulerian}. 
%In Section~\ref{ss:examplesmainequiv}, we give some examples. 
Finally, in Section~\ref{ss:equivEhrhart}, we give some applications to equivariant Ehrhart theory. 

Recall that all groups are finite and all vector spaces are finite-dimensional. 
If a group $W$ acts on a set $S$, then for any $w \in W$, let 
$S^w = \{ s \in S : w \cdot s = s \}$ denote the corresponding fixed point set. 	Recall that given a field $k$ and a $\Z$-module $A$, we write $A_k = A \otimes_\Z k$.

\subsection{Background on representation theory}\label{ss:representationtheory}

We briefly recall some basic facts about representation theory of finite groups. We refer the reader to 
\cite{SerreLinearRepresentations} for more details.

Let $W$ be a finite group with identity element $\id \in W$ and complex group algebra $\C[W]$.  
All $\C[W]$-modules in this paper are finite dimensional.  Given a $\C W$-module $V$ with corresponding complex representation $\psi: W \to \GL(V)$, its associated \emph{character} is the function 
$\chi_V: W \to \C$,  $\chi_V(w) = \tr(\psi(w))$ for all $w \in W$, where $\tr$ denotes the trace function. 
%A \emph{complex representation} of $W$ is a group homomorphism $\psi: W \to \GL(V)$ for some complex vector space $V$. Throughout this paper, we will always assume that $V$ is finite-dimensional. 
%Equivalently, $V$ has the structure of a $\C[W]$-module, where $\C[W]$ is the complex group algebra of $W$. 
The complex \emph{representation ring} $R(W)$ 
is the free abelian group generated by isomorphism classes of finite-dimensional $\C W$-modules, 
modulo the relation $[V \oplus V'] = [V] + [V']$ for $\C W$-modules $V,V'$. Multiplication is defined by $[V][V'] := [V \otimes_\C V']$ and the identity element $1 \in R(W)$ is the class %$[\1]$ 
of the trivial representation. 
Below, we will sometimes write $[\psi] = [V] \in R(W)$.  
As a $\Z$-module, $R(W)$ is a free module with basis given by the classes of the irreducible complex representations of $W$. In particular, the natural map of rings $\Z \to R(W)$ is an inclusion, and induces an inclusion $\Z[t] \to R(W)[t]$. 

A \emph{class function} is a function $\phi: W \to \C$ which is constant on conjugacy classes of $W$. Let $\cf(W)$ be the set of all class functions. Then $\cf(W)$ has the structure of a $\C$-algebra; given $\phi,\phi' \in \cf(W)$, $\lambda \in \C$, and $w \in W$, $(\lambda \cdot \phi)(w) = \lambda \phi(w)$, $(\phi + \phi')(w) = \phi(w) + \phi'(w)$, $(\phi \cdot \phi')(w) = \phi(w)\phi'(w)$. 
%Given a polynomial $\phi = \sum_i \phi_i t^i \in \cf(W)[t]$, we let $\phi(w) = \sum_i \phi_i(w) t^i   \in \C[t]$. 
The assignment 
$V \mapsto \chi_V$ induces a $\C$-algebra isomorphism between $R(W)_\C$ and $\cf(W)$. For any $w \in W$, evaluation of the corresponding class functions at $w \in W$ induces a map
$\ev_w : R(W)_\C \to \C$,
such that $\ev_w([V]) = \chi_V(w)$ for all $\C W$-modules $V$. For example, evaluation at the identity element is determined by $\ev_{\id}([V]) = \dim V$ for all $\C W$-modules $V$. Moreover, by applying $\ev_w$ coefficientwise, we may extend $\ev_w$ to a map $\ev_w: R(W)[[t]] \to \C[[t]]$. 

Given a group homomorphism $\eta: W' \to W$, pullback along $\eta$ induces a 
representation $\Lambda_{\eta}(V)$, i.e., $\Lambda_{\eta}(V)$ is $V$ regarded as a $\C[W']$-module via the induced map $\C[W'] \to \C[W]$. This extends to a ring homomorphism $\Lambda_{\eta}: R(W) \to R(W')$. 
%We also have a corresponding map on class functions such that $\Lambda_{\eta}(\phi)(w') = \phi(\eta(w'))$ for all $\phi \in \cf(W)$.  
By acting coefficientwise, we will also consider the induced map $\Lambda_{\eta}: R(W)[[t]] \to R(W')[[t]]$. This is compatible with composition in the sense that if $\nu: W'' \to W'$, then $\Lambda_{\eta \circ \nu} = \Lambda_\nu \circ \Lambda_\eta$.

Let $W'$ be a subgroup of $W$ with inclusion map $\iota: W' \to W$. Let $V$ be a $\C[W]$-module and let $V'$ be a $\C[W']$-module. The \emph{restricted representation} $\Res_{W'}^W V$ of $V$ is $\Lambda_{\iota}(V)$. 
%, where $\eta : W' \to W$ is the inclusion map. 
%$V$ regarded as a $\C[W']$-module via the inclusion $\C[W'] \to \C[W]$. 
The  \emph{induced representation} $\Ind_{W'}^W V'$ of $V'$ is $\C[W] \otimes_{\C[W']} V'$. These operations extend to additive maps
$\Res_{W'}^W: R(W) \to R(W')$ and 
$\Ind_{W'}^W: R(W') \to R(W)$. 
 By acting coefficientwise, we have induced maps
$\Res_{W'}^W: R(W)[[t]] \to R(W')[[t]]$ and 
$\Ind_{W'}^W: R(W')[[t]] \to R(W)[[t]]$.
%We also have a compatible notion of restriction and induction for class functions. Given a class function $\phi: W \to \C$, $\Res_{W'}^W \phi = \Lambda_{\iota}(\phi)$. We will give an example of induction of class functions below.
%% is the composition of the inclusion $W' \to W$ with $\phi$. 

Suppose that $W$ acts on a set $X$. For any $x \in X$, let $W_x$ denote the stabilizer of $x$. For any $w \in W$, conjugation by $w$ induces a group isomorphism
$$\eta^w_x : W_x \to W_{w \cdot x},$$ 
$$\eta^w_x(w') =  w w' w^{-1}.$$ 
Then pullback induces a ring isomorphism
 $$\Lambda_{\eta^w_x}: R(W_{w \cdot x})[[t]] \to R(W_x)[[t]].$$
	   Suppose that we have  a collection $\{ p_x \in R(W_x)[[t]] : x \in X \}$ %of class functions
satisfying $p_x = \Lambda_{\eta^w_x}(p_{w \cdot x})$ for all  $x \in X$ and $w \in W$. 
Recall that if $W$ acts on a set $S$ and $w \in W$, then  
$S^w = \{ s \in S : w \cdot s = s \}$ denotes the corresponding fixed point set. Fix an element $x_0 \in X$. 
Then for all $w \in W$, 
\begin{equation}\label{eq:inducedcharacter}
	\ev_w(\Ind^{W}_{W_{x_0}} p_{x_0}) = 
	\sum_{x \in (W \cdot x_0)^w} \ev_w(p_x).
	%\sum_{ \substack{ x \in X \\ w \cdot x = x }} p_x(w).
\end{equation} 
%Summing over all choices of $x_0$ in the orbit $W \cdot x_0$ gives 
%\begin{equation}\label{eq:inducedcharacterbasepointfree}
%	\sum_{x \in W \cdot x_0} \ev_w(\Ind^{W}_{W_{x}} p_{x}) = |W \cdot x_0| 
%	\sum_{x \in (W \cdot x_0)^w} \ev_w(p_x).
%	%\sum_{ \substack{ x \in X \\ w \cdot x = x }} p_x(w).
%\end{equation} 
For example, if $p_x = 1$ for all $x \in X$, then $\Ind^{W}_{W_{x_0}} p_{x_0}$ is the permutation representation of $W$ acting on $W \cdot x_0$. If $\pi$ denotes the permutation representation of $W$ acting on $X$, then   \eqref{eq:inducedcharacter} implies that $\ev_w(\pi) = |X^w|$.

Consider the representation $\psi: W \to \GL(V)$. 
For $m \in \Z_{\ge 0}$, we may consider the corresponding representation $\bigwedge^m \psi: W \to \GL(\bigwedge^m V).$ 
%For example, when $m = 0$, $\bigwedge^m \psi$ is the trivial representation, when $m = 1$, $\bigwedge^m \psi = \psi$, and when $m > \dim V$, 
We define $\det(I - \psi t ) \in R(W)[t]$ by 
\begin{equation}\label{eq:detI-rhot}
	\det(I - \psi t ) := \sum_{m \ge 0} (-1)^m [ \bigwedge^m V ]	t^m \in R(W)[t]. 
\end{equation}
Also, let $\det(\psi) = [\bigwedge^{\dim V} \psi] = [ \bigwedge^{\dim V} V ] \in R(W)$. 
%If $\psi: W \to \GL(V)$ is the corresponding representation and 
Consider an element $w \in W$. If $\psi(w)$ has eigenvalues $\lambda_1,\ldots,\lambda_{\dim V}$, then $\ev_w(\det(I - \psi t)) = \det(I - \psi(w)t) = \prod_{i = 1}^{\dim V} (1 - \lambda_i t)$ (see, for example, \cite[Lemma~3.1]{StapledonEquivariant}). Similarly, we will also consider $\det(tI - \psi) := t^{\dim V} \det(I - \psi t^{-1}) \in R(W)[t]$. Then 
$\ev_w(\det(tI - \psi))  = \det(tI - \psi(w)) = \prod_{i = 1}^{\dim V} (t - \lambda_i)$. 
%The constant coefficients of the polynomials $\det(I - \psi t)$ and $\det(tI - \psi)$ in $R(W)[t]$ are $1$ and $(-1)^{\dim V} \det(\psi)$ respectively in $R(W)$. 
%%= [\1_W] 
%In particular, $\det(I - \psi t)$ and  $\det(tI - \psi)$ are invertible in $R(W)[[t]]$. 
We will use the following simple lemma (see, for example, \cite{StapledonEquivariant}*{Remark~5.5} and the proof of \cite{StapledonCalabi12}*{Lemma~2.3}). We reproduce the proof for the benefit of the reader. Note that for any $w \in W$, the fixed locus $V^w$ is a subspace of $V$. 

% The constant coefficient of $\det(tI - \psi)$ is $(-1)^{\dim V} \det(\psi)
% \in R(W)$. In particular, $\det(tI - \psi)$ is invertible in $R(W)[[t]]$.

\begin{lemma}\label{lem:detswitcht}
	Consider a real representation $\psi: W \to \GL(V)$. Then 
	\[
	\det(\psi) \det(tI - \psi) = (-1)^{\dim V}  \det(I - \psi t ) \in R(W)[t].
	\]  
	Moreover, for all $w \in W$, $\ev_w(\det(\psi)) = (-1)^{\dim V - \dim V^w}$. 
\end{lemma}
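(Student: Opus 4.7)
The plan is to verify both identities by evaluating characters. Since $R(W)$ is a free $\Z$-module and the natural map $R(W)_\C \to \cf(W)$ is an isomorphism, we have an injection $R(W)[t] \hookrightarrow \cf(W)[t]$, so it will suffice to apply $\ev_w$ and check each identity as a polynomial identity in $\C[t]$ (respectively a scalar identity in $\C$) for every $w \in W$. Fixing $w$ and letting $\lambda_1, \ldots, \lambda_n \in \C$ denote the eigenvalues of $\psi(w)$ counted with multiplicity (where $n = \dim V$), $\ev_w$ sends $\det \psi$, $\det(tI - \psi)$, and $\det(I - \psi t)$ to $\prod \lambda_i$, $\prod (t - \lambda_i)$, and $\prod (1 - \lambda_i t)$ respectively.

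The key input from the realness hypothesis is that the multiset $\{\lambda_1, \ldots, \lambda_n\}$ is invariant under $\lambda \mapsto \lambda^{-1}$. This is because $W$ being finite forces $\psi(w)$ to have finite order, so its eigenvalues are roots of unity, while for a real matrix the complex eigenvalues occur in complex-conjugate pairs, and for a root of unity complex conjugation coincides with inversion. Partitioning the multiset accordingly, we obtain (i) $1$'s of multiplicity $\dim V^w$, (ii) $-1$'s of some multiplicity $k$, and (iii) true pairs $(\lambda, \lambda^{-1})$ with $\lambda \ne \pm 1$, of which there are $(\dim V - \dim V^w - k)/2$.

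The moreover part then falls out immediately: each true pair contributes $\lambda \lambda^{-1} = 1$ to the determinant, so $\ev_w(\det \psi) = (-1)^k$, and since $\dim V - \dim V^w - k$ is visibly even, $k$ has the same parity as $\dim V - \dim V^w$, yielding $\ev_w(\det \psi) = (-1)^{\dim V - \dim V^w}$.

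For the first identity, after applying $\ev_w$ the right-hand side becomes $(-1)^n \prod (1 - \lambda_i t) = \prod (\lambda_i t - 1) = (\prod \lambda_i) \prod (t - \lambda_i^{-1})$, and the inversion symmetry of the eigenvalue multiset lets us replace $\prod (t - \lambda_i^{-1})$ with $\prod (t - \lambda_i)$, matching the left-hand side. The substantive step is the reduction to a character-level identity via $R(W)[t] \hookrightarrow \cf(W)[t]$; once that is in place, the remaining obstacle is essentially the observation that for a finite-order real operator the eigenvalue multiset is closed under inversion, which drives both claims.
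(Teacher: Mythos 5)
Your proof is correct and follows essentially the same approach as the paper's: reduce to a pointwise identity in $\C[t]$ by applying $\ev_w$ for each $w$, use that eigenvalues of the finite-order real operator $\psi(w)$ are roots of unity closed under inversion (equivalently conjugation), and then do the same elementary algebra on $\prod(t-\lambda_i)$ and $\prod\lambda_i$. The partition into $1$'s, $-1$'s, and conjugate pairs is just a slightly more explicit phrasing of the paper's observation that $|\{i:\lambda_i=1\}|+|\{i:\lambda_i=-1\}| \equiv \dim V \pmod 2$.
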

\begin{proof}
	Consider an element $w \in W$. We need to show that both sides agree after applying   $\ev_w$. Suppose that $\psi(w)$ has eigenvalues $\lambda_1,\ldots,\lambda_{\dim V}$. Since $w$ has finite order, each $\lambda_i$ is a root of unity. Hence the complex conjugate of $\lambda_i$ is 	$\lambda_i^{-1}$. Since $V$ is a real vector space, the multiset
	$\{ \lambda_1,\ldots,\lambda_{\dim V} \}$ is closed under complex conjugation. We compute
	\begin{align*}
		\prod_{i = 1}^{\dim V} \lambda_i	\prod_{i = 1}^{\dim V} (t - \lambda_i) = \prod_{i = 1}^{\dim V} \lambda_i \prod_{i = 1}^{\dim V} (t - \lambda_i^{-1}) 
		= (-1)^{\dim V}   \prod_{i = 1}^{\dim V} (1 - \lambda_i t). \\
	\end{align*} 
	This establishes the equation. For the final statement, observe that 
	$\dim V^w = |\{ i : \lambda_i = 1 \}|$. Since the $\lambda_i$ occur in conjugate pairs with $\lambda_i \leftrightarrow \lambda_i^{-1}$, 
		$|\{ i : \lambda_i = 1 \}| +  |\{ i : \lambda_i = -1 \}| = \dim V$ mod $2$,  and 
	$$\ev_w(\det(\psi)) = \prod_{i = 1}^{\dim V} \lambda_i = (-1)^{|\{ i : \lambda_i = -1 \}|} = (-1)^{\dim V - \dim V^w}.$$
\end{proof}

\subsection{Background on equivariant Kazhdan-Lusztig-Stanley theory}\label{ss:backgroundequivariantKLS}

In this section, we recall some basic facts on equivariant KLS theory, which is a generalization of the theory discussed in Section~\ref{ss:KLSbackground}. We refer the reader to 
 \cite{ProudfootEquivariantKLS} for more details.   We continue with the notation of Section~\ref{ss:representationtheory}. 
 
Recall that all posets in this paper are finite. 
Let $B$ be a poset. 
In what follows, we will often denote an interval $[z,z'] \in \Int(B)$ simply by $z,z'$. 
Let $W$ be a finite group acting on $B$. This means that $W$ acts on $B$ as a set, and for all $w \in W$ and $z \le z' \in B$, $w \cdot z \le w \cdot z'$.
In particular, we have an induced action of $W$ on $\Int(B)$.  When $W$ acts trivially on $B$, the theory below reduces to Kazhdan-Lusztig-Stanley theory as described in Section~\ref{ss:KLSbackground}.

Given elements $z \le z'' \le z' \in B$, let $W_z  = \{ w \in W : w \cdot z = z \} \subset W$ be the stabilizer of $z$, and let 
$W_{z,z'} = W_z \cap W_{z'}$ and $W_{z,z'',z'} = W_z \cap W_{z''} \cap W_{z'}$.  
%For any subgroup $W' \subset W$, recall that $R(W')$ denotes the complex representation ring of $W'$.
Consider the action of $W$ on $\Int(B)$. 
Recall from Section~\ref{ss:representationtheory} %(setting $X = \Int(B)$) 
that 
%Then 
for any $w \in W$, conjugation by $w$ and pullback induce a ring homomorphism
\[
\Lambda_{\eta^w_{z,z'}}: R(W_{w \cdot z, w \cdot z'})[t] \to R(W_{z,z'})[t].
\]
For $z \le z'' \le z'$, conjugation by $w$ and pullback also induce a ring homomorphism
\[
\Lambda_{\eta^w_{z,z'',z'}}: R(W_{w \cdot z, w \cdot z'', w \cdot z'})[t] \to R(W_{z,z'',z'})[t].
\]
% Given $w \in W$, let $\eta^w: W \to W'$, $\eta^w(w') = w w' w^{-1}$, denote conjugation by $w$. 
%Given $z \le z' \in B$ and $w \in W$, $\eta^w$ restricts to a group isomorphism
%\[
%\eta^w_{z,z'} : W_{z,z'} \to W_{w \cdot z, w \cdot z'}.
%\]
%Then pullback of representations induces a ring isomorphism 
%\[
%\Lambda_{\eta^w_{z,z'}}: R(W_{w \cdot z, w \cdot z'}) \to R(W_{z,z'}).
%\]
%The corresponding map on class functions satisfies 
%$\Lambda_{\eta^w_{z,z'}}(\phi)(w') = \phi(w w' w^{-1})$ ....
%As in Section~\ref{ss:KLSbackground}, given a function $p$ on $\Int(B)$, we often write
%$p(z,z')$ to denote $p([z,z'])$.
\begin{definition}\label{def:equivariantincidencealgebra}\cite{ProudfootEquivariantKLS}*{Section~2}
	The  \emph{equivariant incidence algebra} $I^W(B)$ is the set of functions  $p: \Int(B) \to \cup_{W' \subset W} R(W')[t]$ satisfying
	\begin{enumerate}
		\item $p(z,z') \in R(W_{z,z'})[t]$ for any $z \le z' \in B$,
		\item\label{i:equivincidence} $p(z,z') = \Lambda_{\eta^w_{z,z'}}(p(w \cdot z, w \cdot z'))$  for any $z \le z' \in B$ and $w \in W$.
	\end{enumerate}
\end{definition}
The equivariant incidence algebra has the structure of 
%a ring. 
an associative $\Z[t]$-algebra.
Given elements $p,p' \in I^W(B)$, addition is defined by 
$(p + p')(z,z') = p(z,z') + p'(z,z')$.
Multiplication 
is given by (see \cite{ProudfootEquivariantKLS}*{Proposition~2.3})
\begin{equation}\label{eq:equivariantmultiplication}
(p \cdot p')(z,z') = \sum_{z \le z'' \le z'} \frac{|W_{z,z'',z'}|}{|W_{z,z'}|}
\Ind^{W_{z,z'}}_{W_{z,z'',z'}} \left( \Res^{W_{z,z''}}_{W_{z,z'',z'}} p(z,z'') \cdot  \Res^{W_{z'',z'}}_{W_{z,z'',z'}} p'(z'',z')  \right). 
\end{equation}
As explained in 
\cite{ProudfootEquivariantKLS}*{Remark~2.4}, although the expression on the right hand side of \eqref{eq:equivariantmultiplication} contains fractions, it is in fact a well-defined element of $R(W_{z,z'})[t]$; an expression involving only integers can be obtained by allowing   $z''$ to vary over a choice of representatives for the $W_{z,z'}$-orbits of $[z,z']$. 
As in Section~\ref{ss:KLSbackground}, the identity element $\delta^W_B \in I^W(B)$ is defined by $\delta^W_B(z,z') = 1$ if $z = z'$, and 
$\delta^W_B(z,z') = 0$ if $z \neq z'$.
%\[
%\delta^W_B(z,z') = \begin{cases}
%	1 &\textrm{if } z = z', \\
%	0 &\textrm{otherwise. }
%\end{cases}
%\]
As in Section~\ref{ss:KLSbackground}, the $\Z[t]$-module structure is defined as follows:
given $a \in \Z[t]$ and $p \in I^W(B)$, $(a \cdot p)(z,z') = a p(z,z')$ for all $z \le z' \in B$. 
A weak rank function $r_B$ is $W$-invariant if $r_B(z,z') = r_B(w \cdot z, w \cdot z')$ for all $z \le z' \in B$ and $w \in W$. 

%Let $r_B \in I(B)$ be a weak rank function that is $W$-invariant in the sense that $r_B(z,z') = r_B(w \cdot z, w \cdot z')$ for all $z \le z' \in B$ and $w \in W$. 

\begin{example}\label{ex:Winvariantrank}
	Suppose that $B$ is a lower Eulerian poset with rank function $\rho_B$, and $W$ acts on $B$.  
	%Suppose that $B$ is a ranked poset with rank function $\rho_B$ and unique minimal element $\hat{0}_B$. 
	Then $\rho_B$ is $W$-invariant in the sense that $\rho_B(z) = \rho_B(w \cdot z)$ for all $z \in B$ and $w \in W$. This follows, for example, by induction on $\rho_B(\hat{0}_B,z)$ for $z \in B$. 
	We deduce that  
	the natural weak rank function is $W$-invariant. 
%	$r_B = \rho_B$ is a $W$-invariant weak rank function.
\end{example}

Fix a $W$-invariant weak rank function $r_B$. 
Consider the following subring of $I^W(B)$
\[
\II^W(B) = \{ p \in I^W(B) : \deg(p(z, z')) \le r_B(z, z') \textrm{ for all } z \le z' \in B \}  \subset I^W(B).
\]
Then $\II^W(B)$
admits a ring involution 
$p \mapsto p^{\rev}$, where
$(p^{\rev})(z, z';t) := t^{r_B(z, z')} p(z, z';t^{-1})$ for any $z \le z'$ in $B$. An element $p \in \II^W(B)$ is \emph{symmetric} if $p = p^{\rev}$, and  
\emph{antisymmetric} if $p = -p^{\rev}$. 
Let 
$U^W(B) = \{ p \in I^W(B) : p(z,z) = 1 \textrm{ for all } z \in B \}$.
Observe that $U^W(B)$ is closed under multiplication. All elements of $U^W(B)$ are invertible by \cite{ProudfootEquivariantKLS}*{Proposition~2.8}. 
%, and $U^W(B)$ is a subring of $I(B)$. 
%We need the following notation.   
Consider the subring 
\[
\II_{1/2}^W(B) = \{ p \in \II^W(B) : \deg(p(z, z')) < r_B(z, z')/2 \textrm{ for all } z < z' \in B \} \subset \II^W(B).
\]
%Observe that $\II_{1/2}^W(B)$ is a subring of $\II^W(B)$.
We may define 
$\Delta: \II^W(B) \to \II_{1/2}^W(B), p \mapsto \Delta p$ by equation  \eqref{eq:Delta}. 
An element $\kappa_B \in \II^W(B) \cap U^W(B)$ is an \emph{equivariant $B$-kernel} if 
%$\kappa(x, x) = 1$ for all $x \in B$ and 
$\kappa_B^{-1} =  \kappa_B^{\rev}$. 
By \cite{ProudfootEquivariantKLS}*{Theorem~3.1}, there exist
 unique elements $f_B,g_B \in \II_{1/2}^W(B) \cap U^W(B)$ such that $f_B^{\rev} = \kappa_B \cdot f_B$  and 
$g_B^{\rev} = g_B \cdot \kappa_B$. The elements $f_B$ and $g_B$  are called the 
\emph{right equivariant Kazhdan-Lusztig-Stanley function} and \emph{left equivariant Kazhdan-Lusztig-Stanley function} respectively. 
The \emph{equivariant $Z$-function}  is $Z_B^W = g_B \cdot \kappa_B \cdot f_B \in \II^W(B) \cap U^W(B)$. Proudfoot showed that $Z_B = g_B^{\rev} \cdot f_B = g_B \cdot f_B^{\rev}$ and hence $Z_B$ is symmetric, i.e., $Z_B = Z_B^{\rev}$ \cite{ProudfootEquivariantKLS}*{Proposition~3.3}.

\begin{remark}\cite{ProudfootAGofKLSpolynomials}*{Remark~3.4}\label{rem:pullback}
	All the definitions above are compatible with pullback in the following sense. 
	Given a group homomorphism $\eta: W' \to W$, recall from Section~\ref{ss:representationtheory} that pullback induces a ring homomorphism $\Lambda_{\eta}: R(W)[t] \to R(W')[t]$. Then  $\Lambda_{\eta}$ induces a ring homomorphism from $I^W(B)$ to $I^{W'}(B)$ that takes $\II^W(B)$ to  $\II^{W'}(B)$ and takes  $\II_{1/2}^W(B)$ to  $\II_{1/2}^{W'}(B)$. Moreover, $\Lambda_{\eta}(\kappa_B) \in \II^{W'}(B) \cap U^{W'}(B)$ is an 
	equivariant $B$-kernel with corresponding right and left equivariant Kazhdan-Lusztig-Stanley functions
	$\Lambda_{\eta}(f_B)$ and $\Lambda_{\eta}(g_B)$ respectively, and equivariant $Z$-function
	$\Lambda_{\eta}(Z_B)$. 
\end{remark}

\begin{example}\label{ex:lowdegreetermsequiv}
	We have the following equivariant generalization of Example~\ref{ex:lowdegreeterms}. 	Recall that given a polynomial $a \in \Z[t]$,  $a_i$ denotes the coefficient of $t^i$ in $a$. Then for any $z \le z'$ in $B$, comparing highest degree terms in the expressions $g_B^{\rev} = g_B \cdot \kappa_B$, $f_B^{\rev} = \kappa_B \cdot f_B$, and $Z_B = g_B \cdot \kappa_B \cdot f_B$ evaluated at $[z,z']$ gives
	\[
	g_B(z,z')_0 = f_B(z,z')_0 = \kappa_B(z,z')_{r_B(z,z')} = Z_B(z,z')_0 = Z_B(z,z')_{r_B(z,z')} \in R(W_{z,z'}). 
	\]

	In particular, if $r_B(z,z') \le 2$, then $g_B(z,z') = f_B(z,z') = \kappa_B(z,z')_{r_B(z,z')}$. Assume that $r_B(z,z') > 2$. Then comparing second highest degree terms in the expressions $g_B^{\rev} = g_B \cdot \kappa_B$ and  $f_B^{\rev} = \kappa_B \cdot f_B$ evaluated at $[z,z']$ gives the following equalities in $R(W_{z,z'})$,
	\[
	g_B(z,z')_1 = \kappa_B(z,z')_{r_B(z,z') - 1} + \sum_{\substack{z \le z'' \le z' \\ r_B(z,z'') = 1 }} \frac{|W_{z,z'',z'}|}{|W_{z,z'}|} \Ind^{W_{z,z'}}_{W_{z,z'',z'}} \Res^{W_{z'',z'}}_{W_{z,z'',z'}} \kappa_B(z'',z')_{r_B(z'',z')},
	\]	
	\[
	f_B(z,z')_1 = \kappa_B(z,z')_{r_B(z,z') - 1} + \sum_{\substack{z \le z'' \le z' \\ r_B(z'',z') = 1 }} \frac{|W_{z,z'',z'}|}{|W_{z,z'}|} \Ind^{W_{z,z'}}_{W_{z,z'',z'}} \Res^{W_{z,z''}}_{W_{z,z'',z'}} \kappa_B(z,z'')_{r_B(z,z'')}.
	\]	
%	Equivalently, these can be deduced from Example~\ref{ex:lowdegreeterms} by applying $\ev_w$ for all $w \in W$ and using Lemma~\ref{lem:Ztalgebrahom}. 
\end{example}

\begin{example}\label{ex:fanequiv}
	Suppose that $\Sigma$ is a %full-dimensional 
	fan in a real vector space $V$, and $\psi: W \to \GL(V)$ is a real representation of a finite group $W$ such that $\Sigma$ is $W$-invariant. %Assume that the support $|\Sigma|$ of $\Sigma$ is convex.  
	Consider the lower Eulerian poset $B = \face(\Sigma)$ with 
		the natural weak rank function $\rho_B$, which is $W$-invariant by
		Example~\ref{ex:Winvariantrank}.
		  %and $\kappa_\Gamma$ is the Eulerian kernel, 
	%, and let $\rho_B$ be the natural rank function.  Then $\rho_B$ is $W$-invariant by 
%	Example~\ref{ex:Winvariantrank}. Let $r_B = \rho_B$ and 
    Define $\kappa_B \in \II^W(B)$ as follows. 
	For any $z \in B$, let $C_z$ be the corresponding cone in $\Sigma$, and let $V_z$ be the linear span of $C_z$. 
	For any $z \le z' \in B$, $\psi$ induces a representation  $\psi_{z,z'}: W_{z,z'} \to \GL(V_{z'}/V_z)$. Define
	\[
			\kappa_B(z,z') = 
		\det(tI - \psi_{z,z'}). 
	\]
	Then $\kappa_B$ is an equivariant $B$-kernel by Example~\ref{ex:fanequivv2} below. By Example~\ref{ex:lowdegreetermsequiv}, 
	$g_B(z,z')_0 = f_B(z,z')_0 = Z_B(z,z')_0 = Z_B(z,z')_{\rho_B(z,z')} = 1 \in R(W_{z,z'})$. For $\rho_B(z,z') > 2$, 
	$g_B(z,z')_1 = [\pi] - [\psi_{z,z'}]$, where $\pi$ is the permutation representation of $W_{z,z'}$ acting on $\{ z'' \in [z,z'] : \rho_B(z,z'') = 1\}$, and $f_B(z,z')_1 = [\pi'] - [\psi_{z,z'}]$, where $\pi'$ is the permutation representation of $W_{z,z'}$ acting on $\{ z'' \in [z,z'] : \rho_B(z'',z') = 1\}$.

	For example, suppose that $z \le z' \in B$ and $C_{z'}$ is a simplicial cone. Then $\psi_{z,z}$ is the permutation representation of $W_{z,z'}$ acting on the set $S$ of rays of $C_{z'} \smallsetminus C_{z}$.  Suppose that $w \in W_{z,z'}$ acts on $S$ with orbits of size $m_1,\ldots,m_s$. Then 
	$\ev_w(\kappa_B(z,z')) = \prod_{i = 1}^s (t^{m_i} - 1)$. 
	We claim that  $f_B(z,z') = g_B(z,z') = 1$. 
	Moreover, for $0 \le i \le \rho_B(z,z')$, let $\pi_i$ be the permutation representation of $W_{z,z'}$ acting on subsets of $S$ of size $i$. 
	%, and let $[\pi_i]$ be the corresponding element in $R(W_{z,z'})$. 
	We claim that 
	$Z_B(z,z') = \sum_{i = 0}^{\rho_B(z,z')} [\pi_i] t^i$.  When $W$ acts trivially, these claims reduce to Example~\ref{ex:t-1caseBn}.

	The first claim follows by induction and the computation
%	Using induction and the identity $t^{\sum_{i \in [s]} m_i} = \sum_{I \subset [s]} \prod_{i \in I} (t^{m_i} - 1)$, one may directly compute that $f_B(z,z') = g_B(z,z') = 1$. 
	\[
	\ev_w(f_B(z,z')^{\rev}) - \ev_w(f_B(z,z')) = \ev_w(g_B(z,z')^{\rev}) - \ev_w(g_B(z,z')) =  \sum_{\emptyset \nsubseteq I \subset [s]} \prod_{i \in I} (t^{m_i} - 1) = t^{\rho_B(z,z')} - 1.  
	\]
	For the second claim, we compute 
	\[
	\ev_w(Z_B(z,z')) = \ev_w((g_B^{\rev} \cdot f_B)(z,z')) = \sum_{I \subset [s]} t^{\sum_{i \in I} m_i} =  \prod_{i = 1}^s (1 + t^{m_i}). 
	\]
	Observe that the coefficient of $t^i$ in $\prod_{i = 1}^s (1 + t^{m_i})$ is the number of subsets of $S$ of size $i$ that are $w$-invariant. The latter is $\ev_w([\pi_i])$.

%	$\Sigma = C$ is a simplicial cone and let $n = \dim V$. Then $B = B_n$ and $\psi$ is the permutation representation of $W$ acting on the rays of $C$. For $z \le  z'$, suppose that $w \in W_{z,z'}$ acts on the rays of $C_{z'} \smallsetminus C_{z}$ with orbits of size $m_1,\ldots,m_s$. Then 
%	$\ev_w(\kappa_B(z,z')) = \prod_{i = 1}^s (t^{m_i} - 1)$. Using induction and the identity $t^{\sum_{i \in [s]} m_i} = \sum_{I \subset [s]} \prod_{i \in I} (t^{m_i} - 1)$, one may directly compute that $f_B(z,z') = g_B(z,z') = 1$. 
%	\alan{TODO}
%	Compare with Example~\ref{ex:t-1caseBn}. 
%	What about $Z$??
%	Alternative proof: once you know that $g_B$ and $f_B$ (do we know this?? yes; by looking at the dual) are effective, then reduce to nonequivariant calculation...
\end{example}

\subsection{Evaluation of class functions}\label{ss:classfunctions}

We now consider the effect on equivariant Kazhdan-Lusztig-Stanley theory of evaluating the corresponding class functions. We continue with the notation of Section~\ref{ss:KLSbackground} and  Section~\ref{ss:backgroundequivariantKLS}. 

Let $W$ be a finite group acting on a poset $B$ with $W$-invariant weak rank function $r_B$ and equivariant $B$-kernel $\kappa_B$. Recall that we have corresponding equivariant KLS invariants $f_B$,$g_B$, and $Z_B$. 
%Recall that the finite group $W$ acts on a poset $B$ with $W$-invariant weak rank function $r_B \in I(B)$ and equivariant $B$-kernel $\kappa_B \in \II^W(B) \cap U^W(B)$, with corresponding
%equivariant Kazhdan-Lusztig-Stanley functions
%$f_B,g_B \in \II_{1/2}^W(B) \cap U^W(B)$, and equivariant $Z$-function
%$\Lambda_{\eta}(Z_B) \in \II_{1/2}^W(B) \cap U^W(B)$. 
Fix an element $w \in W$. Consider the fixed locus $B^w = \{ z \in B : w \cdot z = z \} \subset B$ with its induced poset structure. 
By definition, $z \in B^w$ if and only if $w \in W_z$. 
Let $r_B^w$ be the restriction of $r_B$ to $B^w$. Then 
$r_B^w$ is a weak rank function. We fix this choice of weak rank function, and may consider $\II(B^w)$ and $\II_{1/2}(B^w)$. 

\begin{remark}\label{rem:rankwarning}
	Suppose that $r_B$ is the natural weak rank function. 
   In general, 
	one should not expect $r_B^w$ to be the natural weak rank function. 
%	induced by a rank function for $B^w$, even if $B^w$ is a ranked poset (which is not guaranteed; see Example~\ref{ex:notEulerian} below). 
	For example, if $B$ and $B^w$ are Eulerian then  $\rho_{B^w}(B^w) = \rank(B^w) \le \rank(B) =  r_B^w(B^w)$ but the inequality is strict in general.
\end{remark}

For any subgroup $W'$ of $W$ containing $w$, recall that evaluation of corresponding class functions at $w$ induces a ring homomorphism $\ev_w: R(W')[t] \to \C[t]$. These are compatible with restriction  in the sense that for any $W' \subset W'' \subset W$ and $p \in R(W'')[t]$, we have  $\ev_w(\Res_{W'}^{W''} p ) = \ev_w(p)$. We have an induced map
\[
\ev_w : I^W(B) \to I(B^w)_\C,
\]
\[
\ev_w(p)(z,z') = \ev_w(p(z,z')) \textrm{ for any } z \le z' \in B^w. 
\]
Above, observe that $p(z,z') \in R(W_{z,z'})[t]$, and $z,z' \in B^w$ implies that $w \in W_{z,z'}$, so $\ev_w(p(z,z'))  \in \C[t]$ is well-defined. 
Also, observe that $\ev_w(\II^W(B)) \subset \II(B^w)_\C$, 
$\ev_w(\II_{1/2}^W(B)) \subset \II_{1/2}(B^w)_\C$, 
and 
$\ev_w(U^W(B)) \subset U(B^w)_\C$.
Moreover, $\ev_w$ commutes with the involution $p \mapsto p^{\rev}$, i.e.,
$\ev_w(p^{\rev}) = \ev_w(p)^{\rev}$ for all $p \in \II^W(B)$. 
 For example, $\ev_{\id}: I^W(B) \to I(B)_\C$ corresponds to pullback to the trivial subgroup in Remark~\ref{rem:pullback}. 
 We have the following key lemma.

\begin{lemma}\label{lem:Ztalgebrahom}
	Let $W$ be a finite group acting on a poset $B$ with $W$-invariant weak rank function $r_B$. Fix an element $w \in W$, and consider $B^w$ with the restricted weak rank function $r_B^w$. 
	Then $\ev_w : I^W(B) \to I(B^w)_\C$ is a $\Z[t]$-algebra homomorphism. 
\end{lemma}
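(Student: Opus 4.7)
The plan is to verify the four $\Z[t]$-algebra axioms: compatibility with addition, with scalar multiplication by $\Z[t]$, preservation of the identity, and compatibility with convolution. The first three are immediate from the definitions, since evaluation at $w$ is applied pointwise, $\ev_w$ is already a ring homomorphism $R(W')[t] \to \C[t]$ for each subgroup $W' \ni w$, and the identity $\delta^W_B$ takes constant values $0$ and $1$ which are unaffected by $\ev_w$. So all the content of the lemma lies in verifying that $\ev_w(p \cdot p') = \ev_w(p) \cdot \ev_w(p')$ for $p, p' \in I^W(B)$.

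For this, I would fix $z \le z'$ in $B^w$ (so that $w \in W_{z,z'}$) and expand $(p \cdot p')(z,z')$ using \eqref{eq:equivariantmultiplication}, grouping the terms by $W_{z,z'}$-orbits on $[z,z']$ so as to write
\[
(p \cdot p')(z,z') = \sum_{[z''] \in [z,z']/W_{z,z'}} \Ind^{W_{z,z'}}_{W_{z,z'',z'}} \bigl( \Res^{W_{z,z''}}_{W_{z,z'',z'}} p(z,z'') \cdot \Res^{W_{z'',z'}}_{W_{z,z'',z'}} p'(z'',z') \bigr),
\]
where $z''$ runs over a choice of orbit representatives. The compatibility condition \eqref{i:equivincidence} in Definition~\ref{def:equivariantincidencealgebra} guarantees that the integrands $p_{z''} := \Res p(z,z'') \cdot \Res p'(z'',z')$ form a compatible family in the sense required by \eqref{eq:inducedcharacter} with respect to the $W_{z,z'}$-action on $[z,z']$.

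Applying \eqref{eq:inducedcharacter} orbit by orbit (with $W$ there replaced by $W_{z,z'}$) collapses each induced character into a sum over the $w$-fixed points of the orbit, and summing gives
\[
\ev_w\bigl((p \cdot p')(z,z')\bigr) = \sum_{z''' \in [z,z']^w} \ev_w\bigl(\Res p(z,z''')\bigr) \cdot \ev_w\bigl(\Res p'(z''',z')\bigr).
\]
Since $z, z' \in B^w$, the fixed locus $[z,z']^w$ is exactly the interval from $z$ to $z'$ in the subposet $B^w$, and because $\ev_w$ is unchanged by restriction (the character of a restricted representation at $w$ equals the character of the original at $w$, whenever $w$ lies in the smaller group), the right-hand side is precisely $(\ev_w(p) \cdot \ev_w(p'))(z,z')$.

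The main obstacle is the bookkeeping around \eqref{eq:inducedcharacter}: one must verify that the family $\{p_{z''}\}_{z'' \in [z,z']}$ really is $W_{z,z'}$-compatible (this is what condition \eqref{i:equivincidence} is designed to provide) and that the fractional factor $|W_{z,z'',z'}|/|W_{z,z'}|$ in \eqref{eq:equivariantmultiplication} is correctly absorbed by passing to orbit representatives. Once this is set up, the final identification of $[z,z']^w$ with the interval in $B^w$ is routine.
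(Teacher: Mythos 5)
Your proposal is correct and follows essentially the same route as the paper: expand $(p \cdot p')(z,z')$ via \eqref{eq:equivariantmultiplication}, verify that the integrands $p_{z''}$ form a $W_{z,z'}$-compatible family (the paper carries this out explicitly via the two commutativity relations $\Res \circ \Lambda_{\eta^w} = \Lambda_{\eta^w} \circ \Res$ together with condition \eqref{i:equivincidence}), then apply \eqref{eq:inducedcharacter} orbit-by-orbit and absorb the fractional weight. You have correctly flagged the compatibility check as the one substantive step that needs to be written out; the rest of the bookkeeping matches the paper's proof.
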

\begin{proof}
	The fact that $\ev_w : I^W(B) \to I(B^w)_\C$ is a $\Z[t]$-module homomorphism follows since  $\ev_w: R(W')[t] \to \C[t]$ is a ring homomorphism for any subgroup $W'$ of $W$. We need to show that multiplication is preserved. Using \eqref{eq:equivariantmultiplication}, we compute for $p,p' \in I^W(B)$ and $z \le z' \in B^w$,
	\begin{align*}
	(\ev_w(p \cdot p'))(z,z') &= \ev_w((p \cdot p')(z,z')) \\
	&= \ev_w \left( \sum_{z \le z'' \le z'} \frac{|W_{z,z'',z'}|}{|W_{z,z'}|}
	\Ind^{W_{z,z'}}_{W_{z,z'',z'}} \left( \Res^{W_{z,z''}}_{W_{z,z'',z'}} p(z,z'') \cdot  \Res^{W_{z'',z'}}_{W_{z,z'',z'}} p'(z'',z')  \right) \right) \\
	&= \sum_{z \le z'' \le z'} \frac{|W_{z,z'',z'}|}{|W_{z,z'}|}
	\ev_w \left( \Ind^{W_{z,z'}}_{W_{z,z'',z'}} \left( \Res^{W_{z,z''}}_{W_{z,z'',z'}} p(z,z'') \cdot  \Res^{W_{z'',z'}}_{W_{z,z'',z'}} p'(z'',z')  \right) \right). 
	\end{align*}
	For any $z'' \in [z,z']$, let 
	\[
	p_{z''} =  \Res^{W_{z,z''}}_{W_{z,z'',z'}} p(z,z'') \cdot  \Res^{W_{z'',z'}}_{W_{z,z'',z'}} p'(z'',z') \in R(W_{z, z'',z})[t]. 
	\]
%	Consider an element 
	Observe that $w \in W_{z,z'}$. 
	Since applying the inclusion $W_{z,z'',z'} \hookrightarrow W_{z,z''}$ followed by conjugation by $w$ is the same as conjugating by $w$ and then applying the inclusion $W_{z,w \cdot z'',z'} \hookrightarrow W_{z,w \cdot z''}$, we have 
	\[
	\Res_{W_{z,z'',z'}}^{W_{z,z''}} \circ \Lambda_{\eta^w_{z,z''}} = \Lambda_{\eta^w_{z,z'',z'}}	\circ \Res_{W_{z,w \cdot z'',z'}}^{W_{z,w \cdot z''}}.  
	\]
	Similarly, applying the inclusion $W_{z,z'',z'} \hookrightarrow W_{z'',z'}$ followed by conjugation by $w$ is the same as conjugating by $w$ and then applying the inclusion $W_{z,w \cdot z'',z'} \hookrightarrow W_{w \cdot z'',z'}$, and we have 
	\[
		\Res_{W_{z,z'',z'}}^{W_{z'',z'}} \circ \Lambda_{\eta^w_{z'',z'}} = \Lambda_{\eta^w_{z,z'',z'}}	\circ \Res_{W_{z,w \cdot z'',z'}}^{W_{w \cdot z'',z'}}.  
	\]
	Using the fact that $\Lambda_{\eta^w_{z,z'',z'}}$ is a ring homomorphism as well as condition \eqref{i:equivincidence} in Definition~\ref{def:equivariantincidencealgebra}, we compute
	\begin{align*}
			\Lambda_{\eta^w_{z,z'',z'}}(p_{w \cdot z''}) &= \Lambda_{\eta^w_{z,z'',z'}} \left( \Res^{W_{z,w \cdot z''}}_{W_{z,w \cdot z'',z'}} p(z,w \cdot z'') \cdot  \Res^{W_{w \cdot z'',z'}}_{W_{z,w \cdot z'',z'}} p'(w \cdot z'',z')  \right) \\
			&= \Lambda_{\eta^w_{z,z'',z'}} \left( \Res^{W_{z,w \cdot z''}}_{W_{z,w \cdot z'',z'}} p(z,w \cdot z'') \right) \cdot \Lambda_{\eta^w_{z,z'',z'}} \left( \Res^{W_{w \cdot z'',z'}}_{W_{z,w \cdot z'',z'}} p'(w \cdot z'',z')  \right) \\
			&= 	\Res_{W_{z,z'',z'}}^{W_{z,z''}} \left( \Lambda_{\eta^w_{z,z''}} p(z,w \cdot z'') \right) \cdot 
				\Res_{W_{z,z'',z'}}^{W_{z'',z'}} \left( 
				\Lambda_{\eta^w_{z'',z'}} p'(w \cdot z'',z') \right) 
				\\
						&= 	 \Res_{W_{z,z'',z'}}^{W_{z,z''}}   p(z,z'')  \cdot 
			 \Res_{W_{z,z'',z'}}^{W_{z'',z'}}  
			p'(z'',z') \\
			&= p_{z''}.
	\end{align*}
	Applying \eqref{eq:inducedcharacter} to the action of $W_{z,z'}$ on $X = [z,z']$ with $x_0 = z''$ gives
	\[
		\ev_w(\Ind^{W_{z,z'}}_{W_{z,z'',z'}} p_{z''}) =  	\sum_{x \in (W_{z,z'} \cdot z'')^w} \ev_w(p_x).
	\]
	Putting this together, we compute
		\begin{align*}
		(\ev_w(p \cdot p'))(z,z') 
		&= \sum_{z \le z'' \le z'} \frac{|W_{z,z'',z'}|}{|W_{z,z'}|}
		\ev_w ( \Ind^{W_{z,z'}}_{W_{z,z'',z'}} 	p_{z''} ) \\
		&= \sum_{z \le z'' \le z'} \frac{|W_{z,z'',z'}|}{|W_{z,z'}|}
 \sum_{x \in (W_{z,z'} \cdot z'')^w} \ev_w(p_x) \\
 &= \sum_{z \le z'' \le z'} \frac{|W_{z,z'',z'}|}{|W_{z,z'}|}
 \sum_{x \in (W_{z,z'} \cdot z'')^w} \ev_w(p)(z,x) \ev_w(p)(x,z') \\
 &= \sum_{x \in  [z,z']^w} \ev_w(p)(z,x) \ev_w(p)(x,z')  \sum_{z'' \in W_{z,z'} \cdot x} \frac{|W_{z,z'',z'}|}{|W_{z,z'}|} \\
  &= \sum_{x \in  [z,z']^w} \ev_w(p)(z,x) \ev_w(p)(x,z')  \\
  &= (\ev_w(p) \cdot \ev_w(p'))(z,z'). 
	\end{align*}	
\end{proof}

With Lemma~\ref{lem:Ztalgebrahom} in hand, we have the following consequences which will allow us to relate the equivariant and non-equivariant settings. 

\begin{lemma}\label{lem:reducenonequivariant}
	Let $W$ be a finite group acting on a poset $B$ 
	with $W$-invariant weak rank function $r_B$. 
	For each $w \in W$, 	let $r_B^w$ be the restriction of $r_B$ to $B^w$.
	Consider an element $\kappa_B \in \II^W(B) \cap U^W(B)$. Assume that $\ev_w(\kappa_B) \in I(B^w) \subset I(B^w)_\C$ for all $w \in W$.  Then $\kappa_B$ is an equivariant $B$-kernel if and only if $\ev_w(\kappa_B) \in \II(B^w)_\C$ is a $B^w$-kernel for all $w \in W$. 
	
	Moreover, assume that $\kappa_B$ is an equivariant $B$-kernel. Consider 
	$f_B, g_B \in \II_{1/2}^W(B)  \cap U^W(B)$ and $Z_B \in  \II_{1/2}^W(B)  \cap U^W(B)$. Then $f_B$ and $g_B$ are the right and left equivariant Kazhdan-Lusztig-Stanley functions respectively corresponding to $\kappa_B$ if and only if $\ev_w(f_B)$ and $\ev_w(g_B)$ are the right and left Kazhdan-Lusztig-Stanley functions respectively corresponding to $\ev_w(\kappa_B)$ for all $w \in W$. 
	If $Z_B$ is the equivariant $Z$-function corresponding to $\kappa_B$, then $\ev_w(Z_B)$ is the $Z$-function corresponding to $\ev_w(\kappa_B)$  for all $w \in W$. 	
\end{lemma}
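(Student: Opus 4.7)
The plan is to reduce each of the three claims to Lemma~\ref{lem:Ztalgebrahom}, which ensures that for each $w \in W$ the map $\ev_w : I^W(B) \to I(B^w)_\C$ is a $\Z[t]$-algebra homomorphism. Combined with the observations made just before Lemma~\ref{lem:Ztalgebrahom} that $\ev_w$ commutes with the involution $p \mapsto p^{\rev}$ and carries $\II^W(B)$, $\II^W_{1/2}(B)$, and $U^W(B)$ into their non-equivariant counterparts over $\C$, this means that the defining identities of equivariant $B$-kernels and equivariant KLS functions transport directly between the two settings.

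For the forward direction of the first claim, applying $\ev_w$ to $\kappa_B \cdot \kappa_B^{\rev} = \delta_B$ immediately gives $\ev_w(\kappa_B) \cdot \ev_w(\kappa_B)^{\rev} = \delta_{B^w}$, so $\ev_w(\kappa_B)$ is a $B^w$-kernel; under the standing hypothesis $\ev_w(\kappa_B) \in I(B^w)$, this holds in $I(B^w)$ as well as in $I(B^w)_\C$. For the converse I will use the standard fact recalled in Section~\ref{ss:representationtheory} that an element $\alpha \in R(W')$ is determined by its character values $\{\ev_{w'}(\alpha) : w' \in W'\}$ via the isomorphism $R(W')_\C \cong \cf(W')$. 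Concretely, for each $z \le z' \in B$ and each $w \in W_{z,z'}$ one has $z, z' \in B^w$, so by hypothesis $\ev_w\bigl((\kappa_B \cdot \kappa_B^{\rev} - \delta_B)(z,z')\bigr) = 0$; letting $w$ range over $W_{z,z'}$, the character-determination fact forces $(\kappa_B \cdot \kappa_B^{\rev} - \delta_B)(z,z') = 0$ in $R(W_{z,z'})[t]$, yielding $\kappa_B \cdot \kappa_B^{\rev} = \delta_B$.

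The second and third claims follow by the same template, applied to the identities $f_B^{\rev} = \kappa_B \cdot f_B$, $g_B^{\rev} = g_B \cdot \kappa_B$, and $Z_B = g_B \cdot \kappa_B \cdot f_B$. The forward direction of Claim 2 uses Remark~\ref{rem:existenceofgoverk}, which guarantees existence and uniqueness of KLS functions after base change to $\C$; this identifies $\ev_w(f_B)$ and $\ev_w(g_B)$ as the unique elements of $\II_{1/2}(B^w)_\C \cap U(B^w)_\C$ satisfying the KLS identities for $\ev_w(\kappa_B)$. The converse of Claim 2 repeats the character-determination argument of Claim 1, applied to $f_B^{\rev} - \kappa_B \cdot f_B$ and $g_B^{\rev} - g_B \cdot \kappa_B$. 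Claim 3 then follows from multiplicativity of $\ev_w$ applied to $Z_B = g_B \cdot \kappa_B \cdot f_B$.

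The main step requiring care is the character-determination argument, since one must verify that for fixed $z \le z'$ the evaluations $\{\ev_w(p)(z,z') : w \in W_{z,z'}\}$ separate elements of $R(W_{z,z'})[t]$. This is straightforward: the condition $w \in W_{z,z'}$ is precisely the condition $z, z' \in B^w$, which is precisely the condition under which $\ev_w$ reads off the value of $p$ at the interval $[z,z']$, so the standard fact that characters separate elements of $R(W_{z,z'})$ applies unchanged.
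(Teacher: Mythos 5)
Your proposal is correct and follows essentially the same route as the paper's proof: reduce to the fact that $\ev_w$ is a $\Z[t]$-algebra homomorphism (Lemma~\ref{lem:Ztalgebrahom}), note that $\ev_w$ commutes with $p \mapsto p^{\rev}$ and carries $\II^W(B)$, $\II^W_{1/2}(B)$, $U^W(B)$ into their non-equivariant counterparts, invoke the character-determination principle to pass between a single equivariant identity and the family of identities indexed by $w \in W$, and finish Claim~2 with the uniqueness afforded by Remark~\ref{rem:existenceofgoverk}. The only difference is cosmetic: you spell out the character-determination step interval by interval, where the paper states it once globally as ``$p \in I^W(B)$ is determined by $\{\ev_w(p) : w \in W\}$''.
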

\begin{proof}
	Recall from Section~\ref{ss:representationtheory} that there is an  isomorphism between $R(W')_\C[t]$ and $\cf(W')[t]$ for all finite groups $W'$. In particular, $p \in I^W(B)$ is determined by $\{ \ev_w(p) : w \in W \}$. The first statement now follows from Lemma~\ref{lem:Ztalgebrahom}. Explicitly, $\kappa_B \cdot \kappa_B^{\rev} = \delta^W_B$ if and only if 
 	$\ev_w(\kappa_B \cdot \kappa_B^{\rev}) = \ev_w(\kappa_B) \cdot \ev_w(\kappa_B^{\rev}) = \ev_w(\kappa_B) \cdot \ev_w(\kappa_B)^{\rev} = \ev_w(\delta^W_B) =  \delta_B$ for all $w \in W$. Also, $\ev_w(\kappa_B)  \in \II(B^w) \cap U(B^w)$ by assumption. 
 	
 	For the second statement, Lemma~\ref{lem:Ztalgebrahom} implies that
 	$f_B^{\rev} = \kappa_B \cdot f_B$ if and only if 
 	$\ev_w(f_B)^{\rev} = \ev_w(\kappa_B) \cdot \ev_w(f_B)$ for all $w \in W$, and
 	 $g_B^{\rev} = g_B \cdot \kappa_B$ if and only if 
 	$\ev_w(g_B)^{\rev} = \ev_w(g_B) \cdot \ev_w(\kappa_B)$ for all $w \in W$.
 	The statement now follows from Remark~\ref{rem:existenceofgoverk}. 
 	Explicitly, Remark~\ref{rem:existenceofgoverk} implies that $\ev_w(f_B)^{\rev} = \ev_w(\kappa_B) \cdot \ev_w(f_B)$ if and only if $\ev_w(f_B)$ lies in $I(B^w) \subset I(B^w)_\C$ and is the right Kazhdan-Lusztig-Stanley function. Similarly, $\ev_w(g_B)^{\rev} = \ev_w(g_B) \cdot \ev_w(\kappa_B)$ if and only if $\ev_w(g_B)$ lies in $I(B^w) \subset I(B^w)_\C$ and is the left Kazhdan-Lusztig-Stanley function.
 	
 	Finally, suppose that $Z_B$ is the equivariant $Z$-function  corresponding to $\kappa_B$. By Lemma~\ref{lem:Ztalgebrahom}, for any $w \in W$, $\ev_w(Z_B) = \ev_w( g_B \cdot \kappa_B \cdot f_B) = \ev_w(g_B) \cdot \ev(\kappa_B) \cdot \ev_w(f_B)$ is the $Z$-function corresponding to $\ev_w(\kappa_B)$. 
\end{proof}

For the remainder of the section, assume that $B$ is a lower Eulerian poset.  %with rank function $\rho_B: B \to \Z$. 
We introduce the following definition.
%Example~\ref{ex:notEulerian} provides an example of an action not satisfying the conditions below.

\begin{definition}\label{def:Eulerianmap}
	Consider a finite group $W$  acting on a lower Eulerian poset $B$. The action is \emph{Eulerian} if $B^w$ is a lower Eulerian poset for all $w \in W$.
\end{definition}

The example below shows that the action in Example~\ref{ex:fanequiv} is Eulerian. 
%For example, we will see that the action in Example~\ref{ex:fanequiv} is Eulerian (see Example~\ref{ex:convexsupportequiv} below).

\begin{example}\label{ex:fanactionisEulerian}
	
		Recall the setup of Example~\ref{ex:fanequiv}. 
	That is,  $\Sigma$ is a %full-dimensional 
	fan in a real vector space $V$, and $\psi: W \to \GL(V)$ is a real representation of a finite group $W$ such that $\Sigma$ is $W$-invariant. %Assume that the support $|\Sigma|$ of $\Sigma$ is convex. 
	We claim that the induced action of $W$ on $\face(\Sigma)$ is Eulerian. 
	
	Fix an element $w \in W$, and consider the fan $\Sigma^w = \{ C^w : C \in \Sigma \}$ in $V^w$. We claim that $\face(\Sigma^w) = \face(\Sigma)^w$. In particular, $\face(\Sigma)^w$ is lower Eulerian. 
	The claim follows by a standard averaging argument. Any $v \in |\Sigma|^w$ lies in the relative interior of a 
	unique cone $C$ in $\Sigma$, and $w \cdot v = v$ implies that $w \cdot C = C$.  Hence $\Sigma^w = \{ C^w : C \in \Sigma , w \cdot C = C \}$. 
	Conservely, 
	consider $C \in \Sigma$ such that $w \cdot C = C$.  Given an element $v$ in the relative interior of $C$, $\frac{1}{|W|} \sum_{w \in W} w \cdot v \in C^w$ lies in the relative interior of $C$. % \subset V^w$.

%	Let $\Sigma^w = \{ C^w : C \in \Sigma, w \cdot C = C \}$. By a similar averaging argument, $\Sigma^w$ is a %full-dimensional 
%	fan in $V^w$ with face poset $\face(\Sigma^w) = \face(\Sigma)^w$. Indeed, any $v \in |\Sigma|^w$ \alan{fix/check} lies in the relative interior of a unique cone $C$ in $\Sigma$, and $w \cdot v = v$ implies that $w \cdot C = C$. Conversely, given $C \in \Sigma$ such that $w \cdot C = C$ and an element $v$ in the relative interior of $C$, $\frac{1}{|W|} \sum_{w \in W} w \cdot v \in C^w \subset V^w$.

\end{example}

	Below is an example of an action that is not Eulerian. 

%
%The example below shows that the fixed point set of a poset may not be well-behaved. 

\begin{example}\label{ex:notEulerian}
	If $B$ is Eulerian, then the fixed poset $B^w$ is not necessarily a ranked poset. 
	For example, with the notation of Example~\ref{ex:B1}, consider the semisuspension $\tilde{\Sigma} B_2 = B_2 \cup   \{ \hat{z}, \hat{1}_{\tilde{\Sigma} B_2}\}$  of $B_2$. That is, $\tilde{\Sigma} B_2$ is obtained from $B_2$ by adjoining an element $\hat{z}$ that is greater than all elements in $B_2$ except $\hat{1}_{B_2}$, and then adjoining a maximal element $\hat{1}_{\tilde{\Sigma} B_2}$. 
%	For example, let $B'$ be the rank $3$ Eulerian poset with two 
%	For example, consider the semisuspension $\tilde{\Sigma} B_2 = B_2 \cup   \{ \hat{z}, \hat{1}_{\tilde{\Sigma} B_2}\}$  of $B_2$. 
	Let $B$ be obtained from two copies  of 
	$\tilde{\Sigma} B_2$ by identifying the unique minimal elements, and also identifying the two maximal elements. Then $B$ is Eulerian of rank $3$. See \cite{StanleySurveyEulerian}*{Figure~2}.  Let $W = \Z/2\Z = \langle w \rangle$. Let $W$ act trivially on the first copy of $\tilde{\Sigma} B_2$, and act on the second copy of $\tilde{\Sigma} B_2$ by fixing all elements except $w \cdot \hat{z} = \hat{1}_{B_2}$ and $w \cdot \hat{1}_{B_2} = \hat{z}$. Then $B^w = [\hat{0}_{B^w},\hat{1}_{B^w}]$ has maximal chains of length both $2$ and $3$, and hence $B^w$ is not a ranked poset.
\end{example}

Assume that the action of $W$ on the lower Eulerian poset $B$ is Eulerian. 
%Assume that $B^w$ is a ranked poset for all $w \in W$. 
For any $w \in W$, recall the ring involution $p \mapsto \widehat{p}$ of $\II(B^w)$ defined by $\widehat{p}(z, z') = (-1)^{\rho_{B^w}(z,z')} p(z, z')$ for $z \le z'$ in $B^w$, where $\rho_{B^w}$ is the natural weak rank function for $B^w$. 
%$\rho_{B^w}: B^w \to \Z$ is any choice of rank function for $B^w$. 
%Note that this is independent of the choice of $\rho_{B^w}$.  
To help with notation, let us temporarily write 
$\iota_w(p) = \hat{p}$ for $p \in \II(B^w)$. 
Recall that an element $p \in I^W(B)_\C$ determines and is determined by $\{ \ev_w(p) : w \in W \}$. 
We may define a corresponding involution 
$p \mapsto \widehat{p}$ on $\II^W(B)_\C$, where $\widehat{p}$  is determined by
$\ev_w(\widehat{p}) = \iota_w(\ev_w(p))$ for all $w \in W$. 
%setting
%$\ev_w(\widehat{p}) = \widehat{\ev_w(p)}$ for all $w \in W$. 

\begin{lemma}
	Consider an Eulerian action of a finite group $W$ on the lower Eulerian poset $B$. 
	%If $B^w$ is a ranked poset for all $w \in W$, 
	Then
	the involution $p \mapsto \widehat{p}$ on $\II^W(B)_\C$ is a ring involution. 
\end{lemma}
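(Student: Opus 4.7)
The strategy is to first establish that $p \mapsto \widehat{p}$ is well-defined as a $\C$-linear map $\II^W(B)_\C \to \II^W(B)_\C$, and then verify the ring involution axioms by reducing them to the corresponding properties of each $\iota_w$ on $\II(B^w)_\C$ (which were noted in Section~\ref{ss:KLSbackground}) via Lemma~\ref{lem:Ztalgebrahom}. Using the $\C$-algebra isomorphism $R(W_{z,z'})_\C \cong \cf(W_{z,z'})$ from Section~\ref{ss:representationtheory}, I would construct $\widehat{p}(z,z') \in R(W_{z,z'})_\C[t]$ directly as the unique polynomial whose character value at $w' \in W_{z,z'}$ is $(-1)^{\rho_{B^{w'}}(z,z')} \ev_{w'}(p(z,z'))$. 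This makes sense because $w' \in W_{z,z'}$ forces $z,z' \in B^{w'}$ so that the rank is defined, and the assignment $w' \mapsto (-1)^{\rho_{B^{w'}}(z,z')}$ is genuinely a class function on $W_{z,z'}$: if $u \in W_{z,z'}$, then conjugation by $u$ is a rank-preserving poset isomorphism carrying $B^{w'}$ onto $B^{uw'u^{-1}}$ while fixing both $z$ and $z'$, so $\rho_{B^{w'}}(z,z') = \rho_{B^{uw'u^{-1}}}(z,z')$.

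Next I would verify condition~(2) of Definition~\ref{def:equivariantincidencealgebra} for $\widehat{p}$, which is the main technical step. Given $w \in W$ and $z \le z' \in B$, the character of $\Lambda_{\eta^w_{z,z'}}(\widehat{p}(w \cdot z, w \cdot z'))$ at $w' \in W_{z,z'}$ is by definition the character of $\widehat{p}(w \cdot z, w \cdot z')$ at $ww'w^{-1}$. Because multiplication by $w$ induces a rank-preserving poset isomorphism $B^{w'} \to B^{ww'w^{-1}}$ carrying $[z,z']$ to $[w \cdot z, w \cdot z']$, we obtain $\rho_{B^{ww'w^{-1}}}(w \cdot z, w \cdot z') = \rho_{B^{w'}}(z,z')$. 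Combining this identity with the equivariance of $p$ itself yields the desired equality $\widehat{p}(z,z') = \Lambda_{\eta^w_{z,z'}}(\widehat{p}(w \cdot z, w \cdot z'))$. The degree bound $\widehat{p} \in \II^W(B)_\C$ is automatic since for each $w$ the polynomial $\ev_w(\widehat{p}(z,z';t))$ agrees up to sign with $\ev_w(p(z,z';t))$, so the two have the same degree in $t$.

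Once well-definedness is in hand, the remaining axioms follow easily. The involution identity $\widehat{\widehat{p}} = p$ is immediate from $(-1)^{2\rho_{B^{w'}}(z,z')} = 1$; additivity is clear since the twisting sign is independent of $p$; and the identity element $\delta_B^W$ is preserved because $\rho_{B^{w'}}(z,z) = 0$. For multiplicativity, I would use that an element of $\II^W(B)_\C$ is determined by its full collection of evaluations $\{\ev_w(-)\}_{w \in W}$ (by character theory), so it suffices to verify $\ev_w(\widehat{p \cdot q}) = \ev_w(\widehat{p} \cdot \widehat{q})$ for every $w \in W$. By Lemma~\ref{lem:Ztalgebrahom}, both sides rewrite as expressions in $\ev_w(p)$, $\ev_w(q)$, and $\iota_w$, and the equality then reduces to the statement from Section~\ref{ss:KLSbackground} that $\iota_w$ is itself a ring involution on $\II(B^w)_\C$.

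The main obstacle will be the equivariance verification in the second paragraph, as it is the step that genuinely uses the hypothesis that the action is Eulerian: one must carefully track how the natural rank functions on the distinct lower Eulerian posets $B^{w'}$ and $B^{ww'w^{-1}}$ transform under the conjugation action of $W$, and it is precisely here that the assumption $B^w$ is lower Eulerian for every $w$ (so that $\rho_{B^w}$ is even defined) enters the argument.
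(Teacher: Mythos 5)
Your proof is correct and, for the multiplicativity verification, takes exactly the paper's route: reduce to each $\ev_w$ via Lemma~\ref{lem:Ztalgebrahom} and invoke the fact that $\iota_w$ is a ring involution on $\II(B^w)_\C$. The extra content you provide — checking that $w' \mapsto (-1)^{\rho_{B^{w'}}(z,z')}$ is a class function on $W_{z,z'}$ and that $\widehat{p}$ satisfies condition~(2) of Definition~\ref{def:equivariantincidencealgebra}, via the observation that $u \in W$ induces a rank-preserving poset isomorphism $B^{w'} \to B^{uw'u^{-1}}$ — is a verification of well-definedness that the paper folds silently into the definition of $\widehat{p}$ immediately preceding the lemma; it is a worthwhile addition, and your account of how the Eulerian hypothesis enters (to make the fixed-locus rank functions exist at all) is accurate.
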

\begin{proof}
	This follows from Lemma~\ref{lem:Ztalgebrahom} and the fact that $\iota_w$ is a ring involution is for all $w \in W$. 
	%We need to check that multiplication is preserved by the involution.
	For example, we explicitly check that multiplication is preserved. For any $p,p' \in \II^W(B)_\C$ and $w \in W$,
%	This follows from Lemma~\ref{lem:Ztalgebrahom} and the fact that $\iota_w$ is a ring involution for all $w \in W$. 
%	%To see this, we introduce some temporarily notation. Let $\iota_w(\lambda) = \hat{\lambda}$ for $\lambda \in I(B^w)$ so that $\ev_w(\hat{p}) = \iota_w(\ev_w(p))$ for all $p \in \II^W(B)_\C$ and $w \in W$. 
%	Explicitly,
%	for all $p,p' \in \II^W(B)_\C$ and $w \in W$, 
	\begin{align*}
		\ev_w(\widehat{p \cdot p'}) &= \iota_w(\ev_w(p \cdot p')) = \iota_w(\ev_w(p) \cdot \ev_w(p')) \\ &= \iota_w(\ev_w(p)) \cdot \iota_w(\ev_w(p')) 
		= \ev_w(\widehat{p}) \cdot \ev_w(\widehat{p'}) = 
		\ev_w(\widehat{p} \cdot \widehat{p'}). 
	\end{align*}

\end{proof}

We say that $p \in \II^W(B)$ is \emph{rank alternating} if $p^{\rev} = \widehat{p}$. We say that  an element $p \in \II^W(B)$ is \emph{multiplicative} if
$\ev_w(p(z,z')) = \ev_w(p(z,z''))\ev_w(p(z'',z'))$ for all $w \in W$ and $z \le z'' \le z'$ in $B^w$.  

\begin{lemma}\label{lem:equivariantmultiplicativealternating}
	Consider an Eulerian action of a finite group $W$ on the lower Eulerian poset $B$. 
	%If $B^w$ is a ranked poset for all $w \in W$, and  
	Suppose that $\kappa_B \in \II^W(B) \cap U^W(B)$ is multiplicative and rank alternating.  Then 
	$\kappa_B$ is an equivariant $B$-kernel, and 
	 the corresponding equivariant Kazhdan-Lusztig-Stanley functions satisfy $\widehat{f_B} = g_B^{-1}$ and $\widehat{g_B} = f_B^{-1}$.
\end{lemma}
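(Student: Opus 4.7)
The approach is to reduce both claims to their non-equivariant analogues (Lemma~\ref{lem:multaltiskernel} and Lemma~\ref{lem:inverse}) applied on each fixed subposet $B^w$, via the $\Z[t]$-algebra homomorphism $\ev_w : I^W(B) \to I(B^w)_\C$ of Lemma~\ref{lem:Ztalgebrahom}. The Eulerian action hypothesis guarantees that $B^w$ is lower Eulerian for every $w \in W$, which is precisely what Lemma~\ref{lem:inverse} requires. A key observation that makes the reduction faithful is that an element $p \in \II^W(B)$ is zero if and only if $\ev_w(p) = 0$ for all $w \in W$, since characters detect virtual representations. Throughout I would work in $\II(B^w)_\C$ and invoke the $\C$-coefficient analogues of Lemma~\ref{lem:multaltiskernel} and Lemma~\ref{lem:inverse}, whose proofs transfer verbatim as they rely only on the ring structure of the incidence algebra and the reversal and hat involutions.

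First I would verify that, for each $w \in W$, the element $\ev_w(\kappa_B) \in \II(B^w)_\C \cap U(B^w)_\C$ is multiplicative and rank alternating on the lower Eulerian poset $B^w$ with the restricted weak rank function $r_B^w$. Multiplicativity is exactly the defining property of a multiplicative element of $\II^W(B)$. For rank alternation, the definition of the extended hat involution gives $\ev_w(\widehat{\kappa_B}) = \widehat{\ev_w(\kappa_B)}$, and $\ev_w$ commutes with the reversal involution, so the hypothesis $\kappa_B^{\rev} = \widehat{\kappa_B}$ transfers to $\ev_w(\kappa_B)^{\rev} = \widehat{\ev_w(\kappa_B)}$. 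Applying the $\C$-coefficient version of Lemma~\ref{lem:multaltiskernel} then shows that $\ev_w(\kappa_B)$ is a $B^w$-kernel, so $\ev_w(\kappa_B \cdot \kappa_B^{\rev} - \delta_B^W) = 0$ in $I(B^w)_\C$ by Lemma~\ref{lem:Ztalgebrahom}. Since this holds for every $w \in W$, we conclude $\kappa_B \cdot \kappa_B^{\rev} = \delta_B^W$, establishing that $\kappa_B$ is an equivariant $B$-kernel.

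For the identities $\widehat{f_B} = g_B^{-1}$ and $\widehat{g_B} = f_B^{-1}$, Lemma~\ref{lem:Ztalgebrahom} applied to the defining equations $f_B^{\rev} = \kappa_B \cdot f_B$ and $g_B^{\rev} = g_B \cdot \kappa_B$ shows that $\ev_w(f_B)$ and $\ev_w(g_B)$ lie in $\II_{1/2}(B^w)_\C \cap U(B^w)_\C$ and satisfy the analogous identities with respect to $\ev_w(\kappa_B)$; by the uniqueness statement in Remark~\ref{rem:existenceofgoverk}, they must be the right and left Kazhdan-Lusztig-Stanley functions for $\ev_w(\kappa_B)$. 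Since $B^w$ is lower Eulerian and $\ev_w(\kappa_B)$ is rank alternating, the $\C$-coefficient version of Lemma~\ref{lem:inverse} yields $\widehat{\ev_w(f_B)} = \ev_w(g_B)^{-1}$ and $\widehat{\ev_w(g_B)} = \ev_w(f_B)^{-1}$. Using $\ev_w(\widehat{f_B}) = \widehat{\ev_w(f_B)}$ and that $\ev_w$ is a ring homomorphism, these translate to $\ev_w(\widehat{f_B} - g_B^{-1}) = 0$ and $\ev_w(\widehat{g_B} - f_B^{-1}) = 0$ for all $w \in W$, giving the desired equalities. The main subtlety to anticipate is that $\ev_w(\kappa_B)$ need not have integer coefficients, so one cannot invoke Lemma~\ref{lem:reducenonequivariant} directly and must instead argue throughout over $\C$.
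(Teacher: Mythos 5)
Your proof is correct and takes essentially the same route as the paper: reduce to the non-equivariant Lemmas~\ref{lem:multaltiskernel} and \ref{lem:inverse} on each fixed subposet $B^w$ via the $\Z[t]$-algebra homomorphism $\ev_w$ of Lemma~\ref{lem:Ztalgebrahom}, and recover the equivariant statement by the fact that characters separate virtual representations. The one place where you are more careful than the paper is the coefficient issue you flag at the end: the paper's proof states $\ev_w(\kappa_B) \in \II(B^w) \cap U(B^w)$ and then invokes Lemma~\ref{lem:reducenonequivariant}, whose stated hypothesis requires $\ev_w(\kappa_B)$ to have integer coefficients; however, a multiplicative, rank-alternating $\kappa_B$ need not satisfy this in general (for instance, $\det(tI-\psi_{z,z'}(w))$ for a real but non-integral representation can have irrational coefficients). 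You correctly sidestep this by working over $\C$ throughout and appealing to the $\C$-coefficient analogues of Lemmas~\ref{lem:multaltiskernel}, \ref{lem:inverse}, and Remark~\ref{rem:existenceofgoverk}, whose proofs transfer verbatim. In substance you are re-proving the relevant implication of Lemma~\ref{lem:reducenonequivariant} over $\C$ rather than citing it, which is the cleaner way to close the argument.
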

\begin{proof}
	It follows from the definitions that for any $w \in W$, $\ev_w(\kappa_B) \in \II(B^w) \cap U(B^w)$ is multiplicative and rank alternating. 
	By Lemma~\ref{lem:multaltiskernel}, $\ev_w(\kappa_B)$ is a $B^w$-kernel.
	By Lemma~\ref{lem:reducenonequivariant}, $\kappa_B$ is an equivariant $B$-kernel with equivariant Kazhdan-Lusztig-Stanley functions $f_B,g_B$.
	By Lemma~\ref{lem:reducenonequivariant}, $\ev_w(f_B),\ev_w(g_B)$  are the 
	Kazhdan-Lusztig-Stanley functions corresponding to $\ev_w(\kappa_B)$. 
	With the notation above, by Lemma~\ref{lem:inverse} and Lemma~\ref{lem:Ztalgebrahom}, 
	$\ev_w(\widehat{f_B}) = \iota_w(\ev_w(f_B)) = \ev_w(g_B)^{-1} = \ev_w(g_B^{-1})$ and $\ev_w(\widehat{g_B}) = \iota_w(\ev_w(g_B)) = \ev_w(f_B)^{-1} = \ev_w(f_B^{-1})$ for all $w \in W$. We deduce that 
	$\widehat{f_B} = g_B^{-1}$ and $\widehat{g_B} = f_B^{-1}$.
\end{proof}

For example, we will see that 	the equivariant kernel in Example~\ref{ex:fanequiv} is multiplicative and rank alternating (see Example~\ref{ex:fanequivv2} below).

\subsection{Equivariant Kazhdan-Lusztig-Stanley theory for subdivisions of lower Eulerian posets}\label{ss:equivariantKLSlowerEulerian}

In this section, we develop equivariant generalizations of the results  of Section~\ref{ss:statements}. 
As in Section~\ref{ss:statements}, we fix the following setup. 
Let $\sigma: X \to Y$ be a strong formal subdivision between lower Eulerian posets with rank functions $\rho_X$ and $\rho_Y$ respectively,
corresponding under Theorem~\ref{thm:introbijection} to a triple $(\Gamma, \rho_\Gamma, q)$, where $\Gamma$ is a lower Eulerian poset
with 
rank function $\rho_\Gamma$, and $q$ is a join-admissible element of $\Gamma$ with $q \neq \hat{0}_\Gamma$. 

%See Section~\ref{ss:backgroundposet} for details. 
%As in Section~\ref{sec:lowerEulerian}, we fix the following setup throughout. 
%Let $\sigma: X \to Y$ be a strong formal subdivision between lower Eulerian posets with rank functions $\rho_X$ and $\rho_Y$ respectively,
%corresponding under Theorem~\ref{thm:mainsimplified} to a triple $(\Gamma, \rho_\Gamma, q) \in \JoinIdealLW^\circ$ with $\Gamma = \Cyl(\sigma)$, $\rho_\Gamma$ determined by \eqref{eq:rhoCyl}, and $q = \hat{0}_Y$. 
%Recall that $X = \Gamma \smallsetminus \Gamma_{\ge q}$, $Y = \Gamma_{\ge q}$, and $\sigma(x) = x \vee q$ for all $x \in X$. \alan{Thinking of phasing out notation $\Gamma_{\ge q}$}

%Let $W$ be a finite group acting on the poset $\Gamma$.
%
% We say that the action is \emph{Eulerian} if 

Let $W$ be a finite group. We define an action of $W$ on $(\Gamma, \rho_\Gamma, q)$ to be an action of $W$ on the poset $\Gamma$ such that $\rho_\Gamma$ is $W$-invariant and $q \in \Gamma$ is fixed by all elements of $W$. 
%, and $\rho_\Gamma$ is $W$-invariant in the sense that $\rho_\Gamma(w \cdot z) = \rho_\Gamma(z)$ for all $z \in \Gamma$ and $w \in W$. 
In that case, $X$ and $Y$ are $W$-invariant, and $\sigma$ is $W$-equivariant in the sense that for all $x \in X$ and $w \in W$, 
$\sigma(w \cdot x) = w \cdot \sigma(x)$. In particular, for any $w \in W$,
$\sigma$ restricts to an order-preserving function between posets $\sigma^w: X^w \to Y^w$.
We say that the action of $W$ on $(\Gamma, \rho_\Gamma, q)$ is \emph{Eulerian} if the action of $W$ on $\Gamma$ is Eulerian as defined in Definition~\ref{def:Eulerianmap}, i.e.,  $\Gamma^w$ is a lower Eulerian poset for all $w \in W$.
% Observe that for any $w \in W$, the induced map $w \cdot  : \Gamma \to \Gamma$, $z \mapsto w \cdot z$ is a strong formal subdivision. Moreover, under \alan{add a refernce}
% 			\[
% \CYLcat \left(\begin{tikzcd} X  \arrow[r, "\sigma"] \arrow[d, "w \cdot"] & Y  \arrow[d, "w \cdot"] \\ X \arrow[r, "\sigma"] &  Y \end{tikzcd}\right) = (w \cdot  : \Gamma \to \Gamma).
% \]		
%We introduce the following definition. 
%
%\begin{definition}\label{def:Eulerianmap}
%	Consider a finite group $W$  acting on a lower Eulerian poset $B$. We say that the action is \emph{Eulerian} if $B^w$ is a lower Eulerian poset for all $w \in W$.
%\end{definition}
%
%See Example~\ref{ex:notEulerian} for an example of a finite group action on an Eulerian poset that is not an Eulerian action. 
%We say that an action of $W$ on $(\Gamma, \rho_\Gamma, q)$ is Eulerian if the action of $W$ on $\Gamma$ is Eulerian. 
%
%Recall from Definition~\ref{def:Eulerianmap} that .....

Fix an Eulerian action of $W$ on $(\Gamma, \rho_\Gamma, q)$. 
%Assume that the action of $W$ on $\Gamma$ is Eulerian as defined in Definition~\ref{def:Eulerianmap}. 
Since $X$ is a lower order ideal of $\Gamma$, $Y$ is an upper order ideal of $\Gamma$, and $q$ is $W$-fixed, it follows that the action of $W$ restricts to Eulerian actions on $X$ and $Y$. 
Consider any $w \in W$. Observe that $q$ is a join-admissible element of $\Gamma^w$. Indeed,  for any $z \in \Gamma^w$, we have $w \cdot (z \vee q) = (w \cdot z) \vee (w \cdot q) = z \vee q$, and hence $z \vee q \in \Gamma^w$.
  Let $\rho_{\Gamma^w}: \Gamma^w \to \Z$ be a choice of rank function for $\Gamma^w$. 
  % uniquely determined by $\rho_{\Gamma^w}(q) = \rho_\Gamma(q)$.  
  Then $\rho_{X^w} = \rho_{\Gamma^w}|_{X^w}$ and $\rho_{Y^w} = \rho_{\Gamma^w}|_{Y^w}[-1]$ are rank functions for $X^w$ and $Y^w$ respectively. 
We deduce that the triple $(\Gamma^w, \rho_{\Gamma^w}, q)$ corresponds to  $\sigma^w : X^w \to Y^w$  under Theorem~\ref{thm:introbijection}. In particular, $\sigma^w : X^w \to Y^w$ is a strong formal subdivision. 

%Let $W$ be a finite group acting on the poset $\Gamma$ and assume that $q \in \Gamma$ is fixed by all elements of $W$. 
%Then $X$ and $Y$ are $W$-invariant, and $\sigma$ is $W$-equivariant in the sense that for all $x \in X$ and $w \in W$, 
%$\sigma(w \cdot x) = w \cdot \sigma(x)$. 
%We further assume that $\Gamma^w$ is a lower Eulerian poset for all $w \in W$. Example~\ref{ex:notEulerian} shows that this assumption does not hold in general.  Let $\rho_{\Gamma^w}: \Gamma^w \to \Z$ be the rank function  uniquely determined by $\rho_{\Gamma^w}(q) = \rho_\Gamma(q)$. 
%Observe that $q \in \Gamma^w$ is join-admissible since for any $z \in \Gamma^w$, we have $w \cdot (z \vee q) = (w \cdot z) \vee (w \cdot q) = z \vee q$, and hence $z \vee q \in \Gamma^w$. Then $(\Gamma^w, \rho_{\Gamma^w}, q) \in \JoinIdealLW^\circ$ and the corresponding strong formal subdivision is the restriction of $\sigma$ to a function $\sigma^w: X^w \to Y^w$, with rank functions $\rho_{X^w} = \rho_{\Gamma^w}|_{X^w}$ and $\rho_{Y^w} = \rho_{\Gamma^w}|_{Y^w}[-1]$ for $X^w$ and $Y^w$ respectively. 

Fix  a  $W$-invariant weak rank function $r_\Gamma$ for $\Gamma$. 
Then
$r_\Gamma$ restricts to $W$-invariant weak rank functions on $X$ and $Y$. We may then define $\II^W(X), \II^W(Y), \II^W(\Gamma)$ and related invariants as in Section~\ref{ss:backgroundequivariantKLS}.
For any $w \in W$, fix the weak rank function   $r_\Gamma^w$ for $\Gamma^w$ given by restriction of $r_\Gamma$. We may then consider $\II(\Gamma^w)$ and related invariants as in Section~\ref{ss:classfunctions}. 
The most important example is when $r_\Gamma$ is the natural weak rank function, which is $W$-invariant by  Example~\ref{ex:Winvariantrank}.  We warn that even in this case, one should not expect $r_\Gamma^w$ to be the natural weak rank function  (see Remark~\ref{rem:rankwarning}).

Fix an element $\kappa_\Gamma \in \II^W(\Gamma) \cap U^W(\Gamma)$ that is multiplicative and rank alternating. By Lemma~\ref{lem:equivariantmultiplicativealternating}, 
	$\kappa_\Gamma$ is an equivariant $\Gamma$-kernel. Moreover, the restrictions $\kappa_X \in \II^W(X)$ and $\kappa_Y \in \II^W(Y)$ of $\kappa_\Gamma$ to $X$ and $Y$ respectively are multiplicative and rank alternating, and are
	an equivariant $X$-kernel  and an equivariant $Y$-kernel respectively.
  Then $f_\Gamma,g_\Gamma, Z_\Gamma \in \II^W(\Gamma)$ restrict to $f_X,g_X, Z_X \in \II^W(X)$ respectively, and restrict to $f_Y,g_Y, Z_Y \in \II^W(Y)$ respectively.

As in Section~\ref{sec:lowerEulerian}, we will use the following notation. 
Given $p \in \II^W(\Gamma)$ and a $W$-invariant subset $S \subset \Int(\Gamma)$, define 
$p|_S \in \II^W(\Gamma)$ by 
\[
p|_{S}(z,z') = \begin{cases}
	p(z,z') &\textrm{ if } [z,z'] \in S, \\
	0 &\textrm{ otherwise.}
\end{cases}
\]
It follows from Definition~\ref{def:equivariantincidencealgebra} that $p|_S$ is a well-defined element of $\II^W(\Gamma)$. Observe that with the notation of Section~\ref{sec:lowerEulerian}, $\ev_w(p|_S) = \ev_w(p)|_S$. 
%We use the following shorthand notations
For convenience, we also set
\[
p|_X := p|_{\{ [z,z'] \in \Int(\Gamma) : z,z' \in X \} },
\]
\[
p|_Y := p|_{\{ [z,z'] \in \Int(\Gamma) : z,z' \in Y \} },
\]
\[
p|_{X/Y} := p|_{\{ [z,z'] \in \Int(\Gamma) : z \in X, z' \in Y \} },
\]
\[
p|_{(X/Y)^\circ} := p|_{\{ [z,z'] \in \Int(\Gamma) : z \in X, z' = \sigma(z) \in Y\} }.
\]

We have the following equivariant analogue of Definition~\ref{def:hellpolynomial}. 

\begin{definition}\label{def:equivarianthellpolynomial}
	With the notation above, define elements $h_{\sigma},\ell_{\sigma} \in \II^W(\Gamma)$ by
	\[
	(t - 1) \cdot h_\sigma = g_\Gamma \cdot \kappa_\Gamma|_{(X/Y)^\circ} =  g_\Gamma|_X \cdot \kappa_\Gamma|_{(X/Y)^\circ}, 
	\]
	\[
	\ell_\sigma = h_\sigma \cdot g_\Gamma^{-1}. 
	\]
	For any   $x \in X$ and $y \in Y$ such that $\sigma(x) \le y$, 
	the polynomials $h_{\sigma}(x,y)$ and $\ell_{\sigma}(x,y)$ in $R(W_{x,y})[t]$ are called the 
	\emph{equivariant $h$-polynomial} and \emph{equivariant local $h$-polynomial} respectively associated to the interval $[x,y]$  in $\Gamma$. 	When $\Gamma$ is Eulerian, we call  $h_{\sigma}(\Gamma) = h_{\sigma}(\hat{0}_X,\hat{1}_Y)$ and $\ell_{\sigma}(\Gamma) = \ell_{\sigma}(\hat{0}_X,\hat{1}_Y)$ the 
	equivariant $h$-polynomial and equivariant local $h$-polynomial respectively associated to $\sigma$. 
\end{definition}

Equivalently, using Lemma~\ref{lem:Ztalgebrahom} and Lemma~\ref{lem:reducenonequivariant}, and comparing with Definition~\ref{def:hellpolynomial}, $h_\sigma$ and $\ell_\sigma$ are determined by the condition that $\ev_w(h_\sigma) = h_{\sigma^w} \in \II(B^w)$ and  $\ev_w(\ell_\sigma) = \ell_{\sigma^w} \in \II(B^w)$ for all $w \in W$.  In particular, $h_\sigma$ is a well-defined element of $\II^W(B)_\C$. We need to show that $h_\sigma$ is a well-defined element of $\II^W(B)$. Then $\ell_\sigma = h_\sigma \cdot g_\Gamma^{-1}$ is also a well-defined element of $\II^W(B)$. For any $x \in X$ and $y \in Y$, since 
$t - 1$ is invertible in $R(W_{x,y})[[t]]$ and $(t - 1)h_\sigma(x,y) \in R(W_{x,y})[t]$ by Definition~\ref{def:equivarianthellpolynomial}, we have
$h_\sigma(x,y) \in R(W_{x,y})[[t]] \cap R(W_{x,y})_\C[t] = R(W_{x,y})[t]$, as desired.

With this setup, we can now deduce equivariant analogues of the results of Section~\ref{ss:statements}.   

\begin{corollary}\label{cor:equivariantsymmetry}
	The equivariant local $h$-polynomials are symmetric in the sense that for all $x \in X$ and $y \in Y$ such that $\sigma(x) \le y$,
\[
\ell_{\sigma}(x,y;t) = t^{r_\Gamma(x,y) - 1}\ell_{\sigma}(x,y;t^{-1}).
\]
Equivalently,  $(t - 1) \cdot \ell_{\sigma} \in \II^W(\Gamma)$ is antisymmetric.
\end{corollary}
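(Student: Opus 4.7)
The plan is to deduce the statement from its non-equivariant counterpart Proposition~\ref{prop:symmetry} via the evaluation homomorphisms $\ev_w$ introduced in Section~\ref{ss:classfunctions}. Since the character map $R(W')[t] \hookrightarrow \cf(W')[t]$ is injective for every finite group $W'$, an element $p \in \II^W(\Gamma)$ is determined by the collection $\{\ev_w(p)\}_{w \in W}$, and moreover $\ev_w$ commutes with the involution $p \mapsto p^{\rev}$ (this is immediate from the definitions in Section~\ref{ss:classfunctions}). Consequently, antisymmetry of $(t-1)\cdot \ell_\sigma$ in $\II^W(\Gamma)$ is equivalent to antisymmetry of $\ev_w\bigl((t-1)\cdot \ell_\sigma\bigr)$ in $\II(\Gamma^w)_\C$ for each $w \in W$.

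The main bookkeeping step is to identify $\ev_w(\ell_\sigma) = \ell_{\sigma^w}$, where $\sigma^w: X^w \to Y^w$ is the strong formal subdivision corresponding to the triple $(\Gamma^w,\rho_{\Gamma^w},q)$ as in the setup of Section~\ref{ss:equivariantKLSlowerEulerian}. This follows by applying $\ev_w$ to the defining equations $(t-1) \cdot h_\sigma = g_\Gamma \cdot \kappa_\Gamma|_{(X/Y)^\circ}$ and $\ell_\sigma = h_\sigma \cdot g_\Gamma^{-1}$ from Definition~\ref{def:equivarianthellpolynomial}, and invoking three ingredients: (i) $\ev_w$ is a $\Z[t]$-algebra homomorphism by Lemma~\ref{lem:Ztalgebrahom}; (ii) $\ev_w(g_\Gamma)$ is the non-equivariant left Kazhdan-Lusztig-Stanley function associated to $\ev_w(\kappa_\Gamma)$ on $\Gamma^w$ by Lemma~\ref{lem:reducenonequivariant}; and (iii) the restriction operations commute with $\ev_w$ in the sense that for a $W$-invariant subset $S \subset \Int(\Gamma)$ one has $\ev_w(p|_S)(z,z') = \ev_w(p)(z,z')$ if $[z,z'] \in S \cap \Int(\Gamma^w)$ and $0$ otherwise. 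In particular, since $[z,z'] \in \Int(\Gamma^w)$ with $z \in X$ and $z' = \sigma(z) \in Y$ forces $z \in X^w$ and $z' = \sigma^w(z) \in Y^w$, the restriction $\kappa_\Gamma|_{(X/Y)^\circ}$ is sent by $\ev_w$ to the corresponding restriction on $\Gamma^w$ associated to $\sigma^w$.

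To close, observe that $\ev_w(\kappa_\Gamma) \in \II(\Gamma^w)_\C \cap U(\Gamma^w)_\C$ is multiplicative and rank alternating: this is exactly what the definitions of multiplicative and rank alternating elements of $\II^W(\Gamma)$ demand. Applying Proposition~\ref{prop:symmetry}, extended over $\C$ as in Remark~\ref{rem:existenceofgoverk} (its proof is purely algebraic and carries over unchanged), to the strong formal subdivision $\sigma^w$ equipped with the kernel $\ev_w(\kappa_\Gamma)$ gives that $(t-1)\cdot \ell_{\sigma^w}$ is antisymmetric in $\II(\Gamma^w)_\C$. Assembling this across all $w \in W$ yields the claim. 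There is no serious obstacle in this argument beyond verifying the compatibility of the restriction and evaluation operations; the algebraic content is entirely inherited from the non-equivariant Proposition~\ref{prop:symmetry}.
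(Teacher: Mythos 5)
Your proof is correct and follows exactly the paper's route: reduce to the non-equivariant Proposition~\ref{prop:symmetry} via the evaluation maps $\ev_w$, using Lemma~\ref{lem:Ztalgebrahom}, the identity $\ev_w(\ell_\sigma) = \ell_{\sigma^w}$ (which the paper records just after Definition~\ref{def:equivarianthellpolynomial}), and the compatibility of $\ev_w$ with $p \mapsto p^{\rev}$. The extra detail you supply on why $\ev_w(\ell_\sigma) = \ell_{\sigma^w}$ and on the extension over $\C$ is sound but only unpacks what the paper cites.
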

\begin{proof}
	This follows from Proposition~\ref{prop:symmetry} using Lemma~\ref{lem:Ztalgebrahom} and the fact that for all $w \in W$, $\ev_w$ commutes with the involution $p \mapsto p^{\rev}$ and 
	$\ev_w(\ell_{\sigma}) = \ell_{\sigma^w}$. 
\end{proof}

 It follows from Corollary~\ref{cor:equivariantsymmetry} and \eqref{eq:tildeell} that 
$\Delta \ell_{\sigma}  \in \II_{1/2}^W(\Gamma)$ is an alternative encoding of $\ell_{\sigma}$. Observe that $\ev_w(\Delta \ell_{\sigma}) = \Delta \ell_{\sigma^w}$ for all $w \in W$.

\begin{example}\label{ex:lowdegreehellequiv}
	We have the following equivariant generalization of Example~\ref{ex:lowdegreehell}. For any   $x \in X$ and $y \in Y$ such that $\sigma(x) \le y$, 
		\[
	h_{\sigma}(x,y)_0 = \kappa_\Gamma(x,y)_{r_\Gamma(x,y)},
	\]
	\[
	h_{\sigma}(x,y)_{r_\Gamma(x,y) - 1} = \ell_{\sigma}(x,y)_{r_\Gamma(x,y) - 1} = \ell_{\sigma}(x,y)_0 = (\Delta \ell_{\sigma}(x,y))_0  =  \begin{cases}
		\kappa_\Gamma(x,y)_{r_\Gamma(x,y)} &\textrm{if } \sigma(x) = y, \\
		0    &\textrm{otherwise.} 
	\end{cases}
	\]
	In fact, these equations follow from Example~\ref{ex:lowdegreehell}, by applying $\ev_w$ to both sides for all $w \in W_{x,y}$, and using Lemma~\ref{lem:reducenonequivariant}, together with the fact that 
	$\ev_w(h_\sigma) = h_{\sigma^w}$ and  $\ev_w(\ell_\sigma) = \ell_{\sigma^w}$.
\end{example}

We deduce the following equivariant generalization of the main results of Section~\ref{ss:statements}. % Theorem~\ref{thm:maingell}. 
In fact, using the formalism of Section~\ref{ss:classfunctions}, we deduce this result as a corollary of the main results of Section~\ref{ss:statements}.

\begin{theorem}\label{thm:equivariantmaingell}
			Let  $\sigma: X \to Y$ be a strong formal subdivision
	between lower Eulerian posets $X$ and $Y$ with rank functions $\rho_X$ and $\rho_Y$ respectively, corresponding to a triple $(\Gamma,\rho_\Gamma,q)$  under Theorem~\ref{thm:introbijection}. 
	Consider an Eulerian action of a finite group $W$ on $(\Gamma,\rho_\Gamma,q)$. 
	Fix a $W$-invariant weak rank function $r_\Gamma \in I(\Gamma)$ and a multiplicative and rank alternating element $\kappa_\Gamma \in \II^W(\Gamma) \cap U^W(\Gamma)$. 
	Then
	\[
	g_\Gamma|_{X/Y} = \Delta \ell_{\sigma} \cdot  g_\Gamma =  \Delta \ell_{\sigma} \cdot  g_\Gamma|_Y,
	\]
%	That is, for any $x \in X$ and $y \in Y$ such that $\sigma(x) \le y$, 
%	\[
%	g_\Gamma(x,y) = \sum_{ \sigma(x) \le y' \le y} \Delta \ell_{\sigma}(x,y') g_Y(y',y). 
%	\] 
	\[
f_\Gamma|_{X/Y} = - f_\Gamma \cdot \Delta \widehat{\ell_{\sigma}} =  - f_\Gamma|_X \cdot \Delta \widehat{\ell_{\sigma}},
\]
	\begin{align*}
	Z_\Gamma|_{X/Y} = - Z_\Gamma|_X \cdot \Delta \widehat{\ell_{\sigma}} + (\Delta \ell_{\sigma})^{\rev} \cdot Z_\Gamma|_Y. 
\end{align*}
\end{theorem}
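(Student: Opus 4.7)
My plan is to deduce each of the three identities directly from its non-equivariant counterpart (Theorem~\ref{thm:maingell}, Corollary~\ref{cor:rightKLSfunction}, and Corollary~\ref{cor:Zmappingformula}) by evaluating class functions at each element $w \in W$. Since $R(W')_\C \cong \cf(W')$ for every subgroup $W' \subset W$, an element of $\II^W(\Gamma)_\C$ is determined by the collection of its evaluations $\{\ev_w(\cdot) : w \in W\}$, so to verify each identity it suffices to check it in $\II(\Gamma^w)_\C$ after applying $\ev_w$ for every $w \in W$.

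Fix $w \in W$. As explained in Section~\ref{ss:equivariantKLSlowerEulerian}, the fixed-point map $\sigma^w : X^w \to Y^w$ is a strong formal subdivision between lower Eulerian posets, corresponding under Theorem~\ref{thm:introbijection} to a triple $(\Gamma^w,\rho_{\Gamma^w},q)$, and $\Gamma^w$ is equipped with the restricted weak rank function $r_\Gamma^w$. Because $\kappa_\Gamma$ is multiplicative and rank alternating, so is $\ev_w(\kappa_\Gamma) \in \II(\Gamma^w)_\C$; by Lemma~\ref{lem:reducenonequivariant}, $\ev_w(\kappa_\Gamma)$ is a $\Gamma^w$-kernel whose corresponding right and left Kazhdan-Lusztig-Stanley functions are $\ev_w(f_\Gamma)$ and $\ev_w(g_\Gamma)$, and whose $Z$-function is $\ev_w(Z_\Gamma)$. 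Pushing the defining relations $(t-1) \cdot h_\sigma = g_\Gamma \cdot \kappa_\Gamma|_{(X/Y)^\circ}$ and $\ell_\sigma = h_\sigma \cdot g_\Gamma^{-1}$ through $\ev_w$, using that $\ev_w$ is a $\Z[t]$-algebra homomorphism by Lemma~\ref{lem:Ztalgebrahom}, I obtain $\ev_w(h_\sigma) = h_{\sigma^w}$, $\ev_w(\ell_\sigma) = \ell_{\sigma^w}$, and consequently $\ev_w(\Delta \ell_\sigma) = \Delta \ell_{\sigma^w}$.

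I then apply $\ev_w$ to each of the three proposed identities. The map $\ev_w$ is a $\Z[t]$-algebra homomorphism (Lemma~\ref{lem:Ztalgebrahom}), commutes with $p \mapsto p^{\rev}$ (immediate from the definitions), commutes with $p \mapsto \widehat{p}$ (by construction of the latter on $\II^W(\Gamma)_\C$), and commutes with restriction $p \mapsto p|_S$ to any $W$-invariant subset $S \subset \Int(\Gamma)$. Under these compatibilities, each equivariant identity transforms, term by term, into the corresponding non-equivariant identity of Theorem~\ref{thm:maingell}, Corollary~\ref{cor:rightKLSfunction}, or Corollary~\ref{cor:Zmappingformula} applied to $\sigma^w$, which holds by those results. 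Since this is true for every $w \in W$, the three equivariant identities follow. There is no substantive obstacle: the nontrivial input, namely that evaluation of class functions is compatible with the convolution product on incidence algebras, has already been done in Lemma~\ref{lem:Ztalgebrahom}, so the proof is essentially a formal reduction to the non-equivariant theorems of Section~\ref{ss:statements}.
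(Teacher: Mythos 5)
Your proof is correct and follows essentially the same route as the paper: both reduce each equivariant identity to the corresponding non-equivariant result of Section~\ref{ss:statements} by applying $\ev_w$ for each $w \in W$, invoking Lemma~\ref{lem:Ztalgebrahom} and Lemma~\ref{lem:reducenonequivariant} together with the compatibility of $\ev_w$ with $p \mapsto p^{\rev}$, $p \mapsto \widehat{p}$, and restriction to $W$-invariant subsets, and using that an element of $\II^W(\Gamma)_\C$ is determined by its evaluations. The paper's proof is merely more terse.
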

\begin{proof}
	Consider the first equation. 
	For all $w \in W$, we need to show that $\ev_w(g_\Gamma|_{X/Y})   = \ev_w(\Delta \ell_{\sigma} \cdot  g_\Gamma) = \ev_w(\Delta \ell_{\sigma} \cdot  g_\Gamma|_Y)$. This follows from Theorem~\ref{thm:maingell}, Lemma~\ref{lem:Ztalgebrahom}, Lemma~\ref{lem:reducenonequivariant}, the fact that $\ev_w(\Delta \ell_{\sigma}) = \Delta \ell_{\sigma^w}$, and the fact that $\ev_w(p|_S) = \ev_w(p)|_S$ for all $W$-invariant sets $S$ and $p \in \II^W(\Gamma)$.	
%	it follows from Theorem~\ref{thm:maingell}, Lemma~\ref{lem:Ztalgebrahom}, Lemma~\ref{lem:reducenonequivariant}, the fact that $\ev_w(\Delta \ell_{\sigma}) = \Delta \ell_{\sigma^w}$, and the fact that $\ev_w(p|_S) = \ev_w(p)|_S$ for all $W$-invariant sets $S$ and $p \in I^W(\Gamma)$, that This establishes the first equation. 
	The second equation follows in the same way using Corollary~\ref{cor:rightKLSfunction} and the fact that $\ev_w$ commutes with the involution $p \mapsto \widehat{p}$  by construction. 
	Similarly, the final equation follows from 
	Corollary~\ref{cor:Zmappingformula} using the fact that $\ev_w$ commutes with the involution $p \mapsto p^{\rev}$. 
\end{proof}

\begin{remark}\label{rem:altdeltaellequiv}
	%Consider the setup of Theorem~\ref{thm:equivariantmaingell}. 
		As in Remark~\ref{rem:altdeltaell}, by Lemma~\ref{lem:equivariantmultiplicativealternating} and Theorem~\ref{thm:equivariantmaingell}, we have  $\Delta \ell_{\sigma} = g_\Gamma|_{X/Y} \cdot g_\Gamma^{-1}  = g_\Gamma|_{X/Y} \cdot \widehat{f_\Gamma}$. 
\end{remark}

\begin{example}
	We have the following equivariant generalization of Example~\ref{ex:sigmaxequalsyv3}. 
	For any $x \in X$, it follows from Definition~\ref{def:equivarianthellpolynomial} that $h_{\sigma}(x,\sigma(x)) = \ell_{\sigma}(x,\sigma(x)) \in R(W_{x,y})$. 
	By Theorem~\ref{thm:equivariantmaingell},  $g_\Gamma(x,\sigma(x)) = \Delta \ell_{\sigma}(x,\sigma(x))  \in R(W_{x,y})$. 		
\end{example}

We end the section with some examples where there is a natural choice of equivariant kernel.

\begin{example}\label{ex:propermapequiv}
Consider the setup of Example~\ref{ex:introproperequivariant}. 	
That is, consider a linear map $\phi: V' \to V$ inducing a proper, surjective morphism between %full-dimensional 
fans $\Sigma'$ and $\Sigma$ in real vector spaces $V'$ and $V$ respectively, with induced strong formal subdivision  
$\sigma: X = \face(\Sigma') \to Y = \face(\Sigma)$. Let  $(\Gamma,\rho_\Gamma,q)$  be the corresponding triple under Theorem~\ref{thm:introbijection}. 
By \cite{StapledonLWPosets}*{Example~7.26},  $\Gamma$ is a $CW$-poset, and, if  the morphism of fans is projective, then $\Gamma$ is the face poset of a fan. If the morphism of fans is projective and $\Sigma$ is a pointed cone, then $\Gamma$ is the face lattice of a polytope (see Example~\ref{ex:polytope}). 
Let  $\psi': W \to \GL(V')$ and $\psi: W \to \GL(V)$ be real representations of $W$ such that we have an induced action of $W$ on 
$\Sigma'$ and $\Sigma$, and 
$\phi: V' \to V$ is $W$-equivariant. Then we have a corresponding action of $W$ on $(\Gamma,\rho_\Gamma,q)$.  We claim that this action is Eulerian.

  For any $w \in W$, we have a corresponding linear map 
$\phi^w: (V')^w \to V^w$. Observe that $\phi^w$ is surjective by a standard averaging argument. Explicitly, consider any $v \in V^w$.  Since $\phi$ is surjective, there exists $v' \in V'$ such that $\phi(v') = v$. Then $\frac{1}{|W|} \sum_{w \in W} w \cdot v' \in (V')^w$ maps to $v$ under $\phi$. 
By Example~\ref{ex:fanactionisEulerian}, we have fans 
$(\Sigma')^w = \{ (C')^w : C' \in \Sigma', w \cdot C' = C' \}$ and 
$\Sigma^w = \{ C^w : C \in \Sigma, w \cdot C = C \}$ with face posets $\face((\Sigma')^w) = \face(\Sigma')^w$ and $\face(\Sigma^w) = \face(\Sigma)^w$ respectively. 
%
%We have seen in Example~\ref{ex:fanactionisEulerian} that 
% $\Sigma^w$ is a %full-dimensional 
%fan in $V^w$ with face poset $\face(\Sigma^w) = \face(\Sigma)^w$.
%%By a similar averaging argument, $\Sigma^w$ is a %full-dimensional 
%%fan in $V^w$ with face poset $\face(\Sigma^w) = \face(\Sigma)^w$. Indeed, any $v \in |\Sigma|^w$ \alan{fix/check} lies in the relative interior of a unique cone $C$ in $\Sigma$, and $w \cdot v = v$ implies that $w \cdot C = C$. Conversely, given $C \in \Sigma$ such that $w \cdot C = C$ and an element $v$ in the relative interior of $C$, $\frac{1}{|W|} \sum_{w \in W} w \cdot v \in C^w \subset V^w$. 
It follows that $\phi^w$ induces a proper surjective map of fans from  $(\Sigma')^w$ to  $\Sigma^w$ with induced strong formal subdivision $\sigma^w:  \face(\Sigma')^w \to \face(\Sigma)^w$. In particular, $\Gamma^w = \Cyl(\sigma^w)$ is lower Eulerian, and the action of $W$ on $\Gamma$ is Eulerian, as desired.
 %in the sense of Definition~\ref{def:Eulerianmap}. 
 
% \alan{Do we need this?}
% Let $\rho_{\Gamma^w}$ be the natural rank function. 
%% and corresponding element  $(\Gamma^w,\rho_{\Gamma^w},q)$ in $\JoinIdealLW^\circ$. 
%Then $\rho_{\Gamma^w}(C') = \dim (C')^w$ for $C' \in \face(\Sigma')^w$, and 
% $\rho_{\Gamma^w}(C) = \dim (\phi^w)^{-1}(C^w) + 1 = \dim (\phi^{-1}(C))^w + 1$ for $C \in \face(\Sigma)^w$.

%For a (not necessarily pointed) cone $C'$ in $V'$, let $V_{C'}'$ be the linear span of $C'$ in $V'$. 

Given $z \in \Gamma$, define a (not necessarily pointed) cone $C_z'$ in $V'$ by 
\[
C_z' = \begin{cases}
	C' &\textrm{ if } z  = C' \in X = \face(\Sigma'), \\
	\phi^{-1}(C) &\textrm{ if } z  = C \in Y = \face(\Sigma). \\
\end{cases}
\]
Let $V_z'$ denote the linear span of $C_z'$ in $V'$. Observe that $C_z'$ and $V_z'$ are $W_z$-invariant. Also, if $w \in W_z$, then the linear span of $(C_z')^w$ in $(V')^w$ equals $(V_z')^w$. This follows by the same averaging argument as above;  given any $v' \in (V_z')^w$, we may write
$v' = \sum_i a_i v_i'$ for some $a_i \in \R$ and $v_i' \in C_z'$,
and then $v' =  \frac{1}{|W|}  \sum_{w \in W} w \cdot v' =  \sum_i a_i \frac{1}{|W|} \sum_{w \in W} w \cdot v_i'$ lies in the linear span of $(C_z')^w$. 
For $z \le z'$, observe that $C_z' \subset C_{z'}'$ and $V_z' \subset V_{z'}'$. 
Then $\psi$ induces a representation $\psi_{z,z'}: W_{z,z'} \to \GL(V_{z'}'/V_z')$. Observe that  for $z \le z' \in \Gamma$, the natural weak rank function for $\Gamma$ is given by 
\begin{equation}\label{eq:rhoGammanotw}
\rho_\Gamma(z,z') = \begin{cases}
	\dim (V_{z'}'/V_z')  + 1 &\textrm{ if } z \in X, z' \in Y, \\
	\dim (V_{z'}'/V_z')  &\textrm{ otherwise. }
\end{cases}
\end{equation}
Also, for $z \le z' \in \Gamma^w$, the natural weak rank function for $\Gamma^w$ is given by 
\begin{equation}\label{eq:rhoGammaw}
	\rho_{\Gamma^w}(z,z') = \begin{cases}
		\dim ((V_{z'}'/V_z')^w)  + 1 &\textrm{ if } z \in X, z' \in Y, \\
		\dim ((V_{z'}'/V_z')^w)  &\textrm{ otherwise. }
	\end{cases}
\end{equation}

Fix the $W$-invariant natural weak rank function $r_\Gamma$ for $\Gamma$. %This is $W$-invariant by Example~\ref{ex:Winvariantrank}. 
%Fix the $W$-invariant weak rank function $r_\Gamma = \rho_\Gamma$. 
Define an element $\kappa_\Gamma \in \II^W(\Gamma) \cap U^W(\Gamma)$ as follows. With the notation of Section~\ref{ss:representationtheory}, for any $z \le z' \in \Gamma$, define
\begin{equation}\label{eq:kappagammaequiv}
\kappa_\Gamma(z,z') = \begin{cases}
	(t - 1)\det(tI - \psi_{z,z'}) &\textrm{ if } z \in X, z' \in Y, \\
	\det(tI - \psi_{z,z'}) &\textrm{ otherwise. }
\end{cases}	
\end{equation}
One may verify that $\kappa_\Gamma$ satisfies the properties of Definition~\ref{def:equivariantincidencealgebra} and hence is a well-defined element of $\II^W(\Gamma)$. 
By Lemma~\ref{lem:kappamultalt} below, $\kappa_\Gamma$ is multiplicative and rank alternating. By Lemma~\ref{lem:equivariantmultiplicativealternating}, 
$\kappa_\Gamma$ is an equivariant $\Gamma$-kernel. 

Consider the restrictions $\kappa_{\face(\Sigma')}$ and $\kappa_{\face(\Sigma)}$ of $\kappa_\Gamma$ to $\face(\Sigma')$ and $\face(\Sigma)$ respectively. Observe that these are equivariant kernels that are
multiplicative and rank alternating. 
%$\kappa_\Gamma$ restricts to a multiplicative and rank alternating  equivariant $\face(\Sigma)$-kernel $\kappa_{\face(\Sigma)}$ that is independent of $\phi$. 
%See Example~\ref{ex:fanequiv}.  
%For example, 
%	if $\Sigma$ is a %full-dimensional 
%fan in a real vector space $V$, and $\psi: W \to \GL(V)$ is a real representation of a finite group $W$ such that $\Sigma$ is $W$-invariant, then the identity map on $V$ induces a morphism of fans from $\Sigma$ to itself. 
\end{example}

\begin{example}\label{ex:fanequivv2}
	Recall the setup of Example~\ref{ex:fanequiv}. 
	That is,  $\Sigma$ is a %full-dimensional 
	fan in a real vector space $V$, and $\psi: W \to \GL(V)$ is a real representation of a finite group $W$ such that $\Sigma$ is $W$-invariant. %Assume that the support $|\Sigma|$ of $\Sigma$ is convex. 
	Consider the lower Eulerian poset $B = \face(\Sigma)$ with 
	the natural weak rank function, and $\kappa_B$ defined by 
	\[
	\kappa_B(z,z') = 
	\det(tI - \psi_{z,z'}),
	\]
	for any $z \le z' \in B$. 
	Observe that the identity map on $V$ induces a morphism of fans from $\Sigma$ to itself. 
	By Example~\ref{ex:propermapequiv} applied to this morphism, $\kappa_B$ is a multiplicative and rank alternating equivariant $B$-kernel.
	
%	For example, if $P$ is a full-dimensional $W$-invariant polytope in a vector space $V$ \alan{TODO: is this already covered in the intro??}

\end{example}

%\begin{example}\label{ex:convexsupportequiv}
%	We have the following special case of Example~\ref{ex:propermapequiv}.  
%	Suppose that $\Sigma$ is a %full-dimensional 
%	fan in a real vector space $V$, and $\psi: W \to \GL(V)$ is a real representation of a finite group $W$ such that $\Sigma$ is $W$-invariant. Assume that the support $|\Sigma|$ of $\Sigma$ is convex. Let $L$ be the largest linear subspace of $V$ contained in $|\Sigma|$, let $\phi: V \to V/L$ be the projection map, and  let $C = \phi(|\Sigma|)$. Then $C$ is $W$-invariant, $\phi$ is $W$-equivariant, and we have an induced proper, surjective morphism of fans from $\Sigma$ to $C$. For example, if $\Sigma$ is complete, i.e., $|\Sigma| = V$, then $C = \{ 0 \}$ (c.f. Example~\ref{ex:B0}).
%\end{example}

\begin{example}\label{ex:polytopeequiv}
	Let $P$ be a full-dimensional polytope $P$ in a real vector space $V$ that contains the origin. 	Suppose that $\psi: W \to \GL(V)$ is a representation of a finite group $W$ such that $P$ is $W$-invariant. 
	Consider $\face(P)$ with the natural weak rank function. 
%	We explain that there is a natural choice of equivariant $\face(P)$-kernel. 
	
	On the one hand, recall that we may consider the cone $C(P \times \{1 \})$ in $V \oplus \R$ with $\face(C(P \times \{1 \})) = \face(P)$. We have an induced  representation
	$\widetilde{\psi}: W \to \GL(V \oplus \R)$, where $W$ acts trivially on the last coordinate of $V \oplus \R$. Then $C(P \times \{1 \})$ is $W$-invariant, and we have a natural choice of equivariant $\face(P)$-kernel given by Example~\ref{ex:fanequivv2}. 
	
	On the other hand, let $F$ be the unique face of $P$ containing the origin in its relative interior. Then $F$ is $W$-invariant. Let $\rho_{\face(P)}$ be the natural rank function for $\face(P)$.  Recall from Example~\ref{ex:polytope} that there is a projective, surjective morphism of 
	fans from a fan $\Sigma$ to a pointed cone $C$ such that the induced strong formal subdivision $\sigma : \face(\Sigma) \to \face(C)$ corresponds to
	the triple 
	$(\face(P),\rho_{\face(P)},F)$ under Theorem~\ref{thm:introbijection}.
	One may apply Example~\ref{ex:propermapequiv} to this projective, surjective morphism and \eqref{eq:kappagammaequiv} gives a natural choice of equivariant $\face(P)$-kernel. 
	
	Expanding out the definitions, these two choices of  equivariant $\face(P)$-kernel agree.

\end{example}

\begin{lemma}\label{lem:kappamultalt}
	With the notation of Example~\ref{ex:propermapequiv}, $\kappa_\Gamma$ is multiplicative and rank alternating.
\end{lemma}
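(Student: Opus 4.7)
The plan is to verify both conditions pointwise on $w \in W$, since by Lemma~\ref{lem:Ztalgebrahom} and the definition of multiplicativity and rank alternation in the equivariant setting, it suffices to show that for every $w \in W$, $\ev_w(\kappa_\Gamma) \in I(\Gamma^w)_\C$ is multiplicative and rank alternating with respect to the restricted rank function $\rho_{\Gamma^w}$. Evaluating \eqref{eq:kappagammaequiv} at $w$ produces the usual characteristic polynomial $\det(tI - \psi_{z,z'}(w))$, so the problem reduces to standard linear algebra on the filtration $V_z' \subset V_{z''}' \subset V_{z'}'$.

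For multiplicativity, I would first check that $V_z' \subset V_{z''}'$ whenever $z \le z''$ in $\Gamma$. The only nontrivial case is $z \in X$, $z'' \in Y$, where $z \le z''$ means $\sigma(z) \le z''$ in $Y$, so $\phi(C') \subset C''$ and hence $C' \subset \phi^{-1}(C'') = C_{z''}'$. Given $z \le z'' \le z' \in \Gamma^w$, the resulting filtration $V_z' \subset V_{z''}' \subset V_{z'}'$ is $\psi(w)$-invariant, and a block-triangular matrix argument shows the characteristic polynomial of $\psi(w)$ on $V_{z'}'/V_z'$ factors as the product of those on $V_{z''}'/V_z'$ and $V_{z'}'/V_{z''}'$. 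To match the $(t-1)$ prefactors in \eqref{eq:kappagammaequiv}, I observe that exactly one $(t-1)$ appears in $\ev_w(\kappa_\Gamma(z,z''))\ev_w(\kappa_\Gamma(z'',z'))$ precisely when $z \in X$ and $z' \in Y$, regardless of whether $z''$ lies in $X$ or in $Y$.

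For rank alternation, Lemma~\ref{lem:detswitcht} applied to $\psi_{z,z'}$ gives
\[
t^{\dim(V_{z'}'/V_z')} \ev_w(\det(t^{-1}I - \psi_{z,z'})) = (-1)^{\dim(V_{z'}'/V_z')^w} \ev_w(\det(tI - \psi_{z,z'})).
\]
When $z, z' \in X$ or $z, z' \in Y$, this matches the desired identity directly via \eqref{eq:rhoGammanotw} and \eqref{eq:rhoGammaw}. When $z \in X$ and $z' \in Y$, the extra factor $(t-1)$ obeys $t(t^{-1}-1) = -(t-1)$, contributing exactly one additional sign that matches the $+1$ shift in both $\rho_\Gamma(z,z')$ and $\rho_{\Gamma^w}(z,z')$.

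The main obstacle, such as it is, is purely bookkeeping: the formula \eqref{eq:kappagammaequiv} and the rank formulas \eqref{eq:rhoGammanotw}, \eqref{eq:rhoGammaw} each change shape at the $X/Y$ boundary of $\Gamma$, so one must be careful that the case splits on $z$, $z''$, and $z'$ line up on both sides of each identity. Once this is handled, both conclusions follow from standard facts, namely the multiplicativity of characteristic polynomials under $\psi(w)$-invariant filtrations and the functional equation supplied by Lemma~\ref{lem:detswitcht}.
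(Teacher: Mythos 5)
Your proof is correct and follows essentially the same approach as the paper: reduce both properties to a pointwise check over $w \in W$ (which is precisely how the equivariant notions of multiplicative and rank alternating are defined, so no appeal to Lemma~\ref{lem:Ztalgebrahom} is really needed here), then use the $\psi(w)$-invariant filtration $V_z' \subset V_{z''}' \subset V_{z'}'$ for multiplicativity of characteristic polynomials, and Lemma~\ref{lem:detswitcht} plus the sign $t(t^{-1}-1)=-(t-1)$ for rank alternation. Your explicit bookkeeping of where the extra $(t-1)$ factor appears in $\ev_w(\kappa_\Gamma(z,z''))\ev_w(\kappa_\Gamma(z'',z'))$ is slightly more detailed than what the paper writes for the multiplicativity step, but the substance is the same.
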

\begin{proof}
	Consider $w \in W$ and $z \le z'' \le z'$ in $B^w$.
	Since $V_{z''}'/V_z'$ is isomorphic as a $W_{z,z'',z'}$-representation to the direct sum of $V_{z''}'/V_{z'}'$ and $V_{z'}'/V_z'$, we have 
	$\ev_w(\det(tI - \psi_{z,z''})) = \det(tI - \psi_{z,z''}(w)) = \det(tI - \psi_{z,z'}(w)) \det(tI - \psi_{z',z''}(w)) = \ev_w(\det(tI - \psi_{z,z'}))\ev_w(\det(tI - \psi_{z',z''})).$ It follows that $\kappa_\Gamma$ is multiplicative. 
	
	To show that $\kappa_\Gamma$ is rank alternating, we need to show that
	for any $w \in W$ and $z \le z'$ in $B^w$,
	\[
	\ev_w(\kappa_\Gamma(z,z')^{\rev}) = (-1)^{\rho_{\Gamma^w}(z,z')} \ev_w(\kappa_\Gamma(z, z')). 
	\]
	By Lemma~\ref{lem:detswitcht}, 
	\[
	(-1)^{\dim (V_{z'}'/V_{z}')^w}
	\det(tI - \psi_{z,z'}(w)) = t^{\dim (V_{z'}'/V_{z}')} \det(t^{-1}I - \psi_{z,z'}(w)). 
	\]  
	If $z \in X = \face(\Sigma'), z' \in Y = \face(\Sigma)$, we compute
	\begin{align*}
		\ev_w(\kappa_\Gamma(z,z')^{\rev}) &= t^{\rho_\Gamma(z,z')} (t^{-1} - 1)\det(t^{-1}I - \psi_{z,z'}(w)) \\
		&= -(-1)^{\dim (V_{z'}'/V_{z}')^w}
		(t - 1)\det(tI - \psi_{z,z'}(w)) \\
		&= (-1)^{\dim (V_{z'}'/V_{z}')^w + 1} \ev_w(\kappa_\Gamma(z, z')).
	\end{align*}
	Otherwise, we compute
		\begin{align*}
		\ev_w(\kappa_\Gamma(z,z')^{\rev}) &= t^{\rho_\Gamma(z,z')} \det(t^{-1}I - \psi_{z,z'}(w)) \\
		&= (-1)^{\dim (V_{z'}'/V_{z}')^w}
		\det(tI - \psi_{z,z'}(w)) \\
		&= (-1)^{\dim (V_{z'}'/V_{z}')^w} \ev_w(\kappa_\Gamma(z, z')).
	\end{align*}
		The result now follows from \eqref{eq:rhoGammaw}. 
%	It remains to show that 
%	\[
%	\rho_{\Gamma^w}(z,z') = \begin{cases}
%		\dim (V_{z'}/V_z)  + 1 &\textrm{ if } z \in X = \face(\Sigma'), z' \in Y = \face(\Sigma), \\
%		\dim (V_{z'}/V_z)  &\textrm{ otherwise. }
%	\end{cases}
%	\]

\end{proof}

%WORKING BELOW
%
%\begin{example}
%	We have the following equivariant analogue of Example~\ref{ex:productsformulas}. 
%\end{example}
%

%\begin{example}\label{ex:productsformulas}

%OLD BELOW
%
%
%
%
%
%
%
%\alan{TODO: give example of an equivariant unimodular triangulation of a lattice polytope (probably want to give the non-equivariant version earlier).} 
%
%
%\alan{expansion formulas for equivariant $h^*$-polynomial ??}
%
%\alan{revisit other examples in the equivariant case}

\subsection{Applications to equivariant Ehrhart theory}\label{ss:equivEhrhart}

In this final section, we relate invariants from equivariant Ehrhart theory to invariants from equivariant KLS theory. 
%These results are well-known in the non-equivariant case. 

We first recall some background on equivariant Ehrhart theory and refer the reader to \cite{StapledonEquivariant,StapledonEquivariantCommutativeTriangulations} for more details.
Let $N$ be a lattice and let  $\widetilde{N} = N \oplus \Z$.  Let $\pr: \widetilde{N} = N \oplus \Z \to \Z$ denote projection onto the last coordinate.
%Let $W$ be a finite group. 
	Suppose that $\psi: W \to \Aff(N)$ is an affine representation of a finite group $W$. That is, there is a  representation
$\widetilde{\psi}: W \to \GL(\widetilde{N})$ such that $W$ acts trivially on the last coordinate of $\widetilde{N} = N \oplus \Z$, and $\psi$ is the induced action on $N \times \{ 1 \}$.
%
%
%A representation $\widetilde{\psi}: W \to \GL(\widetilde{N})$ is called an \emph{affine representation} if 
%%$\widetilde{\psi}$ preserves 
%projection onto the last coordinate $\pr: \widetilde{N} = N \oplus \Z \to \Z$ is $W$-invariant, where $W$ acts trivially on the codomain $\Z$ 
%%i.e., $\widetilde{\psi}(g \cdot u) = \widetilde{\psi}(u)$ 
%(see \cite[Section~2.3]{StapledonEquivariantCommutativeTriangulations} for more details). Equivalently, if we identify $N$ with $N \times \{ 1 \} \subset \widetilde{N}$, then we may consider the induced representation $\psi: W \to \Aff(N)$ of $W$ acting on $N$ via affine transformations.  
%%For example, if $N = \{ 0 \}$, then $[\widetilde{\psi}] = 1 \in R(W)$. 
For example, 
%Observe that 
a (linear) representation $\psi: W \to \GL(N)$ is naturally an affine representation. 
% naturally induces an affine representation $\widetilde{\psi}: W \to \GL(\widetilde{N})$ by $W$ acting trivially on the last coordinate of $\widetilde{N} = N \oplus \Z$. 
Below, we will also consider the case when $N$ is empty, $\widetilde{N} = \{ 0 \}$, and  $\widetilde{\psi}: W \to \GL(\widetilde{N})$. 
% In this case, we consider the representation $\widetilde{\psi}: W \to \GL(\widetilde{N})$ to be an affine representation. 

%is the trivial representation. 

%Fix an affine representation $\widetilde{\psi}: W \to \GL(\widetilde{N})$. 
Let $P \subset N_\R$ be a full-dimensional $W$-invariant lattice polytope. %Assume that 
%$P \times \{ 1\}$ is $W$-invariant. 
%Equivalently, assume that  
%$P$ is $W$-invariant under the corresponding action via $\psi : W \to \Aff(N)$.
Recall that given a polyhedron $Q$ in $\widetilde{N}_\R$,   $C(Q)$ denotes the cone spanned by $Q$. 
 Then we have an induced action of $W$ on the semigroup  $C(P \times \{ 1\}) \cap \widetilde{N}$. 
	For any $m \in \Z_{\ge 0}$, let 
$L(P,\psi;m)$ in  $R(W)$  be the class of the permutation representation of $W$ acting on 
$\{ u \in C(P \times \{ 1\}) \cap \widetilde{N} : \pr(u) = m \}$. 
	The corresponding \emph{equivariant Ehrhart series} in $R(W)[[t]]$   is %defined by 
$$\Ehr(P,\psi;t) := \sum_{m \ge 0} L(P,\psi;m) t^m.$$
With the notation of \eqref{eq:detI-rhot}, %Section~\ref{ss:representationtheory}, 
the corresponding \emph{equivariant $h^*$-series} in $R(W)[[t]]$ is %defined by 
\begin{equation}\label{eq:equivhstar}
	h^*(P,\psi;t) := \Ehr(P,\psi;t)\det(I - \widetilde{\psi} t).
\end{equation}
%For example, if $W$ acts trivially, then $\Ehr(P,\psi;t)$ and $h^*(P,\psi;t)$ are the usual Ehrhart series and $h^*$-polynomial respectively associated to the lattice polytope $P$. 

For $w \in W$, 
$\ev_w(L(P,\psi;m))$ is the number of lattice points in the $m$th dilate of the  rational polytope $P^w$, and
%by applying $\ev_w$ coefficientwise, we may extend $\ev_w$ to a map $\ev_w: R(W)[[t]] \to \C[[t]]$. 
%Then 
$\ev_w(\Ehr(P,\psi;t)) \in \C[[t]]$ is the usual Ehrhart series of 
%the rational polytope 
$P^w$. In particular, $\ev_w(\Ehr(P,\psi;t))$ is a rational function in $t$, i.e., an element of $\C(t) \cap \C[[t]]$.
When $w = \id$, $\ev_w(h^*(P,\psi;t))$ is the usual $h^*$-polynomial $h^*(P;t) \in \Z[t]$; a polynomial of degree at most $\dim P$ with nonnegative coefficients \cite{StanleyDecompositions}.  

 Given an element $a(t) \in \C(t)$, we may consider $a(t^{-1}) \in \C(t)$. 
Let $\Int(C)$ denote the relative interior of a cone $C$. 	For any $m \in \Z_{> 0}$, let 
$L^\circ(P,\psi;m)$ in  $R(W)$  be the class of the permutation representation of $W$ acting on 
$\{ u \in \Int(C(P \times \{ 1\})) \cap \widetilde{N} : \pr(u) = m \}$, and let $\Ehr^\circ(P,\psi;t) := \sum_{m \ge 1} L^\circ(P,\psi;m) t^m \in R(W)[[t]].$
Then equivariant Ehrhart reciprocity \cite{StapledonEquivariant}*{Corollary~6.6} states that both $\ev_w(\Ehr^\circ(P,\psi;t))$ and $t^{\dim P + 1}\ev_w(h^*(P,\psi;t^{-1}))$ are well-defined elements of $\C[[t]] \cap \C(t)$, and we have the following equality in $\C[[t]] \cap \C(t)$, 
\begin{equation}\label{eq:reciprocity}
	\ev_w(\Ehr^\circ(P,\psi;t)) = \frac{ t^{\dim P + 1}\ev_w(h^*(P,\psi;t^{-1}))}{\det(I - \widetilde{\psi}(w)t)}.
\end{equation}
%where both sides are interpreted as rational functions in $\C(t)$ and $\ev_w(t^{\dim P + 1}h^*(P,\psi;t^{-1}))$ is a rational function obtained by setting $t \mapsto t^{-1}$ 
Moreover, if $h^*(P,\psi;t) \in R(W)[t]$  then its degree equals the degree of $h^*(P;t)$. 
% (which is at most $\dim P$). 
%is at most $\dim P + 1$ (in fact, its degree is the degree of the usual $h^*$-polynomial). 

%Recall that for any  $z \in \face(P)$, $W_z$ denotes the stabilizer of $z$, $F_z$ denotes the corresponding face of $P$, and $V_z$ denotes the linear span of $F_z \times \{ 1 \}$ in $\widetilde{N}_\R$. For example, if $z = \hat{1}_{\face(P)}$, then $W_z = W$, $F_z = P$ and $V_z = \widetilde{N}_\R$. Recall that for any $z \le z' \in \face(P)$, 
% $\psi$ induces a representation $\psi_{z,z'}: W_{z,z'}  \to \GL(V_{z'}/V_z)$, where $W_{z,z'} = W_{z} \cap W_{z'}$.
% We let $\psi_z = \psi_{\hat{0}_{\face(P)},z}$ for any $z \in \face(P)$. 
%%For any $z \in \face(P)$, 
%%recall that $\psi$ induces a representation $\psi_{z,\hat{1}_{\face(P)}}: W_{z,\hat{1}_P} = W_z \to \GL(V_{\hat{1}_{\face(P)}}/V_z)$, where $W_{z,z'} = W_{z} \cap W_{z'}$.
%The corresponding \emph{equivariant local $h^*$-series} in $R(W)[[t]]$ is %defined by 
%\[
%\ell^*(P,\psi;t) = \sum_{z \in \face(P)} h^*(F_z,\psi_{z};t)
%\]

Let $\cS$ be a $W$-invariant lattice polyhedral subdivision of $P$, and consider the corresponding face poset $\face(\cS)$.
For example, if $\cS$ is the trivial subdivision of $P$, then $\face(\cS)$ is the face lattice $\face(P)$ of $P$. 
Recall from Example~\ref{ex:introequivariantpolytope} that there is a natural choice of equivariant $\face(\cS)$-kernel. We recall the details.
For any $z \in \face(\cS)$, let $F_z$ denote the corresponding face in $\cS$, and let $C_z = C(F_z \times \{1\})$ with linear span $V_z$ in $\widetilde{N}_\R$.  
%Let $\widetilde{N}_z$ be the intersection of $\widetilde{N}$ with the linear span of  $C(F_z \times \{1\})$ in $\widetilde{N}_\R$. 
%For example, if $z = \hat{0}_{\face(\cS)}$, then $F_z$ is the empty face and  $C(F_z \times \{1\}) = \widetilde{N}_z$ is the origin in $\widetilde{N}_\R$. 
Recall that the fan $\Sigma_{\cS} =  \{ C_z : z \in \face(\cS) \}$ has support $C(P \times \{1 \})$. 
Recall that $W_z$ denotes the stabilizer of $z$, and $W_{z,z'} = W_z \cap W_{z'}$. 
For any $z \le z' \in \face(\cS)$, 
$\widetilde{\psi}$ induces a representation $\widetilde{\psi}_{z,z'}: W_{z,z'}  \to \GL(V_{z'}/V_z)$.
%, where $W_{z,z'} = W_{z} \cap W_{z'}$.
Consider $\face(\cS)$ with the $W$-invariant natural weak rank function, i.e., 
$\rho_{\face(\cS)}(z,z') = 
\dim (V_{z'}/V_z)$ for $z \le z' \in \face(\cS)$. 
By Example~\ref{ex:fanequivv2}, there is a natural choice of equivariant $\face(\cS)$-kernel $\kappa_{\face(\cS)}$. Explicitly, 
for $z \le z' \in \face(\cS)$, 
\[
\kappa_{\face(\cS)}(z,z') = 
\det(tI - \widetilde{\psi}_{z,z'}) \in R(W_{z,z'})[t].
\]
For $z \in \face(\cS)$, let $\widetilde{\psi}_z = \widetilde{\psi}_{\hat{0}_{\face(\cS)},z} : W_z \to \GL(V_z)$, with corresponding representation $\psi_z: W_z \to \Aff(N_z)$, where $N_z$ is the intersection of $N$ with the affine span of $F_z$ in $N_\R$.  
We have
%an induced affine representation 
%$\widetilde{\psi}_z : W_z \to \GL(V_z)$, and 
corresponding invariants $\Ehr(F_z,\psi_z;t)$ and $h^*(F_z,\psi_z;t)$ in $R(W_z)[[t]]$. 
For example, if $z = \hat{0}_{\face(\cS)}$, then $F_z$ is the empty face, and 
$\Ehr(F_z,\psi_z;t) = h^*(F_z,\psi_z;t) = 1$.

%By Example~\ref{ex:convexsupportequiv} (see \eqref{eq:rhoGammanotw} and \eqref{eq:kappagammaequiv}),  $r_{\face(\cS)} = \rho_{\face(\cS)}$ is a $W$-invariant rank function, and $\kappa_{\face(\cS)} \in \II^W(\face(\cS))$ is an  equivariant $W$-kernel. 
%%We may consider a $W$-invariant rank function  $r_{\face(\cS)} = \rho_{\face(\cS)}$ given by \eqref{eq:rhoGammanotw}, and equivariant $W$-kernel given by \eqref{eq:kappagammaequiv}.  
%%That is, 

For example, consider the trivial subdivision of $P$ with corresponding 
equivariant $W$-kernel $\kappa_{\face(P)}$ and left equivariant Kazhdan-Lusztig-Stanley function $g_{\face(P)}$. The \emph{equivariant local $h^*$-series} in $R(W)[[t]]$ is
\begin{equation}\label{eq:equivlocalhstar}
	\ell^*(P,\psi;t) = \sum_{z \in \face(P)} \frac{|W_z|}{|W|} \Ind_{W_z}^W \left( h^*(F_z,\psi_z;t) g^{-1}_{\face(P)}(z,\hat{1}_{\face(P)})\right).
\end{equation}
As in \eqref{eq:equivariantmultiplication}, the fractions appearing in the right hand side are for convenience, and can be removed by allowing the sum to vary over a choice of $W$-orbits of $\face(P)$.   The definition of 
$\ell^*(P,\psi;t)$ agrees with \cite{StapledonCalabi12}*{Definition~3.3}. 
When $w = \id$, $\ev_w(\ell^*(P,\psi;t))$ is the usual local $h^*$-polynomial $\ell^*(P;t) \in \Z[t]$; a polynomial first introduced by Stanley in \cite[Example~7.13]{Stanley92} with nonnegative coefficients and $\ell^*(P;t) = t^{\dim P + 1}  \ell^*(P;t^{-1})$.  

\begin{example}\label{ex:simplexequiv}
	Suppose that $P$ is a simplex with vertices $u_0,\ldots,u_{\dim P} \in N$. Then the coefficient of $t^m$ in $h^*(P,\psi;t)$ equals the permutation representation of $W$ acting on $\{ u \in \widetilde{N} :  u = \sum_{i = 0}^{\dim P} \lambda_i (u_i,1), 0 \le \lambda_i < 1, \sum_i \lambda_i = m \}$ 	\cite{StapledonEquivariant}*{Proposition~6.1}. 
	The coefficient of $t^m$ in $\ell^*(P,\psi;t)$ equals the permutation representation of $W$ acting on $\{ u \in \widetilde{N} :  u = \sum_{i = 0}^{\dim P} \lambda_i (u_i,1), 0 < \lambda_i < 1, \sum_i \lambda_i = m \}$ \cite{StapledonCalabi12}*{Example~3.5}. In particular, $h^*(P,\psi;t)$ and $\ell^*(P,\psi;t)$ are polynomials in this case.
\end{example}

\begin{definition}
The action of $W$ on a $W$-invariant lattice polyhedral subdivision $\cS$ is \emph{polynomial} if $h^*(F_z,\psi_z;t)$ in $R(W_z)[t]$ for all $z \in \face(\cS)$. 
\end{definition}

For example, if $\cS$ is a triangulation, i.e., all faces of $\cS$ are simplices, then the action of $W$ on $\cS$ is polynomial by Example~\ref{ex:simplexequiv}.
 %\cite{StapledonEquivariant}*{Proposition~6.1}. 
 If $W$ acts trivially, then 
the action of $W$  on $\cS$ is polynomial. 

Assume that the action of $W$ on $\cS$ is polynomial. 
Then we have well-defined elements $h_{\face(\cS)}^*,\ell_{\face(\cS)}^* \in \II^W(\face(\cS))$ such that for  $z \le z' \in \face(\cS)$, 
\[
h_{\face(\cS)}^*(z,z') = \begin{cases}
	h^*(F_{z'},\psi_{z'};t) &\textrm{ if } z = \hat{0}_\Gamma, \\
	0 &\textrm{ otherwise,} 
\end{cases}
\]
\[
\ell_{\face(\cS)}^*(z,z') = \begin{cases}
	\ell^*(F_{z'},\psi_{z'};t) &\textrm{ if } z = \hat{0}_\Gamma, \\
	0 &\textrm{ otherwise.} 
\end{cases}
\]
By definition, $h_{\face(\cS)}^* = \ell_{\face(\cS)}^* \cdot g_{\face(\cS)}^{-1}$.  In this case, equivariant Ehrhart reciprocity is equivalent to the statement that $\ell^*_{\face(\cS)}$ is symmetric in the sense that 
$(\ell^*_{\face(\cS)})^{\rev} = \ell_{\face(\cS)}^*$ \cite{StapledonCalabi12}*{Lemma~3.6}.

We want to understand the behavior of these invariants under refinement.
Suppose that $\cS'$ is a  $W$-invariant lattice polyhedral subdivision of $P$ that refines $\cS$.
%Suppose that 
%$\cS'$ is a  $W$-invariant lattice polyhedral subdivision of $P$, and 
%$\cS'$ refines $\cS$ in the sense that for any $z' \in \face(\cS')$, $F_{z'}$ is contained in a face of $\cS$. 
As in Example~\ref{ex:propermapequiv},  the identity map on $\widetilde{N}_\R$ induces a proper surjective map of fans from 
$\Sigma_{\cS'}$
 %=  \{ C(F_z \times \{1\}) : z \in \face(\cS') \}$ 
 to 
$\Sigma_{\cS}$ 
%=  \{ C(F_z \times \{1\}) : z \in \face(\cS) \}$  
 with corresponding strong formal subdivision $\sigma: X \to Y$, with $X = \face(\cS')$ and $Y = \face(\cS)$.  Here $F_{\sigma(z')}$ is the smallest face of $\cS$ containing $F_{z'}$ for all $z' \in \face(\cS')$. % such that $F_z \subset F_{\sigma(z)} \in \cS'$. 
Let $\Gamma = \Cyl(\sigma)$ be the corresponding non-Hausdorff mapping cylinder with its induced action of $W$, 
its $W$-invariant natural weak rank function  $\rho_\Gamma$ given by \eqref{eq:rhoGammanotw}, and its equivariant $W$-kernel $\kappa_\Gamma$ given by \eqref{eq:kappagammaequiv}. 
That is, for $z \le z' \in \Gamma$,
\[
\rho_\Gamma(z,z') = \begin{cases}
	\dim (V_{z'}/V_z)  + 1 &\textrm{ if } z \in X, z' \in Y, \\
	\dim (V_{z'}/V_z)  &\textrm{ otherwise. }
\end{cases}
\]
Recall that $\widetilde{\psi}$ induces a representation $\widetilde{\psi}_{z,z'}: W_{z,z'} \to \GL(V_{z'}/V_z)$, where $W_{z,z'} = W_{z} \cap W_{z'}$. Then 
\[
\kappa_\Gamma(z,z') = \begin{cases}
	(t - 1)\det(tI - \widetilde{\psi}_{z,z'}) &\textrm{ if } z \in X, z' \in  Y, \\
\det(tI - \widetilde{\psi}_{z,z'}) &\textrm{ otherwise. }
\end{cases}
\]

Assume further that the action of $W$ on both $\cS'$ and $\cS$ is polynomial.
We have well-defined elements  $h_\Gamma^*, \ell_\Gamma^*  \in \II^W(\Gamma)$ such that  for
 %Define an element $h_\Gamma^* \in \II^W(\Gamma)$ as follows. For 
 $z \le z' \in \Gamma$, 
\[
h^*_\Gamma(z,z') = \begin{cases}
	h^*(F_{z'},\psi_{z'};t) &\textrm{ if } z = \hat{0}_\Gamma, \\
	0 &\textrm{ otherwise.} 
\end{cases}
\]
\[
\ell^*_\Gamma(z,z') = \begin{cases}
	\ell^*(F_{z'},\psi_{z'};t) &\textrm{ if } z = \hat{0}_\Gamma, \\
	0 &\textrm{ otherwise.} 
\end{cases}
\]
Then $h^*_\Gamma$ and $\ell^*_\Gamma$ restrict to $h^*_X$ and $\ell^*_X$ respectively. Also, $\ell_\Gamma^* = h_\Gamma^* \cdot ( g_\Gamma^{-1}|_X + g_\Gamma^{-1}|_Y)$. 
%Also, let $\ell^* = h^* \cdot g_\Gamma^{-1} \in \II^W(\Gamma)$. For any $z \in \Gamma$, $\ell^*(\hat{0}_\Gamma,z) \in R(W_z)[t]$ is the \emph{equivariant $h^*$-polynomial} $\ell^*(F_z,\psi_v;t)$ of $F_z$. This definition agrees with \cite{StapledonCalabi12}*{Definition~3.3} and restricts to the usual local $h^*$-polynomial when $W$ acts trivially. \alan{ref}

When $W$ acts trivially,
 %the assumption that the action of $W$ on $\cS$ is polynomial holds, and 
 the proposition below is equivalent to $(2)$ in \cite{KatzStapledon16}*{Lemma~7.12}. 

\begin{proposition}\label{prop:equivariantEhrhartapp}
Let $N$ be a lattice and let $P \subset N_\R$ be a full-dimensional lattice polytope. % and let $W$ be a finite group. 
Consider an affine representation $\psi: W \to \Aff(N)$ of a finite group $W$ such that $P$ is $W$-invariant. Let $\cS'$ and $\cS$ be $W$-invariant lattice polyhedral subdivisions of $P$ such that $\cS'$ refines $\cS$. Suppose that the action of $W$ on $\cS'$ is polynomial. Then the action of $W$ on $\cS$ is polynomial and we have the following equality in $\II^W(\Gamma)$,
\begin{equation}\label{eq:hstarresult}
	(t - 1) \cdot h^*_\Gamma|_{X/Y} = h^*_\Gamma \cdot \kappa_\Gamma|_{(X/Y)^\circ} =  h^*_\Gamma|_X \cdot \kappa_\Gamma|_{(X/Y)^\circ}.
\end{equation} 
In particular, if there exists a $W$-invariant lattice polyhedral subdivision $\cS$ of $P$ such that the action of $W$ on $\cS$ is polynomial, then the action of $W$ on the trivial subdivision of $P$ is polynomial, and 
% $h^*(P,\psi;t) \in R(W)[t]$ is a polynomial, and 
\begin{equation}\label{eq:hstarPformula}
h^*(P,\psi;t)  = %\sum_{\substack{z \in \face(\cS) \\ \sigma(z) = \hat{1}_{\face(P)}
		\sum_{\substack{z \in \face(\cS) \\ \sigma(z) = \hat{1}_{\face(P)} }} \frac{|W_z|}{|W|} \Ind^{W}_{W_{z}} \left( h^*(F_{z},\psi_{z};t) \det(tI - \widetilde{\psi}_{z,\hat{1}_\Gamma}) \right) \in R(W)[t].
\end{equation}
\end{proposition}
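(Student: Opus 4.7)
The plan is to first establish \eqref{eq:hstarresult} in a natural enlargement of the equivariant incidence algebra in which entries are allowed to be power series in $R(W_{z,z'})[[t]]$; in this enlargement $h^*_\Gamma$ is defined without the polynomiality hypothesis on $\cS$. The second equality in \eqref{eq:hstarresult} is immediate from the appropriate extension of Lemma~\ref{lem:restrict}: $h^*_\Gamma$ is supported on intervals $[\hat{0}_\Gamma, \cdot]$ with $\hat{0}_\Gamma \in X$, and $\kappa_\Gamma|_{(X/Y)^\circ}$ contributes only on intervals starting in $X$, so the product retains only $h^*_\Gamma|_X$.

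For the main equality $(t-1) h^*_\Gamma|_{X/Y} = h^*_\Gamma \cdot \kappa_\Gamma|_{(X/Y)^\circ}$, I would apply $\ev_w$ for each $w \in W$ and reduce to the non-equivariant identity $(2)$ of \cite{KatzStapledon16}*{Lemma~7.12}. The extension of Lemma~\ref{lem:Ztalgebrahom} to power-series-valued entries is routine. By Example~\ref{ex:fanactionisEulerian} and Example~\ref{ex:propermapequiv}, the fixed subposet $\Gamma^w$ is the non-Hausdorff mapping cylinder of the strong formal subdivision $\sigma^w$ induced by the proper surjective morphism of fans $(\Sigma_{\cS'})^w \to (\Sigma_\cS)^w$, which corresponds to a (rational) polyhedral subdivision of $P^w$. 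The evaluated identity follows by applying the non-equivariant lemma to this induced subdivision; the rational case follows from the lattice case at the level of Ehrhart series, which is all that is used. By injectivity of $p \mapsto (\ev_w(p))_{w \in W}$, the identity then lifts back to the enlarged equivariant incidence algebra.

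Once \eqref{eq:hstarresult} is in hand, polynomiality of the $W$-action on $\cS$ follows: the RHS $h^*_\Gamma|_X \cdot \kappa_\Gamma|_{(X/Y)^\circ}$ has polynomial entries, since $h^*_X$ is polynomial by hypothesis and $\kappa_\Gamma$ by construction, and each entry vanishes at $t=1$ because every factor $\kappa_\Gamma(z,y) = (t-1)\det(tI - \widetilde{\psi}_{z,y})$ does; dividing by $(t-1)$ therefore produces a polynomial value for $h^*_\Gamma|_{X/Y}$. The explicit formula \eqref{eq:hstarPformula} then follows by evaluating \eqref{eq:hstarresult} at the interval $[\hat{0}_\Gamma, \hat{1}_\Gamma]$, noting that $W_{\hat{0}_\Gamma, z, \hat{1}_\Gamma} = W_z$ since $\hat{0}_\Gamma$ and $\hat{1}_\Gamma$ are $W$-fixed, substituting $\kappa_\Gamma(z, \hat{1}_\Gamma) = (t-1)\det(tI - \widetilde{\psi}_{z, \hat{1}_\Gamma})$, and canceling the common factor $(t-1)$.

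The main obstacle is the reduction to the non-equivariant lemma in the second step: one must verify that the identity of \cite{KatzStapledon16}*{Lemma~7.12} extends from lattice polytopes to the rational polytope $P^w$, either by re-running its proof at the level of rational Ehrhart series or by a scaling argument that reduces to the lattice case. Once this is done, the rest of the proof is formal manipulation within the incidence algebra framework already developed in Section~\ref{ss:equivariantKLSlowerEulerian}.
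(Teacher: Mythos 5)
Your overall structure matches the paper's, and the endgame — deducing polynomiality of the action on $\cS$ from divisibility of $\kappa_\Gamma(x,y)$ by $t-1$, and then evaluating at $[\hat 0_\Gamma,\hat 1_\Gamma]$ to get \eqref{eq:hstarPformula} — is exactly right. But the central step, establishing the scalar identity $\ev_w\bigl((t-1)h^*_\Gamma|_{X/Y}\bigr) = \ev_w\bigl(h^*_\Gamma\cdot\kappa_\Gamma|_{(X/Y)^\circ}\bigr)$ for each $w$, is not actually carried out in your proposal, and the route you suggest (treating \cite{KatzStapledon16}*{Lemma~7.12} as a black box) does have a genuine gap, as you yourself flag. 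For $w\neq\mathrm{id}$, the fixed polytope $P^w$ and the faces $F_z^w$ are only rational, while Lemma~7.12 is a statement about lattice polyhedral subdivisions of lattice polytopes; moreover, $\ev_w(h^*(F_z,\psi_z;t))$ is not the $h^*$-polynomial of a lattice polytope with respect to any natural sublattice, but rather $\Ehr(F_z^w;t)\det(I - \widetilde{\psi}_z(w)t)$. So neither a scaling argument nor a naive citation closes the gap.

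The paper bypasses this issue entirely. It does not invoke the non-equivariant lemma at all. Instead, for each $w \in W$ and $y \in Y^w$ it uses the geometric observation that every $w$-fixed point in $\Int(C_y)$ lies in $\Int(C_x)$ for a unique $x\in X^w$ with $\sigma(x)=y$, giving
\[
\ev_w\bigl(\Ehr^\circ(F_y,\psi_y;t)\bigr) = \sum_{\substack{x\in X^w\\ \sigma(x)=y}} \ev_w\bigl(\Ehr^\circ(F_x,\psi_x;t)\bigr),
\]
and then applies equivariant Ehrhart reciprocity \eqref{eq:reciprocity} to each term. Since \eqref{eq:reciprocity} is stated as an identity in $\C[[t]]\cap\C(t)$ for arbitrary $w$, it applies directly to the rational fixed-locus polytopes without any extension. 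Substituting $t\mapsto t^{-1}$ and using multiplicativity of $\kappa_\Gamma$ then gives precisely the scalar identity you wanted, after which Lemma~\ref{lem:Ztalgebrahom} lifts it to $\II^W(\Gamma)$. In other words, the paper does what you called ``re-running the proof at the level of rational Ehrhart series,'' but the crucial tool that makes this rigorous — and that your proposal does not name — is \eqref{eq:reciprocity}. Without it, the derivation of the main identity is missing.

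Two minor points. First, the second equality in \eqref{eq:hstarresult} does not actually follow from Lemma~\ref{lem:restrict} (none of its three cases applies directly); it follows from the observation that $h^*_\Gamma|_Y = 0$ (since $h^*_\Gamma$ is supported at $\hat 0_\Gamma \in X$) and $h^*_\Gamma|_{X/Y}\cdot\kappa_\Gamma|_{(X/Y)^\circ} = 0$ (the intermediate element would need to be simultaneously in $Y$ and in $X$). Second, your deduction of polynomiality from the identity is correct, but note that the paper proves the identity first as an equality of rational functions and then deduces polynomiality entry by entry — you should be careful to present the logic in that order, since $h^*_\Gamma|_{X/Y}$ is not a priori an element of $\II^W(\Gamma)$.
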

\begin{proof}
	The second statement follows from the first by setting $\cS'$ to be $\cS$, and setting $\cS$ to be the trivial subdivision of $P$, and evaluating both sides of \eqref{eq:hstarresult} at the interval 
	$\Gamma = [ \hat{0}_\Gamma, \hat{1}_\Gamma ]$, using the definition of multiplication in $\II^W(\Gamma)$ (see \eqref{eq:equivariantmultiplication}). 
	
	Fix  $w \in W$ and $y \in Y^w$. Since every $w$-fixed point in $\Int(C_y)$  lies in $\Int(C_x)$ for a unique $x \in X^w$ such that $\sigma(x) = y$, we have the following equality in $\C(t) \cap \C[[t]]$,
		\[
	\ev_w(\Ehr^\circ(F_y, \psi_y;t)) = \sum_{\substack{x \in X^w \\ \sigma(x) = y}}  \ev_w(\Ehr^\circ(F_x, \psi_x;t)).
	\]
    Applying \eqref{eq:reciprocity} to both sides gives 
	\[
	\frac{ t^{\dim V_y}\ev_w(h^*(F_y, \psi_y;t^{-1}))}{\det(I - \widetilde{\psi_y}(w)t)} = \sum_{\substack{x \in X^w \\ \sigma(x) = y}} 	\frac{ t^{\dim V_x}\ev_w(h^*(F_x, \psi_x;t^{-1}))}{\det(I - \widetilde{\psi_x}(w)t)}. 
	\]
	Replacing $t$ with $t^{-1}$ gives the following equality in $\C(t)$,
	\[
	\frac{ \ev_w(h^*(F_y, \psi_y;t))}{\ev_w(\det(tI - \widetilde{\psi_y}))} = \sum_{\substack{x \in X^w \\ \sigma(x) = y}} 	\frac{ \ev_w(h^*(F_x, \psi_x;t))}{\ev_w(\det(tI - \widetilde{\psi_x}))}. 
	\]
	Equivalently, using the multiplicative property of $\kappa_\Gamma$ and \eqref{eq:kappagammaequiv}, 
	\[
\ev_w(h^*(F_y, \psi_y;t)) =  \frac{1}{t - 1} \sum_{\substack{x \in X^w \\ \sigma(x) = y}} 	 \ev_w(h^*(F_x, \psi_x;t)) \ev_w(\kappa_\Gamma(x,y)). 
	\]
	Since the action of $W$ on $\cS'$ is polynomial, and $\ev_w(\kappa_\Gamma(x,y))$ is divisible by $t - 1$ by Lemma~\ref{lem:poleatone} and Lemma~\ref{lem:reducenonequivariant}, 
	the right hand side of the above equation is a polynomial in $t$. 
	We deduce that $\ev_w(h^*(F_y, \psi_y;t))$ is a polynomial in $t$. Since this holds for all choices of $w$ and $y$,  we conclude that  the action of $W$ on $\cS$ is polynomial. Moreover, the above equation can be rewritten as
	\[
	(t - 1)\ev_w(h^*_\Gamma)(\hat{0}_\Gamma,y) = (\ev_w(h^*_\Gamma) \cdot \ev_w(\kappa_\Gamma|_{(X/Y)^\circ}))(\hat{0}_\Gamma,y). 
	\] 
	We deduce that the following equality  in $I(\Gamma^W)$
	\[
	(t - 1) \cdot \ev_w(h^*_\Gamma|_{X/Y}) = \ev_w(h^*_\Gamma) \cdot \ev_w(\kappa_\Gamma|_{(X/Y)^\circ}). 
	\]
	In fact, since the right hand side lies in $\II(\Gamma^w)$,  the above inequality holds in $\II(\Gamma^w)$. Using Lemma~\ref{lem:Ztalgebrahom}, we deduce that 
	\[
		(t - 1) \cdot h^*_\Gamma|_{X/Y} = h^*_\Gamma \cdot \kappa_\Gamma|_{(X/Y)^\circ}. 
	\]
	This establishes \eqref{eq:hstarresult}, using the fact that 
	$h^*_\Gamma \cdot \kappa_\Gamma|_{(X/Y)^\circ} = h^*_\Gamma|_X \cdot \kappa_\Gamma|_{(X/Y)^\circ}$.
\end{proof}

\begin{remark}
By comparison with Definition~\ref{def:equivarianthellpolynomial},  \eqref{eq:hstarresult} in Proposition~\ref{prop:equivariantEhrhartapp} is equivalent to 
\begin{equation*}%
	h^*_\Gamma|_{X/Y} = h^*_\Gamma|_X \cdot g_\Gamma^{-1}|_X \cdot h_\sigma = \ell^*_\Gamma|_X \cdot h_\sigma, 
\end{equation*}
and, after multiplying both sides on the right by $g_\Gamma^{-1}|_Y$, is also equivalent to 
\begin{equation*}%
	\ell^*_\Gamma|_{X/Y} =  \ell^*_\Gamma|_X \cdot \ell_\sigma. 
\end{equation*}
In particular, suppose there exists a $W$-invariant lattice polyhedral subdivision $\cS$ of $P$ such that the action of $W$ on $\cS$ is polynomial. 
 Setting $\cS'$ to be $\cS$, and setting $\cS$ to be the trivial subdivision of $P$ above,
 then evaluating %\eqref{eq:hstarintermsofh} and \eqref{eq:localhstarintermsofh} 
the above equations
at the interval 
$\Gamma = [ \hat{0}_\Gamma, \hat{1}_\Gamma ]$ gives 
\begin{equation}\label{eq:hstarintermsofh}
	h^*(P,\psi;t)  = %\sum_{\substack{z \in \face(\cS) \\ \sigma(z) = \hat{1}_{\face(P)}
			\sum_{z \in \face(\cS)} \frac{|W_z|}{|W|} \Ind^{W}_{W_{z}} \left( \ell^*(F_{z},\psi_{z};t) h_\sigma(z,\hat{1}_\Gamma) \right) \in R(W)[t],
\end{equation}
\begin{equation}\label{eq:localhstarintermsofh}
	\ell^*(P,\psi;t)  = %\sum_{\substack{z \in \face(\cS) \\ \sigma(z) = \hat{1}_{\face(P)}
			\sum_{z \in \face(\cS)} \frac{|W_z|}{|W|} \Ind^{W}_{W_{z}} \left( \ell^*(F_{z},\psi_{z};t) \ell_\sigma(z,\hat{1}_\Gamma) \right) \in R(W)[t].
\end{equation}
Observe that when $z = \hat{0}_{\face(\cS)}$ in the sums above, the corresponding contributions are $h_\sigma(\Gamma)$ and $\ell_\sigma(\Gamma)$ respectively. 
When $W$ acts trivially, \eqref{eq:hstarintermsofh} and \eqref{eq:localhstarintermsofh} are equivalent to $(3)$ and $(4)$ in  \cite{KatzStapledon16}*{Lemma~7.12} respectively.  
\end{remark}

\begin{example}\label{ex:triangulations}
	Suppose there exists  a $W$-invariant lattice triangulation $\cS$ of $P$. Recall that the action of $W$ on $\cS$ is polynomial
	by Example~\ref{ex:simplexequiv}. %\cite{StapledonEquivariant}*{Proposition~6.1}. 
	In this case, Example~\ref{ex:simplexequiv} gives explicit combinatorial formulas for the coefficients of the polynomials $h^*(F_{z},\psi_{z};t)$ and $\ell^*(F_{z},\psi_{z};t)$ as permutation representations for all $z \in X$. 
	In particular,  \eqref{eq:hstarPformula} agrees with the formula \cite{StapledonEquivariantCommutativeTriangulations}*{Proposition~4.40}.
	 If we further assume that $\cS$ is a unimodular triangulation, i.e., the fan $\Sigma_{\cS}$ is smooth, then it follows from Example~\ref{ex:simplexequiv} that $\ell^*_\sigma(\hat{0}_\Gamma,x) =  \delta_\Gamma(\hat{0}_\Gamma,x)$ for all $x \in X$. 
%	 
%	  \eqref{ex:fanequiv} and \cite{StapledonEquivariant}*{Proposition~6.1} that $h^*_\Gamma(\hat{0}_\Gamma,x) = g_\Gamma(\hat{0}_\Gamma,x) = 1$ for all $x \in X$. 
%	  %is $1$ on all intervals in $X$ and $0$ otherwise. 
%	  In particular, $\ell^*_\Gamma(\hat{0}_\Gamma,x) = (h^*_\Gamma \cdot g_\Gamma^{-1})(\hat{0}_\Gamma,x) = \delta_\Gamma(\hat{0}_\Gamma,x)$. 
	   By \eqref{eq:hstarintermsofh} and \eqref{eq:localhstarintermsofh}, we deduce that $h^*(P,\psi;t) = h_\sigma(\Gamma)$ is the equivariant $h$-polynomial of $\sigma$, and 
	   $\ell^*(P,\psi;t) = \ell_\sigma(\Gamma)$ is the equivariant local $h$-polynomial of $\sigma$ (c.f. \cite{StapledonEquivariantCommutativeTriangulations}*{Remark~4.41} and \cite{DDEquivariantEhrhart}*{Theorem~5.2}).

%	  In particular, $\ell^*_\Gamma|_X = \delta_\Gamma|_X$, and we deduce that 
%	$h^*_\Gamma|_{X/Y} = h_\sigma$ and $\ell^*_\Gamma|_{X/Y} = \ell_\sigma$ in this case (c.f. \cite{StapledonEquivariantCommutativeTriangulations}*{Remark~4.41} and \cite{DDEquivariantEhrhart}*{Theorem~5.2}). 
%	% Then \eqref{eq:hstarintermsofh} implies that 
%	% comparison with Definition~\ref{def:equivarianthellpolynomial} yeilds that 

\end{example}

\bibliography{KLStheory}
\bibliographystyle{amsalpha}

\end{document}